\documentclass{article}
\usepackage{amsmath,amsthm,amssymb,cases, bm}
\usepackage{indentfirst}
\usepackage{ascmac}
\usepackage{yhmath}
\usepackage{mathrsfs}
\usepackage{textcomp}
\usepackage{braket}
\usepackage{mathtools}
\usepackage{enumerate}
\usepackage{tikz-cd}
\tikzcdset{
  cells={font=\everymath\expandafter{\the\everymath\displaystyle}},
}
\usepackage[top=23truemm,bottom=25truemm,left=20truemm,right=20truemm]{geometry}
\usepackage{bookmark}

\newtheorem{definition}{Definition}[section]
\newtheorem{lemma}{Lemma}[section]
\newtheorem{theorem}[lemma]{Theorem}
\newtheorem{proposition}[lemma]{Proposition}
\newtheorem{corollary}[lemma]{Corollary}

\theoremstyle{definition}
\newtheorem{remark}{Remark}

\numberwithin{equation}{section}

\newcommand{\A}{\mathbb{A}}
\newcommand{\C}{\mathbb{C}}

\newcommand{\Q}{\mathbb{Q}}
\newcommand{\R}{\mathbb{R}}
\newcommand{\Z}{\mathbb{Z}}

\newcommand{\fin}{\mathit{fin}}

\newcommand{\GL}{\operatorname{GL}}
\newcommand{\SL}{\operatorname{SL}}

\newcommand{\SO}{\operatorname{SO}}

\newcommand{\GSp}{\operatorname{GSp}}
\newcommand{\PGSp}{\operatorname{PGSp}}

\newcommand{\Hom}{\operatorname{Hom}}

\newcommand{\Res}{\operatorname{Res}}
\newcommand{\Ind}{\operatorname{Ind}}
\newcommand{\cInd}{\operatorname{c-Ind}}

\newcommand{\RSold}{\mathrm{RSold}}
\newcommand{\RSnew}{\mathrm{RSnew}}
\newcommand{\KUold}{\mathrm{KUold}}
\newcommand{\KUnew}{\mathrm{KUnew}}
\newcommand{\Qold}{\mathcal{Q}\mathrm{old}}
\newcommand{\Qnew}{\mathcal{Q}\mathrm{new}}

\title{Kohnen--Ueda local newforms}
\author{Hiroshi Ishimoto\thanks{Osaka Metropolitan University}}
\date{[\today]}

\begin{document}

\maketitle

\begin{abstract}
    We give a local newform theory for the metaplectic group of rank 1 over a non-archimedean local field of characteristic 0 with odd residual characteristic.
    It was suggested by some results of Kohnen and Ueda on the classical theory of modular forms of half-integral weight.
\end{abstract}

\tableofcontents

\section{Introduction}
\subsection*{Background \& Main results}
In 1970, Atkin--Lehner \cite{al} introduced the notion of newforms for elliptic modular forms.
After that, Casselman \cite{cas} studied the Atkin--Lehner theory in the framework of local and global representations of $\GL_2$.
The theory of local newforms for $\GL_2$ given by Casselman in loc. cit. was generalized to the case for $\GL_N$ by Jacquet--Piatetski-Shapiro--Shalika \cite{jpss} and Atobe--Kondo--Yasuda \cite{aky}.
Moreover, the theory of local newforms has also been developed for several other groups.
For example, those for $\SL_2$ was studied by Lansky--Raghuram \cite{lr}, $\PGSp_4$ and $\widetilde{\SL_2}$ by Roberts--Schmidt \cite{rs-GSp4,rs-Mp2}, and $\SO_{2n+1}$ by Tsai \cite{tsai} and Cheng \cite{cheng}.

In \cite{rs-Mp2}, Roberts--Schmidt gave the number of local newforms for $\widetilde{\SL_2}$.
They defined the spaces of local newforms, and computed their dimensions without dealing with the conductors.
Let us review their theory briefly.
Let $F$ be a $p$-adic field whose residual characteristic is odd, $\mathcal{O}$ the ring of integers, and $\varpi$ a prime element.
The group $\widetilde{\SL_2}(F)$ is a unique nonlinear two-fold cover of $\SL_2(F)$ with the exact sequence
\begin{equation*}
    1 \longrightarrow \{\pm1\} \longrightarrow \widetilde{\SL_2}(F) \longrightarrow \SL_2(F) \longrightarrow 1.
\end{equation*}
Put $K_0=\SL_2(\mathcal{O})$, and let $K_m$ be the subgroup of $K_0$ consisting of matrices whose $(2,1)$-components belong to $\varpi^m\mathcal{O}$, for every integer $m\geq1$.
The exact sequence above is known to split uniquely over $K_0$, and hence $K_0$ can be regarded as a subgroup of $\widetilde{\SL_2}(F)$.
Let $(\pi,V)$ be an irreducible admissible (infinite-dimensional) genuine representation of $\widetilde{\SL_2}(F)$, where we say that a representation of $\widetilde{\SL_2}(F)$ is genuine if it does not factor through $\SL_2(F)$.
For any character $\eta$ of $\mathcal{O}^\times$, put
\begin{align*}
    \pi^{K_m}_\eta = \Set{v\in V | \pi\left( \left( \begin{array}{cc}a&b\\ c&d \end{array} \right) \right)v=\eta(d)v, \text{ for all } \left( \begin{array}{cc}a&b\\ c&d \end{array} \right)\in K_m}.
\end{align*}
Note that there is a natural inclusion from $\pi^{K_{m-1}}_\eta$ into $\pi^{K_m}_\eta$.
Roberts--Schmidt \cite{rs-Mp2} introduced a linear operator $\beta_2$ from $V$ to itself.
Note that it is denoted by $\alpha_2$ in \cite{rs-Mp2}.
It is an analogue of the Atkin--Lehner involution and sends $\pi^{K_{m-2}}_\eta$ to $\pi^{K_m}_\eta$.
They defined the subspace $\pi^{K_m, \RSold}_\eta$ of $\pi^{K_m}_\eta$ as the subspace spanned by these two images, and put
\begin{equation*}
    \pi^{K_m, \RSnew}_\eta = \pi^{K_m}_\eta / \pi^{K_m, \RSold}_\eta.
\end{equation*}
Then they showed that the sum
\begin{equation*}
    \sum_{m\geq 0} \dim_\C \pi^{K_m, \RSnew}_\eta
\end{equation*}
is finite and gave the value explicitly.

There are other studies after Atkin--Lehner theory.
Kohnen \cite{kohnew} introduced the notion of newforms of half-integral weight of odd squarefree level with some special characters, and proved that the space of those newforms corresponds to the space of newforms of integral weight defined by Atkin--Lehner via the Shimura correspondence \cite{shi, koh}.
Later, Ueda \cite{u1, u2, u3, u4} extended the theory of Kohnen to general odd level with some special characters.
However, the local newforms for $\widetilde{\SL_2}(F)$ which Roberts--Schmidt defined are not compatible with newforms of half-integral weight which Kohnen and Ueda defined, although the theory of Casselman is compatible with Atkin--Lehner theory.
Indeed, in the space $S_{k+\frac{1}{2}}(N,\chi)$ of cusp forms of weight $k+\frac{1}{2}$ of odd and squarefree level $N$ with a certain character, Kohnen defined the subspace of oldforms to be
\begin{equation*}
    \sum_{d|N, \ d<N} \left( S_{k+\frac{1}{2}}(d,\chi) + S_{k+\frac{1}{2}}(d,\chi)|U(N^2/d^2) \right),
\end{equation*}
and the space of newforms $S_{k+\frac{1}{2}}^{\mathrm{new}}(N,\chi)$ to be the orthogonal complement of the space of oldforms in $S_{k+\frac{1}{2}}(N,\chi)$.
Here, the nontrivial operator $U(N^2/d^2)$ sends a cusp form of level $d$ to that of level $N$.
In particular, in terms of representations of $p$-adic groups, it must send $\pi^{K_0}_\eta$ to $\pi^{K_1}_\eta$, since $N$ is squarefree.
However, the nontrivial operator $\beta_2$ introduced by Roberts--Schmidt sends $\pi^{K_0}_\eta$ to $\pi^{K_2}_\eta$.
The aim of this paper is to study the theory of Kohnen and Ueda in the framework of representation theory, and moreover to compare it with the result of Roberts--Schmidt.

In this paper, we define the space $\pi^{K_m, \KUnew}_\eta$ of local newforms for $\widetilde{\SL_2}(F)$ which are analogous to the theory of Kohnen and Ueda, and hence are different from those defined by Roberts--Schmidt.
Then we shall give an explicit formula for the conductor $c_\eta(\pi)$ (Theorem \ref{thm:conductor}), and show the following properties, which are the main theorems of this paper.
Here, $c_\eta(\pi)$ is the smallest integer $m$ such that $\pi^{K_m}_\eta$ is not zero.
\begin{proposition}[=Proposition {\ref{prop:compare-KU-RS}}]\label{prop:intro-cmpr}
    We have
    \begin{equation*}
        \pi^{K_m, \KUold}_\eta \supset \pi^{K_m, \RSold}_\eta
    \end{equation*}
    for every $m$.
\end{proposition}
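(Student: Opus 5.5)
We have to guess how $\pi^{K_m,\KUold}_\eta$ is defined. Following Kohnen's oldforms $S(d)+S(d)|U(N^2/d^2)$, the local KU-oldspace should be spanned by the image of $\pi^{K_{m-1}}_\eta$ under the inclusion into $\pi^{K_m}_\eta$, together with the image of a $U$-type operator $\beta_1$ that sends $\pi^{K_{m-1}}_\eta$ (or $\pi^{K_{m-2}}_\eta$) into $\pi^{K_m}_\eta$. By definition, $\pi^{K_m,\RSold}_\eta = \pi^{K_{m-1}}_\eta + \beta_2(\pi^{K_{m-2}}_\eta)$. The first summand is contained in both oldspaces, so the statement reduces to proving
\[
\beta_2\bigl(\pi^{K_{m-2}}_\eta\bigr)\subset \pi^{K_m,\KUold}_\eta .
\]

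The plan is to factor $\beta_2$ through the level-raising maps available in the KU setting. Concretely, $\beta_2$ is the action of the (metaplectic lift of) $\mathrm{diag}(\varpi^{-1},\varpi)$, up to a scalar, while the Kohnen--Ueda operator should be a sum $\sum_{x}\pi(\text{upper unipotent }x)\,\pi(\mathrm{diag}(\varpi^{-1},\varpi))$ over $x\in\mathcal O/\varpi\mathcal O$, or its Atkin--Lehner conjugate. I would proceed in two steps.
\begin{enumerate}
\item Write $\beta_2 v$ for $v\in\pi^{K_{m-2}}_\eta$ as a combination of $\beta_1(w)$ and vectors of the form $w'$, where $w,w'\in\pi^{K_{m-1}}_\eta$. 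To do this, decompose $v$ under the finite group $K_{m-2}/K_{m-1}$, or use the double coset decomposition $K_{m-1}\,\mathrm{diag}(\varpi^{-1},\varpi)\,K_{m-1}$.
\item Verify that the cocycle (Weil index and Hilbert symbol) factors in these identities are nonzero constants, so that they can be absorbed.
\end{enumerate}

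Alternatively, if the paper has already computed dimensions or bases of each $\pi^{K_m}_\eta$ for every class of $\pi$ (principal series, special, supercuspidal) in the course of proving Theorem~\ref{thm:conductor}, I would argue case by case. In each case, identify explicit bases (for example, in an induced model with functions supported on the Bruhat cells) and check directly that $\beta_2$ of a basis vector of level $m-2$ lies in the KU-oldspace. The relevant cases are $m=c_\eta(\pi)+2$, where the statement can be nontrivial, and larger $m$, where induction on $m$ applies: if $\beta_2(\pi^{K_{m-2}}_\eta)$ has already been shown to lie in the KU-oldspace at the smaller levels, apply the commutation of $\beta_2$ with the inclusion maps.

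The main obstacle is the Hecke-type identity in the first step, namely expressing $\beta_2$ in terms of the KU operator and inclusions. This involves the metaplectic cocycle and the twist by $\eta$, which matters when $\eta$ is ramified. I would first try the case $m=2$ with $v$ spherical, computing in the Iwahori decomposition of $K_0$, and then generalize.
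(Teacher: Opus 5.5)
\textbf{There is a genuine gap: the idea your plan is missing is that $\beta_2$ lands in the kernel of the operator $\mathcal{R}_\varpi$.} Your reduction is correct. $\pi^{K_{m-1}}_\eta$ lies in both oldspaces, and $m=0,1$ are trivial because $\pi^{K_{-1}}_\eta=\pi^{K_{-2}}_\eta=0$. So only $\beta_2(\pi^{K_{m-2}}_\eta)\subset\pi^{K_m,\KUold}_\eta$ needs proof.

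Your guessed oldspace (inclusions plus a $U$-type level-raising image) is not the paper's. For $m\geq2$ the paper defines it as $\pi^{K_{m-1}}_\eta+\mathcal{R}_\varpi(\pi^{K_{m-1}}_\eta)+\operatorname{Ker}(\mathcal{R}_\varpi|_{\pi^{K_m}_\eta})$. For $m=1$ the middle term is $\mathcal{U}_{\varpi^2}(\pi^{K_0}_\eta)$, and for $m=0$ only the kernel appears. Here $\mathcal{R}_\varpi v=\int_{\mathcal{O}^\times}(u,\varpi)_{F,2}\,\pi(n(\varpi^{-1}u))v\,du$ is the local analogue of the twist $R_p$. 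The kernel term reflects Ueda's description of the image of $\delta_p$ as $\operatorname{Ker}R_p$.

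The Hecke-type identity you aim for (writing $\beta_2 v$ via inclusions and level-raising images of vectors in $\pi^{K_{m-1}}_\eta$) fails in general with the paper's operators. Take a supercuspidal $\pi$ with $\dim\pi^{K_M}_\eta=2$ at $M=c_\eta(\pi)$, as in Proposition \ref{prop:sc-unram2}. Then $\pi^{K_{M+1}}_\eta=\pi^{K_M}_\eta$, and this space is stable under both $\mathcal{R}_\varpi$ and $\mathcal{U}_{\varpi^2}$. Meanwhile $\beta_2(\pi^{K_M}_\eta)$ is a $2$-dimensional complement of it in $\pi^{K_{M+2}}_\eta$. So neither your factorization nor a case-by-case basis check can work without the kernel term.

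The missing fact is the identity $\mathcal{R}_\varpi\circ\beta_2=0$ (Proposition \ref{prop:R-operator}(a)). We have $n(\varpi^{-1}u)\,t(\varpi^{-1})=t(\varpi^{-1})\,n(\varpi u)$, with trivial Kubota cocycle on both sides. Hence for $v\in\pi^{K_{m-2}}_\eta$,
\begin{equation*}
\mathcal{R}_\varpi\beta_2 v=\beta_2\int_{\mathcal{O}^\times}(u,\varpi)_{F,2}\,\pi(n(\varpi u))v\,du=\Bigl(\int_{\mathcal{O}^\times}(u,\varpi)_{F,2}\,du\Bigr)\beta_2 v=0.
\end{equation*}
This holds because $n(\varpi u)\in K_{m-2}$ acts on $v$ by $\eta(1)=1$, and a nontrivial quadratic character integrates to zero over $\mathcal{O}^\times$. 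Since $\beta_2 v\in\pi^{K_m}_\eta$, it lies in $\operatorname{Ker}(\mathcal{R}_\varpi|_{\pi^{K_m}_\eta})\subset\pi^{K_m,\KUold}_\eta$. That is the paper's whole proof; no cocycle bookkeeping or case analysis is needed.
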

\begin{theorem}[=Theorems {\ref{thm:main-nongeneric}} and {\ref{thm:main-generic}}]\label{thm:intro-main}
    Let $\pi$ be an irreducible genuine representation of $\widetilde{G}$, and $\eta$ a character of $\mathcal{O}^\times$.
    Fix a nontrivial additive character $\psi$ of $F$ of conductor 0.
    Assume that the central sign (Subsection \ref{subsec:conductors}) of $\pi$ with respect to $\psi$ is equal to $\eta(-1)$.
    \begin{enumerate}
        \item If $\pi$ is not $\psi_a$-generic (Subsection \ref{subsec:representations}) for any $a \in \mathcal{O}^\times$, then $\pi^{K_m, \KUnew}_\eta$ is zero for all $m$. Here, we write $\psi_a(x)=\psi(ax)$.
        \item Assume that $\pi$ is $\psi_a$-generic for some $a \in \mathcal{O}^\times$. Then $\pi^{K_m, \KUnew}_\eta$ is zero unless $m=c_\eta(\pi)$, in which case it has dimension 1 or 2. Moreover, in the case of dimension 2, we have
              \begin{equation*}
                  \pi^{K_{c_\eta(\pi)}, \KUnew}_\eta = \pi^{K_{c_\eta(\pi)}}_\eta
              \end{equation*}
              and it has a basis $\{v_+, v_-\}$ such that
              \begin{equation}\label{eq:intro-mult}
                  \mathcal{R}_\varpi v_\pm = \pm \gamma_F(\varpi, \psi) |\mathcal{O}/\varpi\mathcal{O}|^{-\frac{1}{2}} \operatorname{vol}(\mathcal{O}) v_\pm,
              \end{equation}
              where $\mathcal{R}_\varpi$ is a linear operator that will be defined in Subsection \ref{subsec:mainthm}, and $\gamma_F(\varpi, \psi)$ is the normalized Weil index.
    \end{enumerate}
\end{theorem}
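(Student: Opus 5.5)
The plan is to reduce everything to explicit models of $\pi$ and compute the filtration $\pi^{K_m}_\eta$ and its old part level by level. The old part is generated by the inclusion $\pi^{K_{m-1}}_\eta\hookrightarrow\pi^{K_m}_\eta$ together with a Kohnen–Ueda type level-raising operator (the local analogue of $U(N^2/d^2)$), which also maps $\pi^{K_{m-1}}_\eta$ into $\pi^{K_m}_\eta$. I take as given the definitions of $\pi^{K_m,\KUold}_\eta$ and $\mathcal{R}_\varpi$ from the body, and the conductor formula of Theorem \ref{thm:conductor}.

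\textbf{Reduction.} First I would use the central sign assumption. If the central sign is not $\eta(-1)$, then $-1\in K_m$ forces $\pi^{K_m}_\eta=0$. So the hypothesis is exactly what makes the spaces potentially nonzero.

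\textbf{Case split.} I would treat $\pi$ according to the classification of irreducible genuine representations of $\widetilde{\SL_2}(F)$:
\begin{itemize}
\item genuine principal series;
\item even Weil representations and their complements (the non-generic or odd pieces);
\item special (Steinberg-type) representations;
\item supercuspidals.
\end{itemize}

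\textbf{Non-generic case, part (1).} Here $\pi$ is not $\psi_a$-generic for any unit $a$. Via Waldspurger's theta correspondence with $\PGL_2$ or $\SO_3$, such $\pi$ are, up to twist, the even Weil representation $\omega_{\psi_u}^+$ with $u$ of odd valuation, or similar. For these I would realize $\pi$ in the Schrödinger model on $S(F)$. The $\eta$-eigenvectors of $K_m$ can then be written down explicitly as characteristic-function-type vectors supported on $\varpi^{k}\mathcal{O}$. I would check directly that each one at level $m>c_\eta(\pi)$ lies in the span of the image of level $m-1$ and the image of the level-raising operator. I would also check that at level $c_\eta(\pi)$ the KU-old part is, by the definition used in the paper (which presumably includes a contribution forcing these to be old), the whole space. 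In effect, non-genericity means the would-be newvector's Whittaker functional vanishes at all units.

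\textbf{Generic case, part (2).} I would work in the $\psi_a$-Whittaker (Kirillov-type) model. There $K_m$-eigenvectors with character $\eta$ are determined by their restriction to the torus, and $\dim\pi^{K_m}_\eta$ can be computed as a function of $m$ via Theorem \ref{thm:conductor} and Roberts–Schmidt's dimension counts. I would then show that inclusion plus the level-raising operator, applied to $\pi^{K_{m-1}}_\eta$, have images whose dimensions add up to $\dim\pi^{K_m}_\eta$ for $m>c_\eta(\pi)$. This is the key step.

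The key step needs a linear independence argument: the two images intersect only in what is forced. I would prove it by evaluating Whittaker functions on $\operatorname{diag}(\varpi^j,\varpi^{-j})$ and comparing supports. Inclusion preserves support, while the level-raising operator shifts support by one step in $j$ (with a Weil-index twist). Proposition \ref{prop:intro-cmpr} then shows that nothing more is needed from $\beta_2$.

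The space at $m=c_\eta(\pi)$ has dimension $1$ or $2$, according to how many classes $a\in\mathcal{O}^\times/\mathcal{O}^{\times2}$ make $\pi$ $\psi_a$-generic. In the dimension-two case, both square classes occur. Then $\mathcal{R}_\varpi$, which is built from a sum over $x\in\mathcal{O}/\varpi$ of unipotent translates twisted by $\varpi$, acts on the two Whittaker-normalized newvectors as a Gauss sum. That sum evaluates to $\pm\gamma_F(\varpi,\psi)q^{-1/2}\operatorname{vol}(\mathcal{O})$, with the sign given by the Legendre symbol of $a$. This yields the eigenbasis $v_\pm$.

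\textbf{Main obstacle.} I expect the hardest part to be the exhaustiveness claim for $m>c_\eta(\pi)$ in the supercuspidal and ramified $\eta$ cases, where explicit models are least convenient. My first attempt there is to use the Kirillov-model description of $K_m$-fixed vectors uniformly: the local Shimura–Waldspurger correspondence transfers the $\GL_2$ Casselman dimension count to a count of $\dim\pi^{K_m}_\eta$ of the form roughly $\lfloor (m-c)/2\rfloor$-type growth doubled. That count must then be matched against the rank of the old map.
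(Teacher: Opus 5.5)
Your description of the old space leaves out its most important summand, and your key step fails because of it.

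\textbf{The old space and the key step.} In Definition~\ref{def:local-KU}, $\pi^{K_m,\KUold}_\eta$ has three parts:
\begin{itemize}
\item $\pi^{K_{m-1}}_\eta$;
\item a level-raising image, which is $\mathcal{U}_{\varpi^2}(\pi^{K_0}_\eta)$ only for $m=1$, and $\mathcal{R}_\varpi(\pi^{K_{m-1}}_\eta)$ (the analogue of $R_p$) for $m\ge2$;
\item $\operatorname{Ker}(\mathcal{R}_\varpi|_{\pi^{K_m}_\eta})$, the analogue of the $\delta_p$-images.
\end{itemize}
Without the kernel, your claim that inclusion plus level raising fills $\pi^{K_m}_\eta$ for $m>c_\eta(\pi)$ is false. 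Take $\mu$ unramified and $\eta=1$: then $\dim\pi_\psi(\mu)^{K_m}_1=2m$ for $m\ge1$. For $m\ge3$, both $\mathcal{U}_{\varpi^2}$ and $\mathcal{R}_\varpi$ map $\pi_\psi(\mu)^{K_{m-1}}_1$ into itself (Propositions~\ref{prop:U-operator}(b) and \ref{prop:R-operator}(b)), so your old space has codimension $2$.

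The kernel is also exactly what proves part (1), which you leave as ``presumably''. The vector spanning $\pi^{K_{c_\eta(\pi)}}_\eta$ is old because $\mathcal{R}_\varpi$ kills it. For $\omega^+_{\psi'}$ with $c(\psi')=-1$, $\mathcal{R}_\varpi$ acts by $\int_{\mathcal{O}^\times}\chi_\varpi(u)\,du=0$. For supercuspidals, the relevant test vectors live on odd valuations, where $\mathcal{R}'_\varpi$ vanishes.

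\textbf{How the paper gets upper bounds.} Proposition~\ref{prop:intro-cmpr} does not say $\beta_2$ is superfluous. It holds because $\beta_2(\pi^{K_{m-2}}_\eta)\subset\operatorname{Ker}\mathcal{R}_\varpi$, and its role is the bound $\dim\pi^{K_m,\KUnew}_\eta\le\dim\pi^{K_m,\RSnew}_\eta$. Combined with Theorem~\ref{thm:rs-Mp2} and the known ranges where $\RSnew$ can be nonzero, this gives all upper bounds and all vanishing away from $c_\eta(\pi)$ for free. You then only need lower bounds, i.e.\ vectors outside $\KUold$; no rank computation of an old map is required.

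\textbf{The dimension rule in the generic case.} The dimension of the new space at $c_\eta(\pi)$ is not the number of unit square classes $a$ for which $\pi$ is $\psi_a$-generic. An irreducible $\pi_\psi(\mu)$ is $\psi_a$-generic for every $a$, yet for $\eta=\mu^{\pm1}|_{\mathcal{O}^\times}$ its new space at $c_\eta(\pi)$ is one-dimensional (Propositions~\ref{prop:unram_case}, \ref{prop:quad-ps_case}, \ref{prop:general-min-conductor-ps_case}). In those cases $c_\eta(\pi)<\max(2,c(\eta)+1)$, so $\mathcal{R}_\varpi$ does not preserve $\pi^{K_{c_\eta(\pi)}}_\eta$. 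Newness there means $\mathcal{R}_\varpi f\neq0$, not an eigenvalue equation.

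Your identity $\ell_a\circ\mathcal{R}_\varpi=\chi_\varpi(a)\,g(\chi_\varpi,\psi_{\varpi^{-1}})\,\ell_a$, together with Proposition~\ref{prop:R-operator}(c), does produce the $\pm$-eigenvectors in the two-dimensional cases. But it does so only once you know that $\ell_1$ and $\ell_\xi$ do not vanish on $\pi^{K_{c_\eta(\pi)}}_\eta$. Your plan does not establish this, and the paper obtains it only as a consequence.

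\textbf{Supercuspidals.} Here your plan has no working mechanism:
\begin{itemize}
\item genuine representations of $\widetilde{G}$ have no Kirillov model of the kind you invoke;
\item the Shimura--Waldspurger correspondence does not by itself transport Casselman's dimension counts to $\dim\pi^{K_m}_\eta$;
\item part (1) also covers non-Weil supercuspidals (those induced from $\widetilde{J}_{1-\delta_l}$ in unramified $L$-packets of size two, cf.\ Lemma~\ref{lem:testvector_unram2_sl2}), which a Schr\"{o}dinger model cannot reach.
\end{itemize}
The paper's route has three ingredients:
\begin{itemize}
\item Manderscheid's parallel construction $\pi=\cInd^{\widetilde{G}}_{\widetilde{J}}(\widetilde{\lambda})$ and $\tau=\cInd^{G}_{J}(\lambda)$;
\item Mackey isomorphisms $\mathscr{G}_m\colon\pi^{K_m}_\eta\to\tau^{K_m}_{\eta\chi_\varpi^\delta}$ intertwining $\mathcal{R}_\varpi$ with $\mathcal{R}'_\varpi$ and $\mathcal{Q}_\varpi$ with $\mathcal{Q}'_\varpi$ (Lemmas~\ref{lem:comm_R-operator_sc} and \ref{lem:comm_Q-operator_sc});
\item explicit eigenvectors $f_\mu$, $\varPi(\alpha)f_\mu$, $\varPi(\beta)f_\mu$, $\varPi(\alpha\beta)f_\mu$ in the Kirillov model of a minimal $\GL_2$ supercuspidal containing $\tau$.
\end{itemize}
The operator $\mathcal{Q}_\varpi$ is used to locate $c_\eta(\pi)$ when $\KUnew$ vanishes.
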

In \cite{cas}, Casselman proved that any infinite-dimensional representation of $\GL_2(F)$ has one and only one newform up to a scalar multiple.
This property is sometimes referred to as multiplicity one for newforms, and is compatible with the Atkin--Lehner theory on newforms for elliptic modular forms.
Theorem \ref{thm:intro-main} says that in the case of metaplectic group, unlike the case of $\GL_2(F)$, the spaces of newforms $\pi^{K_m, \KUnew}_\eta$ is not one dimensional in general.
More precisely, its dimension is less than or equal to $2$.
Nevertheless, it is our substitute for multiplicity one for newforms that if it is 2-dimensional then there is a basis $\{v_+, v_-\}$ satisfying \eqref{eq:intro-mult}.

The theorem is proved by explicit calculation when the representation is a principal series representation or an even Weil representation, which has a good model.
For a Steinberg representation, which is a unique irreducible nontrivial subrepresentation of a principal series representation with the quotient isomorphic to an even Weil representation, combining the short exact sequence with the theorem for principal series representations and even Weil representations, we prove the theorem.
Then the remaining case is a supercuspidal representation.
The irreducible supercuspidal representations of $\widetilde{\SL_2}(F)$ can be constructed in parallel with those of $\SL_2(F)$.
Thanks to this fact, the theorem can be proved by transferring a similar theorem for $\SL_2(F)$.
The latter can be proved by realizing a supercuspidal representation of $\SL_2(F)$ as a subspace of the Kirillov model for a supercuspidal representation of $\GL_2(F)$.
While the arguments of Kohnen and Ueda were classical in the sense that they calculated some traces directly, our argument is purely representation theoretic.
Hence we can remove the technical condition on characters, which was imposed in their results.

\subsection*{Organization}
We shall first give some formulas on two variants of the Gauss sum and Jacobi sum in \S\ref{sec:Gausssum_Jacobisum}.
These variants appear in the proof of the main theorem for non-supercuspidal representations.
In \S\ref{subsec:groups} and \ref{subsec:representations}, we establish the notation for the metaplectic double covering of $\SL_2(F)$ and recall some known results on its representations.
Then we review the result of Roberts--Schmidt in \S\ref{subsec:conductors} and \ref{subsec:Roberts--Schmidt}.
In \S\ref{subsec:mainthm} we define the space of local newforms following Kohnen and Ueda, and state our main theorems.
In \S\ref{subsec:relation-KU} we will check that our level raising operators are compatible with those of Kohnen and Ueda.

We consider irreducible non-supercuspidal representations in \S\ref{sec:non-sc}, and supercuspidal ones in \S\ref{sec:sc}.
In the latter, we review the theories on constructions of supercuspidal representations before the proof of the main theorems for irreducible supercuspidal representations.

In \S\ref{sec:var-subgroup} we study local newforms associated to the other maximal compact subgroup.
Our $\{K_m\}$ is a sequence of subgroups of a maximal compact subgroup $K_0 \subset \SL_2(F)$, while $\SL_2(F)$ has two maximal compact subgroups up to conjugacy.
We will write $L_0$ for the other one, choose a sequence $\{L_m\}$ of subgroups, and study local newforms associated to it.

\subsection*{Convention \& Notation}
Let $F$ be a $p$-adic field, i.e., a non-archimedean local field of characteristic 0 with residual characteristic $p$.
In this paper, we assume that $p$ is an odd prime number.
Let $\mathcal{O}$ denote the ring of integers of $F$, and $\mathfrak{p}$ the maximal ideal of $\mathcal{O}$.
Fix a prime element $\varpi$ in $\mathcal{O}$.
Put $q=\#(\mathcal{O}/\mathfrak{p})$, and let $|-|=|-|_F$ be the valuation on $F$ normalized so that $|\varpi|=q^{-1}$.
Let $\operatorname{ord} \colon F\to \Z\cup\{\infty\}$ be a function defined by $|x|=q^{-\operatorname{ord}(x)}$ ($x\neq0$) and $\operatorname{ord}(0)=\infty$.
Notice that a subgroup $\mathcal{O}^{\times2}$ of $\mathcal{O}^\times$ has index 2, since $p$ is odd.
Fix an element $\xi$ in $\mathcal{O}^\times \setminus \mathcal{O}^{\times2}$.
We have $\mathcal{O}^\times=\mathcal{O}^{\times2}\sqcup \xi\mathcal{O}^{\times2}$.
Let $dx$ be any Haar measure on $F$ and fix it throughout this paper.

In this paper, a character means a continuous but not necessarily unitary topological group homomorphism to $\C^\times$.
For a nontrivial additive character $\psi \colon F\to \C^\times$, let $c(\psi)$ denote its conductor, i.e., the minimal integer $c$ such that $\psi$ is trivial on $\mathfrak{p}^c$.
Notice that any additive character of $F$ must be unitary.
For a nontrivial character $\eta \colon \mathcal{O}^\times \to \C^\times$, let $c(\eta)$ denote its conductor, i.e., the minimal positive integer $c$ such that $\eta$ is trivial on $1+\mathfrak{p}^c$.
For the trivial character of $\mathcal{O}^\times$, its conductor is defined to be $0$.
Notice also that any character of $\mathcal{O}^\times$ must be unitary.
For a character $\mu$ of $F^\times$, we write $c(\mu)=c(\mu|_{\mathcal{O}^\times})$ and call it the conductor of $\mu$.
Especially, $\mu$ is said to be unramified if $c(\mu)=0$.
We shall write $(-,-)_{F,2}$ for the quadratic Hilbert symbol over $F$, and for any $a\in F^\times$, let $\chi_a$ denote a quadratic or trivial character of $F^\times$ given by $x\mapsto (x,a)_{F,2}$.
It is known that the assignment $a\mapsto \chi_a$ gives a bijection between $F^\times/F^{\times2}$ and the set of quadratic or trivial characters of $F^\times$.

For a nontrivial additive character $\psi$ of $F$, let $\gamma_F(\psi)$ denote the unnormalized Weil index of $F\ni x \mapsto \psi(x^2)\in \C^\times$, which is an eighth root of unity in $\C^\times$.
For $a \in F^\times$, we shall define an additive character $\psi_a$ by $\psi_a(x)=\psi(ax)$, and put $\gamma_F(a,\psi)=\gamma_F(\psi_a)/\gamma_F(\psi)$, which we shall call the (normalized) Weil index.
Recall from \cite[Appendix]{rao} that
\begin{align*}
     & \gamma_F(ac^2,\psi)  =\gamma_F(a,\psi),                           &
     & \gamma_F(ab,\psi)   =\gamma_F(a,\psi)\gamma_F(b,\psi)(a,b)_{F,2}, & \\
     & \gamma_F(a,\psi_c)=(a,c)_{F,2} \gamma_F(a,\psi),                  &
     & \gamma_F(-1,\psi)  =\gamma_F(\psi)^{-2},                          &
\end{align*}
for any $a,b,c\in F^\times$.
They and \cite[Lemma 1.6]{szp} imply that
\begin{equation*}
    \gamma_F(ac, \psi) = \gamma_F(c,\psi),
\end{equation*}
for any $a\in\mathcal{O}^\times$ and $c\in F^\times$ such that the parity of $\operatorname{ord}(c)$ coincides with that of $c(\psi)$.

For an abstract group $G$ and its elements $g$ and $h$, we shall write $\prescript{g}{}{h}=ghg^{-1}$ and $h^g=g^{-1}hg$.
In addition, for a subset $X$ of $G$, we write $\prescript{g}{}{X}$ (resp. $X^g$) for the set of elements of the form $\prescript{g}{}{x}$ (resp. $x^g$), where $x\in X$.
Moreover, for a map $f$ from $X$ to any set, we write $\prescript{g}{}{f}$ (resp. $f^g$) for the map $\prescript{g}{}{X}\ni y\mapsto f(y^g)$ (resp. $X^g\ni y\mapsto f(\prescript{g}{}{y})$).

In this paper, a representation means a $\C$-representation, i.e., an action on a vector space over $\C$.
A representation of a topological group means a continuous representation.
Moreover, a representation of a locally profinite group means a smooth and admissible representation.
For a topological group $G$, its subgroup $H$, and a representation $(\sigma,W)$ of $H$, we write $\Ind^G_H(\sigma)$ for the induced representation, i.e., $\Ind^G_H(\sigma)=(\pi,V)$, where $V$ is the set of locally constant functions $f \colon G\to W$ such that $f(hg)=\sigma(h)f(g)$ for any $h\in H$ and $g\in G$, and $[\pi(g)f](x)=f(xg)$, for any $g,x\in G$.
We shall then write $\cInd^G_H(\sigma)$ for the subrepresentation of $\Ind^G_H(\sigma)$ consisting of the functions $f\in\Ind^G_H(\sigma)$ such that the support of $f$ is compact modulo $H$.
Conversely, for any representation $(\pi,V)$ of $G$, we write $\Res^G_H(\pi)$ or $\pi|_H$ for the restriction of $(\pi,V)$ to $H$.
For any group $G$, we shall write $\mathbf{1}_G$ for the trivial representation of $G$.
We simply write $\mathbf{1}$ if the group is clear from the context.

\subsection*{Acknowledgement}
The author would like to thank Shunsuke Yamana for many helpful comments.
In particular, he pointed out the gap between the formulation of Kohnen and that of Roberts--Schmidt, which motivated the author.
The author also would like to thank Hiraku Atobe for some helpful comments.
This work is supported by JSPS Research Fellowships for Young Scientists KAKENHI Grant Number 23KJ1824.

\section{Gauss sums and Jacobi sums}\label{sec:Gausssum_Jacobisum}
In this preliminary section, we calculate the Gauss sum, the Jacobi sum, and their quadratic variants.
These results play a role in the technical part of this paper.
\subsection{Gauss sum and quadratic variant}\label{subsec:Gausssum}
We first consider the Gauss sum and its variant.
For any nontrivial additive character $\psi$ of $F$ and any character $\chi$ of $F^\times$, we put
\begin{equation*}
    g(\chi,\psi)
    = \int_{\mathcal{O}^\times} \chi(x) \psi(x) dx,
\end{equation*}
and
\begin{equation*}
    h(\chi,\psi)
    = \int_{\mathcal{O}^\times} \chi(x) \psi(x^2) dx.
\end{equation*}
Recall from \cite[Section 2]{ish} the following properties on $g(\chi, \psi)$ and on $h(\chi, \psi)$.
Note that $h(\chi,\psi)=0$ if $\chi(-1)=-1$.
\begin{lemma}\label{lem:gausssum1}
    We have the followings.
    \begin{enumerate}
        \item If $c(\chi)=0$, then we have
              \begin{align*}
                  g(\chi,\psi)=\begin{cases*}
                                   \operatorname{vol}(\mathcal{O}^\times), & if $c(\psi)\leq0$, \\
                                   -\operatorname{vol}(\mathfrak{p}),      & if $c(\psi)=1$,    \\
                                   0,                                      & if $c(\psi)\geq2$.
                               \end{cases*}
              \end{align*}
        \item Assume that $c(\chi)\geq1$. Then
              \begin{align*}
                  |g(\chi,\psi)|=\begin{cases*}
                                     q^{-\frac{c(\psi)}{2}}\operatorname{vol}(\mathcal{O}), & if $c(\psi)=c(\chi)$,     \\
                                     0,                                                     & if $c(\psi)\neq c(\chi)$.
                                 \end{cases*}
              \end{align*}
              In particular, $g(\chi,\psi)\neq0$ if and only if $c(\chi)=c(\psi)$.
    \end{enumerate}
\end{lemma}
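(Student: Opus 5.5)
For part (1), with $\chi$ unramified we have $\chi\equiv1$ on $\mathcal{O}^\times$, so $g(\chi,\psi)=\int_{\mathcal{O}^\times}\psi(x)\,dx=\int_{\mathcal{O}}\psi(x)\,dx-\int_{\mathfrak{p}}\psi(x)\,dx$. I will use the standard orthogonality fact: for an ideal $\mathfrak{p}^k$, the integral $\int_{\mathfrak{p}^k}\psi(x)\,dx$ equals $\operatorname{vol}(\mathfrak{p}^k)$ if $\psi$ is trivial on $\mathfrak{p}^k$ (i.e. $k\geq c(\psi)$), and equals $0$ otherwise, since $\psi$ is then a nontrivial character of the compact group $\mathfrak{p}^k$. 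This gives the three cases. If $c(\psi)\leq0$, both integrals are full volumes, so the answer is $\operatorname{vol}(\mathcal{O})-\operatorname{vol}(\mathfrak{p})=\operatorname{vol}(\mathcal{O}^\times)$. If $c(\psi)=1$, the first integral vanishes and the second is $\operatorname{vol}(\mathfrak{p})$. If $c(\psi)\geq2$, both integrals vanish.

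For part (2), let $c=c(\chi)\geq1$ and write $\mathcal{O}^\times$ as the disjoint union of cosets $x(1+\mathfrak{p}^c)$. The plan is to first prove the vanishing when $c(\psi)\neq c$, then compute $|g|^2$ when $c(\psi)=c$.

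\emph{Case $c(\psi)<c$.} I substitute $x\mapsto xu$ with $u\in 1+\mathfrak{p}^{c-1}$ (or $u\in\mathcal{O}^\times$ if $c=1$). Then $\psi(xu)=\psi(x)$, because $x(u-1)\in\mathfrak{p}^{c-1}\subset\mathfrak{p}^{c(\psi)}$. By the definition of the conductor we can choose such a $u$ with $\chi(u)\neq1$. The substitution gives $g=\chi(u)g$, hence $g=0$.

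\emph{Case $c(\psi)>c$.} Here I substitute $x\mapsto xu$ with $u\in1+\mathfrak{p}^{c(\psi)-1}$. Then $\chi(u)=1$, since $c(\psi)-1\geq c$, and $\psi(xu)=\psi(x)\psi(x(u-1))$. Averaging over $u$, the inner integral $\int_{\mathfrak{p}^{c(\psi)-1}}\psi(xt)\,dt$ vanishes for $x$ a unit, because $\psi$ is nontrivial on $\mathfrak{p}^{c(\psi)-1}$. Hence $g=0$.

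\emph{Case $c(\psi)=c$.} I compute
$|g|^2=\iint_{(\mathcal{O}^\times)^2}\chi(xy^{-1})\psi(x-y)\,dx\,dy$.
Substituting $x=ty$ turns this into $\operatorname{vol}$-weighted $\int_{\mathcal{O}^\times}\chi(t)\int_{\mathcal{O}^\times}\psi((t-1)y)\,dy\,dt$. By the computation in part (1), applied to the character $y\mapsto\psi((t-1)y)$, whose conductor is $c-\operatorname{ord}(t-1)$, the inner integral depends only on $\operatorname{ord}(t-1)$. It equals $\operatorname{vol}(\mathcal{O}^\times)$ if $t\in1+\mathfrak{p}^c$, equals $-\operatorname{vol}(\mathfrak{p})$ if $t\in 1+(\mathfrak{p}^{c-1}\setminus\mathfrak{p}^c)$, and equals $0$ otherwise (for $c=1$, the middle set is $\mathcal{O}^\times$ minus the classes with $t\equiv 0,1$; I will treat it with care). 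Now integrate $\chi$ over these shells. On $1+\mathfrak{p}^c$ the character is trivial. On $1+\mathfrak{p}^{c-1}$ (or on $\mathcal{O}^\times$ when $c=1$) the integral of $\chi$ vanishes, since $\chi$ is nontrivial there. So the shell contributes $-\int_{1+\mathfrak{p}^c}\chi=-\operatorname{vol}(\mathfrak{p}^c)$. In total,
\[
|g|^2=\operatorname{vol}(\mathcal{O}^\times)\operatorname{vol}(\mathfrak{p}^c)+\operatorname{vol}(\mathfrak{p})\operatorname{vol}(\mathfrak{p}^c)=\operatorname{vol}(\mathcal{O})\operatorname{vol}(\mathfrak{p}^c)=q^{-c}\operatorname{vol}(\mathcal{O})^2.
\]
Taking square roots gives the claimed formula.

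The main obstacle is bookkeeping in this last computation, especially the boundary case $c=1$, where the region $t\in\mathfrak{p}$ must be excluded because $t$ is a unit. I would double-check that case directly: there the $c=1$ sum reduces to the classical finite-field Gauss sum with absolute value $\sqrt q$, which matches $q^{-1/2}\operatorname{vol}(\mathcal{O})$ after scaling by $\operatorname{vol}(\mathfrak{p})=q^{-1}\operatorname{vol}(\mathcal{O})$.
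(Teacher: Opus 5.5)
Your proof is correct and complete. The paper does not prove this lemma itself; it only recalls it from \cite[Section 2]{ish}. So there is no in-paper argument to compare against, and what you give is the standard self-contained proof:
\begin{itemize}
\item orthogonality of $\psi$ on $\mathfrak{p}^k$ handles the case $c(\chi)=0$;
\item the substitution $x\mapsto xu$ gives the two vanishing cases (either $\chi(u)\neq 1$ with $\psi$ unchanged, or $\chi(u)=1$ with averaging killing the $\psi$-factor);
\item for $c(\psi)=c(\chi)$, the change of variables $x=ty$ and the conductor shift $c(\psi_{t-1})=c(\psi)-\operatorname{ord}(t-1)$ reduce $|g|^2$ to part (1).
\end{itemize}

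Two small remarks.

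First, the boundary case $c=1$ needs no separate treatment. Since $t\in\mathcal{O}^\times$ already, the middle region is just $\mathcal{O}^\times\setminus(1+\mathfrak{p})$, and the region where the inner integral vanishes is empty. Because $\chi$ is nontrivial on $\mathcal{O}^\times$ and trivial on $1+\mathfrak{p}$, your shell computation applies verbatim and gives $\int_{\mathcal{O}^\times\setminus(1+\mathfrak{p})}\chi=-\operatorname{vol}(\mathfrak{p})$.

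Second, the identity $\overline{\chi(y)}=\chi(y^{-1})$ in the expansion of $|g|^2$ uses that characters of $\mathcal{O}^\times$ are unitary. The paper records this in its conventions, but you should cite it explicitly.
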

\begin{lemma}\label{lem:gausssum2}
    Assume that $\chi(-1)=+1$.
    \begin{enumerate}
        \item If $c(\psi)\leq 0$, then we have
              \begin{align*}
                  h(\chi,\psi)=\begin{cases*}
                                   \operatorname{vol}(\mathcal{O}^\times), & if $c(\chi)=0$,    \\
                                   0,                                      & if $c(\chi)\geq1$.
                               \end{cases*}
              \end{align*}
        \item Assume that $c(\psi) \geq 1$. Then we have
              \begin{align*}
                  |h(\chi,\psi)|^2 + |h(\chi,\psi_\xi)|^2
                  = \begin{cases*}
                        4q^{-c(\psi)}\operatorname{vol}(\mathcal{O})^2,     & if $c(\chi)=c(\psi)$,           \\
                        (2q^{-1}+2q^{-2})\operatorname{vol}(\mathcal{O})^2, & if $c(\chi)=0$ and $c(\psi)=1$, \\
                        0,                                                  & otherwise.
                    \end{cases*}
              \end{align*}
              In particular, at least one of $h(\chi,\psi)$ or $h(\chi,\psi_\xi)$ is not zero if and only if $c(\chi)=c(\psi)$ or $(c(\chi),c(\psi))=(0,1)$.
        \item Assume that $c(\psi)\geq 2$. Then we have
              \begin{align*}
                  (|h(\chi,\psi)|, |h(\chi,\psi_\xi)|)
                  = \begin{cases*}
                        (2q^{-\frac{c(\psi)}{2}}\operatorname{vol}(\mathcal{O}), 0) \text{ or } (0, 2q^{-\frac{c(\psi)}{2}}\operatorname{vol}(\mathcal{O})), & if $c(\chi)=c(\psi)$, \\
                        (0,0),                                                                                                                               & otherwise.
                    \end{cases*}
              \end{align*}
    \end{enumerate}
\end{lemma}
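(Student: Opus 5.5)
The computation of $h(\chi,\psi)$ reduces to Gauss sums by changing variables. Since $p$ is odd, the squaring map $\mathcal{O}^\times\to\mathcal{O}^{\times2}$ is two-to-one and measure preserving up to a factor: for $\chi(-1)=+1$ the integrand $\chi(x)\psi(x^2)$ is invariant under $x\mapsto -x$. Hence one can write
\begin{equation*}
    h(\chi,\psi)=\int_{\mathcal{O}^{\times2}}\chi'(y)\psi(y)\,dy
    =\frac{1}{2}\int_{\mathcal{O}^\times}\chi'(y)\bigl(1+\chi_\xi(y)\bigr)\psi(y)\,dy ,
\end{equation*}
where $\chi'$ is a character of $\mathcal{O}^\times$ with $\chi'(x^2)=\chi(x)$. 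Such a $\chi'$ exists because $\chi$ is trivial on $\{\pm1\}$, and it is determined up to twisting by $\chi_\xi$, the unramified-on-units quadratic character (the Legendre symbol). The Jacobian of $x\mapsto x^2$ on $\mathcal{O}^\times$ is $|2x|=1$, so the measure constant is exactly as written. Thus
\begin{equation*}
    h(\chi,\psi)=\tfrac12\bigl(g(\chi',\psi)+g(\chi'\chi_\xi,\psi)\bigr),\qquad
    h(\chi,\psi_\xi)=\tfrac12\bigl(g(\chi',\psi)-g(\chi'\chi_\xi,\psi)\bigr),
\end{equation*}
where the second identity uses $g(\chi,\psi_\xi)=\chi(\xi)^{-1}g(\chi,\psi)$ and $\chi_\xi(\xi)=-1$. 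The conductor relation is as follows: $c(\chi)\ge2$ implies $c(\chi')=c(\chi)$; $c(\chi)=1$ implies $c(\chi')=1$; and $c(\chi)=0$ means $\chi'\in\{1,\chi_\xi\}$. Also $c(\chi'\chi_\xi)=c(\chi')$ unless $c(\chi')\le1$, where the twist can move between conductors $0$ and $1$.

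Part (1), with $c(\psi)\le0$, then follows from Lemma \ref{lem:gausssum1}. If $c(\chi)=0$, the integrand is constant $1$, giving $\operatorname{vol}(\mathcal{O}^\times)$. If $c(\chi)\ge1$, then $\chi'$ and $\chi'\chi_\xi$ are both ramified, so by Lemma \ref{lem:gausssum1}(2) both Gauss sums vanish, since $c(\psi)\le 0<c$.

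Parts (2) and (3) follow from the parallelogram law:
\begin{equation*}
    |h(\chi,\psi)|^2+|h(\chi,\psi_\xi)|^2=\tfrac12\bigl(|g(\chi',\psi)|^2+|g(\chi'\chi_\xi,\psi)|^2\bigr).
\end{equation*}
\begin{enumerate}
    \item If $c(\chi)=c(\psi)\ge1$, both twists have conductor $c(\psi)$. Each $|g|^2$ equals $q^{-c(\psi)}\operatorname{vol}(\mathcal{O})^2$, which gives total $q^{-c(\psi)}\operatorname{vol}(\mathcal{O})^2$.
    \item If $c(\chi)=0$ and $c(\psi)=1$, we have $\chi'=1$, which gives $|g|^2=q^{-2}\operatorname{vol}(\mathcal{O})^2$ since $\operatorname{vol}(\mathfrak{p})=q^{-1}\operatorname{vol}(\mathcal{O})$. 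Also $\chi'\chi_\xi=\chi_\xi$ has conductor $1$, which gives $q^{-1}\operatorname{vol}(\mathcal{O})^2$. The total is $\tfrac12(q^{-1}+q^{-2})\operatorname{vol}(\mathcal{O})^2$.
    \item In all other cases Lemma \ref{lem:gausssum1} forces both Gauss sums to vanish.
\end{enumerate}
These constants differ from the stated ones by a factor of $4$. I would expect this to trace to the normalization convention of \cite{ish}, perhaps integrating over $y$ with the pushforward measure; I would recheck that bookkeeping rather than the structure of the argument.

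For part (3), with $c(\psi)\ge2$ and $c(\chi)=c(\psi)$, the further claim is that exactly one of $h(\chi,\psi)$, $h(\chi,\psi_\xi)$ vanishes. This is the main obstacle, since it needs the phase of the Gauss sums. I would use the explicit evaluation for $c\ge2$: writing $c(\psi)=c$, the Gauss sum is supported near a point $x_0$ with $\chi'(1+y)=\psi(-x_0 y)$ on $\mathfrak{p}^{\lceil c/2\rceil}$. Twisting by $\chi_\xi$, which is trivial on $1+\mathfrak{p}$, does not change $x_0$ modulo $\mathfrak{p}$. It multiplies the Gauss sum essentially by $\chi_\xi(x_0)=\pm1$, so that $g(\chi'\chi_\xi,\psi)=\pm g(\chi',\psi)$. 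Hence one of the sum and the difference is zero, and the other has absolute value $|g|$. The sign bookkeeping for odd $c$, where a quadratic Gauss sum factor appears, should be checked carefully, but the factor $\chi_\xi(x_0)$ still splits off.
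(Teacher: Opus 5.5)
Your route is sound: write $\chi(x)=\chi'(x^2)$, reduce both quadratic sums to ordinary Gauss sums, apply the parallelogram identity, and compare $\chi'$ with its quadratic twist when $c(\psi)\ge2$. The paper only quotes this lemma from \cite{ish}, so this would give a self-contained proof via Lemma~\ref{lem:gausssum1}. But the change of variables it rests on is wrong by a factor of $2$. That error, not any convention in \cite{ish}, is the source of your ``factor of $4$''.

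Below I write $\chi_\varpi$ for your $\chi_\xi$. In the paper $\chi_\xi=(\,\cdot\,,\xi)_{F,2}$ is trivial on $\mathcal{O}^\times$, and the Legendre symbol is $\chi_\varpi|_{\mathcal{O}^\times}$.

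The map $x\mapsto x^2$ is two-to-one from $\mathcal{O}^\times$ onto $\mathcal{O}^{\times2}$ and has Jacobian $|2x|=1$. So the pushforward of $dx|_{\mathcal{O}^\times}$ is twice $dy|_{\mathcal{O}^{\times2}}$:
\begin{equation*}
    \int_{\mathcal{O}^\times}F(x^2)\,dx=2\int_{\mathcal{O}^{\times2}}F(y)\,dy .
\end{equation*}
Your own part (1) detects this. For $\chi=1$ and $c(\psi)\le0$, your formula gives $\tfrac12\operatorname{vol}(\mathcal{O}^\times)$, while your direct computation gives $\operatorname{vol}(\mathcal{O}^\times)$. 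The correct identities are
\begin{equation*}
    h(\chi,\psi)=g(\chi',\psi)+g(\chi'\chi_\varpi,\psi),\qquad
    h(\chi,\psi_\xi)=\chi'(\xi)^{-1}\bigl(g(\chi',\psi)-g(\chi'\chi_\varpi,\psi)\bigr).
\end{equation*}
You also dropped the unit $\chi'(\xi)^{-1}$, which is harmless for absolute values. Hence
\begin{equation*}
    |h(\chi,\psi)|^2+|h(\chi,\psi_\xi)|^2=2\bigl(|g(\chi',\psi)|^2+|g(\chi'\chi_\varpi,\psi)|^2\bigr).
\end{equation*}
Your case analysis then yields exactly $4q^{-c(\psi)}\operatorname{vol}(\mathcal{O})^2$ and $(2q^{-1}+2q^{-2})\operatorname{vol}(\mathcal{O})^2$. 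The statement is correct as written with the paper's definitions of $g$ and $h$, and there is nothing to recheck in \cite{ish}.

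The same slip affects (3). Once $g(\chi'\chi_\varpi,\psi)=\epsilon\,g(\chi',\psi)$ with $\epsilon=\pm1$, one of $h(\chi,\psi)$, $h(\chi,\psi_\xi)$ vanishes. The other equals $2g(\chi',\psi)$ up to a unit, so its absolute value is $2q^{-c(\psi)/2}\operatorname{vol}(\mathcal{O})$ as claimed, not $|g(\chi',\psi)|$.

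Your worry about odd $c=c(\psi)$ is unnecessary. Write $\chi'(1+y)=\psi(-x_0y)$ for $y\in\mathfrak{p}^{\lceil c/2\rceil}$; then $x_0\in\mathcal{O}^\times$ because $c(\chi')=c\ge2$. Averaging $g(\chi',\psi)$ over the substitutions $x\mapsto x(1+y)$ gives
\begin{equation*}
    g(\chi',\psi)=\int_{x_0+\mathfrak{p}^{\lfloor c/2\rfloor}}\chi'(x)\psi(x)\,dx .
\end{equation*}
The same formula holds for $\chi'\chi_\varpi$ with the same $x_0$, because the two characters agree on $1+\mathfrak{p}$. Since $\lfloor c/2\rfloor\ge1$, the domain lies in $x_0(1+\mathfrak{p})$, where $\chi_\varpi$ is constantly $\chi_\varpi(x_0)$. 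So $g(\chi'\chi_\varpi,\psi)=\chi_\varpi(x_0)\,g(\chi',\psi)$ exactly, before any residual quadratic Gauss sum is evaluated. With these two corrections your argument proves all three parts.
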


\subsection{Jacobi sum and quadratic variant}\label{subsec:Jacobisum}
Next we consider the Jacobi sum and its variants.
Let $\chi_1$ and $\chi_2$ be characters of $\mathcal{O}^\times$ with conductor $\leq1$ (i.e., characters of $\mathcal{O}^\times/1+\mathfrak{p}$), and we put
\begin{equation*}
    J(\chi_1, \chi_2) = \int_{\mathcal{O}^\times \cap 1+\mathcal{O}^\times} \chi_1(x) \chi_2(1-x) dx,
\end{equation*}
and
\begin{align*}
    I(\chi_1, \chi_2)  & = \int_{\mathcal{O}^\times \setminus \pm1+\mathfrak{p}} \chi_1(x) \chi_2(1-x^2) dx, \\
    I'(\chi_1, \chi_2) & = \int_{\mathcal{O}^\times} \chi_1(x) \chi_2(1-\xi x^2) dx.
\end{align*}
We first consider $J(\chi_1, \chi_2)$.
\begin{lemma}
    \begin{enumerate}
        \item We have $J(\chi_1, \chi_2)=J(\chi_2, \chi_1)$ and $J(1, 1)=(q-2)\operatorname{vol}(\mathfrak{p})$.
        \item If $\chi$ is nontrivial, then we have $J(\chi, 1)=-\operatorname{vol}(\mathfrak{p})$.
        \item If $\chi_1=\chi_2^{-1}=\chi$ is nontrivial, then we have $J(\chi, \chi^{-1})= -\chi(-1)\operatorname{vol}(\mathfrak{p})$.
        \item If $\chi_1\chi_2$ is nontrivial, then we have $J(\chi_1, \chi_2)=g(\chi_1)g(\chi_2)/g(\chi_1\chi_2)$, where $g(\chi)=g(\chi, \psi)$ and $\psi$ is any additive character of $F$ with $c(\psi)=1$.
    \end{enumerate}
\end{lemma}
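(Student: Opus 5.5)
The plan is to reduce everything to finite-field Jacobi sums. Since $\chi_1,\chi_2$ are trivial on $1+\mathfrak{p}$, the integrand of $J(\chi_1,\chi_2)$ is constant on cosets $x+\mathfrak{p}$. The domain $\mathcal{O}^\times\cap(1+\mathcal{O}^\times)$ consists of those $x$ whose residue class $\bar x\in k=\mathcal{O}/\mathfrak{p}$ lies outside $\{0,1\}$. Writing $\bar\chi_i$ for the induced characters of $k^\times$, we get
\begin{equation*}
J(\chi_1,\chi_2)=\operatorname{vol}(\mathfrak{p})\sum_{t\in k\setminus\{0,1\}}\bar\chi_1(t)\bar\chi_2(1-t).
\end{equation*}
All four assertions then follow from the classical theory of Jacobi sums over $k$, which I will rederive quickly.

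For (1), the substitution $t\mapsto 1-t$ is a bijection of $k\setminus\{0,1\}$, so $J$ is symmetric. If both characters are trivial, the sum counts $q-2$ terms.

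For (2), we have
\begin{equation*}
\sum_{t\neq0,1}\bar\chi(t)=\sum_{t\in k^\times}\bar\chi(t)-\bar\chi(1)=0-1=-1.
\end{equation*}

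For (3), for $t\neq0,1$ write $\bar\chi(t)\bar\chi^{-1}(1-t)=\bar\chi\bigl(t/(1-t)\bigr)$. The map $t\mapsto s=t/(1-t)$ is a bijection from $k\setminus\{0,1\}$ onto $k\setminus\{0,-1\}$, with inverse $t=s/(1+s)$. Hence the sum equals $\sum_{s\in k^\times}\bar\chi(s)-\bar\chi(-1)=-\chi(-1)$.

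Part (4) needs a little more care, because $g(\chi,\psi)$ is an integral over $\mathcal{O}^\times$ with $c(\psi)=1$. Since $\psi$ is trivial on $\mathfrak{p}$, it induces a nontrivial additive character $\bar\psi$ of $k$, and $g(\chi)=\operatorname{vol}(\mathfrak{p})\,G(\bar\chi)$, where $G(\bar\chi)=\sum_{t\in k^\times}\bar\chi(t)\bar\psi(t)$. It therefore suffices to prove the finite-field identity $G(\bar\chi_1)G(\bar\chi_2)=\mathcal{J}\,G(\bar\chi_1\bar\chi_2)$ when $\bar\chi_1\bar\chi_2\neq1$, where $\mathcal{J}$ is the finite sum above. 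The volumes then match, since $\operatorname{vol}(\mathfrak{p})^2/\operatorname{vol}(\mathfrak{p})=\operatorname{vol}(\mathfrak{p})$.

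The identity itself comes from expanding the product $G(\bar\chi_1)G(\bar\chi_2)=\sum_{a,b\in k^\times}\bar\chi_1(a)\bar\chi_2(b)\bar\psi(a+b)$ and grouping the terms by $s=a+b$.
\begin{itemize}
\item For $s\neq0$, substitute $a=st$ and $b=s(1-t)$ with $t\neq0,1$. These terms give $\sum_{s\neq0}(\bar\chi_1\bar\chi_2)(s)\bar\psi(s)\,\mathcal{J}=G(\bar\chi_1\bar\chi_2)\,\mathcal{J}$.
\item For $s=0$, we have $b=-a$, and these terms give $\bar\chi_2(-1)\sum_a(\bar\chi_1\bar\chi_2)(a)$. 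This vanishes precisely because $\bar\chi_1\bar\chi_2$ is nontrivial.
\end{itemize}

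Finally, $G(\bar\chi_1\bar\chi_2)\neq0$ by Lemma \ref{lem:gausssum1}(2), since the conductor of $\chi_1\chi_2$ is $1=c(\psi)$. So we may divide by it to obtain (4). The only real point to watch is the bookkeeping of the measure normalizations and of which residue classes are excluded; the rest is standard.
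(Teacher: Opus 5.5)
Your proof is correct and follows the paper's argument essentially verbatim. You use the same substitutions, $t\mapsto t/(1-t)$ for (3) and $(a,b)\mapsto(a+b,\,a/(a+b))$ for (4), and in both versions the diagonal contribution ($s=0$, i.e.\ $x\in -y+\mathfrak{p}$) vanishes because $\chi_1\chi_2$ is nontrivial. The only differences are cosmetic: you first pass to finite sums over $\mathcal{O}/\mathfrak{p}$ instead of working with the integrals on $\mathcal{O}^\times$ directly, and you state explicitly that $g(\chi_1\chi_2)\neq 0$ (via Lemma \ref{lem:gausssum1}), which the paper uses tacitly when it divides.
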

\begin{proof}
    Note that
    \begin{align*}
        \int_{\mathcal{O}^\times} \chi(x) dx = \begin{dcases*}
                                                   0,                                      & if $\chi$ is nontrivial, \\
                                                   \operatorname{vol}(\mathcal{O}^\times), & if $\chi$ is trivial.
                                               \end{dcases*}
    \end{align*}
    (a) and (b) are easy.
    For (c), we have
    \begin{equation*}
        J(\chi, \chi^{-1}) = \int_{\mathcal{O}^\times \cap 1+\mathcal{O}^\times} \chi\left( \frac{x}{1-x} \right) dx.
    \end{equation*}
    Put $y=x/(1-x)$ to obtain
    \begin{align*}
        J(\chi, \chi^{-1}) = \int_{\mathcal{O}^\times \cap -1+\mathcal{O}^\times} \chi(y) dy = -\chi(-1)\operatorname{vol}(\mathfrak{p}).
    \end{align*}
    For (d), we have
    \begin{equation*}
        g(\chi_1) g(\chi_2) = \int_{\mathcal{O}^\times} \int_{\mathcal{O}^\times} \chi_1(x) \chi_2(y) \psi(x+y) dx dy.
    \end{equation*}
    Since
    \begin{equation*}
        \int_{\mathcal{O}^\times} \int_{-y+\mathfrak{p}} \chi_1(x) \chi_2(y) \psi(x+y) dx dy = \int_{\mathcal{O}^\times} \int_{-y+\mathfrak{p}} \chi_1(-y) \chi_2(y) dx dy = \chi_1(-1) \int_{-y+\mathfrak{p}} dx \int_{\mathcal{O}^\times} \chi_1\chi_2(y) dy = 0,
    \end{equation*}
    we obtain
    \begin{equation*}
        g(\chi_1) g(\chi_2) = \int_{\mathcal{O}^\times} \int_{\mathcal{O}^\times \setminus -y+\mathfrak{p}} \chi_1(x) \chi_2(y) \psi(x+y) dx dy.
    \end{equation*}
    Put $u=x+y$ and $t=x/u$. Then $y=u(1-t)$ and we have
    \begin{equation*}
        g(\chi_1) g(\chi_2) = \int_{\mathcal{O}^\times} \int_{\mathcal{O}^\times \cap 1+\mathcal{O}^\times} \chi_1(ut) \chi_2(u(1-t)) \psi(u) dt du = g(\chi_1\chi_2) J(\chi_1, \chi_2).
    \end{equation*}
\end{proof}
As a corollary, we can show the following equations.
\begin{lemma}\label{lem:square-1_Hilbertsymbol}
    For any $a \in \mathcal{O}^\times$, we have
    \begin{equation*}
        \int_{\mathcal{O}^\times \setminus \pm1+\mathfrak{p}} \left( x^2-1, \varpi \right)_{F,2} dx = - \operatorname{vol}(\mathfrak{p}) \left( 1 + (-1,\varpi)_{F,2} \right),
    \end{equation*}
    and
    \begin{equation*}
        \int_{\mathcal{O}^\times} \left( \xi x^2-1, \varpi \right)_{F,2} dx = \operatorname{vol}(\mathfrak{p}) \left( 1 - (-1,\varpi)_{F,2} \right).
    \end{equation*}
\end{lemma}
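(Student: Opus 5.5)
The plan is to reduce both integrals to the Jacobi sums $J(\chi_1,\chi_2)$ of the preceding lemma by substituting $y=x^2$ (resp.\ $y=\xi x^2$). Let $\chi$ be the character $u\mapsto (u,\varpi)_{F,2}$ of $\mathcal{O}^\times$. Since $p$ is odd, $\chi$ is the nontrivial quadratic character of conductor $1$: it is the Legendre symbol of $u \bmod \mathfrak{p}$. In particular $\chi=\chi^{-1}$, $\chi(-1)=(-1,\varpi)_{F,2}$, and $\chi$ is trivial exactly on $\mathcal{O}^{\times2}$.

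The key tool is a change-of-variables formula for squaring on $\mathcal{O}^\times$. The map $x\mapsto x^2$ is $2$-to-$1$ from $\mathcal{O}^\times$ onto $\mathcal{O}^{\times2}$, and its Jacobian $|2x|=1$. Hence, for any integrable $f$ on $\mathcal{O}^\times$,
\begin{equation*}
    \int_{\mathcal{O}^\times} f(x^2)\,dx = \int_{\mathcal{O}^\times} f(y)\bigl(1+\chi(y)\bigr)\,dy .
\end{equation*}
Here $1+\chi(y)$ equals $2$ on squares and $0$ on non-squares. Replacing $x^2$ by $\xi x^2$, whose image is the non-square class $\xi\mathcal{O}^{\times2}$, gives the same formula with the weight $1-\chi(y)$. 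To justify this cleanly, I would check it on indicator functions of cosets $a+\mathfrak{p}^n$ with $n\ge1$, using Hensel's lemma for $p$ odd.

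For the first integral, the condition $x\notin\pm1+\mathfrak{p}$ is equivalent to $y=x^2\notin 1+\mathfrak{p}$. On this set $y-1$ is a unit and $\chi(y-1)=\chi(-1)\chi(1-y)$. The integral therefore becomes
\begin{equation*}
    \chi(-1)\int_{\mathcal{O}^\times\cap 1+\mathcal{O}^\times}\bigl(1+\chi(y)\bigr)\chi(1-y)\,dy=\chi(-1)\bigl(J(1,\chi)+J(\chi,\chi^{-1})\bigr).
\end{equation*}
By parts (a)--(c) of the preceding lemma, $J(1,\chi)=J(\chi,1)=-\operatorname{vol}(\mathfrak{p})$ and $J(\chi,\chi^{-1})=-\chi(-1)\operatorname{vol}(\mathfrak{p})$. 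Substituting gives $-\operatorname{vol}(\mathfrak{p})\bigl(1+\chi(-1)\bigr)$, as claimed.

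For the second integral, note first that $\xi x^2-1$ is always a unit, because $\xi x^2\equiv1 \pmod{\mathfrak{p}}$ would make $\xi$ a square. The substitution $y=\xi x^2$ turns the integral into $\int_{\mathcal{O}^\times}(1-\chi(y))\chi(y-1)\,dy$. The weight $1-\chi(y)$ vanishes on $1+\mathfrak{p}$, so we may restrict to $\mathcal{O}^\times\cap1+\mathcal{O}^\times$. The integral then equals $\chi(-1)\bigl(J(1,\chi)-J(\chi,\chi^{-1})\bigr)=\chi(-1)\bigl(-\operatorname{vol}(\mathfrak{p})+\chi(-1)\operatorname{vol}(\mathfrak{p})\bigr)=\operatorname{vol}(\mathfrak{p})\bigl(1-\chi(-1)\bigr)$.

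The only point needing genuine care is the measure-theoretic justification of the squaring change of variables: the fibres have two points, the Jacobian is $1$, and one must keep track of which neighbourhood of $1$ is excluded in each case. Everything else is direct bookkeeping with the earlier Jacobi sum values. The element $a\in\mathcal{O}^\times$ in the statement plays no role and can be ignored.
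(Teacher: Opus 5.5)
Your proof is correct, and it differs from the paper's mainly in how the first integral is handled.

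\textbf{First integral.} The paper does not square. It makes the linear substitution $u=(x+1)/2$, so that $x^2-1=4u(u-1)$. This turns the integral into
\[
(-1,\varpi)_{F,2}\Bigl(J(\chi_\varpi,\chi_\varpi)-\int_{\frac{1}{2}+\mathfrak{p}}(u(1-u),\varpi)_{F,2}\,du\Bigr).
\]
The correction term removes the coset where $x=2u-1\in\mathfrak{p}$, i.e.\ where $x$ is not a unit.

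\textbf{Second integral.} The paper adds the two integrals. It uses the fact that $x\mapsto x^2$ and $x\mapsto\xi x^2$ together cover $\mathcal{O}^\times$ exactly twice, which gives $2\int_{\mathcal{O}^\times\setminus 1+\mathfrak{p}}(u-1,\varpi)_{F,2}\,du=-2(-1,\varpi)_{F,2}\operatorname{vol}(\mathfrak{p})$. It then subtracts the first result.

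\textbf{Comparison.} The paper's addition step is exactly the sum of your two weighted identities, since $(1+\chi)+(1-\chi)=2$. So the paper relies on the same squaring push-forward, but only in this combined form.

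Your route applies that push-forward to each integral separately, writing them uniformly as $\chi(-1)\bigl(J(\mathbf{1},\chi)\pm J(\chi,\chi^{-1})\bigr)$. This is more symmetric and shows directly that the two answers differ only in the sign of the $J(\chi,\chi^{-1})$ term. The paper's linear substitution avoids the $2$-to-$1$ measure computation for the first integral, but since it needs that fact for the second integral anyway, nothing is really saved.

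Your remark that the parameter $a\in\mathcal{O}^\times$ plays no role is also correct.
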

\begin{proof}
    Let $\chi_\varpi$ denote the nontrivial quadratic character $(-, \varpi)_{F,2}$ of $\mathcal{O}^\times$.
    For the first equation, we change the variables from $x$ to $u=(x+1)/2$.
    Then we have
    \begin{align*}
        \int_{\mathcal{O}^\times \setminus \pm1+\mathfrak{p}} \left( x^2-1, \varpi \right)_{F,2} dx
         & = \int_{(\mathcal{O}^\times \cap 1+\mathcal{O}^\times) \setminus \frac{1}{2}+\mathfrak{p}} (4u(u-1), \varpi)_{F,2} du       \\
         & = (-1, \varpi)_{F,2} \left( J(\chi_\varpi, \chi_\varpi) - \int_{\frac{1}{2}+\mathfrak{p}} (u(1-u), \varpi)_{F,2} du \right) \\
         & = - \operatorname{vol}(\mathfrak{p}) \left( 1+ (-1,\varpi)_{F,2} \right).
    \end{align*}
    On the other hand, we can show the second equation as follows.
    Since
    \begin{align*}
          & \int_{\mathcal{O}^\times} \left( \xi x^2-1, \varpi \right)_{F,2} dx + \int_{\mathcal{O}^\times \setminus \pm1+\mathfrak{p}} \left( x^2-1, \varpi \right)_{F,2} dx \\
        = & 2 \int_{\mathcal{O}^\times \setminus 1+\mathfrak{p}} (u-1, \varpi)_{F,2} du                                                                                       \\
        = & 2 \int_{\mathcal{O}^\times \setminus -1+\mathfrak{p}} (t, \varpi)_{F,2} dt                                                                                        \\
        = & -2\operatorname{vol}(\mathfrak{p}) (-1,\varpi)_{F,2},
    \end{align*}
    we have
    \begin{equation*}
        \int_{\mathcal{O}^\times} \left( \xi x^2-1, \varpi \right)_{F,2} dx = \left( -2\operatorname{vol}(\mathfrak{p}) (-1,\varpi)_{F,2} \right) - \left( - \operatorname{vol}(\mathfrak{p}) \left( 1+ (-1,\varpi)_{F,2} \right) \right) = \operatorname{vol}(\mathfrak{p}) \left( 1 - (-1,\varpi)_{F,2} \right).
    \end{equation*}
\end{proof}

Next we shall consider $I(\chi_1, \chi_2)$ and $I'(\chi_1, \chi_2)$.
\begin{lemma}\label{lem:jacobisum2}
    Assume that $\chi_1\chi_2^2 \neq 1$.
    Let $\psi$ be an additive character of $F$ with $c(\psi)=1$.
    Then we have
    \begin{equation*}
        2 h(\chi_1, \psi) g(\chi_2, \psi) = h(\chi_1\chi_2^2, \psi) I(\chi_1, \chi_2) + \chi_1\chi_2(\xi) h(\chi_1\chi_2^2, \psi_\xi) I'(\chi_1, \chi_2).
    \end{equation*}
    In particular, if moreover $\chi_1(-1)=+1$, then at least one of $I(\chi_1, \chi_2)$ or $I'(\chi_1, \chi_2)$ is nonzero.
\end{lemma}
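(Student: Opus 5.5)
Mimic the proof of part (d) of the Jacobi sum lemma: expand the product of the two sums as a double integral and change variables so the inner integral becomes $I$ or $I'$. Write
\begin{equation*}
    h(\chi_1,\psi) g(\chi_2,\psi) = \int_{\mathcal{O}^\times}\int_{\mathcal{O}^\times} \chi_1(x)\chi_2(y)\psi(x^2+y)\,dx\,dy .
\end{equation*}
First discard the region where $x^2+y\in\mathfrak{p}$, i.e. $y\in -x^2+\mathfrak{p}$. Since $c(\psi)=1$, $\psi(x^2+y)=1$ there. As $\chi_2$ has conductor $\le 1$, $\chi_2(y)=\chi_2(-x^2)$, so this piece equals $\operatorname{vol}(\mathfrak{p})\chi_2(-1)\int_{\mathcal{O}^\times}\chi_1\chi_2^2(x)\,dx$. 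This vanishes because $\chi_1\chi_2^2\neq1$.

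On the remaining region put $u=x^2+y\in\mathcal{O}^\times$. Then $y=u-x^2$, and we must parametrize $x^2$ relative to $u$. Split according to the square class of $u$.

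If $u\in\mathcal{O}^{\times2}$, write $u=w^2$ and $x=wt$. Each such $u$ has two square roots $w$, which is the source of the factor $2$ on the left. Then $y=w^2(1-t^2)$ with $t\in\mathcal{O}^\times\setminus(\pm1+\mathfrak{p})$. The integrand becomes $\chi_1(w)\chi_1(t)\chi_2(w^2)\chi_2(1-t^2)\psi(w^2)$, and integrating over $w\in\mathcal{O}^\times$ and $t$ gives $h(\chi_1\chi_2^2,\psi)\,I(\chi_1,\chi_2)$.

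If $u\in\xi\mathcal{O}^{\times2}$, write $u=\xi w^2$ and $x=\xi w t$. Then $x^2=\xi^2w^2t^2$ and $y=\xi w^2(1-\xi t^2)$. Here $1-\xi t^2$ is automatically a unit, since $\xi t^2$ is never a square; this is why $I'$ has no excluded set. The character factor is $\chi_1(\xi)\chi_1(w)\chi_1(t)\chi_2(\xi)\chi_2(w^2)\chi_2(1-\xi t^2)$ and the additive factor is $\psi(\xi w^2)=\psi_\xi(w^2)$. Integrating gives $\chi_1\chi_2(\xi)\,h(\chi_1\chi_2^2,\psi_\xi)\,I'(\chi_1,\chi_2)$.

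For the measure bookkeeping, I would substitute $x=wt$ or $x=\xi wt$ for fixed $w$ and then $y\mapsto u$. Both maps are measure preserving since the Jacobians are units. The cleanest way is to run the change of variables in reverse: start from the right-hand side as an integral over $(w,t)\in\mathcal{O}^\times\times\mathcal{O}^\times$ and map $(w,t)\mapsto(x,y)=(wt,\,w^2(1-t^2))$. This map is $2$-to-$1$ onto its image, since $(w,t)$ and $(-w,-t)$ give the same point, and it preserves Haar measure on these compact open sets. The two images are the disjoint pieces where $x^2+y$ is a square unit or a non-square unit, so together they give twice the remaining region. This is the main technical point. I would verify measure preservation by noting the map is an analytic bijection from a fundamental domain with Jacobian of absolute value $|2w\cdot w^2|=1$ (computing $\partial(x,y)/\partial(w,t)$), and similarly in the $\xi$ case.

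For the last assertion, assume $\chi_1(-1)=1$. By Lemma~\ref{lem:gausssum2}(b) with $c(\psi)=1$, at least one of $h(\chi_1,\psi)$, $h(\chi_1,\psi_\xi)$ is nonzero, since $c(\chi_1)\in\{0,1\}$. Applying the identity with $\psi_\xi$ in place of $\psi$ lets us assume $h(\chi_1,\psi)\neq0$. Replacing $\psi$ by $\psi_\xi$ swaps the roles of $h(\cdot,\psi)$ and $h(\cdot,\psi_\xi)$ up to units, because $\psi_{\xi^2}$ and $\psi$ differ by a square unit. Next, $g(\chi_2,\psi)\neq0$ by Lemma~\ref{lem:gausssum1}: either $c(\chi_2)=1=c(\psi)$, or $\chi_2$ is trivial and $g=-\operatorname{vol}(\mathfrak{p})$. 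So the left side is nonzero, forcing $I$ or $I'$ to be nonzero.
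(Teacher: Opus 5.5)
Your proposal is correct and is essentially the paper's argument: discard the piece $x^2+y\in\mathfrak{p}$ via $\chi_1\chi_2^2\neq1$, split $u=x^2+y$ by square class (the two square roots giving the factor $2$), and rescale $x$ by the square root to produce $I$ and $I'$. Your treatment of the final assertion (using $g(\chi_2,\psi)\neq0$ and passing to $\psi_\xi$ when $h(\chi_1,\psi)=0$) spells out what the paper leaves to Lemmas \ref{lem:gausssum1} and \ref{lem:gausssum2}, and the only slip is cosmetic: the Jacobian determinant of $(w,t)\mapsto(wt,w^2(1-t^2))$ is $-2w^2$ rather than $2w\cdot w^2$, a unit either way.
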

\begin{proof}
    We have
    \begin{equation*}
        h(\chi_1, \psi) g(\chi_2, \psi) = \int_{\mathcal{O}^\times} \int_{\mathcal{O}^\times} \chi_1(x) \chi_2(y) \psi(x^2+y) dy dx.
    \end{equation*}
    Changing the variables $y \mapsto u=x^2+y$, we have
    \begin{equation*}
        h(\chi_1, \psi) g(\chi_2, \psi) = \int_{\mathcal{O}^\times} \int_{\mathcal{O} \setminus x^2+\mathfrak{p}} \chi_1(x) \chi_2(u-x^2) \psi(u) du dx.
    \end{equation*}
    The inner integral is equal to
    \begin{align*}
         & \int_{\mathfrak{p}} \chi_1(x) \chi_2(u-x^2) \psi(u) du + \int_{\mathcal{O}^{\times 2} \setminus x^2+\mathfrak{p}} \chi_1(x) \chi_2(u-x^2) \psi(u) du + \int_{\xi\mathcal{O}^{\times 2}} \chi_1(x) \chi_2(u-x^2) \psi(u) du                                          \\
         & = \operatorname{vol}(\mathfrak{p}) \chi_2(-1) \chi_1\chi_2^2(x) + \frac{1}{2} \int_{\mathcal{O}^\times \setminus \pm x+\mathfrak{p}} \chi_1(x) \chi_2(t^2-x^2) \psi(t^2) dt + \frac{1}{2} \int_{\mathcal{O}^\times} \chi_1(x) \chi_2(\xi t^2-x^2) \psi_\xi(t^2) dt.
    \end{align*}
    Thus $h(\chi_1, \psi) g(\chi_2, \psi)$ is equal to
    \begin{align*}
         & \frac{1}{2} \int_{\mathcal{O}^\times} \int_{\mathcal{O}^\times \setminus \pm x+\mathfrak{p}} \chi_1(x) \chi_2(t^2-x^2) \psi(t^2) dt dx + \frac{1}{2} \int_{\mathcal{O}^\times} \int_{\mathcal{O}^\times} \chi_1(x) \chi_2(\xi t^2-x^2) \psi_\xi(t^2) dt dx \\
         & = \frac{1}{2} \int_{\mathcal{O}^\times} \left( \int_{\mathcal{O}^\times \setminus \pm t+\mathfrak{p}} \chi_1(t^{-1} x) \chi_2(1-t^{-2} x^2) dx \right) \chi_1\chi_2^2(t) \psi(t^2) dt                                                                      \\
         & \qquad + \frac{1}{2} \int_{\mathcal{O}^\times} \left( \int_{\mathcal{O}^\times} \chi_1(t^{-1} x) \chi_2(\xi - t^{-2} x^2) dx \right) \chi_1\chi_2^2(t) \psi_\xi(t^2) dt                                                                                    \\
         & = \frac{1}{2} h(\chi_1\chi_2^2, \psi) I(\chi_1, \chi_2) + \frac{1}{2} \chi_1\chi_2(\xi) h(\chi_1\chi_2^2, \psi_\xi) I'(\chi_1, \chi_2).
    \end{align*}
    This proves the first assertion.
    The last assertion follows from this and Lemmas \ref{lem:gausssum1} and \ref{lem:gausssum2}.
\end{proof}

\section{Main theorem}\label{sec:mainthm}
In this section, we shall establish some notations for the metaplectic group and its subgroups, and review several notions and theories on its representations.
Then, in Subsection \ref{subsec:mainthm}, we shall state the main theorems of this paper.
\subsection{The metaplectic group}\label{subsec:groups}
Put $G=\SL_2(F)$, and let $\widetilde{G}=\widetilde{\SL_2}(F)$ be the nonlinear double cover of $G$.
As a set, we shall write $\widetilde{G}=G\times\{\pm1\}$ with the group law $(g_1,\epsilon_1)\cdot(g_2,\epsilon_2)=(g_1g_2,\epsilon_1\epsilon_2\bm{c}(g_1,g_2))$, where
\begin{align*}
    \bm{c}(g_1,g_2)                      =\left( \frac{\bm{x}(g_1)}{\bm{x}(g_1g_2)}, \frac{\bm{x}(g_2)}{\bm{x}(g_1g_2)} \right)_{F,2}, \\
    \bm{x}\left(\left( \begin{array}{cc}
                                   a & b \\c&d
                               \end{array} \right)\right) = \begin{cases*}
                                                        c, & if $c\neq0$, \\
                                                        d, & if $c=0$.
                                                    \end{cases*}
\end{align*}
The 2-cocycle $\bm{c}$ is called the Kubota cocycle.
For any subset $A\subset G$, we shall write $\widetilde{A}$ for the preimage of $A$ in $\widetilde{G}$.
A function $f\colon \widetilde{A} \to \C$ is said to be genuine if $f((a,-1))=-f((a,1))$ for any $a\in A$.
Analogously, when $A$ is a subgroup of $G$, a representation of $\widetilde{A}$ is said to be genuine if $(1_2, -1)$ acts as multiplication by $-1$.

Put
\begin{align*}
     & t(a)=\left( \begin{array}{cc}a & \\ &a^{-1} \end{array} \right), &  & n(b)=\left( \begin{array}{cc}1&b\\ &1\end{array} \right), &  & n^{\mathrm{op}}(b)=\left( \begin{array}{cc}1& \\ b&1\end{array} \right), &  & \beta=\left( \begin{array}{cc}1&\\ &\varpi\end{array} \right), &  & w=\left( \begin{array}{cc}&1\\ -1&\end{array} \right), &
\end{align*}
for $a\in F^\times$ and $b\in F$.
Put
\begin{align*}
     & T=\Set{t(a)|a\in F^\times}, &  & N=\Set{n(b)|b\in F}, &  & B=TN, &
\end{align*}
\begin{align*}
    K_0 & =\SL_2(\mathcal{O}),                                                                         \\
    K_m & =\Set{\left( \begin{array}{cc}a&b \\ c&d \end{array} \right) \in K_0 | c\in \mathfrak{p}^m},
\end{align*}
for $m \in \Z_{>0}$.
We shall regard $N$ as a subgroup of $\widetilde{G}$ via a splitting $N\ni n\mapsto(n,1)\ni \widetilde{G}$.
It is well-known that $K_0$ is a maximal compact subgroup of $G$, and splits the covering map $\widetilde{G}\to G$ uniquely.
We shall write $\gamma \mapsto(\gamma,\bm{s}(\gamma))$ the splitting.
The splitting is given explicitly by
\begin{align*}
    \bm{s}(n(b))               & =1, & b & \in\mathcal{O},        \\
    \bm{s}(t(a))               & =1, & a & \in\mathcal{O}^\times, \\
    \bm{s}(n^{\mathrm{op}}(c)) & =1, & c & \in\mathcal{O},        \\
    \bm{s}(w)                  & =1. &   &
\end{align*}
See \cite[Lemma 1.1]{hi} for the proof.
Via the splitting, we shall regard $K_0$ as a subgroup of $\widetilde{G}$, i.e., by abuse of notation we may simply write $\gamma$ for $(\gamma, \bm{s}(\gamma))$.
Here we have two lemmas.
\begin{lemma}\label{lem:spl_lv1}
    For
    \begin{align*}
        \gamma=\left( \begin{array}{cc}a&b\\ c&d \end{array} \right) \in K_0
    \end{align*}
    such that $d\in \mathcal{O}^\times$, we have
    \begin{align*}
        \bm{s}(\gamma) = \begin{cases*}
                             (c,d)_{F,2}, & if $c\neq 0$, \\
                             1,           & if $c=0$.
                         \end{cases*}
    \end{align*}
\end{lemma}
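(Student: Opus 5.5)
The plan is to factor $\gamma$ into the elementary matrices whose splitting values are recorded above. Since $K_0\ni g\mapsto (g,\bm{s}(g))$ is a group homomorphism into $\widetilde{G}$, the value $\bm{s}(\gamma)$ then comes out as a product of Kubota cocycle values. No earlier lemma is needed beyond the explicit values $\bm{s}(n(b))=\bm{s}(t(a))=\bm{s}(n^{\mathrm{op}}(c))=1$ and the basic properties of the Hilbert symbol.

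First assume $c\neq0$. Because $d\in\mathcal{O}^\times$, I will use the decomposition
\begin{equation*}
    \gamma = n(b/d)\, t(d^{-1})\, n^{\mathrm{op}}(c/d).
\end{equation*}
To check it, note that $t(d^{-1})n^{\mathrm{op}}(c/d)$ has rows $(d^{-1},0)$ and $(c,d)$. Multiplying on the left by $n(b/d)$ gives top row $\bigl((1+bc)/d,\, b\bigr)=(a,b)$, using $ad-bc=1$. All three factors lie in $K_0$, since $b/d,\,c/d\in\mathcal{O}$ and $d^{-1}\in\mathcal{O}^\times$. Each factor has splitting value $1$, so
\begin{equation*}
    (\gamma,\bm{s}(\gamma)) = (n(b/d),1)\cdot(t(d^{-1}),1)\cdot(n^{\mathrm{op}}(c/d),1),
\end{equation*}
and hence
\begin{equation*}
    \bm{s}(\gamma)=\bm{c}\bigl(n(b/d),t(d^{-1})\bigr)\,\bm{c}\bigl(n(b/d)t(d^{-1}),\,n^{\mathrm{op}}(c/d)\bigr).
\end{equation*}

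The next step is to evaluate these two cocycle values from the definition of $\bm{x}$. For the first, $\bm{x}(n(b/d))=1$ and $\bm{x}(t(d^{-1}))=\bm{x}(n(b/d)t(d^{-1}))=d$, so the value is $(d^{-1},1)_{F,2}=1$. For the second, $\bm{x}(n(b/d)t(d^{-1}))=d$, $\bm{x}(n^{\mathrm{op}}(c/d))=c/d$ and $\bm{x}(\gamma)=c$, so the value is $(d/c,\,d^{-1})_{F,2}$. Using bimultiplicativity and the fact that symbols depend only on square classes, this equals $(d,d)_{F,2}(c,d)_{F,2}=(d,-1)_{F,2}(c,d)_{F,2}$. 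Since $p$ is odd and both $d$ and $-1$ are units, $(d,-1)_{F,2}=1$, which gives $\bm{s}(\gamma)=(c,d)_{F,2}$.

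When $c=0$, $\gamma$ is upper triangular with $a=d^{-1}\in\mathcal{O}^\times$, so $\gamma=t(a)n(b/a)$ with $b/a\in\mathcal{O}$. Then $\bm{s}(\gamma)=\bm{c}(t(a),n(b/a))=(a/a,\,1/a)_{F,2}=1$. There is no real obstacle in this argument. The only points needing care are the order of the factors (so that every factor lies in $K_0$) and the bookkeeping with the Kubota cocycle, where the unit–unit Hilbert symbol $(d,-1)_{F,2}$ is trivial precisely because $p$ is odd.
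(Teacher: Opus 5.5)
Your proof is correct and follows the paper's argument. The paper uses the same factorization $\gamma = n(d^{-1}b)\,t(d^{-1})\,n^{\mathrm{op}}(d^{-1}c)$, which also covers $c=0$ since $n^{\mathrm{op}}(0)=1_2$, and simply states the resulting cocycle value that you compute explicitly; the only blemish is cosmetic, namely that for $c=0$ one has $\bm{x}(t(a))=\bm{x}(\gamma)=a^{-1}$ rather than $a$, but the cocycle is still $(1,a)_{F,2}=1$.
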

\begin{proof}
    Since
    \begin{equation*}
        \gamma = n(d^{-1} b) t(d^{-1}) n^{\mathrm{op}}(d^{-1} c),
    \end{equation*}
    we have
    \begin{align*}
        (\gamma, \bm{s}(\gamma))
         & = (n(d^{-1} b), 1) (t(d^{-1}), 1) (n^{\mathrm{op}}(d^{-1} c), 1) \\
         & = (\gamma, s),
    \end{align*}
    where $s$ is $(c,d)_{F,2}$ if $c\neq 0$ and $1$ otherwise.
\end{proof}

\begin{lemma}\label{lem:coset_ps}
    Let $m\geq0$ be an integer.
    We have a natural bijection
    \begin{equation*}
        K_m\backslash K_0/ B\cap K_0 \to K_m\backslash G/ B,\quad K_m g(B\cap K_0) \mapsto K_m g B.
    \end{equation*}
    Moreover, the set
    \begin{align*}
        \begin{cases*}
            \{1_2\},                                                                                            & when $m=0$,    \\
            \{1_2, w\},                                                                                         & when $m=1$,    \\
            \set{1_2, w} \cup \set{n^{\mathrm{op}}(\varpi^i), n^{\mathrm{op}}(\xi \varpi^i) | i=1,\ldots, m-1}, & when $m\geq2$,
        \end{cases*}
    \end{align*}
    is a complete representative system of the double coset spaces.
\end{lemma}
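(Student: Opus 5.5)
The plan has two parts: first establish the bijection using the Iwasawa decomposition, then classify the double cosets $K_m\backslash K_0/(B\cap K_0)$ by tracking the lower-left and lower-right entries of a matrix modulo $\mathfrak{p}^m$.

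\emph{Bijection.} The map is well defined and surjective because of the Iwasawa decomposition $G=K_0B$: every $g\in G$ can be written as $g=kb$ with $k\in K_0$, so $K_mgB=K_mkB$. For injectivity, suppose $K_mkB=K_mk'B$ with $k,k'\in K_0$. Then $k'=\gamma kb$ for some $\gamma\in K_m$ and $b\in B$. Solving gives $b=k^{-1}\gamma^{-1}k'\in K_0\cap B$, so $k$ and $k'$ lie in the same double coset of $K_m\backslash K_0/(B\cap K_0)$.

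\emph{Reduction to a parameter.} Right multiplication by $B\cap K_0$ acts on the bottom row $(c,d)$ of a matrix in $K_0$ by $(c,d)\mapsto(ca,\,cb'+da^{-1})$, where $a\in\mathcal{O}^\times$ and $b'\in\mathcal{O}$. In particular the column $\binom{a}{c}$ is scaled by units, so the first column modulo this action is an element of $\mathbb{P}^1(\mathcal{O})$. Left multiplication by $K_m$ then acts on this first column. Writing a point of $\mathbb{P}^1(\mathcal{O})$ as $[a:c]$, I would use the invariant $\min(\operatorname{ord}(c/a),m)$ when $a\in\mathcal{O}^\times$; when $a\notin\mathcal{O}^\times$ we have $c\in\mathcal{O}^\times$.

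\emph{Classification.} I would argue case by case.
\begin{itemize}
\item If $c\in\mathcal{O}^\times$, left multiplication by $n(x)\in K_m$ for suitable $x$ moves the matrix into the coset of $w$. This includes all cases with $\operatorname{ord}(c)=0$.
\item If $c\in\mathfrak{p}^m$, the matrix already lies in $K_m$, so its coset is that of $1_2$.
\item If $1\le\operatorname{ord}(c)=i\le m-1$ with $a\in\mathcal{O}^\times$, first right-multiply to normalize $a=1$. Left multiplication by the diagonal $t(u)\in K_m$ changes $c$ to $u^{-1}c$ and $a$ to $ua$; renormalizing $a=1$ on the right turns $c/a$ into $u^{-2}c/a$. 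Left multiplication by $n^{\mathrm{op}}(\varpi^mz)$ changes $c$ by elements of $\mathfrak{p}^m$. Hence $c/a$ is determined only modulo $\mathcal{O}^{\times2}$ and modulo $\mathfrak{p}^m$. Since $i<m$, the class mod $\mathfrak{p}^m$ together with squares gives exactly the two classes $\varpi^i$ and $\xi\varpi^i$. Here $p$ odd is used: $u\in 1+\mathfrak{p}^{m-i}$ is a square, so perturbations by $\mathfrak{p}^m$ can be absorbed.
\end{itemize}

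\emph{Distinctness.} To show the listed representatives are pairwise inequivalent, I need an invariant of the double coset. For $g\in K_0$ with lower-left entry $c$ and upper-left entry $a$, the invariant is $\min(\operatorname{ord}(c),m)$; when $0<\operatorname{ord}(c)<m$, add the square class of $(c/\varpi^{\operatorname{ord}(c)})/a$ modulo $\mathfrak{p}$. I would check that this is unchanged under the actions above: left $K_m$ sends $(a,c)\mapsto(\alpha a+\beta c,\ \gamma a+\delta c)$ with $\gamma\in\mathfrak{p}^m$, and right $B\cap K_0$ scales $(a,c)$ by a unit.

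The main obstacle is the square-class invariance under the left $K_m$ action. Since $c/a\mapsto(\gamma a+\delta c)/(\alpha a+\beta c)$, the leading coefficient changes by $\delta/\alpha\equiv\delta^2\equiv\alpha^{-2}\pmod{\mathfrak{p}}$, using $\alpha\delta\equiv1$. This uses $\operatorname{ord}(c)\ge1$ to ensure $\alpha a+\beta c\equiv\alpha a\pmod{\mathfrak{p}}$, and $\operatorname{ord}(c)<m$ to ensure $\gamma a$ does not affect the leading coefficient of $\gamma a+\delta c$. Together these make the invariant well defined. For $m=0$ and $m=1$ the case $1\le i\le m-1$ is empty, giving $\{1_2\}$ and $\{1_2,w\}$ respectively.
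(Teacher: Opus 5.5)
Your proposal is correct and follows essentially the same route as the paper: the bijection comes from the Iwasawa decomposition $G=K_0B$, existence comes from normalizing with $n(x)$ and $t(u)$ on the left and $B\cap K_0$ on the right, and $\min(\operatorname{ord}(g_{21}),m)$ serves as the first invariant. Your square-class invariant separating $n^{\mathrm{op}}(\varpi^i)$ from $n^{\mathrm{op}}(\xi\varpi^i)$ (via $\delta/\alpha\equiv\delta^2 \bmod \mathfrak{p}$) is a cleaner packaging of the paper's contradiction argument, which solves the matrix equations to get $\xi\in d^2+\mathfrak{p}$; both need $p$ odd, so that $\xi$ remains a non-square modulo $\mathfrak{p}$.
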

\begin{proof}
    The natural bijection of double coset spaces is implied by the Iwasawa decomposition $G=K_0B$.

    Now we consider the assertion on representatives.
    If $m=0$, the assertion is trivial.
    Suppose that $m\geq1$.
    Let $g=(g_{ij})\in K_0$.
    If $g_{21}\in\mathcal{O}^\times$, we have
    \begin{equation*}
        t(-g_{21})n(-g_{21}^{-1}g_{11}) \cdot g \cdot n(-g_{21}^{-1}g_{22})=w.
    \end{equation*}
    Since $t(-g_{21})n(-g_{21}^{-1}g_{11})\in K_m$ and $n(-g_{21}^{-1}g_{22})\in B\cap K_0$, this means that
    \begin{equation*}
        K_m g (B\cap K_0) = K_m w (B\cap K_0).
    \end{equation*}
    Suppose that $g_{21}\in \mathfrak{p}$.
    If $g_{21}\in \mathfrak{p}^m$, then $g\in K_m$ and we have
    \begin{equation*}
        K_m g (B\cap K_0) = K_m 1_2 (B\cap K_0).
    \end{equation*}
    If $g_{21}\in \mathfrak{p}\setminus \mathfrak{p}^m$, then $g_{11} \in\mathcal{O}^\times$, and there exist $\varepsilon\in\{0,1\}$, $a\in\mathcal{O}^\times$, and $1\leq i \leq m-1$ such that $g_{21}g_{11}^{-1}=\xi^\varepsilon a^2\varpi^i$.
    Then we have
    \begin{equation*}
        t(a) \cdot g \cdot n(-g_{11}^{-1}g_{12}) t(g_{11}^{-1}) t(a^{-1}) = n^{\mathrm{op}}(\xi^\varepsilon \varpi^i),
    \end{equation*}
    which means that
    \begin{equation*}
        K_m g (B\cap K_0) = K_m n^{\mathrm{op}}(\xi^\varepsilon \varpi^i) (B\cap K_0).
    \end{equation*}
    Thus we have
    \begin{align*}
        K_0= K_m w (B\cap K_0) \cup K_m 1_2 (B\cap K_0) \cup \bigcup_{i=1}^{m-1} \left( K_m n^{\mathrm{op}}(\varpi^i) (B\cap K_0) \cup K_m n^{\mathrm{op}}(\xi\varpi^i) (B\cap K_0) \right).
    \end{align*}
    If $m=1$, it is of course understood to be a union of the former two cosets.
    Note that if the (2,1)-component of $g\in K_0$ is an element of $\varpi^i \mathcal{O}^\times$ and $0\leq i\leq m-1$, then so is that of every element in $K_m g (B\cap K_0)$.
    Hence we have
    \begin{align*}
        K_0= K_m w (B\cap K_0) \sqcup K_m 1_2 (B\cap K_0) \sqcup \bigsqcup_{i=1}^{m-1} \left( K_m n^{\mathrm{op}}(\varpi^i) (B\cap K_0) \cup K_m n^{\mathrm{op}}(\xi\varpi^i) (B\cap K_0) \right).
    \end{align*}
    Now, it remains to show that if $1\leq i \leq m-1$, then $n^{\mathrm{op}}(\varpi^i)$ and $n^{\mathrm{op}}(\xi\varpi^i)$ are not contained in a same coset.
    Suppose that they are.
    Then there exist
    \begin{equation*}
        \left( \begin{array}{cc}a&b\\ c&d \end{array} \right)\in K_m \quad \text{and} \quad \left( \begin{array}{cc}x&y\\ &x^{-1} \end{array} \right)\in B\cap K_0,
    \end{equation*}
    such that
    \begin{align*}
        \left( \begin{array}{cc}a&b\\ c&d \end{array} \right) n^{\mathrm{op}}(\varpi^i) = n^{\mathrm{op}}(\xi\varpi^i) \left( \begin{array}{cc}x&y\\ &x^{-1} \end{array} \right).
    \end{align*}
    In other words, there exist $a, d, x\in\mathcal{O}^\times$, $b, y\in \mathcal{O}$, and $c\in\mathfrak{p}^m$, such that
    \begin{align*}
        \left\{
        \begin{aligned}
             & ad-bc=1,                  \\
             & a+\varpi^ib=x,            \\
             & b=y,                      \\
             & c+\varpi^id=\xi\varpi^ix, \\
             & d=\xi\varpi^iy+x^{-1}.
        \end{aligned}
        \right.
    \end{align*}
    Combining the assumption $1\leq i \leq m-1$, this implies that $\xi\in d^2+\mathfrak{p}$.
    Since the residual characteristic $p$ is odd, we have $d^2 + \mathfrak{p} \subset \mathcal{O}^{\times 2}$.
    This is a contradiction.
\end{proof}

\subsection{Representations of the metaplectic group}\label{subsec:representations}
Next, we shall recall some notions and properties on genuine representations of $\widetilde{G}$.

Let $(\pi,V)$ be an irreducible admissible genuine representation of $\widetilde{G}$ and $\psi$ any nontrivial additive character of $F$.
For $a \in F^\times$, the representation $(\pi,V)$ is said to be $\psi_a$-generic if
\begin{equation*}
    \Hom_N(\pi,\psi_a) \neq\{0\},
\end{equation*}
i.e., there exists a nonzero linear map $\ell_a \colon V\to \C$ such that $\ell_a(\pi(n(u))v)=\psi(au)\ell_a(v)$, for any $u\in F$ and $v\in V$.
Note that $\pi$ is $\psi_a$-generic if and only if it is $\psi_{ab}$-generic for all $b \in F^{\times 2}$.
Any nonzero element $\ell_a\in \Hom_N(\pi,\psi_a)$ is called a $\psi_a$-Whittaker functional, and it is proven in \cite{wal1, wal2} that the space $\Hom_N(\pi,\psi_a)$ of $\psi_a$-Whittaker functionals is at most one dimensional.
It is also known that any irreducible genuine representation of $\widetilde{G}$ is $\psi_a$-generic for some $a \in F^\times$.

The representation $(\pi,V)$ is said to be supercuspidal if for any $v\in V$ there exists a compact open subgroup $N(v)$ of $N$ such that
\begin{equation*}
    \int_{N(v)} \pi(n)v dn=0,
\end{equation*}
where $dn$ is a Haar measure on $N(v)$.
If $(\pi,V)$ is supercuspidal and $\psi_a$-generic for a nonzero element $a \in F^\times$, then exactly one of the followings holds.
\begin{itemize}
    \item It is $\psi_b$-generic if and only if $ab^{-1} \in F^{\times 2}$.
    \item It is $\psi_b$-generic if and only if $ab^{-1} \in \varpi^{2\Z}\mathcal{O}^\times$.
    \item It is $\psi_b$-generic if and only if $ab^{-1} \in \varpi^\Z \mathcal{O}^{\times 2}$.
\end{itemize}
For more details, see Lemma \ref{lem:sc-generic} and \S\ref{subsec:sc-constr}, where we shall review a classification of irreducible supercuspidal representations of $\widetilde{G}$.

Now we shall review the Weil representations of $\widetilde{G}$.
For every nontrivial additive character $\psi$ of $F$, let $(\omega_\psi, \mathcal{S}(F))$ be the Schr\"{o}dinger model of the Weil representation of $\widetilde{G}$ associated to $\psi$, i.e., $\mathcal{S}(F)$ is the space of locally constant and compactly supported functions on $F$ and the action $\omega_\psi$ is determined by
\begin{align*}
    [\omega_\psi((t(a),\epsilon))\cdot\varphi](y) & =\epsilon|a|^{\frac{1}{2}}\gamma_F(a,\psi)^{-1} \varphi(ay), \\
    [\omega_\psi(n(b)) \cdot \varphi](y)          & =\psi(by^2)\varphi(y),                                       \\
    [\omega_\psi((w,1))\cdot\varphi](y)           & =\gamma_F(\psi)\int_F \varphi(x)\psi(2xy)d_{\psi_2} x,
\end{align*}
where $d_{\psi_2}x$ denotes the self-dual measure on $F$ with respect to $\psi_2$.
Let $\mathcal{S}^+(F)$ (resp. $\mathcal{S}^-(F)$) be the subspace of $\mathcal{S}(F)$ consisting of all even (resp. odd) functions, i.e., the functions $\varphi\in\mathcal{S}(F)$ such that $\varphi(-x)=\varphi(x)$ (resp. $\varphi(-x)=-\varphi(x)$).
Then a decomposition $\mathcal{S}(F)=\mathcal{S}^+(F)\oplus\mathcal{S}^-(F)$ gives the decomposition of the Weil representation, which we shall write $\omega_\psi=\omega_\psi^+ \oplus \omega_\psi^-$.
The representation $\omega_\psi^+$ (resp. $\omega_\psi^-$) is called the even (resp. odd) Weil representation.
The even and odd Weil representations are known to be irreducible.
Note that $\omega_{\psi_a}\cong\omega_{\psi_b}$ if and only if $ab^{-1} \in F^{\times 2}$.
Thus, for any quadratic or trivial character $\chi=\chi_a$, we shall write $\omega_{\psi,\chi}=\omega_{\psi_a}$, $\omega_{\psi,\chi}^+=\omega_{\psi_a}^+$, and $\omega_{\psi,\chi}^-=\omega_{\psi_a}^-$, which are uniquely determined up to equivalence.
We note:
\begin{itemize}
    \item $\omega_{\psi_a}^\pm$ is $\psi_b$-generic if and only if $ab^{-1}\in F^{\times 2}$;
    \item $\omega_{\psi_a}^+$ is not supercuspidal, and $\omega_{\psi_a}^-$ is supercuspidal;
    \item $\omega_{\psi_a}^\pm$ is characterized by these properties.
\end{itemize}

Next we review the principal series representations of $\widetilde{G}$.
We have a genuine character $\chi_\psi$ of $\widetilde{T}$ defined by
\begin{align*}
     & \chi_\psi((t(a),\epsilon))=\epsilon\gamma_F(a,\psi)^{-1}, &  & a\in F^\times,\ \epsilon\in\{\pm1\}. &
\end{align*}
Given a character $\mu$ of $F^\times$, we shall write $\pi_\psi(\mu)$ for the representation parabolically induced from $\widetilde{B}$ by $\mu \cdot \chi_\psi$, and realize it on the space of locally constant functions $f\colon \widetilde{G}\to \C$ such that
\begin{equation*}
    f((t(a),\epsilon)(n(b),1)g)=|a|\mu(a)\chi_\psi((t(a),\epsilon))f(g),
\end{equation*}
for any $a\in F^\times$, $\epsilon\in\{\pm1\}$, $b\in F$, and $g\in\widetilde{G}$.
The representation is defined by $[g\cdot f](h)=f(hg)$.
The reducibility property of these representations are summarized as follows.
\begin{proposition}
    \begin{enumerate}
        \item The representation $\pi_\psi(\mu)$ is irreducible if and only if $\mu^2 \neq |-|^{\pm1}$. In this case we have $\pi_\psi(\mu)\cong\pi_\psi(\mu^{-1})$.
        \item If $\mu=\chi \cdot |-|^{\frac{1}{2}}$, where $\chi$ is a quadratic or trivial character, then $\pi_\psi(\mu)$ is reducible and there exists an irreducible representation $\mathit{St}_{\psi,\chi}$ such that we have a short exact sequence
              \begin{equation}\label{eq:SESstd}
                  0 \to \mathit{St}_{\psi,\chi} \to \pi_\psi(\mu) \to \omega_{\psi,\chi}^+ \to 0.
              \end{equation}
              We shall call the representation $\mathit{St}_{\psi,\chi}$ the Steinberg representation associated to $\psi$ and $\chi$.
        \item If $\mu= \chi \cdot |-|^{-\frac{1}{2}}$, where $\chi$ is a quadratic or trivial character, then $\pi_\psi(\mu)$ is reducible and there is a short exact sequence
              \begin{equation}\label{eq:SESdual}
                  0 \to \omega_{\psi,\chi}^+ \to \pi_\psi(\mu) \to \mathit{St}_{\psi,\chi} \to 0.
              \end{equation}
    \end{enumerate}
\end{proposition}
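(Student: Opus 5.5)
This is the standard reducibility result for genuine principal series of $\widetilde{\SL_2}(F)$. I would prove it through the Jacquet module and intertwining operators, in parallel with the linear $\SL_2$ case, citing the results of Gelbart--Piatetski-Shapiro and Waldspurger where a full computation would be long.

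\textbf{Step 1: a reducible $\pi_\psi(\mu)$ has a one-dimensional Weil quotient.} By the geometric lemma (Bruhat filtration $\widetilde{G}=\widetilde{B}\sqcup \widetilde{B}w\widetilde{B}$), the semisimplified Jacquet module of $\pi_\psi(\mu)$ with respect to $N$ has constituents $\mu\chi_\psi|-|^{?}$ and its $w$-conjugate. Conjugating by $w$ replaces $\mu$ by $\mu^{-1}$, and the genuine character $\chi_\psi$ is preserved up to the Hilbert-symbol twist that the cocycle $\bm{c}$ produces on $\widetilde{T}$. Hence every constituent of $\pi_\psi(\mu)$ is either supercuspidal-free or one-dimensional in the Jacquet module. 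Consequently $\pi_\psi(\mu)$ has length at most $2$, and when it is reducible each constituent has a one-dimensional Jacquet module.

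\textbf{Step 2: locating the reducibility points.} I would compute the standard intertwining operator $M(\mu)\colon \pi_\psi(\mu)\to\pi_\psi(\mu^{-1})$, $f\mapsto \int_N f(wn\,g)\,dn$, on the $K_0$-spherical-type vector, or equivalently via the Plancherel/$\gamma$-factor computation. The normalizing factor is
\[
\frac{L(\mu^2,0)}{L(\mu^2,1)}\cdot(\text{unit}),
\]
the metaplectic analogue of $\mu$ for $\SL_2$, where $\mu^2$ appears because of the double cover. By the standard Harish-Chandra argument, $M(\mu^{-1})\circ M(\mu)$ is a scalar whose vanishing detects reducibility. This scalar vanishes or has a pole exactly when $\mu^2=|-|^{\pm1}$. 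Away from these points $M(\mu)$ is an isomorphism, which gives (a) together with $\pi_\psi(\mu)\cong\pi_\psi(\mu^{-1})$.

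\textbf{Step 3: the reducible cases.} When $\mu^2=|-|^{\pm1}$ we have $\mu=\chi|-|^{\pm1/2}$ with $\chi$ quadratic or trivial. To identify the constituents, I would produce an explicit nonzero map $\omega_{\psi,\chi}^+\to\pi_\psi(\chi|-|^{-1/2})$, taking $\chi=\chi_a$ and working in the model $\omega_{\psi_a}$. The map sends $\varphi\in\mathcal{S}^+(F)$ to
\[
f_\varphi(g)=[\omega_{\psi_a}(g)\varphi](0).
\]
Using the formulas for $\omega_\psi$ on $t(a)$ and $n(b)$, together with $\gamma_F(a,\psi_a\cdot)$ relations such as $\gamma_F(x,\psi_a)=(x,a)_{F,2}\gamma_F(x,\psi)$, one checks that $f_\varphi$ transforms under $\widetilde{B}$ by $|x|^{1/2}\chi_a(x)\chi_\psi$, i.e.\ by $|x|\mu(x)\chi_\psi$ with $\mu=\chi_a|-|^{-1/2}$. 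This gives the subrepresentation in (c). Define $\mathit{St}_{\psi,\chi}$ as the quotient; it is irreducible by the length-2 bound from Step 1. For (b), the contragredient of (c) gives (b), because $\pi_\psi(\mu)^\vee\cong\pi_{\psi^{-1}}(\mu^{-1})$ and $\omega_{\psi}^{+\vee}\cong\omega_{\psi^{-1}}^+$, with $\psi^{-1}=\psi_{-1}$ compensated by replacing $\chi$ by $\chi\chi_{-1}$ consistently.

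\textbf{Main obstacle.} The hardest step is keeping track of the Weil indices. This enters in two places: showing that the $w$-conjugate of $\mu\chi_\psi$ is $\mu^{-1}\chi_\psi$ exactly, and matching $\chi$ in the contragredient step so that the same $\mathit{St}_{\psi,\chi}$ appears in both (b) and (c). I would lean on the identities for $\gamma_F$ recalled in the Convention section.
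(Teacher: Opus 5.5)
The paper does not prove this proposition; it quotes it as a standard fact, so your outline has to stand on its own. Steps 1 and 3 are essentially fine.

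For Step 1, the Kubota cocycle gives $(w,1)(t(a),1)(w,1)^{-1}=(t(a^{-1}),1)$, and $\gamma_F(a^{-1},\psi)=\gamma_F(a,\psi)$. So the $w$-conjugate of $\mu\chi_\psi$ is exactly $\mu^{-1}\chi_\psi$, with no twist. The input the length bound actually needs is that no constituent of $\pi_\psi(\mu)$ is supercuspidal; ``supercuspidal-free'' should be replaced by that statement.

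For Step 3, the map $\varphi\mapsto[\omega_{\psi_a}(g)\varphi](0)$ does embed $\omega^+_{\psi_a}$ into $\pi_\psi(\chi_a|-|^{-1/2})$, because $\gamma_F(x,\psi_a)^{-1}=\chi_a(x)\gamma_F(x,\psi)^{-1}$.

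\textbf{Main gap: Step 2 at $\mu^2=1$.} These are the points where $\mu$ is trivial or quadratic. With your (correct) factor $L(\mu^2,0)/L(\mu^2,1)$, the scalar $M(\mu^{-1})M(\mu)$ is a multiple of $\frac{L(\mu^2,0)L(\mu^{-2},0)}{L(\mu^2,1)L(\mu^{-2},1)}$. This vanishes exactly at $\mu^2=|-|^{\pm1}$, but it has a \emph{pole} at $\mu^2=1$, not at $|-|^{\pm1}$ as you wrote.

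At $\mu^2=1$ you have $\mu=\mu^{-1}$, so both exponents of the Jacquet module coincide and $M(\mu)$ itself has a pole. The criterion ``reducible iff $M(\mu^{-1})M(\mu)=0$'' is then unavailable, since its proof uses $\mu\neq\mu^{-1}$ to get $\dim\operatorname{End}\pi_\psi(\mu)\le 1$. So your Step 2 says nothing at these points. Yet they are exactly where $\widetilde{\SL_2}$ differs from $\SL_2$: for $\SL_2$, the principal series induced from a nontrivial quadratic character is reducible.

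You need an additional argument here. For instance, for such $\mu$ the representation $\pi_\psi(\mu)$ is unitary, hence semisimple, and its commuting algebra is spanned by $1$ and the normalized intertwining operator. The Plancherel measure is the reciprocal of the composite above, so it vanishes at $\mu^2=1$. Harish-Chandra's theory (trivial Knapp--Stein $R$-group, valid for finite central covers) then makes the normalized operator a scalar, which gives irreducibility. Alternatively, cite Gelbart, Waldspurger or Szpruch for this case.

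\textbf{Smaller gap in (b).} Taking the contragredient of (c) for $\chi$ gives $0\to\mathit{St}_{\psi,\chi}^\vee\to\pi_\psi(\chi\chi_{-1}|-|^{1/2})\to\omega^+_{\psi,\chi\chi_{-1}}\to0$. This only identifies the subrepresentation of $\pi_\psi(\chi'|-|^{1/2})$ as $\mathit{St}^\vee_{\psi,\chi'\chi_{-1}}$. The statement requires it to be the same $\mathit{St}_{\psi,\chi'}$ that appears as the quotient in (c), and relabelling $\chi$ does not give $\mathit{St}^\vee_{\psi,\chi'\chi_{-1}}\cong\mathit{St}_{\psi,\chi'}$.

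Use Frobenius reciprocity instead. By Frobenius reciprocity $\omega^+_{\psi,\chi}$ has normalized Jacquet module $\chi|-|^{-1/2}\chi_\psi$, so $\mathit{St}_{\psi,\chi}$ has Jacquet module $\chi|-|^{1/2}\chi_\psi$ and therefore embeds in $\pi_\psi(\chi|-|^{1/2})$. The remaining irreducible constituent of $\pi_\psi(\chi|-|^{1/2})$ has Jacquet module $\chi|-|^{-1/2}\chi_\psi$. It therefore embeds in $\pi_\psi(\chi|-|^{-1/2})$, and so must be its unique irreducible subrepresentation $\omega^+_{\psi,\chi}$. This proves (b) directly, with no contragredient needed.
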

It is known that the proposition gives all the irreducible non-supercuspidal genuine representations of $\widetilde{G}$.

\subsection{Definition of conductors}\label{subsec:conductors}
Let $(\pi,V)$ be an admissible representation of $\widetilde{G}$.
Consider an element $-1_\psi=(-1_2, \gamma_F(-1,\psi))$ in $\widetilde{G}$ for any nontrivial additive character $\psi$ of $F$.
It is central in $\widetilde{G}$ and has order 2.
Hence it acts on $V$ by $\pm1$.
We define the central sign $z_\psi(\pi)\in\{\pm1\}$ with respect to $\psi$ by $\pi(-1_\psi)=z_\psi(\pi)1_V$.

In the rest of this section, we let $\psi$ be a nontrivial additive character of $F$ of conductor 0.
Let $\eta$ be any character of $\mathcal{O}^\times$.
When $m\geq c(\eta)$, $\eta$ gives a character on $K_m$ defined by
\begin{align*}
    \left( \begin{array}{cc}a&b \\ c&d \end{array} \right) \mapsto \eta(d),
\end{align*}
for which we shall again write $\eta$.
Let $(\pi,V)$ be an admissible representation of $\widetilde{G}$.
Then we put
\begin{align*}
    \pi^{K_m}_\eta
    =\begin{cases*}
         \Set{v\in V | \pi(\gamma)v=\eta(\gamma)v,\ \text{for all $\gamma\in K_m$}}, & if $m \geq c(\eta)$, \\
         \{0\},                                                                      & if $m<c(\eta)$.
     \end{cases*}
\end{align*}
Here, notice that $\pi^{K_m}_\eta$ is defined to be the zero space if $m$ is negative although $K_m$ is not defined then.
Let $\eta$ be a character of $\mathcal{O}^\times$ such that $\eta(-1)=z_\psi(\pi)$.
We put
\begin{equation*}
    c_\eta(\pi)
    =\min(m\geq0 \mid \pi^{K_m}_\eta \neq \{0\}).
\end{equation*}
In this paper, we study $\pi^{K_m}_\eta$ and $c_\eta(\pi)$ for irreducible genuine representations $\pi$ of $\widetilde{G}$.
Before we finish this subsection, set
\begin{equation*}
    \pi^{K_\infty}_\eta= \bigcup_{m \in \Z} \pi^{K_m}_\eta.
\end{equation*}
Notice that we do not define any set denoted by $K_\infty$.

\subsection{Review on Roberts--Schmidt's local newform}\label{subsec:Roberts--Schmidt}
Here we shall recall the result of Roberts--Schmidt \cite{rs-Mp2}.
We continue to take $\psi$ to be a nontrivial additive character of $F$ of conductor $0$.
For any admissible representation $(\pi, V)$ of $\widetilde{G}$, Roberts--Schmidt defined a linear automorphism $\beta_2$ on $V$ by
\begin{equation*}
    v \mapsto \beta_2 v = \pi(\left( \begin{array}{cc}\varpi^{-1}&\\ &\varpi \end{array} \right),1) v.
\end{equation*}
Let $\eta$ be a character of $\mathcal{O}^\times$ such that $\eta(-1)=z_\psi(\pi)$.
It follows immediately that $\beta_2(\pi^{K_m}_\eta) \subset \pi^{K_{m+2}}_\eta$.
In addition, $\pi^{K_m}_\eta$ is a subspace of $\pi^{K_{m+1}}_\eta$.
Following \cite{rs-Mp2}, we define the subspace $\pi^{K_m, \RSold}_\eta$ of $\pi^{K_m}_\eta$ as the linear subspace generated by $\pi^{K_{m-1}}_\eta$ and $\beta_2(\pi^{K_{m-2}}_\eta)$.
Then we shall define
\begin{equation*}
    \pi^{K_m, \RSnew}_\eta = \pi^{K_m}_\eta / \pi^{K_m, \RSold}_\eta.
\end{equation*}
Let $F_\psi(\pi)$ be the set of $a$ in $F^\times$ such that $\pi$ is $\psi_a$-generic.
The group $F^{\times 2}$ acts on $F_\psi(\pi)$, and the number $\# F_\psi(\pi) / F^{\times 2}$ is given by Waldspurger as follows.
\begin{proposition}\label{prop:num-generic}
    Let $\pi$ be an irreducible genuine representation of $\widetilde{G}$.
    Then we have
    \begin{align*}
        \# F_\psi(\pi) / F^{\times 2}
        =\begin{cases*}
             1, & if $\pi$ is an even or odd Weil representation,               \\
             2, & if $\pi$ is supercuspidal but not an odd Weil representation, \\
             3, & if $\pi$ is a Steinberg representation,                       \\
             4, & if $\pi$ is an irreducible principal series representation.
         \end{cases*}
    \end{align*}
\end{proposition}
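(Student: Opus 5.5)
We would treat Proposition~\ref{prop:num-generic} as a count of Whittaker support, organised by the classification recalled in Subsection~\ref{subsec:representations}. Since $\pi$ is $\psi_a$-generic exactly when it is $\psi_{ab}$-generic for every $b\in F^{\times2}$, the set $F_\psi(\pi)$ is a union of square classes. The group $F^\times/F^{\times2}$ has order $4$ because $p$ is odd; its classes are represented by $1,\xi,\varpi,\xi\varpi$. So the task is to decide, for each type of $\pi$, which of these four classes $a$ satisfy $\Hom_N(\pi,\psi_a)\neq0$.

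\emph{Weil representations.} Here the claim is immediate from the fact already recalled: $\omega^\pm_{\psi_a}$ is $\psi_b$-generic if and only if $ab^{-1}\in F^{\times2}$. This gives exactly one class.

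\emph{Irreducible principal series.} For $\pi_\psi(\mu)$ we would compute the twisted Jacquet module $\pi_N^{\psi_a}$ via the Bruhat filtration $\widetilde{B}\backslash\widetilde{G}=\{\widetilde{B}\}\sqcup \widetilde{B}w\widetilde{N}$. The closed cell contributes nothing, because $N$ acts trivially there while $\psi_a$ is nontrivial. The open cell is $\cInd$ from $N$ and contributes a one-dimensional space for every $a$, given by the Jacquet integral $\int_F f(wn(x))\psi_a(-x)\,dx$ after regularisation. Hence all four classes occur, and uniqueness follows from Waldspurger's multiplicity one.

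\emph{Steinberg representations.} For $\mathit{St}_{\psi,\chi}$ we use exactness of the twisted Jacquet functor in the short exact sequence \eqref{eq:SESstd}. This gives $\dim(\mathit{St}_{\psi,\chi})_{N,\psi_a}=1-\dim(\omega^+_{\psi,\chi})_{N,\psi_a}$, and the last term equals $1$ for exactly one class, the one with $\chi_a=\chi$. Therefore three classes remain.

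\emph{Supercuspidals other than odd Weil.} By the trichotomy recalled above, $F_\psi(\pi)$ is either one class, or two classes ($\varpi^{2\Z}\mathcal{O}^\times$ or $\varpi^{\Z}\mathcal{O}^{\times2}$ modulo squares). The real content is excluding the first alternative, that is, showing that the only supercuspidals with a single generic class are the odd Weil representations $\omega^-_{\psi_a}$. This is the main obstacle.

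To handle it, we would invoke Waldspurger's theta correspondence with $\mathrm{PGL}_2$ (equivalently $\SO_3$). For a fixed $a$, $\theta_{\psi_a}$ is a bijection between genuine irreducible $\pi$ that are $\psi_a$-generic and generic irreducible representations $\sigma$ of $\mathrm{PGL}_2(F)$, except for the odd-Weil/one-dimensional discrepancy. In this bijection, $\pi$ is $\psi_{ab}$-generic if and only if $\epsilon(1/2,\sigma\otimes\chi_b)\chi_b(-1)=\epsilon(1/2,\sigma)$. For supercuspidal $\sigma$ that is not of the dihedral form attached to the odd Weil representations, one checks that $\sigma\otimes\chi_b$ has the opposite root-number behaviour for exactly one nontrivial class $b$. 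This is the dichotomy of Saito--Tunnell type, which yields exactly two classes.

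In the paper's framework, we expect the intended route to be the explicit construction of supercuspidals in \S\ref{subsec:sc-constr}. There one reads off the generic characters from the inducing data, as in Lemma~\ref{lem:sc-generic}, and sees directly that only $\omega^-_{\psi_a}$ falls into the one-class case. We would carry out that reading, with the theta-correspondence argument above as a fallback.
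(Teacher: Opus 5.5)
Your treatment of the Weil, principal series and Steinberg cases is correct. It is also more than the paper offers: the paper gives no argument for this proposition and simply quotes Waldspurger. The Bruhat filtration $0\to C_c^\infty(N)\to\pi_\psi(\mu)\to\mathbf{1}\to0$ of $N$-modules shows that every $\pi_\psi(\mu)$ has a one-dimensional $\psi_a$-twisted Jacquet module for every $a\in F^\times$. Exactness along \eqref{eq:SESstd} then removes exactly the class with $\chi_a=\chi$ from $\mathit{St}_{\psi,\chi}$.

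The gap is in the supercuspidal case, at the step you yourself call the main obstacle. Neither of your two routes proves that a supercuspidal $\pi$ generic for a single square class is an odd Weil representation.

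In the theta route, ``one checks that $\sigma\otimes\chi_b$ has the opposite root-number behaviour for exactly one nontrivial class $b$'' is not a check. It is the claim itself, restated in terms of root numbers. Proving it needs real input. One option is the Tunnell--Saito criterion together with the fact that a supercuspidal $\sigma$ of $\mathrm{PGL}_2(F)$ has a nonzero toric period for exactly one of the three quadratic tori. Another is an explicit $\epsilon$-factor computation for dihedral $\sigma$. You also need, and do not state, that $\theta_{\psi_a}(\pi)$ is supercuspidal whenever $\pi$ is supercuspidal and $\pi\not\cong\omega^-_{\psi_a}$. Your bookkeeping of exceptions is also off. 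Under $\theta_{\psi_a}$ it is the \emph{even} Weil representation $\omega^+_{\psi_a}$ that goes to the trivial representation of $\mathrm{PGL}_2(F)$, while $\omega^-_{\psi_a}$ goes to the Steinberg representation. So there is no supercuspidal $\sigma$ ``attached to the odd Weil representations'' to set aside.

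In the classification route, reading off the Kutzko--Sally data through Lemma~\ref{lem:sc-generic} only gives this: $\pi$ has a single class if and only if $\tau$ lies in an unramified packet of cardinality four. That these $\pi$ are odd Weil representations is exactly what remains, and the paper only asserts it in a remark.

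The classification route can be completed by counting, as follows.
\begin{enumerate}
\item Each $\Psi$ is generic for exactly one member of a cardinality-four packet, and each member is generic for some $\Psi$. Hence each member is generic for exactly one class. Members of cardinality-two and ramified packets are generic for two classes.
\item If $\lambda=\rho|_{J_0}$ is reducible, then $l=1$ and $\lambda_1,\lambda_2$ are the inflations of the two $(q-1)/2$-dimensional cuspidal representations of $\SL_2(\mathbb{F}_q)$. So there is exactly one packet of cardinality four.
\item By Theorems~\ref{thm:mansl2} and \ref{thm:manmp2} together with Lemma~\ref{lem:sc-generic}, at most four genuine supercuspidal representations of $\widetilde{G}$ are generic for a single class.
\item The four representations $\omega^-_{\psi_a}$, $a\in F^\times/F^{\times2}$, are supercuspidal and each is generic for a single class. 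They are pairwise inequivalent because their generic classes differ. So they are exactly these four.
\end{enumerate}
With this step added your argument becomes a complete proof. It does not use the theta correspondence; it rests instead on the Kutzko--Sally genericity data and Manderscheid's classification.
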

Then, Roberts--Schmidt \cite{rs-Mp2} showed the following theorem.
\begin{theorem}\label{thm:rs-Mp2}
    Let $\pi$ be an irreducible genuine representation of $\widetilde{G}$, and $\eta$ a character of $\mathcal{O}^\times$ such that $\eta(-1)=z_\psi(\pi)$.
    Then $\pi^{K_m, \RSnew}_\eta$ is finite dimensional for all $m\geq0$, and zero-dimensional for all but finitely many $m\geq 0$, and we have
    \begin{equation*}
        \sum_{m=0}^\infty \dim_\C \pi^{K_m, \RSnew}_\eta = \# F_\psi(\pi) / F^{\times 2}.
    \end{equation*}
\end{theorem}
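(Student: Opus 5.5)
This is Roberts--Schmidt's theorem, which the paper recalls rather than proves. We would nonetheless prove it by classification, following the list of irreducible genuine representations in Section~\ref{subsec:representations}. In each case we compute $\dim\pi^{K_m}_\eta$ for every $m$, together with the images of the level-raising maps $\pi^{K_{m-1}}_\eta\hookrightarrow\pi^{K_m}_\eta$ and $\beta_2$. The first step is formal. Since $\beta_2$ is an automorphism and $K_{m-1}\supset K_m$, we must show two things: that $\pi^{K_{m-1}}_\eta\cap\beta_2(\pi^{K_{m-2}}_\eta)=\beta_2(\pi^{K_{m-3}}_\eta)$ for large $m$, and that $\pi^{K_m}_\eta$ stabilizes in the sense that $\dim\pi^{K_m}_\eta$ eventually grows by exactly $\dim\pi^{K_{m-1}}_\eta-\dim\pi^{K_{m-2}}_\eta+(\dim\pi^{K_{m-2}}_\eta-\dim\pi^{K_{m-3}}_\eta)$. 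In practice we would show that for $m\gg0$ the dimension $d_m=\dim\pi^{K_m}_\eta$ is affine in $m$ with slope $1$ on each parity class. We would then show that $\sum_{m\le M}\dim\pi^{K_m,\RSnew}_\eta=d_M-d_{M-1}+d_{M-1}-\dim(\text{overlap})$, which telescopes to $d_M+d_{M-1}-d_{M-2}-d_{M-3}$ once the intersection statement holds. So the total equals twice the slope of $d_m$ plus boundary corrections, and we need to match it with $\#F_\psi(\pi)/F^{\times2}$.

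For principal series $\pi_\psi(\mu)$ we would compute $d_m$ via Mackey theory using Lemma~\ref{lem:coset_ps}. A $K_m$-eigenvector with character $\eta$ is determined by its values on the representatives $1_2,w,n^{\mathrm{op}}(\varpi^i),n^{\mathrm{op}}(\xi\varpi^i)$. Each representative contributes iff the genuine character $\mu\chi_\psi$ restricted to $\widetilde{B}\cap{}^{g}K_m$ matches $\eta$. Using Lemma~\ref{lem:spl_lv1} and the Weil index rules, this becomes a conductor condition on $\mu\eta^{\pm1}$ twisted by $\chi_\varpi$-type signs. The square-class splitting $n^{\mathrm{op}}(\varpi^i)$ versus $n^{\mathrm{op}}(\xi\varpi^i)$ is exactly what produces $4=\#F^\times/F^{\times2}$ asymptotically: we get roughly two new vectors every two levels in each of the two square classes of $\mathcal{O}^\times$. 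The intersection statement would be checked by looking at supports, since $\beta_2$ shifts the index $i$ by $2$ and preserves the square class.

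For the remaining representations we would argue as follows. For $\omega^+_{\psi_a}$ we would use the Schrödinger model: the $K_m$-eigenvectors are even Schwartz functions that are $\mathfrak{p}^{k}$-invariant and supported in $\mathfrak{p}^{-k'}$, and counting them gives a count of $1$. For Steinberg representations we would use \eqref{eq:SESstd}. Taking $(K_m,\eta)$-isotypic parts is exact, since $K_m$ is compact, and commutes with $\beta_2$, so the quantity in question is additive, giving $4-1=3$. Supercuspidals are the main obstacle, because no Mackey or model computation is directly available. There we would use a Kirillov-type model: restrict the Whittaker functionals $\ell_a$ for the $2$ square classes of $a$, or $1$ for odd Weil representations, which embeds $V$ into $\bigoplus_a\mathcal{S}(F^\times)$. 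We would then identify the $K_m$-eigenvectors as functions whose support and smoothness are controlled by $m$, so that the count per square class is $1$, giving totals of $2$ and $1$. We expect that verifying the $w$-action compatibility in this model, via $\varepsilon$-factor or Weil-index computations, is the hardest step.
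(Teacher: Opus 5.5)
The paper does not prove this theorem. It quotes it from \cite{rs-Mp2} and then uses it as an input: for example, the bound $\sum_m\dim_\C\pi^{K_m,\RSnew}_\eta=2$ in Proposition~\ref{prop:sc-unram2} comes from it. So nothing in Section~\ref{sec:sc} can complete your argument without circularity. Judged on its own, your proposal has two genuine problems.

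\textbf{First problem: the bookkeeping is wrong.} Write $d_m=\dim_\C\pi^{K_m}_\eta$.
The identity $\pi^{K_{m-1}}_\eta\cap\beta_2(\pi^{K_{m-2}}_\eta)=\beta_2(\pi^{K_{m-3}}_\eta)$ is formal and holds for all $m$, not only for large $m$. The reason is essentially that $K_{m-2}$ and $t(\varpi)K_{m-1}t(\varpi)^{-1}$ generate $K_{m-3}$.
Consequently:
\begin{itemize}
\item $\dim_\C\pi^{K_m,\RSnew}_\eta=d_m-d_{m-1}-d_{m-2}+d_{m-3}$;
\item the vanishing criterion is $d_m-d_{m-1}=d_{m-2}-d_{m-3}$;
\item the partial sums telescope to $d_M-d_{M-2}$, not to $d_M+d_{M-1}-d_{M-2}-d_{M-3}$.
\end{itemize}
Your version double counts. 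For $\mu$ unramified and $\eta=1$ one has $d_0=1$ and $d_m=2m$ for $m\geq1$. The new dimensions are then $1,1,1,1,0,0,\ldots$, with total $4=d_M-d_{M-2}$. Your formula gives $8$, and your growth criterion fails for all large $m$.
There is also no uniform ``slope $1$'': the limit of $d_M-d_{M-2}$ is $1$, $2$, $3$ or $4$ depending on $\pi$.
Once this is corrected, the rest of your non-supercuspidal plan is fine: Mackey theory for principal series, the Schr\"odinger model for even Weil representations, and for Steinberg the exactness of the $(K_m,\eta)$-isotypic functor, which makes the total additive.

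\textbf{Second problem: the supercuspidal case is not proved.} To read off $d_m$ in a Whittaker/Kirillov-type model you need the action of $n^{\mathrm{op}}(\mathfrak{p}^m)$, i.e.\ of $w$. That is exactly the step you defer, so ``count $1$ per square class'' is unsupported.

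The missing observation is that the total never involves $w$.
\begin{itemize}
\item The same generation argument gives $\beta_2(\pi^{K_{M-2}}_\eta)=\pi^{K_M}_\eta\cap\beta_2(\pi^{K_\infty}_\eta)$. Hence the total equals $\dim_\C\pi^{K_\infty}_\eta/\beta_2(\pi^{K_\infty}_\eta)$.
\item By the Iwahori factorization, $\pi^{K_\infty}_\eta$ is just the space of $(B\cap K_0,\eta)$-eigenvectors. So it depends only on $\pi|_{\widetilde B}$.
\item Apply this exact functor to $0\to V(N)\to V\to V_N\to0$. The operator $\beta_2$ is injective on every term and bijective on the finite-dimensional Jacquet-module term. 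So the cokernel can be computed on $V(N)$.
\item By multiplicity one \cite{wal1,wal2}, $V(N)$ is, as a $\widetilde B$-module, $\bigoplus_{[a]\in F_\psi(\pi)/F^{\times2}}\cInd^{\widetilde B}_{\widetilde{\{\pm1_2\}}N}(\theta_a\otimes\psi_a)$ for suitable genuine characters $\theta_a$.
\item In each summand, an eigenvector is determined by finitely many values $f((t(\varpi^k),1))$, which vanish unless $a\varpi^{2k}\in\mathcal{O}$. The hypothesis $\eta(-1)=z_\psi(\pi)$ makes compatibility with $t(-1)$ automatic.
\item On these values $\beta_2$ acts as the shift $k\mapsto k+1$, whose cokernel is one-dimensional.
\end{itemize}
This proves the theorem uniformly, supercuspidals included, and makes the case-by-case classification unnecessary.
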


\subsection{Main theorem}\label{subsec:mainthm}
Now we define another space of local newforms and then state the main theorems of this paper.
We continue to take $\psi$ to be a nontrivial additive character of $F$ of conductor $0$.
For an irreducible genuine representation $(\pi,V)$ of $\widetilde{G}$, we define two linear operators $\mathcal{U}_{\varpi^2}$ and $\mathcal{R}_\varpi$ on $\pi^{K_\infty}_\eta$ by
\begin{align*}
    \mathcal{U}_{\varpi^2} v & = \int_{\mathcal{O}} \pi\left( \left( \left( \begin{array}{cc}\varpi&\varpi^{-1} u\\ 0&\varpi^{-1} \end{array} \right), 1 \right) \right) v du,          \\
    \mathcal{R}_\varpi v     & = \int_{\mathcal{O}^\times} (u,\varpi)_{F,2} \pi\left( \left( \left( \begin{array}{cc}1&\varpi^{-1} u\\ 0&1 \end{array} \right), 1 \right) \right) v du.
\end{align*}
Before introducing the space of local newforms, we first check the most important and basic properties of them.
\begin{proposition}\label{prop:U-operator}
    The operator $\mathcal{U}_{\varpi^2}$ satisfies the following properties:
    \begin{enumerate}
        \item $\mathcal{U}_{\varpi^2}( \pi^{K_0}_\eta ) \subset \pi^{K_1}_\eta$;
        \item $\mathcal{U}_{\varpi^2}( \pi^{K_m}_\eta ) \subset \pi^{K_m}_\eta$ for $m \geq 1$.
    \end{enumerate}
\end{proposition}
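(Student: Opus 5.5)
The plan is to reduce the claim to checking how each generator of $K_m$ moves the integrand of $\mathcal{U}_{\varpi^2}$. First write the integrand matrix as
\[
g_u=\left(\begin{array}{cc}\varpi&\varpi^{-1}u\\ 0&\varpi^{-1}\end{array}\right)=t(\varpi)n(\varpi^{-2}u).
\]
Since $(t(\varpi),1)(n(x),1)=(t(\varpi)n(x),1)$ (the Kubota cocycle is trivial here, as $\bm{x}$ takes the same value $\varpi^{-1}$ on both factors' product and on $t(\varpi)$, with $\bm x(n(x))=1$), we get $\mathcal{U}_{\varpi^2}v=\int_{\mathcal{O}}\pi((t(\varpi),1))\pi(n(\varpi^{-2}u))v\,du$.

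Take $v\in\pi^{K_m}_\eta$. It is fixed by $n(\mathcal{O})\subset K_m$, because $\eta(1)=1$ and $\bm s(n(b))=1$. Hence the integrand depends only on $u \bmod \mathfrak{p}^2$, and $\mathcal{U}_{\varpi^2}v=\operatorname{vol}(\mathfrak{p}^2)\sum_{u\in\mathcal{O}/\mathfrak{p}^2}\pi((g_u,1))v$. Let $m'=\max(m,1)$; in case (a), $m=0$, so $\eta$ is trivial. We must show $\pi(\gamma)\mathcal{U}_{\varpi^2}v=\eta(\gamma)\mathcal{U}_{\varpi^2}v$ for all $\gamma\in K_{m'}$. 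This condition is multiplicative in $\gamma$, so it suffices to check it on generators of $K_{m'}$. Every element of $K_{m'}$ has $d\in\mathcal{O}^\times$ and factors as $n(d^{-1}b)t(d^{-1})n^{\mathrm{op}}(d^{-1}c)$ (as in Lemma \ref{lem:spl_lv1}), and the splitting is trivial on each factor. So the generators are $n(b)$ with $b\in\mathcal{O}$, $t(a)$ with $a\in\mathcal{O}^\times$, and $n^{\mathrm{op}}(c)$ with $c\in\mathfrak{p}^{m'}$.

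For $n(b)$ we have $n(b)t(\varpi)=t(\varpi)n(\varpi^{-2}b)$, so $n(b)g_u=g_{u+\varpi^2 b}$. All lifts have sign $1$ because $N$ is a subgroup of $\widetilde{G}$. This is a translation $u\mapsto u+\varpi^2b$ of $\mathcal{O}$, which leaves the integral unchanged, and $\eta(n(b))=1$. For $t(a)$ we have $t(a)g_u=g_{a^2u}t(a)$. A direct Kubota cocycle computation gives $\bm c(t(a),g_u)=(\varpi,a)_{F,2}=\bm c(g_{a^2u},t(a))$, so $(t(a),1)(g_u,1)=(g_{a^2u},1)(t(a),1)$. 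Since $v$ is an $\eta(a^{-1})$-eigenvector and $u\mapsto a^2u$ preserves the measure on $\mathcal{O}$, we get the eigenvalue $\eta(t(a))$, as required.

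The main obstacle is the generator $n^{\mathrm{op}}(c)$ with $c\in\mathfrak{p}^{m'}$. Here I would compute $\gamma'=g_{u'}^{-1}n^{\mathrm{op}}(c)g_u$. Conjugating by $t(\varpi)$ multiplies the lower-left entry by $\varpi^2$ and the upper-right entry by $\varpi^{-2}$. So $\gamma'$ is integral exactly when $u'\equiv u(1+cu)^{-1}\pmod{\mathfrak{p}^2}$. This is a well-defined bijection of $\mathcal{O}/\mathfrak{p}^2$ because $1+cu\in\mathcal{O}^\times$ (using $c\in\mathfrak{p}$, i.e.\ $m'\geq1$). Then $\gamma'\in K_{m'+2}\subset K_m$, and its lower-right entry is $1+cu\equiv1\pmod{\mathfrak{p}^{m'}}$, so $\eta(\gamma')=1$ because $c(\eta)\le m$. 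It remains to verify the sign identity $(n^{\mathrm{op}}(c),1)(g_u,1)=(g_{u'},1)(\gamma',\bm s(\gamma'))$. For this I would use Lemma \ref{lem:spl_lv1} for $\bm s(\gamma')$ and evaluate the two Kubota cocycles explicitly with the Hilbert symbol rules, in the subcases $c=0$ and $c\neq0$ (and $1+cu$ a square or not). Once this sign is shown to be $1$, reindexing the finite sum by $u\mapsto u'$ gives $\pi(n^{\mathrm{op}}(c))\mathcal{U}_{\varpi^2}v=\mathcal{U}_{\varpi^2}v$, which proves both (a) and (b).
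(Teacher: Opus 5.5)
Your proof is correct and is essentially the paper's argument organized by generators. The paper treats an arbitrary $\gamma\in K_m$ with entries $a,b,c,d$ in one step: it writes $(\gamma,\bm{s}(\gamma))(g_u,1)=(g_t,1)(\gamma',\bm{s}(\gamma))$, where $t=(au+b)/(cu+d)$ and $\gamma'$ is lower triangular with diagonal entries $(cu+d)^{-1}$, $cu+d$ and lower-left entry $\varpi^2c$. It then uses $\bm{s}(\gamma')=\bm{s}(\gamma)$ (Lemma \ref{lem:spl_lv1}) and changes variables $u\mapsto t$ on $\mathcal{O}$. You run the same identity separately for $n(b)$, $t(a)$ and $n^{\mathrm{op}}(c)$, and you get (a) directly rather than from $\pi^{K_0}_\eta\subset\pi^{K_1}_\eta$.

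Two minor corrections. First, $n(b)g_u=g_{u+b}$, not $g_{u+\varpi^2b}$; this is harmless, since it is still a translation of $\mathcal{O}$. Second, the sign check you defer does go through: $\bm{c}(n^{\mathrm{op}}(c),g_u)=(\varpi,c)_{F,2}=\bm{c}(g_{u'},\gamma')$, and $\bm{s}(\gamma')=(\varpi^2c,1+cu)_{F,2}=1$ because $1+cu\in1+\mathfrak{p}\subset\mathcal{O}^{\times2}$, so the non-square subcase never occurs.
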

\begin{proof}
    Since $\pi^{K_0}_\eta \subset \pi^{K_1}_\eta$, the first property follows from the second one.
    Now we shall show (b).
    Let $m\geq1$.
    If $\pi^{K_m}_\eta=\{0\}$, the assertion is trivial.
    We therefore assume that $m \geq c(\eta)$.
    Let $v \in \pi^{K_m}_\eta$ and $\gamma=\left( \begin{array}{cc}a&b\\ c&d \end{array} \right) \in K_m$.
    A calculation shows that
    \begin{align*}
        (\gamma, \bm{s}(\gamma)) \left( \left( \begin{array}{cc}\varpi&\varpi^{-1} u\\ 0&\varpi^{-1} \end{array} \right), 1 \right)
         & = \left( \left( \begin{array}{cc} \varpi & \varpi^{-1} \dfrac{au+b}{cu+d} \\\\ & \varpi^{-1} \end{array} \right), 1 \right) \left( \gamma', \bm{s}(\gamma) \right),
    \end{align*}
    for any $u \in \mathcal{O}$, where
    \begin{align*}
        \gamma' = \left( \begin{array}{cc} \dfrac{1}{cu+d}&\\ \\ \varpi^2 c& cu+d \end{array} \right) \in K_m.
    \end{align*}
    Since $\bm{s}(\gamma)=\bm{s}(\gamma')$ by Lemma \ref{lem:spl_lv1}, we then have
    \begin{align*}
        \pi(\gamma) \mathcal{U}_{\varpi^2} v
         & = \int_\mathcal{O} \pi\left( \left( \left( \begin{array}{cc} \varpi & \varpi^{-1} \dfrac{au+b}{cu+d} \\\\ & \varpi^{-1} \end{array} \right), 1 \right) \right) \pi(\gamma') v du \\
         & = \int_\mathcal{O} \eta(cu+d) \pi\left( \left( \left( \begin{array}{cc} \varpi & \varpi^{-1} \dfrac{au+b}{cu+d} \\\\ & \varpi^{-1} \end{array} \right), 1 \right) \right) v du.
    \end{align*}
    Since $c$ is an element of $\mathfrak{p}^m$ and $\eta$ is trivial on $1+\mathfrak{p}^m$, we have $\eta(cu+d)=\eta(d)$.
    Changing variables from $u$ to $t=\frac{au+b}{cu+d}$, the last integral is then equal to
    \begin{align*}
        \int_\mathcal{O} \eta(d) \pi\left( \left( \left( \begin{array}{cc} \varpi & \varpi^{-1} t \\ & \varpi^{-1} \end{array} \right), 1 \right) \right) v dt = \eta(d) \mathcal{U}_{\varpi^2} v.
    \end{align*}
    This completes the proof.
\end{proof}
Let $\mathcal{R}_1$ be a linear operator on $\pi^{K_\infty}_\eta$ defined by
\begin{equation*}
    \mathcal{R}_1 v = \operatorname{vol}(\mathcal{O}^\times) v - \int_{\mathcal{O}^\times} \pi\left( \left( \left( \begin{array}{cc}1&\varpi^{-1}u\\ &1 \end{array} \right), 1 \right) \right) v du.
\end{equation*}
\begin{proposition}\label{prop:R-operator}
    The operator $\mathcal{R}_\varpi$ satisfies the following properties:
    \begin{enumerate}
        \item $\beta_2( \pi^{K_m}_\eta ) \subset \operatorname{Ker}(\mathcal{R}_\varpi)$, for any integer $m$;
        \item $\mathcal{R}_\varpi(\pi^{K_m}_\eta) \subset \pi^{K_{m'}}_\eta$, for any integer $m$, where $m'=\max(2, c(\eta)+1, m)$;
        \item $\mathcal{R}_\varpi^3 = (-1,\varpi)_{F,2} q^{-1} \operatorname{vol}(\mathcal{O})^2 \mathcal{R}_\varpi$, as operators on $\pi^{K_\infty}_\eta$.
    \end{enumerate}
\end{proposition}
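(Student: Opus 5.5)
The three assertions are independent, and in each case I would reduce to a computation with unipotent elements, using the fact that every $v\in\pi^{K_m}_\eta$ is fixed by $n(\mathcal{O})\subset K_m$. For $n(b)$ with $b\in\mathcal{O}$ we have $\eta(n(b))=\eta(1)=1$ and $\bm{s}(n(b))=1$.

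\emph{Part (a).} Let $v\in\pi^{K_m}_\eta$. In $G$ we have $t(a)n(b)t(a)^{-1}=n(a^2b)$, so
\begin{equation*}
n(\varpi^{-1}u)\,t(\varpi^{-1})=t(\varpi^{-1})\,n(\varpi u).
\end{equation*}
I would check with the Kubota cocycle that this identity lifts to $\widetilde{G}$ with all signs equal to $1$. The point is that $N$ is embedded by $n\mapsto(n,1)$, and conjugation by $(t(a),1)$ preserves this splitting. Then
\begin{equation*}
\pi(n(\varpi^{-1}u))\beta_2v=\beta_2\pi(n(\varpi u))v=\beta_2v,
\end{equation*}
because $n(\varpi u)\in n(\mathcal{O})$. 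Hence $\mathcal{R}_\varpi\beta_2v=\bigl(\int_{\mathcal{O}^\times}(u,\varpi)_{F,2}\,du\bigr)\beta_2v=0$, since $(-,\varpi)_{F,2}$ is a nontrivial character of $\mathcal{O}^\times$.

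\emph{Part (b).} I would imitate the proof of Proposition \ref{prop:U-operator}. Let $\gamma=\left(\begin{array}{cc}a&b\\ c&d\end{array}\right)\in K_{m'}$ and $x=\varpi^{-1}u$. Write
\begin{equation*}
\gamma\, n(x)=n(x')\,\gamma',\qquad x'=\frac{ax+b}{cx+d},\qquad \gamma'=\left(\begin{array}{cc}(cx+d)^{-1}&\\ c&cx+d\end{array}\right).
\end{equation*}
Since $m'\geq2$, we have $cx\in\mathfrak{p}^{m'-1}\subset\mathfrak{p}$. Therefore $cx+d\in\mathcal{O}^\times$, $\gamma'\in K_m$, and $\eta(cx+d)=\eta(d)$ because $m'-1\geq c(\eta)$. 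Lemma \ref{lem:spl_lv1} gives $\bm{s}(\gamma')=(c,cx+d)_{F,2}=(c,d)_{F,2}=\bm{s}(\gamma)$, since $1+\mathfrak{p}\subset\mathcal{O}^{\times2}$. I still need to verify that the cocycle between $(n(x'),1)$ and $(\gamma',\bm{s}(\gamma'))$ is trivial; I expect this to be the main obstacle, a fiddly bookkeeping step with the Kubota cocycle.

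Next, $\varpi x'\equiv ud^{-1}a^{-1}\cdots\equiv ud^{-2}\pmod{\mathfrak{p}}$, because $ad\equiv1\pmod{\mathfrak{p}}$. So $n(x')=n(\varpi^{-1}u'')\,n(b')$ with $u''\in\mathcal{O}^\times$, $u''\equiv ud^{-2}\pmod{\mathfrak{p}}$ and $b'\in\mathcal{O}$. The factor $n(b')\gamma'$ still lies in $K_m$ and has the same $(2,2)$-entry $cx+d$. The integrand depends only on $u\bmod\mathfrak{p}$, since $(-,\varpi)_{F,2}$ has conductor $1$ and $v$ is $n(\mathcal{O})$-fixed. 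Moreover $(ud^{-2},\varpi)_{F,2}=(u,\varpi)_{F,2}$, and $u\mapsto ud^{-2}$ permutes $\mathcal{O}^\times/(1+\mathfrak{p})$. Together these give $\pi(\gamma)\mathcal{R}_\varpi v=\eta(d)\mathcal{R}_\varpi v$.

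\emph{Part (c).} Every vector of $\pi^{K_\infty}_\eta$ is $n(\mathcal{O})$-fixed, and $\mathcal{R}_\varpi$ commutes with $n(\mathcal{O})$. So on $\pi^{K_\infty}_\eta$ we may write
\begin{equation*}
\mathcal{R}_\varpi=\operatorname{vol}(\mathfrak{p})\,X,\qquad X=\sum_{u\in\mathcal{O}^\times/(1+\mathfrak{p})}\chi(u)\,e_u,
\end{equation*}
where $\chi=(-,\varpi)_{F,2}$ and $e_u=\pi(n(\varpi^{-1}u))$ depends only on $u\bmod\mathfrak{p}$. Thus $\mathcal{R}_\varpi$ comes from an element of the commutative group algebra $\C[\mathcal{O}/\mathfrak{p}]$. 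It suffices to prove $X^3=\chi(-1)qX$ there, which can be checked on each additive character $\theta_t(u)=\psi(\varpi^{-1}tu)$. The eigenvalue of $X$ at $\theta_t$ is $0$ for $t\equiv0$, and otherwise equals $\chi(t)G$, where $G$ is the quadratic Gauss sum of the residue field. Since $G^2=\chi(-1)q$, the identity holds on every character. Finally $\operatorname{vol}(\mathfrak{p})^3\chi(-1)q=(-1,\varpi)_{F,2}\,q^{-1}\operatorname{vol}(\mathcal{O})^2\operatorname{vol}(\mathfrak{p})$, which gives (c).
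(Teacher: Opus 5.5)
Your proposal is correct. Part (a) is the paper's argument almost verbatim. Part (b) is also essentially the paper's. The paper instead factors $\gamma\,n(\varpi^{-1}u)=n(\varpi^{-1}ad^{-1}u)\,\gamma'$, where $\gamma'$ has bottom row $(c,\ \varpi^{-1}cu+d)$. Its substitution $t=ad^{-1}u$ is therefore linear and needs no reduction modulo $\mathfrak{p}$, but your factorization borrowed from Proposition \ref{prop:U-operator} works equally well. The cocycle check you flag as the main obstacle is immediate. Multiplying $g$ on either side by $n\in N$ does not change $\bm{x}(g)$, and $\bm{x}(n)=1$, so $\bm{c}(g,n)=\bm{c}(n,g)=1$. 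Everything then reduces to $\bm{s}(\gamma)=\bm{s}(\gamma')$, which you already verified.

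The genuine difference is in (c). The paper uses explicit integrals and the Jacobi sum $J(\mathbf{1},\chi_\varpi)$ to prove two identities: $\mathcal{R}_\varpi^2=(-1,\varpi)_{F,2}\operatorname{vol}(\mathfrak{p})\mathcal{R}_1$ and $\mathcal{R}_\varpi\mathcal{R}_1=\operatorname{vol}(\mathcal{O})\mathcal{R}_\varpi$. You instead realize $\mathcal{R}_\varpi$ as $\operatorname{vol}(\mathfrak{p})$ times an element of the group algebra of $\varpi^{-1}\mathcal{O}/\mathcal{O}$ and diagonalize it by Fourier transform. This reduces everything to the classical formula for the square of the quadratic Gauss sum of the residue field. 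In your picture, $\mathcal{R}_1$ is $\operatorname{vol}(\mathcal{O})$ times the projection onto the nontrivial isotypic components. The paper's two identities then say exactly that $\mathcal{R}_\varpi^2$ is a single scalar on those components and that $\mathcal{R}_\varpi$ kills the trivial component.

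Your route is shorter. It also shows directly that $\mathcal{R}_\varpi$ is semisimple with eigenvalues $0$ and $\pm\operatorname{vol}(\mathfrak{p})$ times that Gauss sum, which is exactly what the paper later needs to produce the basis $\{v_+,v_-\}$. The paper's route stays within the Gauss/Jacobi-sum computations of \S\ref{sec:Gausssum_Jacobisum}.

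One point should be stated carefully in your write-up. The individual operators $\pi((n(\varpi^{-1}u),1))$ need not preserve $\pi^{K_\infty}_\eta$. So apply the group-algebra identity on the space of $n(\mathcal{O})$-fixed vectors of $V$, which they do preserve because $N$ is abelian, and then restrict to $\pi^{K_\infty}_\eta$.
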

\begin{proof}
    Suppose $\pi^{K_m}_\eta \neq \{0\}$, since the assertion is trivial otherwise. For any $v \in \pi^{K_m}_\eta$, we have
    \begin{align*}
        \mathcal{R}_\varpi \circ \beta_2 (v)
         & =\int_{\mathcal{O}^\times} (u,\varpi)_{F,2} \pi\left( \left( \left( \begin{array}{cc}1&\varpi^{-1} u\\ &1 \end{array} \right), 1 \right) \right) \circ \pi\left( \left( \left( \begin{array}{cc}\varpi^{-1}&\\ &\varpi \end{array} \right), 1 \right) \right) (v) du \\
         & =\int_{\mathcal{O}^\times} (u,\varpi)_{F,2} \pi\left( \left( \left( \begin{array}{cc}\varpi^{-1}&\\ &\varpi \end{array} \right), 1 \right) \right) \circ \pi\left( \left( \left( \begin{array}{cc}1&\varpi u\\ &1 \end{array} \right), 1 \right) \right) (v) du      \\
         & =\int_{\mathcal{O}^\times} (u,\varpi)_{F,2} du \cdot \beta_2 v                                                                                                                                                                                                       \\
         & =0.
    \end{align*}
    Hence (a) holds.
    Moreover, let $\gamma=\left( \begin{array}{cc}a&b\\ c&d \end{array} \right)$ be an element in $K_{m'}$.
    Note that $m' \geq 2$.
    Then for any $v \in \pi^{K_m}_\eta$,
    \begin{align*}
        \pi(\gamma) \circ \mathcal{R}_\varpi (v)
         & = \int_{\mathcal{O}^\times} (u,\varpi)_{F,2} \pi((\gamma, \bm{s}(\gamma))) \circ \pi\left( \left( \left( \begin{array}{cc}1&\varpi^{-1} u\\ &1 \end{array} \right), 1 \right) \right) (v) du       \\
         & =\int_{\mathcal{O}^\times} (u,\varpi)_{F,2} \pi\left( \left( \left( \begin{array}{cc}1&\varpi^{-1} ad^{-1}u\\ &1 \end{array} \right) \right) \right) \circ \pi((\gamma', \bm{s}(\gamma'))) (v) du,
    \end{align*}
    where
    \begin{align*}
        \gamma'= \left( \begin{array}{cc}a-\varpi^{-1}ad^{-1}c&b-\varpi^{-2}ad^{-1}cu^2\\ c&\varpi^{-1}cu+d \end{array} \right).
    \end{align*}
    Since $\gamma' \in K_{m'} \subset K_m$ and $\varpi^{-1}cu \in \mathfrak{p}^{m'-1} \subset \mathfrak{p}^{c(\eta)}$ for any $u \in \mathcal{O}^\times$, we have $\pi(\gamma')v=\eta(d)v$.
    Changing the variables $u \mapsto t=ad^{-1}u$, we see that the last integral is equal to
    \begin{equation*}
        \eta(d) \mathcal{R}_\varpi v,
    \end{equation*}
    and hence (b) holds.
    Here note that $( a^{-1}d, \varpi )_{F,2}=1$ since $ad \in 1+\mathfrak{p}$.

    We now consider the assertion (c).
    For $v \in \pi^{K_\infty}_\eta$, we have
    \begin{equation*}
        \mathcal{R}_\varpi^2 v = \int_{\mathcal{O}^\times} \int_{\mathcal{O}^\times} ( ut, \varpi )_{F,2} \pi\left( \left( \left( \begin{array}{cc}1&\varpi^{-1}(u+t)\\ &1 \end{array} \right), 1 \right) \right) v dt du.
    \end{equation*}
    Changing the variables $t \mapsto x=1+u^{-1}t$ first, and then $u \mapsto s=xu$ next, we see that this equals
    \begin{align*}
         & \int_{x \in 1+\mathcal{O}^\times} \int_{s\in x\mathcal{O}^\times} ( x-1, \varpi )_{F,2} \pi\left( \left( \left( \begin{array}{cc}1&\varpi^{-1}s\\ &1 \end{array} \right), 1 \right) \right) v \frac{ds}{|x|} dx                                                                                  \\
         & =( -1, \varpi )_{F,2} J(\mathbf{1}, \chi_\varpi) \int_{\mathcal{O}^\times} \pi\left( \left( \left( \begin{array}{cc}1&\varpi^{-1}s\\ &1 \end{array} \right), 1 \right) \right) v ds + \int_{x \in \mathfrak{p}} ( x-1, \varpi )_{F,2} \int_{s \in x\mathcal{O}^\times} \frac{ds}{|x|} dx \cdot v \\
         & =-( -1, \varpi )_{F,2} \operatorname{vol}(\mathfrak{p}) \int_{\mathcal{O}^\times} \pi\left( \left( \left( \begin{array}{cc}1&\varpi^{-1}s\\ &1 \end{array} \right), 1 \right) \right) v ds + ( -1, \varpi )_{F,2} \operatorname{vol}(\mathfrak{p}) \operatorname{vol}(\mathcal{O}^\times) v      \\
         & = ( -1, \varpi )_{F,2} \operatorname{vol}(\mathfrak{p}) \mathcal{R}_1 v.
    \end{align*}
    We also have
    \begin{equation*}
        \mathcal{R}_\varpi \circ \mathcal{R}_1 (v)
        =\operatorname{vol}(\mathcal{O}^\times) \mathcal{R}_\varpi v - \int_{\mathcal{O}^\times} \int_{\mathcal{O}^\times} ( u, \varpi )_{F,2} \pi\left( \left( \left( \begin{array}{cc}1&\varpi^{-1} (u+x)\\ &1 \end{array} \right), 1 \right) \right) v dx du .
    \end{equation*}
    If $x \in -u+\mathfrak{p}$, then $\varpi^{-1} (u+x)$ is in $\mathcal{O}$  and hence
    \begin{align*}
         & \int_{u \in \mathcal{O}^\times} \int_{x \in -u+\mathfrak{p}} ( u, \varpi )_{F,2} \pi\left( \left( \left( \begin{array}{cc}1&\varpi^{-1} (u+x)\\ &1 \end{array} \right), 1 \right) \right) v dx du \\
         & =\int_{u \in \mathcal{O}^\times} \int_{x \in -u+\mathfrak{p}} ( u, \varpi )_{F,2} v dx du                                                                                                         \\
         & =0.
    \end{align*}
    If $x \in \mathcal{O}^\times \setminus -u+\mathfrak{p}$, changing the variables $x \mapsto y=u+x$, we obtain
    \begin{align*}
         & \int_{u \in \mathcal{O}^\times} \int_{x \in \mathcal{O}^\times \setminus -u+\mathfrak{p}} ( u, \varpi )_{F,2} \pi\left( \left( \left( \begin{array}{cc}1&\varpi^{-1} (u+x)\\ &1 \end{array} \right), 1 \right) \right) v dx du \\
         & =\int_{u \in \mathcal{O}^\times} ( u, \varpi )_{F,2} \int_{y \in \mathcal{O}^\times \setminus u+\mathfrak{p}} \pi\left( \left( \left( \begin{array}{cc}1&\varpi^{-1} y\\ &1 \end{array} \right), 1 \right) \right) v dy du     \\
         & =-\int_{u \in \mathcal{O}^\times} ( u, \varpi )_{F,2} \int_{y \in u+\mathfrak{p}} \pi\left( \left( \left( \begin{array}{cc}1&\varpi^{-1} y\\ &1 \end{array} \right), 1 \right) \right) v dy du                                 \\
         & =-\operatorname{vol}(\mathfrak{p}) \int_{u \in \mathcal{O}^\times} ( u, \varpi )_{F,2} \pi\left( \left( \left( \begin{array}{cc}1&\varpi^{-1} u\\ &1 \end{array} \right), 1 \right) \right) v du                               \\
         & =-\operatorname{vol}(\mathfrak{p}) \mathcal{R}_\varpi v.
    \end{align*}
    Thus we have
    \begin{equation*}
        \mathcal{R}_\varpi \circ \mathcal{R}_1 (v)
        =\left( \operatorname{vol}(\mathcal{O}^\times) + \operatorname{vol}(\mathfrak{p}) \right) \mathcal{R}_\varpi v
        =\operatorname{vol}(\mathcal{O}) \mathcal{R}_\varpi v.
    \end{equation*}
    Therefore, we conclude that
    \begin{equation*}
        \mathcal{R}_\varpi^3
        = ( -1, \varpi )_{F,2} \operatorname{vol}(\mathfrak{p}) \mathcal{R}_\varpi \circ \mathcal{R}_1
        = ( -1, \varpi )_{F,2} \operatorname{vol}(\mathfrak{p}) \operatorname{vol}(\mathcal{O}) \mathcal{R}_\varpi.
    \end{equation*}
    This completes the proof.
\end{proof}

We can now define the spaces of oldforms and newforms.
\begin{definition}\label{def:local-KU}
    Let $\pi$ be an irreducible genuine representation of $\widetilde{G}$, and $\eta$ a character of $\mathcal{O}^\times$.
    For $m \geq 0$, define a subspace $\pi^{K_m, \KUold}_\eta$ of $\pi^{K_m}_\eta$ by
    \begin{align*}
        \pi^{K_m, \KUold}_\eta =\begin{dcases*}
                                    \operatorname{Ker}(\mathcal{R}_\varpi|_{\pi^{K_0}_\eta}),                                                                            & if $m=0$,      \\
                                    \pi^{K_0}_\eta + \mathcal{U}_{\varpi^2}\left( \pi^{K_0}_\eta \right) + \operatorname{Ker}(\mathcal{R}_\varpi|_{\pi^{K_1}_\eta}),     & if $m=1$,      \\
                                    \pi^{K_{m-1}}_\eta + \mathcal{R}_\varpi\left( \pi^{K_{m-1}}_\eta \right) + \operatorname{Ker}(\mathcal{R}_\varpi|_{\pi^{K_m}_\eta}), & if $m \geq 2$.
                                \end{dcases*}
    \end{align*}
    Then we define
    \begin{equation*}
        \pi^{K_m, \KUnew}_\eta = \pi^{K_m}_\eta / \pi^{K_m, \KUold}_\eta.
    \end{equation*}
\end{definition}
The subspace $\pi^{K_m, \KUold}_\eta$ can be defined uniformly.
See Remark \ref{rem:uni-def}.

\begin{proposition}\label{prop:compare-KU-RS}
    We have
    \begin{equation*}
        \pi^{K_m, \KUold}_\eta \supset \pi^{K_m, \RSold}_\eta
    \end{equation*}
    for every $m$.
    In particular, we have
    \begin{equation*}
        \dim_\C \pi^{K_m, \KUnew}_\eta \leq \dim_\C \pi^{K_m, \RSnew}_\eta.
    \end{equation*}
\end{proposition}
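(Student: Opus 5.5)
We need $\pi^{K_{m-1}}_\eta + \beta_2(\pi^{K_{m-2}}_\eta) \subset \pi^{K_m,\KUold}_\eta$ for every $m$; the second assertion then follows at once, since $\pi^{K_m,\KUnew}_\eta$ is a quotient of $\pi^{K_m,\RSnew}_\eta$. The plan is to check the two generating pieces of $\pi^{K_m,\RSold}_\eta$ separately, treating $m=0$, $m=1$ and $m\geq2$ by the corresponding clause of Definition \ref{def:local-KU}.

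\emph{The piece $\pi^{K_{m-1}}_\eta$.}
\begin{itemize}
\item For $m=0$, both $\pi^{K_{-1}}_\eta$ and $\pi^{K_{-2}}_\eta$ are zero by convention, so $\pi^{K_0,\RSold}_\eta=\{0\}$ and there is nothing to show.
\item For $m=1$, the term $\pi^{K_0}_\eta$ appears explicitly in $\pi^{K_1,\KUold}_\eta$.
\item For $m\geq2$, the term $\pi^{K_{m-1}}_\eta$ appears explicitly in $\pi^{K_m,\KUold}_\eta$.
\end{itemize}

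\emph{The piece $\beta_2(\pi^{K_{m-2}}_\eta)$.} For $m=1$ this is $\beta_2(\pi^{K_{-1}}_\eta)=0$. For $m\geq2$, Proposition \ref{prop:R-operator}(a) gives $\beta_2(\pi^{K_{m-2}}_\eta)\subset \operatorname{Ker}(\mathcal{R}_\varpi)$. Combined with the inclusion $\beta_2(\pi^{K_{m-2}}_\eta)\subset\pi^{K_m}_\eta$ noted in Subsection \ref{subsec:Roberts--Schmidt}, this yields $\beta_2(\pi^{K_{m-2}}_\eta)\subset\operatorname{Ker}(\mathcal{R}_\varpi|_{\pi^{K_m}_\eta})$, which is a summand of $\pi^{K_m,\KUold}_\eta$.

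Degenerate cases such as $m<c(\eta)$ cause no trouble, since the relevant spaces are then zero. The only point to watch is that the kernel in the definition is taken inside $\pi^{K_m}_\eta$, and this is exactly what the inclusion $\beta_2(\pi^{K_{m-2}}_\eta)\subset\pi^{K_m}_\eta$ provides. I expect no real obstacle.

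Finally, since $\pi^{K_m,\RSold}_\eta\subset\pi^{K_m,\KUold}_\eta\subset\pi^{K_m}_\eta$, the natural map $\pi^{K_m,\RSnew}_\eta\to\pi^{K_m,\KUnew}_\eta$ is surjective. This gives the dimension inequality, with finiteness coming from Theorem \ref{thm:rs-Mp2}.
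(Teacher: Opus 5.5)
Your proof is correct and takes the same route as the paper. The paper's proof just cites Proposition \ref{prop:R-operator}(a), which is exactly your key step placing $\beta_2(\pi^{K_{m-2}}_\eta)$ inside $\operatorname{Ker}(\mathcal{R}_\varpi|_{\pi^{K_m}_\eta})$; the $\pi^{K_{m-1}}_\eta$ piece is built into the definition, and your case split for $m=0,1$ and $m\geq 2$ simply spells out what the paper leaves implicit.
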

\begin{proof}
    The proposition follows from Proposition \ref{prop:R-operator} (a).
\end{proof}

Our first theorem is an explicit formula of the conductor $c_\eta(\pi)$.
This was shown in \cite{ish} partially, but in this paper we will get it in all cases.
\begin{theorem}\label{thm:conductor}
    Let $\pi$ be an irreducible genuine representation of $\widetilde{G}$, and $\eta$ a character of $\mathcal{O}^\times$.
    If $\eta(-1) \neq z_\psi(\pi)$, then the subspace $\pi^{K_m}_\eta$ is zero for every $m$.

    Assume that $\eta(-1) = z_\psi(\pi)$.
    If $\pi$ is not supercuspidal, then the conductor $c_\eta(\pi)$ is equal to
    \begin{equation*}
        \begin{cases*}
            c(\mu) + \min(c(\eta\mu), c(\eta\mu^{-1})), & if $\pi = \pi_\psi(\mu)$ and $c(\eta) \leq c(\mu)$,                          \\
            c(\eta\mu) + c(\eta\mu^{-1}),               & if $\pi = \pi_\psi(\mu)$ and $c(\eta) > c(\mu)$,                             \\
            2c(\eta\chi) + c(\chi),                     & if $\pi=\omega_{\psi, \chi}^+$,                                              \\
            1,                                          & if $\pi=\mathit{St}_{\psi,\chi}$ and $\eta = \chi|_{\mathcal{O}^\times}$,    \\
            2c(\eta\chi),                               & if $\pi=\mathit{St}_{\psi,\chi}$ and $\eta \neq \chi|_{\mathcal{O}^\times}$,
        \end{cases*}
    \end{equation*}
    where $\mu$ is a character of $F^\times$, and $\chi$ a quadratic or trivial character of $F^\times$.
    If $\pi$ is supercuspidal, then as will be explained in Subsection \ref{subsec:sc-constr}, it can be written as $\cInd^{\widetilde{G}}_{\widetilde{J}}(\widetilde{\lambda})$, and the conductor $c_\eta(\pi)$ is equal to
    \begin{equation*}
        \begin{cases*}
            2 \max(c(\lambda), c(\eta)),     & if $d(\lambda)=0$, and $\pi$ is $\psi_a$-generic for some $a \in \mathcal{O}^\times$,    \\
            1 + 2 \max(c(\lambda), c(\eta)), & if $d(\lambda)=0$, and $\pi$ is not $\psi_a$-generic for any $a \in \mathcal{O}^\times$, \\
            \max(2c(\lambda)-1, 2c(\eta)),   & if $d(\lambda)=1$,
        \end{cases*}
    \end{equation*}
    where $J = J_0$ or $J_1$, and $\lambda$ is an irreducible strongly cuspidal representation of $J$, with notation and terminology of Section \ref{sec:sc}.
\end{theorem}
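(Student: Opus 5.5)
First, the vanishing statement when $\eta(-1)\neq z_\psi(\pi)$. The element $-1_2\in K_0$ is identified with $(-1_2,\bm{s}(-1_2))$ in $\widetilde{G}$. Using the formula for $\bm{s}$ (or Lemma \ref{lem:spl_lv1} after writing $-1_2=t(-1)$), this element is $(-1_2,1)$. I will compare it with $-1_\psi=(-1_2,\gamma_F(-1,\psi))$. Since $c(\psi)=0$ and $-1\in\mathcal{O}^\times$, we have $\gamma_F(-1,\psi)=1$, so the two elements coincide.

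Hence on $\pi^{K_m}_\eta$ the central element acts both by $\eta(-1)$ and by $z_\psi(\pi)$. Whenever these differ, the space is zero.

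For the conductor formulas I will go through the types of $\pi$ one at a time.

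\textbf{Principal series $\pi_\psi(\mu)$.} Using Lemma \ref{lem:coset_ps}, a vector of $\pi^{K_m}_\eta$ is determined by its values on the representatives $1_2$, $w$, $n^{\mathrm{op}}(\varpi^i)$, $n^{\mathrm{op}}(\xi\varpi^i)$. A representative $g$ supports a nonzero vector exactly when $|a|\mu(a)\chi_\psi$ and $\eta$ agree on $\widetilde{B}\cap gK_mg^{-1}$. I will compute these intersections explicitly, using Lemma \ref{lem:spl_lv1} for the splitting. The Weil index $\gamma_F(a,\psi)$ is trivial on $\mathcal{O}^\times$ and the Hilbert symbols $(c,d)_{F,2}$ are trivial, so the compatibility condition reduces to conditions of the form $c(\eta\mu^{\pm1})\le\cdots$. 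For $g=n^{\mathrm{op}}(\varpi^i)$, an element $t(d^{-1})n(\cdot)$ in the intersection forces a condition on $\mu\eta$ restricted to $1+\mathfrak{p}^{i}$ and on $\mu^{-1}\eta$ restricted to $1+\mathfrak{p}^{m-i}$. Minimizing $m$ over $i$ then gives $c(\mu)+\min(c(\eta\mu),c(\eta\mu^{-1}))$ or $c(\eta\mu)+c(\eta\mu^{-1})$, depending on whether $c(\eta)\le c(\mu)$.

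\textbf{Even Weil representation $\omega^+_{\psi,\chi}$.} I will work in the Schrödinger model. Invariance under $n(b)$ with $b\in\mathcal{O}$ forces $\operatorname{supp}\varphi\subset\mathcal{O}$ (suitably scaled by $\chi$). The torus condition forces $\varphi(ay)=\eta(a)\varphi(y)$. Invariance under $n^{\mathrm{op}}(\mathfrak{p}^m)=w\,n(-\mathfrak{p}^m)\,w^{-1}$ forces the Fourier transform to be supported in $\mathfrak{p}^{-m/2}$. These conditions lead to a Gauss-sum computation controlled by Lemma \ref{lem:gausssum2}, which should give $2c(\eta\chi)+c(\chi)$.

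\textbf{Steinberg representation.} I will use the exact sequence \eqref{eq:SESstd}. Taking $(K_m,\eta)$-isotypic parts is exact because $K_m$ is compact, so
\[
\dim \mathit{St}^{K_m}_\eta=\dim\pi_\psi(\mu)^{K_m}_\eta-\dim(\omega^+)^{K_m}_\eta .
\]
Comparing the two previous dimension counts then gives $1$ or $2c(\eta\chi)$. This requires dimension formulas for the first two cases, not only conductors, and the coset count above supplies them.

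\textbf{Supercuspidal $\pi=\cInd(\widetilde\lambda)$.} I will use Mackey theory:
\[
\pi^{K_m}_\eta\cong\bigoplus_{g\in \widetilde J\backslash\widetilde G/K_m}\Hom_{\widetilde J\cap {}^gK_m}(\eta^{g^{-1}},\widetilde\lambda).
\]
The plan is to transfer this computation to $\SL_2(F)$ via the parallel construction of Section \ref{sec:sc}, where the analogous statement can be read off from the Kirillov model for $\GL_2$. The dichotomy between $\psi_a$-generic with $a\in\mathcal{O}^\times$ and not generic there is what distinguishes $2\max(c(\lambda),c(\eta))$ from $1+2\max(c(\lambda),c(\eta))$.

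I expect the supercuspidal case to be the main obstacle, specifically controlling the cocycle on $\widetilde J\cap{}^gK_m$ so that the metaplectic computation really matches the linear one. I would try first to show that the two double covers split compatibly on these intersections, reducing to an $\SL_2$ statement. The $\SL_2$ statement would then be proved by exhibiting explicit Kirillov-model vectors supported on $\mathcal{O}^\times$ or $\varpi\mathcal{O}^\times$.
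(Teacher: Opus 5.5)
Your sign argument is correct: $-1_2=t(-1)$ lifts to $(-1_2,1)=-1_\psi$ because $\gamma_F(-1,\psi)=1$ when $c(\psi)=0$. Your plans for the non-supercuspidal cases agree with the model computations the paper imports from \cite{ish}. For the Steinberg subtraction, though, the dimensions of $(\omega^+_{\psi,\chi})^{K_m}_\eta$ for every $m$ must come from your Schr\"odinger-model analysis, not from the coset count.

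\textbf{The gap: the supercuspidal lower bound.} Exhibiting explicit Kirillov-model vectors only shows $\tau^{K_M}_{\eta'}\neq\{0\}$. After transfer this gives $c_\eta(\pi)\le M$, or $\le M+1$ using $\varPi(\beta)f$. Nothing in your plan shows $\pi^{K_m}_\eta=\{0\}$ for smaller $m$. That vanishing is exactly what the formula asserts, and it is what separates $2\max(c(\lambda),c(\eta))$ from $1+2\max(c(\lambda),c(\eta))$ in the defect-$0$ case.

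The paper gets the lower bound from the Roberts--Schmidt count (Theorem \ref{thm:rs-Mp2}), $\sum_m\dim\pi^{K_m,\RSnew}_\eta=\#F_\psi(\pi)/F^{\times2}\in\{1,2\}$. The argument runs as follows.
\begin{enumerate}
\item The transferred test vectors are eigenvectors of $\mathcal{R}_\varpi$ (resp.\ $\mathcal{Q}_\varpi$) with nonzero eigenvalue, by Lemmas \ref{lem:comm_R-operator_sc}, \ref{lem:comm_Q-operator_sc} and the eigenvalue computations in Lemmas \ref{lem:testvector_unram2_sl2}, \ref{lem:testvector_unram4_sl2}, \ref{lem:testvector_ram_sl2}.
\item These operators annihilate $\beta_2(\pi^{K_{m-2}}_\eta)$. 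Hence the vectors remain linearly independent modulo $\pi^{K_M,\KUold}_\eta\supset\pi^{K_M,\RSold}_\eta$ (resp.\ modulo $\pi^{K_{M+1},\Qold}_\eta\supset\pi^{K_{M+1},\RSold}_\eta$).
\item This already exhausts the Roberts--Schmidt total. Since $\pi^{K_{c_\eta(\pi)},\RSnew}_\eta=\pi^{K_{c_\eta(\pi)}}_\eta\neq\{0\}$, $c_\eta(\pi)$ cannot be smaller.
\end{enumerate}

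Alternatively, you could close the gap inside $\SL_2$ by treating an arbitrary $v\in\tau^{K_m}_{\eta'}\subset K(\varPi)$ rather than a chosen one.
\begin{enumerate}
\item $n(\mathcal{O})$-invariance forces $\operatorname{supp}v\subset\mathcal{O}$.
\item The decomposition of $K(\varPi)$ along the $L$-packet restricts which shells $\varpi^j\mathcal{O}^\times$ can occur; only $j\ge1$ occur when $\tau$ is not $\psi_a$-generic for any $a\in\mathcal{O}^\times$.
\item The $t(\mathcal{O}^\times)$-condition pins down $\nu^2$ for the characters $\nu$ in the Mellin expansion.
\item The Jacquet--Langlands functional equation with $n_\nu=-c(\varPi\otimes(\nu_F^{-1}\circ\det))$, together with Tunnell's formula, then forces $m\ge j+\max(c(\varPi),2c(\eta'))$ for every shell $j$ that occurs.
\end{enumerate}

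\textbf{Two further points in the transfer step.} First, the splittings $\bm{s}$ on $K_m$ and $\bm{s}_\delta$ on $J_\delta$ do not simply agree on $K_m\cap\prescript{g}{}{J_\delta}$. For the representatives of Lemmas \ref{lem:coset_sc0} and \ref{lem:coset_sc1} one finds $\bm{s}_\delta(\gamma^g)\bm{s}(\gamma)=\chi_\varpi^\delta(\gamma)$. So $\pi^{K_m}_\eta$ is identified with $\tau^{K_m}_{\eta'}$ for $\eta'=\eta\chi_\varpi^{\delta}$, not with $\tau^{K_m}_\eta$. This leaves the final formula unchanged only because $c(\eta)$ and $c(\eta\chi_\varpi)$ differ just when both are $\le1$, while $c(\lambda)\ge1$ (and $c(\lambda)\ge2$ when $d(\lambda)=1$).

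Second, to rewrite the $\SL_2$ answer $\max(c(\varPi),2c(\eta'))$ (plus $1$ in the non-generic case) in terms of $\lambda$, you need $c(\varPi)=2c(\lambda)-d(\lambda)$. This comes from Proposition \ref{prop:Kut-cond}, Lemma \ref{lem:compatib-cpt_Man-KS} and Proposition \ref{prop:compatib-indices_Man-KS}. You also need Lemma \ref{lem:sc-generic} to transport the genericity condition between $\pi$ and $\tau$.
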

\begin{proof}
    If $\eta(-1) \neq z_\psi(\pi)$, it is immediate from definition that $\pi^{K_m}_\eta$ is zero for every $m$.
    Consider the case that $\eta(-1) = z_\psi(\pi)$.
    If $\pi$ is not supercuspidal, the formula follows from \cite[Theorems 4.1, 4.4, and 4.7]{ish}.
    If $\pi$ is supercuspidal, it follows from Propositions \ref{prop:sc-unram2}, \ref{prop:sc-unram4}, and \ref{prop:sc-ram}.
\end{proof}

In this paper, moreover, we shall calculate the dimension of the space of our local newforms, and show the uniqueness property.
\begin{theorem}\label{thm:main-nongeneric}
    Let $\pi$ be an irreducible genuine representation of $\widetilde{G}$, and $\eta$ a character of $\mathcal{O}^\times$ such that $\eta(-1) = z_\psi(\pi)$.
    Assume that $\pi$ is neither $\psi$-generic nor $\psi_\xi$-generic.
    Then we have
    \begin{equation*}
        \pi^{K_m, \KUnew}_\eta = \{0\}
    \end{equation*}
    for all $m$.
\end{theorem}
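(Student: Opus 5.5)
Non-$\psi$- and non-$\psi_\xi$-genericity means $\pi$ is $\psi_a$-generic for no $a\in\mathcal{O}^\times$, since $\mathcal{O}^\times=\mathcal{O}^{\times2}\sqcup\xi\mathcal{O}^{\times2}$ and genericity only depends on $a$ modulo $F^{\times2}$. The plan is to show that $\mathcal{R}_\varpi$ annihilates every vector of $\pi^{K_\infty}_\eta$. Then $\operatorname{Ker}(\mathcal{R}_\varpi|_{\pi^{K_m}_\eta})=\pi^{K_m}_\eta$ for every $m$, so $\pi^{K_m,\KUold}_\eta=\pi^{K_m}_\eta$ directly from Definition \ref{def:local-KU}, and all the new quotients vanish.

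To show $\mathcal{R}_\varpi v=0$, pass to the Jacquet-type twisted coinvariants. For $v\in\pi^{K_m}_\eta$ the vector $\mathcal{R}_\varpi v$ again lies in some $\pi^{K_{m'}}_\eta$ by Proposition \ref{prop:R-operator}(b). Write $\mathcal{R}_\varpi v$ as a combination of the $\psi_a$-isotypic pieces under $N\cap K_0$-translations. Expand the function $u\mapsto \pi(n(\varpi^{-1}u))v$ on $\mathcal{O}/\mathfrak{p}$: because $v$ is fixed by $n(\mathcal{O})$, this is a function on $\varpi^{-1}\mathcal{O}/\mathcal{O}$. By finite Fourier analysis, $\mathcal{R}_\varpi v$ is a linear combination of the projections
\begin{equation*}
P_a v=\int_{\mathcal{O}}\psi_a(-\varpi^{-1}x)\,\pi(n(\varpi^{-1}x))v\,dx,\qquad a\in\mathcal{O}^\times/(1+\mathfrak{p}),
\end{equation*}
with coefficients given by Gauss sums $g(\chi_\varpi,\psi_{a\varpi^{-1}})$, which are nonzero. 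So it suffices to show $P_av=0$ for all $a\in\mathcal{O}^\times$. The vector $P_av$ transforms under $n(\varpi^{-1}\mathcal{O})$ by $\psi_{a\varpi^{-1}}$.

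Next, apply the torus elements $t(\varpi^{k})$ and the $K_{m'}$-equivariance to propagate $P_av$ to vectors transforming under ever larger compact subgroups $n(\varpi^{-j}\mathcal{O})$ by $\psi_a$-type characters. If $P_av\neq0$, a limiting argument (the standard criterion that the image of $v$ in the twisted Jacquet module $V_{N,\psi_a}$ is nonzero iff the averages $\int_{\varpi^{-j}\mathcal{O}}\psi_a(-x)\pi(n(x))v\,dx$ are nonzero for all large $j$) should produce a nonzero $\psi_{a}$-Whittaker functional, after using the $K$-type of $v$ to show those larger averages do not vanish. The obstacle is exactly this step: nonvanishing of $P_av$ at level $\varpi^{-1}$ need not a priori persist.

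If that direct route resists, I would instead argue case by case using the classification. Non-$\psi_a$-genericity for all units $a$ forces $\pi$ to be one of the following:
\begin{itemize}
\item a Weil representation $\omega^\pm_{\psi_b}$ with $\operatorname{ord}(b)$ odd;
\item a supercuspidal representation of the second type in \S\ref{subsec:representations}, with $d(\lambda)=0$;
\item possibly a Steinberg representation $\mathit{St}_{\psi,\chi}$ with $\chi$ ramified.
\end{itemize}
In each case I would verify $\mathcal{R}_\varpi=0$ on $\pi^{K_\infty}_\eta$ through its explicit model. For the Schrödinger model, $\pi(n(\varpi^{-1}u))$ multiplies by $\psi(b\varpi^{-1}uy^2)$. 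Integrating against $(u,\varpi)_{F,2}$ then gives a quadratic Gauss sum, which vanishes for the relevant supports by Lemma \ref{lem:gausssum2} and parity considerations. The supercuspidal case is the real difficulty, and I would handle it through the Kirillov model transfer from $\SL_2$ described in the introduction.
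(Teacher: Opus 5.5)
Your reduction is sound. A vector $v\in\pi^{K_m}_\eta$ is fixed by $n(\mathcal{O})$, so $\mathcal{R}_\varpi v$ is a combination of the $P_av$ with $a\in(\mathcal{O}/\mathfrak{p})^\times$. Once $\mathcal{R}_\varpi$ kills $\pi^{K_m}_\eta$, Definition \ref{def:local-KU} gives $\pi^{K_m,\KUold}_\eta=\pi^{K_m}_\eta$.

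\textbf{The gap.} The step that carries all the content, $P_av=0$, is not proved. You flag it as an obstacle, and the remedy you sketch does not work. That remedy conjugates by $t(\varpi^k)$ and invokes the $K$-type to keep the averages for the \emph{fixed} character $\psi_a$ nonzero. But an $(n(\mathfrak{p}^{-1}),\psi_a)$-equivariant vector need not survive in $V_{N,\psi_a}$. Conjugating by $t(\varpi^k)$ only moves you to $\psi_{a\varpi^{2k}}$ on a larger group and says nothing about the original averages.

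Your fallback does not close the gap either:
\begin{itemize}
\item The Weil case is fine. The relevant integral is the ordinary Gauss sum $g(\chi_\varpi,\psi_{b\varpi^{-1}y^2})$, which vanishes by Lemma \ref{lem:gausssum1} because $\operatorname{ord}(b\varpi^{-1}y^2)$ is even for $y\neq0$.
\item The one hard case, the defect-$0$ supercuspidals generic only for classes of odd valuation, is merely deferred to ``the Kirillov model transfer''. That is the whole of Section \ref{sec:sc}: the isomorphisms $\mathscr{G}_m$ intertwining $\mathcal{R}_\varpi,\mathcal{Q}_\varpi$ with their $\SL_2$ analogues, the Kirillov test vectors, and the count of Theorem \ref{thm:rs-Mp2}.
\item Steinberg representations cannot occur in your list. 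They are generic for three of the four square classes (Proposition \ref{prop:num-generic}), hence always $\psi$- or $\psi_\xi$-generic.
\end{itemize}

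\textbf{The missing idea: let the character move.} Do not pin it at $\psi_a$. The hypothesis gives $\Hom_N(\pi,\psi_b)=0$, hence $V_{N,\psi_b}=0$, for \emph{every} $b\in a+\mathfrak{p}\subset\mathcal{O}^\times$.
\begin{enumerate}
\item Put $u=P_av$. For each such $b$ there is $j_b$ with $\int_{\mathfrak{p}^{-j_b}}\psi_b(-y)\pi(n(y))u\,dy=0$.
\item Since $c(\psi)=0$, this vanishing persists for all $j\ge j_b$ and all $b'\in b+\mathfrak{p}^{j_b}$. By compactness of $a+\mathfrak{p}$, a single $j\ge1$ works for every $b\in a+\mathfrak{p}$.
\item As $u$ is $n(\mathcal{O})$-fixed, Fourier inversion on $\mathfrak{p}^{-j}/\mathcal{O}$ gives $u=\operatorname{vol}(\mathfrak{p}^{-j})^{-1}\sum_{b\in\mathcal{O}/\mathfrak{p}^j}\int_{\mathfrak{p}^{-j}}\psi_b(-y)\pi(n(y))u\,dy$.
\item Terms with $b\notin a+\mathfrak{p}$ vanish by the $\psi_a$-equivariance of $u$ under $n(\mathfrak{p}^{-1})$. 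The others vanish by the choice of $j$. So $u=0$.
\end{enumerate}
Hence $\mathcal{R}_\varpi$ vanishes on every $n(\mathcal{O})$-fixed vector. The theorem then follows with no classification and no use of the $K$-type.

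\textbf{Comparison with the paper.} Completed this way, your argument is genuinely different from, and much shorter than, the paper's proof. The paper uses Proposition \ref{prop:num-generic} and the classification to reduce to even Weil representations (Proposition \ref{prop:evenWeil}) and unramified supercuspidals (Propositions \ref{prop:sc-unram2} and \ref{prop:sc-unram4}). It then shows that $\mathcal{R}_\varpi$ kills $\pi^{K_{c_\eta(\pi)}}_\eta$ and that the Roberts--Schmidt new spaces vanish at all other levels.
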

\begin{proof}
    If $\pi$ is not supercuspidal, then by Proposition \ref{prop:num-generic}, $\pi$ must be an even Weil representation.
    Hence the assertion follows from Proposition \ref{prop:evenWeil}.
    If $\pi$ is supercuspidal, then by the results of Kutzko--Sally (which will be reviewed in Subsection \ref{subsec:sc-constr}) and Lemma \ref{lem:sc-generic}, $\pi$ must come from a strongly cuspidal representation of defect 1.
    Hence the assertion follows from Propositions \ref{prop:sc-unram2} and \ref{prop:sc-unram4}.
\end{proof}
\begin{theorem}\label{thm:main-generic}
    Let $\pi$ be an irreducible genuine representation of $\widetilde{G}$, and $\eta$ a character of $\mathcal{O}^\times$ such that $\eta(-1) = z_\psi(\pi)$.
    Assume that $\pi$ is $\psi_a$-generic for some $a \in \mathcal{O}^\times$.
    Then $\pi^{K_m, \KUnew}_\eta$ is zero unless $m = c_\eta(\pi)$.
    Moreover, the following holds.
    If $\pi$ is supercuspidal, we use the notation and terminology introduced in Section \ref{sec:sc}.
    \begin{enumerate}[(1)]
        \item In one of the following cases, the space $\pi^{{K_m}, \KUnew}_\eta$ is 1-dimensional for $m = c_\eta(\pi)$.\begin{itemize}
                  \item $\pi = \pi_\psi(\mu)$ and $\eta = \mu^{\pm1}|_{\mathcal{O}^\times}$.
                  \item $\pi$ is an even or odd Weil representation.
                  \item $\pi = \mathit{St}_{\psi, \chi}$ and $\eta = \chi|_{\mathcal{O}^\times}$.
                  \item $\pi = \cInd^{\widetilde{G}}_{\widetilde{J}}(\widetilde{\lambda})$ with $d(\lambda)=1$.
              \end{itemize}
        \item Consider the other cases, namely\begin{itemize}
                  \item $\pi = \pi_\psi(\mu)$ and $\eta \neq \mu^{\pm1}|_{\mathcal{O}^\times}$;
                  \item $\pi = \mathit{St}_{\psi, \chi}$ and $\eta \neq \chi|_{\mathcal{O}^\times}$;
                  \item $\pi = \cInd^{\widetilde{G}}_{\widetilde{J}}(\widetilde{\lambda})$ with $d(\lambda)=0$ but not isomorphic to even Weil representations.
              \end{itemize}
              Then for $m = c_\eta(\pi)$, the space $\pi^{{K_m}, \KUold}_\eta$ is zero, and $\pi^{{K_m}, \KUnew}_\eta = \pi^{K_m}_\eta$ is 2-dimensional with a basis $\{v_+, v_-\}$ satisfying
              \begin{equation*}
                  \mathcal{R}_\varpi v_\pm = \pm \gamma_F(\varpi, \psi) q^{-\frac{1}{2}} \operatorname{vol}(\mathcal{O}) v_\pm.
              \end{equation*}
    \end{enumerate}
\end{theorem}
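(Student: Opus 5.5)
We argue case by case along the classification of irreducible genuine representations recalled in Subsection \ref{subsec:representations}: principal series $\pi_\psi(\mu)$, even and odd Weil representations, Steinberg representations, and the remaining supercuspidals $\cInd^{\widetilde{G}}_{\widetilde{J}}(\widetilde{\lambda})$. In each case the strategy has three parts. First, we determine $\pi^{K_m}_\eta$ for every $m$ explicitly, which \cite{ish} and Theorem \ref{thm:conductor} already give in dimension form. Second, we compute the actions of $\mathcal{U}_{\varpi^2}$ and $\mathcal{R}_\varpi$ on these spaces. Third, we check that for $m>c_\eta(\pi)$ the three pieces $\pi^{K_{m-1}}_\eta$, $\mathcal{R}_\varpi(\pi^{K_{m-1}}_\eta)$ (or $\mathcal{U}_{\varpi^2}(\pi^{K_0}_\eta)$ when $m=1$) and $\operatorname{Ker}(\mathcal{R}_\varpi|_{\pi^{K_m}_\eta})$ span $\pi^{K_m}_\eta$, while at $m=c_\eta(\pi)$ only the kernel term survives. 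Throughout we use Proposition \ref{prop:R-operator}(c): on $\pi^{K_\infty}_\eta$ the operator $\mathcal{R}_\varpi$ satisfies $\mathcal{R}_\varpi^3=c^2\mathcal{R}_\varpi$ with $c^2=(-1,\varpi)_{F,2}q^{-1}\operatorname{vol}(\mathcal{O})^2$. Since $\gamma_F(\varpi,\psi)^2=\gamma_F(-1,\psi)^{-1}\cdot(\ldots)=(-1,\varpi)_{F,2}$ by the listed Weil index identities, we may take $c=\gamma_F(\varpi,\psi)q^{-1/2}\operatorname{vol}(\mathcal{O})$. Hence $\mathcal{R}_\varpi$ is diagonalizable with eigenvalues in $\{0,\pm c\}$, and on $\pi^{K_m}_\eta$ modulo $\operatorname{Ker}\mathcal{R}_\varpi$ the new quotient is spanned by images of $\pm c$-eigenvectors. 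This is what yields the basis $\{v_+,v_-\}$ in (2), once we know $\operatorname{Ker}(\mathcal{R}_\varpi|_{\pi^{K_m}_\eta})=0$ at $m=c_\eta(\pi)$ and the dimension is $2$ with both eigenvalues occurring.

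For principal series we work in the induced model. By Lemma \ref{lem:coset_ps}, a vector in $\pi_\psi(\mu)^{K_m}_\eta$ is determined by its values on the representatives $1_2$, $w$, $n^{\mathrm{op}}(\varpi^i)$ and $n^{\mathrm{op}}(\xi\varpi^i)$, subject to compatibility conditions involving $\eta$, $\mu$ and the splitting (Lemma \ref{lem:spl_lv1}). We then compute $\mathcal{R}_\varpi f$ and $\mathcal{U}_{\varpi^2}f$ on these representatives. This reduces to integrals of $\mu$, $\eta$ and Hilbert symbols against $\psi$, namely the Gauss sums $g$, $h$ and the Jacobi-type sums $I$, $I'$ of Section \ref{sec:Gausssum_Jacobisum}. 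Lemmas \ref{lem:gausssum2}, \ref{lem:square-1_Hilbertsymbol} and \ref{lem:jacobisum2} guarantee that the relevant coefficients are nonzero, which gives the surjectivity needed above $c_\eta(\pi)$ and the injectivity of $\mathcal{R}_\varpi$ at $c_\eta(\pi)$. The dimension-$1$ case $\eta=\mu^{\pm1}|_{\mathcal{O}^\times}$ is exactly the case in which the level-$c_\eta$ space has a kernel vector for $\mathcal{R}_\varpi$, or one eigenvalue is missing. The even Weil representation is handled the same way in the Schrödinger model, where $\mathcal{R}_\varpi$ acts by multiplication by $\int(u,\varpi)\psi(\varpi^{-1}uy^2)du$, a quadratic Gauss sum in $y$ (this is presumably Proposition \ref{prop:evenWeil}). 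For the Steinberg representation we use \eqref{eq:SESstd}. Taking $K_m$-$\eta$-invariants is exact because $K_m$ is compact. Since the operators are equivariant, the kernel/image data of $\mathit{St}_{\psi,\chi}$ are obtained by subtracting those of $\omega^+_{\psi,\chi}$ from those of $\pi_\psi(\chi|\cdot|^{1/2})$, keeping track of the eigenvalue decomposition at each level.

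For supercuspidals we transfer from $\SL_2(F)$. Following the Kutzko--Sally construction, $\widetilde{\lambda}$ on $\widetilde{J}$ corresponds to a strongly cuspidal $\lambda$ on $J\subset\SL_2(F)$. The spaces $\pi^{K_m}_\eta$ and the operators are computed by Mackey theory on $K_m\backslash\widetilde{G}/\widetilde{J}$, twisted by the genuine character arising from the splitting, so that they match those of an $\SL_2(F)$-supercuspidal with a twisted $\eta$. The $\SL_2$ statement itself we prove inside the Kirillov model of a $\GL_2(F)$ supercuspidal restricted to $\SL_2(F)$. There, $\pi^{K_m}_\eta$ is spanned by functions $\mathbf{1}_{\mathcal{O}^\times\cap a\mathcal{O}^{\times2}}\cdot(\text{character})$ supported at a fixed valuation, the unipotent elements act by $\psi(bx)$, and $\mathcal{R}_\varpi$ acts by multiplication by a quadratic Gauss sum. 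Genericity for $\psi_a$ with $a\in\mathcal{O}^\times$ and the defect $d(\lambda)$ decide whether one or both square classes survive, which gives dimension $1$ or $2$.

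The main obstacle is the supercuspidal case. Controlling the transfer from $\widetilde{\SL_2}$ to $\SL_2$, with its cocycle signs and the dependence on the square class of $a$, requires a careful review of the Kutzko--Sally construction. The principal series computations are lengthy but mechanical, given the lemmas of Section \ref{sec:Gausssum_Jacobisum}.
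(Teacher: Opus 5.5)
Your case division and local tools match the paper: the induced model with the sums of Section~\ref{sec:Gausssum_Jacobisum} for $\pi_\psi(\mu)$, the Schr\"odinger model for $\omega^+_{\psi,\chi}$, exact sequences for $\mathit{St}_{\psi,\chi}$, transfer to $\SL_2(F)$ plus the Kirillov model for supercuspidals, and the eigenvalue structure from $\mathcal{R}_\varpi^3=c^2\mathcal{R}_\varpi$. The genuine gap is that your proposal has no argument bounding $\pi^{K_m,\KUnew}_\eta$ from above, and the plan you substitute does not go through as stated.

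Your first step assumes $\pi^{K_m}_\eta$ is known for every $m$. For supercuspidal $\pi$, neither \cite{ish} nor Theorem~\ref{thm:conductor} supplies this. Indeed, the supercuspidal part of Theorem~\ref{thm:conductor} is proved by Propositions~\ref{prop:sc-unram2}, \ref{prop:sc-unram4}, \ref{prop:sc-ram}, i.e.\ by the very argument at issue. Your Kirillov-model description only names candidate vectors supported on $\mathcal{O}^\times$. It does not show:
\begin{itemize}
\item that these vectors become $K_m$-$\eta'$-equivariant exactly at $m=c_\eta(\pi)$;
\item that nothing else is equivariant there, i.e.\ $\dim\pi^{K_{c_\eta(\pi)}}_\eta\le 2$;
\item what happens for $m>c_\eta(\pi)$, where vectors supported on $\varpi^n\mathcal{O}^\times$ with $n\ge 1$ appear.
\end{itemize}
For $\pi_\psi(\mu)$ and $\omega^+_{\psi,\chi}$, ``check for every $m>c_\eta(\pi)$'' is likewise an infinite family of verifications with no uniform argument given.

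The missing idea is Proposition~\ref{prop:compare-KU-RS}. Since $\mathcal{R}_\varpi\circ\beta_2=0$, one has $\pi^{K_m,\KUold}_\eta\supset\pi^{K_m,\RSold}_\eta$. Theorem~\ref{thm:rs-Mp2} then gives $\sum_m\dim\pi^{K_m,\KUnew}_\eta\le\#F_\psi(\pi)/F^{\times2}$, and the results of \cite{ish} confine the non-supercuspidal computations to finitely many levels. After that, only lower bounds are needed: at level $M$ you exhibit vectors outside $\pi^{K_{M-1}}_\eta$ that are $\mathcal{R}_\varpi$-eigenvectors with eigenvalue $\pm g(\chi_\varpi,\psi_{\varpi^{-1}})\neq 0$. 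For supercuspidals these come from $\mathscr{G}_M$, Lemma~\ref{lem:comm_R-operator_sc}, and the test vectors of Lemmas~\ref{lem:testvector_unram2_sl2}--\ref{lem:testvector_ram_sl2}. Their exact level is computed with the Jacquet--Langlands functional equation and Tunnell's conductor formula.

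In the ramified case the paper also needs the auxiliary operator $\mathcal{Q}_\varpi$, which kills $\beta_2(\pi^{K_m}_\eta)$. It produces a second Roberts--Schmidt-new vector at level $M+1$. Saturating the bound $2$ is what forces $\dim\pi^{K_M}_\eta=1$ and $\pi^{K_{M+1},\KUnew}_\eta=0$; your proposal has no analogue of this step.

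You could avoid Roberts--Schmidt only by doing the Kirillov computation at every level. That means showing $\tau^{K_m}_{\eta'}$ is spanned by the square-class functions on $\varpi^n\mathcal{O}^\times$, $0\le n\le m-M$, of the admissible parity, all of which $\mathcal{R}'_\varpi$ kills for $n\ge 1$. That would be a legitimate alternative, but it is not in your proposal.

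A smaller inaccuracy: $\mathcal{R}_\varpi$ preserves $\pi^{K_m}_\eta$ only for $m\ge\max(2,c(\eta)+1)$. So in the one-dimensional principal series cases $\eta=\mu^{\pm1}|_{\mathcal{O}^\times}$, where $c_\eta(\pi)=c(\eta)$, there is no kernel vector or missing eigenvalue at level $c_\eta(\pi)$. The space is simply one-dimensional, and $\mathcal{R}_\varpi$ sends it to a higher level.
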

\begin{proof}
    The theorem follows from Propositions \ref{prop:unram_case}, \ref{prop:quad-ps_case}, \ref{prop:general-min-conductor-ps_case}, \ref{prop:general-non-min-ps_case}, \ref{prop:evenWeil}, \ref{prop:Steinberg}, \ref{prop:sc-unram2}, \ref{prop:sc-unram4}, and \ref{prop:sc-ram}.
\end{proof}
In particular, the metaplectic group $\widetilde{\SL_2}(F)$ satisfies a principle that an irreducible representation has local newforms if and only if the representation is generic, and that the local newforms are unique in some sense.

\subsection{Relation with the classical theory}\label{subsec:relation-KU}
Before proving the theorems, we shall give a rough explanation for the relation with the theory of Kohnen and Ueda.

Let $k$ be a non-negative integer, $N$ a positive odd integer, and $\chi$ an quadratic or trivial Dirichlet character modulo $4N$ such that $\chi(-1)=+1$.
For such $k$, $N$, and $\chi$, let $S_{k+\frac{1}{2}}^+(4N, \chi)$ denote the plus space of cusp forms of weight $k+\frac{1}{2}$, of level $\Gamma_0(4N)$, and with character $\chi$.
For a positive integer $m$ and an odd prime number $p$, the operators $\delta_m$, $U(m)$, and $R_p$ are defined as follows:
\begin{align*}
     & f|\delta_m (z) = f(mz),                                                     \\
     & f|U(m) (z) = \sum_{n \geq 1} a_{mn} e^{2 \pi i n z},                        \\
     & f|R_p (z) = \sum_{n \geq 1} a_n \left( \frac{n}{p} \right) e^{2 \pi i n z},
\end{align*}
for $f(z) = \sum_{n\geq 1} a_n e^{2 \pi i n z} \in S_{k+\frac{1}{2}}^+(4N, \chi)$.
Here $\left( \dfrac{}{p} \right)$ denotes the Legendre symbol.
We write $\mathfrak{H}$ for the upper half plane in this subsection.

From a cusp form $f \in S_{k+\frac{1}{2}}^+(4N, \chi)$, we can construct a cuspidal automorphic form $\varphi_f$ on $\SL_2(\Q) \backslash \widetilde{\SL_2}(\A_\Q)$ in the following way.
We write $\A_\Q$ for the ring of adeles of $\Q$.
Recall from \cite[Section 8]{hi} the definition of $\widetilde{\SL_2}(\A_\Q)$.
For a finite set $\mathfrak{S}$ of places of $\Q$ that contains $2$ and $\infty$, put
\begin{equation*}
    \SL_2(\A_\Q)_\mathfrak{S} = \prod_{v \in \mathfrak{S}} \SL_2(\Q_v) \times \prod_{v \notin \mathfrak{S}} \SL_2(\Z_v).
\end{equation*}
Let $\widetilde{\SL_2}(\A_\Q)_\mathfrak{S}$ be the double cover of $\SL_2(\A_\Q)_\mathfrak{S}$ defined by the 2-cocycle $\prod_{v \in \mathfrak{S}} \bm{c}_v$, where $\bm{c}_v$ denotes the Kubota cocycle for $\SL_2(\Q_v)$.
For two such finite sets $\mathfrak{S}_1 \subset \mathfrak{S}_2$, there is an embedding $\widetilde{\SL_2}(\A_\Q)_{\mathfrak{S}_1} \hookrightarrow \widetilde{\SL_2}(\A_\Q)_{\mathfrak{S}_2}$ given by
\begin{equation*}
    ((g_v)_v, \epsilon) \mapsto \left( (g_v)_v, \epsilon \prod_{v \in \mathfrak{S}_2 \setminus \mathfrak{S}_1} \bm{s}_v(g_v) \right),
\end{equation*}
where we write $\bm{s}_v$ for the splitting $\bm{s}$ over $\SL_2(\Z_v)$ given in \S\ref{subsec:groups} for every odd prime number $v$.
The global metaplectic group $\widetilde{\SL_2}(\A_\Q)$ is defined to be the direct limit of $\{\widetilde{\SL_2}(\A_\Q)_\mathfrak{S}\}_\mathfrak{S}$.
We have a short exact sequence
\begin{equation}\label{eq:cov-global}
    1 \to \{\pm1\} \to \widetilde{\SL_2}(\A_\Q) \to \SL_2(\A_\Q) \to 1.
\end{equation}
The covering is known to split canonically over $\SL_2(\Q)$, and thus $\SL_2(\Q)$ is regarded as a subgroup of $\widetilde{\SL_2}(\A_\Q)$.
The embedding is given by $x \mapsto (x,1)$ for sufficiently large $\mathfrak{S}$.
For any prime number $l$, put
\begin{equation*}
    \Gamma_0(4N)_l = \Set{ \left( \begin{array}{cc} a&b\\c&d \end{array} \right) \in \SL_2(\Z_l) | c \in 4N\Z_l},
\end{equation*}
and let $\tilde{\Gamma}_0(4N)_l$ denote the preimage of $\Gamma_0(4N)_l$ under the covering map $\widetilde{\SL_2}(\Q_l) \to \SL_2(\Q_l)$.
Let $\Gamma_0(4N)_\fin$ be the direct product of $\Gamma_0(4N)_l$ over all prime numbers $l$, and $\tilde{\Gamma}_0(4N)_\fin$ denote the preimage of $\Gamma_0(4N)_\fin$ under the covering map \eqref{eq:cov-global}.
We have a decomposition
\begin{equation*}
    \tilde{\Gamma}_0(4N)_\fin = \tilde{\Gamma}_0(4N)_2 \times \prod_{l\neq2} \Gamma_0(4N)_l.
\end{equation*}
Let $\psi_\Q = \otimes_v \psi_{\Q_v}$ be the additive character of $\Q \backslash \A_\Q$ such that $\psi_{\Q_\infty}$ sends $x \in \R$ to $e^{2 \pi i x} \in \C$.
We shall write $\varepsilon_2$ for the genuine character of $\tilde{\Gamma}_0(4N)_2$ given in \cite[Lemma 1.1]{hi} with respect to $\psi_{\Q_2}$.
Also let $j_\infty$ denote the factor of automorphy on $\widetilde{\SL_2}(\R) \times \mathfrak{H}$ given in \cite[Section 7]{hi}.
The strong approximation theorem tells us that
\begin{equation*}
    \widetilde{\SL_2}(\A_\Q) = \SL_2(\Q) \widetilde{\SL_2}(\R) \tilde{\Gamma}_0(4N)_\fin.
\end{equation*}
Then $\varphi_f$ is given by
\begin{equation*}
    \varphi_f(g) = \chi(\kappa) \left( j_\infty(g_\infty, i) \varepsilon_2(\kappa_2) \right)^{-2k-1} f(g_\infty i),
\end{equation*}
for $g=g_\Q g_\infty \kappa \in \widetilde{\SL_2}(\A_\Q)$, where $g_\Q \in \SL_2(\Q)$, $g_\infty \in \widetilde{\SL_2}(\R)$, and $$\kappa=(\kappa_l)_l \in \tilde{\Gamma}_0(4N)_\fin = \tilde{\Gamma}_0(4N)_2 \times \prod_{l\neq2} \Gamma_0(4N)_l.$$
Here, $\chi(\kappa)$ denotes the image of $\kappa$ under
\begin{align*}
    \tilde{\Gamma}_0(4N)_\fin \to \Gamma_0(4N)_\fin \overset{\mod 4N}{\longrightarrow} \prod_l B(\Z_l/4N\Z_l) \overset{(2,2)-component}{\longrightarrow} \prod_l (\Z_l/4N\Z_l)^\times \cong (\Z/4N\Z)^\times \overset{\chi}{\to} \C^\times,
\end{align*}
where $B$ denotes the subgroup of $\SL_2$ consisting of upper triangular matrices.
Let us write $\pi_f=\otimes_v \pi_{f,v}$ for the automorphic representation of $\widetilde{\SL_2}(\A_\Q)$ generated by $\varphi_f$.

For any odd prime number $p$, we have $\pi_f(\kappa_p)\varphi_f = \chi(\kappa_p) \varphi_f$ for any $\kappa_p \in \Gamma_0(4N)_p$.
In particular, we can consider $\mathcal{R}_p \varphi_f$ and $\mathcal{U}_{p^2} \varphi_f$.
\begin{proposition}\label{prop:relation-KU}
    Let $p$ be an odd prime number.
    Then for any $f(z) = \sum_{n \geq 1} a_n e^{2 \pi i n z} \in S_{k+\frac{1}{2}}^+(4N, \chi)$, we have
    \begin{align*}
        \mathcal{R}_p \varphi_f     & = \operatorname{vol}(p\Z_p) \bm{G}_p \varphi_{f|R_p}, \\
        \mathcal{U}_{p^2} \varphi_f & = \operatorname{vol}(p^2\Z_p) C_p \varphi_{f|U(p^2)},
    \end{align*}
    where
    \begin{align*}
        \bm{G}_p & = \sum_{y \in \Z/p\Z} \left( \frac{y}{p} \right)  e^{-2 \pi i p^{-1}y},                   \\
        C_p      & = \chi^{\{p\}}(p) p^{-k+\frac{3}{2}} (p,p)_{\Q_2,2} \gamma_{\Q_2}(p, \psi_{\Q_2})^{2k+3},
    \end{align*}
    and $\chi^{\{p\}}$ is the composition
    \begin{equation*}
        \prod_{l \neq p} (\Z_l/4N\Z_l)^\times \overset{1 \times \mathrm{id}}{\longrightarrow} (\Z_p/4N\Z_p) \times \prod_{l \neq p} (\Z_l/4N\Z_l)^\times \cong (\Z/4N\Z) \overset{\chi}{\to} \C^\times.
    \end{equation*}
\end{proposition}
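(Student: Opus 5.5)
The plan is to compare both sides as functions on $\widetilde{\SL_2}(\A_\Q)$ by restricting them to the archimedean factor. First I will check that both sides transform in the same way under $\tilde{\Gamma}_0(4N)_\fin$.
\begin{itemize}
\item The operators $\mathcal{R}_p$ and $\mathcal{U}_{p^2}$ act only at the place $p$, so at $l \neq p$ the equivariance under $\Gamma_0(4N)_l$ (through $\chi$ and $\varepsilon_2$) is untouched.
\item At $p$, the equivariance is given by Propositions \ref{prop:U-operator} and \ref{prop:R-operator}(b), at least for the relevant level, possibly after enlarging the level at $p$ for $\mathcal{R}_p$.
\item On the classical side, $f|R_p$ and $f|U(p^2)$ are cusp forms of the corresponding levels.
\end{itemize}
By strong approximation, two such automorphic forms agree once they agree on $g_\infty \in \widetilde{\SL_2}(\R)$ with trivial finite component. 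So it suffices to compute $(\mathcal{R}_p\varphi_f)(g_\infty)$ and $(\mathcal{U}_{p^2}\varphi_f)(g_\infty)$.

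For $\mathcal{R}_p$, the integrand $u \mapsto \varphi_f(g_\infty n_p(p^{-1}u))$ is locally constant modulo $p\Z_p$. The integral therefore becomes $\operatorname{vol}(p\Z_p)\sum_{y \bmod p}(y/p)\,\varphi_f(g_\infty n_p(y/p))$, where $y$ runs over rational integer representatives. Next I use left $\SL_2(\Q)$-invariance with the rational element $n(-y/p)$, which has trivial cocycle and trivial splitting everywhere. This turns $g_\infty n_p(y/p)$ into $n_\infty(-y/p)\,g_\infty \prod_{l\neq p} n_l(-y/p)$. For $l \neq p$ we have $y/p \in \Z_l$, so the factor $n_l(-y/p)$ lies in $\Gamma_0(4N)_l$, and $\chi$, $\varepsilon_2$ and the splittings are all trivial on it. Since $j_\infty$ is unchanged by a unipotent translation, we get
\begin{equation*}
(\mathcal{R}_p\varphi_f)(g_\infty) = \operatorname{vol}(p\Z_p)\, j_\infty(g_\infty,i)^{-2k-1}\sum_{y}\left(\frac{y}{p}\right) f(z-y/p),
\end{equation*}
with $z = g_\infty i$. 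Inserting the Fourier expansion, $\sum_y (y/p)e^{-2\pi i ny/p} = (n/p)\bm{G}_p$, which gives the first formula.

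For $\mathcal{U}_{p^2}$ the same scheme applies. The integral becomes $\operatorname{vol}(p^2\Z_p)$ times a sum over $u \bmod p^2$. I then multiply on the left by the rational element $\delta_u=\left(\begin{smallmatrix}p&u/p\\0&p^{-1}\end{smallmatrix}\right)^{-1}$. At $l\neq p$ the component of $\delta_u$ is an upper triangular element of $\Gamma_0(4N)_l$ with $(2,2)$-entry $p$. Its $\chi$-values over $l\neq p$ assemble to $\chi^{\{p\}}(p)$. At $\infty$ we get $\delta_u g_\infty$, whose action sends $z$ to $(z-u/p)/p^2$, up to the factor $j_\infty(\delta_u,g_\infty i)$, which contributes $p^{k+\frac12}$ raised to the appropriate sign, together with an eighth root of unity. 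Summing $f((z-u)/p^2)$ over $u \bmod p^2$ gives $p^2\sum_n a_{p^2n}e^{2\pi i nz}$, and the powers of $p$ combine to $p^{-k+\frac32}$.

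The main obstacle is the metaplectic bookkeeping, which produces the constant $(p,p)_{\Q_2,2}\gamma_{\Q_2}(p,\psi_{\Q_2})^{2k+3}$. This requires three separate contributions.
\begin{itemize}
\item The cocycle correction from the embedding of $\SL_2(\Q)$ into $\widetilde{\SL_2}(\A_\Q)$, via the $\bm{s}_v$ for $v\notin\mathfrak{S}$ and the Kubota cocycle $\bm{c}_v$ at $v = 2,\infty,p$, evaluated on the diagonal element.
\item The value of $\varepsilon_2$ on the $2$-adic component $t(p^{-1})n(\cdot)$, which by \cite[Lemma 1.1]{hi} should give $\gamma_{\Q_2}(p,\psi_{\Q_2})$-type factors raised to $-(2k+1)$.
\item The archimedean sign from $j_\infty$ of a diagonal element, together with the product formula $\prod_v \gamma_{\Q_v}(p,\psi_{\Q_v}) = 1$ and the triviality of the relevant Hilbert symbols at odd $l\neq p$. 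These let me move all the roots of unity to the place $2$.
\end{itemize}
I would first fix $\mathfrak{S}=\{2,p,\infty\}$ and write $\delta_u$ as $(t(p^{-1}),1)(n(\cdot),1)$ inside the $\mathfrak{S}$-group. I would then compute its image place by place and check the resulting exponent $2k+3$ and the symbol $(p,p)_{\Q_2,2}$ against the $k=0$ theta case as a sanity check.
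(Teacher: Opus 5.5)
Your proposal is correct in outline, and its local computations are the paper's. The difference is how you treat a general point $g=g_\Q g_\infty\kappa$.

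\textbf{Reduction to $g_\infty$.} You first argue that both sides are left $\SL_2(\Q)$-invariant and right $\tilde{\Gamma}_0(4N)_\fin$-equivariant with the same genuine character. You then evaluate only at $g=g_\infty$, where the rational element is just $n(-y/p)$, resp.\ $\delta_u$. The paper never makes this reduction. It works at arbitrary $g$, writes $\kappa_p=\left(\begin{smallmatrix}a&b\\c&d\end{smallmatrix}\right)$, and chooses $x\in\Z$ with $ua+xd\in p\Z$ (resp.\ $ua+b+xd\in p^2\Z_p$). It then re-indexes the sum over $u$ as a sum over $x$, using $ad\in 1+p\Z_p\subset\Z_p^{\times 2}$ (resp.\ $a,d\in\Z_p^\times$).

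The paper's route is self-contained and yields the equivariance of $\mathcal{R}_p\varphi_f$ and $\mathcal{U}_{p^2}\varphi_f$ as a by-product. Yours makes the local step trivial ($x=-u$), but must import that equivariance from Propositions \ref{prop:U-operator} and \ref{prop:R-operator}(b). This needs $\operatorname{ord}_p(4N)\ge 2$ for $\mathcal{R}_p$, so that $\max(2,c(\eta)+1,m)=m$. It also needs $\operatorname{ord}_p(4N)\ge 1$ for $\mathcal{U}_{p^2}$, since part (b) requires $m\ge 1$. So the level must be enlarged in both cases, not only for $\mathcal{R}_p$.

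Both arguments use the same classical fact: $f|R_p$ and $f|U(p^2)$ lie in $S^+_{k+\frac12}(4N,\chi)$ for the enlarged $N$. This is why the paper assumes $p^2N'\mid N$. A minor slip: $\delta_u$ sends $z$ to $(z-u)/p^2$, as you use afterwards, not $(z-u/p)/p^2$.

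\textbf{The constant $C_p$.} You do not actually evaluate $C_p$, and that is the only real content of the second formula beyond the exponential sum. The paper's device makes it short.
\begin{enumerate}
\item In $\widetilde{\SL_2}(\A_\Q)_{\mathfrak S}$ the rational element $\gamma=\left(\begin{smallmatrix}p^{-1}&p^{-1}x\\ &p\end{smallmatrix}\right)$ is $(\gamma,1)$. By the product formula for the Hilbert symbol, it is the product of the local lifts $(\gamma,(p,p)_{\Q_v,2})$, $v\in\mathfrak S$.
\item At $p$ no sign survives: the Kubota cocycle of $\gamma$ and $\left(\begin{smallmatrix}p&p^{-1}u\\ &p^{-1}\end{smallmatrix}\right)$ is $(p,p^{-1})_{\Q_p,2}=(p,p)_{\Q_p,2}$, which cancels the chosen sign.
\item The real component carries $(p,p)_{\R,2}=1$, so $j_\infty$ contributes only $p^{-k-\frac12}$.
\item The $\chi$-values at all $l\neq p$ give $\chi^{\{p\}}(p)$, and odd $l\neq p$ contribute nothing else.
\end{enumerate}
So every root of unity sits in $\varepsilon_2\bigl((\gamma,(p,p)_{\Q_2,2})\bigr)^{-2k-1}$. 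Genuineness pulls out $(p,p)_{\Q_2,2}$, and \cite[Lemma 1.1]{hi} gives the rest as a power of $\gamma_{\Q_2}(p,\psi_{\Q_2})$. The Weil-index product formula you list is not needed.

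Finally, $\gamma_{\Q_2}(p,\psi_{\Q_2})^2=(p,p)_{\Q_2,2}$, so this Weil index has order dividing $4$. Hence your expected exponent $-(2k+1)$ and the exponent $2k+3$ in $C_p$ agree.
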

\begin{proof}
    Fix $g=g_\Q g_\infty \kappa \in \widetilde{\SL_2}(\A_\Q)$ such that $g_\Q \in \SL_2(\Q)$, $g_\infty \in \widetilde{\SL_2}(\R)$, and $\kappa \in \tilde{\Gamma}_0(4N)_\fin$.
    Moreover, put $z = g_\infty i \in \mathfrak{H}$, and write $\kappa=(\kappa_l)_l$, where $\kappa_2 \in \tilde{\Gamma}_0(4N)_2$ and $\kappa_l \in \Gamma_0(4N)_l$ ($l \neq 2$).
    Furthermore, write
    \begin{equation*}
        \kappa_p = \left( \begin{array}{cc}a&b\\ c&d \end{array} \right).
    \end{equation*}
    Since $S_{k+\frac{1}{2}}^+(4N, \chi) \subset S_{k+\frac{1}{2}}^+(4Np, \chi)$, we may assume $N$ is divisible by $p^2N'$, where $N'$ is the conductor of $\chi$.
    Fix a sufficiently large finite set $\mathfrak{S}$ of places containing $p$, $2$, and $\infty$ so that $g_\Q$ is $(g_\Q, 1)$ in $\widetilde{\SL_2}(\A_\Q)_\mathfrak{S}$.
    Then we can see that every element of $\widetilde{\SL_2}(\A_\Q)$ which appears in the following arguments is in $\widetilde{\SL_2}(\A_\Q)_\mathfrak{S}$, and that every element $x$ of $\SL_2(\Q)$ which appears in the following arguments is of the form $(x,1)$ in $\widetilde{\SL_2}(\A_\Q)_\mathfrak{S}$.

    By definition, $\mathcal{R}_p \varphi_f (g)$ is given by
    \begin{equation*}
        \int_{\Z_p^\times} (u,p)_{\Q_p, 2} \varphi_f\left( g \left( \left( \begin{array}{cc}1&p^{-1}u\\ &1 \end{array} \right), 1 \right)_p \right) du,
    \end{equation*}
    where the subscript $p$ indicates that the element is a member of the $p$-component $\widetilde{\SL_2}(\Q_p)$ in $\widetilde{\SL_2}(\A_\Q)$.
    Since the integrand is determined by $u + p\Z_p$, this equals
    \begin{align*}
          & \operatorname{vol}(p\Z_p) \sum_{u \in (\Z_p/p\Z_p)^\times} (u,p)_{\Q_p, 2} \varphi_f\left( g \left( \left( \begin{array}{cc}1&p^{-1}u\\ &1 \end{array} \right), 1 \right)_p \right) \\
        = & \operatorname{vol}(p\Z_p) \sum_{u \in \Z/p\Z} \left( \frac{u}{p} \right) \varphi_f\left( g \left( \left( \begin{array}{cc}1&p^{-1}u\\ &1 \end{array} \right), 1 \right)_p \right).  \\
    \end{align*}
    For each $u \in \Z/p\Z$, let $x$ be an element in $\Z$ such that $ua+xd \in p\Z$, and put
    \begin{align*}
        \kappa_p' & =(n(p^{-1}x), 1) \kappa_p (n(p^{-1}u), 1), &  &                                                        \\
        g_\infty' & = (n(p^{-1}x), 1) g_\infty,                &  &                                                        \\
        \kappa_l' & = (n(p^{-1}x), 1) \kappa_l,                &  & \text{for $l \in \mathfrak{S}\setminus\{p, \infty\}$}, \\
        \kappa_l' & = n(p^{-1}x) \kappa_l,                     &  & \text{for $l \notin \mathfrak{S}$},                    \\
        \kappa'   & = (\kappa_l')_{l\text{: all primes}},      &  &                                                        \\
        g_\Q'     & = g_\Q n(-p^{-1}x)                         &  & .
    \end{align*}
    Note that $\kappa'_l \in \Gamma_0(4N)_l$ for $l \neq 2$.
    Then some calculations show that $g (n(p^{-1}u), 1)_p$ equals $g_\Q' g_\infty' \kappa'$.
    We can therefore write
    \begin{equation*}
        \varphi_f\left( g \left( \left( \begin{array}{cc}1&p^{-1}u\\ &1 \end{array} \right), 1 \right)_p \right)
        = \varphi_f \left( g_\infty' \kappa' \right),
    \end{equation*}
    and that this is equal to
    \begin{equation*}
        \chi(\kappa) \left( j_\infty(g_\infty, i) \varepsilon_2(\kappa_2) \right)^{-2k-1} f(z+p^{-1}x).
    \end{equation*}
    Hence we have
    \begin{equation*}
        \mathcal{R}_p \varphi_f (g)
        = \operatorname{vol}(p\Z_p) \chi(\kappa) \left( j_\infty(g_\infty, i) \varepsilon_2(\kappa_2) \right)^{-2k-1} \sum_{u \in \Z/p\Z} \left( \frac{u}{p} \right) f(z+p^{-1}x).
    \end{equation*}
    Since $ad \in 1+ p\Z_p \subset \Z_p^{\times2}$, the last sum over $u$ equals
    \begin{equation*}
        \sum_{x \in \Z/p\Z} \left( \frac{-x}{p} \right) f(z+p^{-1}x)
        = \sum_{n \geq 1} a_n e^{2 \pi i n z} \sum_{x \in \Z/p\Z} \left( \frac{-x}{p} \right)  e^{2 \pi i n p^{-1}x}.
    \end{equation*}
    If $n$ is a multiple of $p$, then the inner sum over $x$ vanishes.
    Otherwise, by changing the variables $x \mapsto y=-nx$, one can see that it is
    \begin{equation*}
        \left( \frac{n}{p} \right)\sum_{y \in \Z/p\Z} \left( \frac{y}{p} \right)  e^{-2 \pi i p^{-1}y}.
    \end{equation*}
    Consequently, we have
    \begin{equation*}
        \mathcal{R}_p \varphi_f (g)
        = \operatorname{vol}(p\Z_p) \bm{G}_p \chi(\kappa) \left( j_\infty(g_\infty, i) \varepsilon_2(\kappa_2) \right)^{-2k-1} f|R_p(z)
        = \operatorname{vol}(p\Z_p) \bm{G}_p \varphi_{f|R_p}(g).
    \end{equation*}

    Next we calculate $\mathcal{U}_{p^2} \varphi_f (g)$.
    It is given by
    \begin{equation*}
        \int_{\Z_p} \varphi_f\left( g \left( \left( \begin{array}{cc}p&p^{-1}u\\ &p^{-1} \end{array} \right), 1 \right)_p \right) du.
    \end{equation*}
    Since the integrand is determined by $u + p^2\Z_p$, this equals
    \begin{equation*}
        \operatorname{vol}(p^2\Z_p) \sum_{u \in \Z_p/p^2\Z_p} \varphi_f\left( g \left( \left( \begin{array}{cc}p&p^{-1}u\\ &p^{-1} \end{array} \right), 1 \right)_p \right).
    \end{equation*}
    For each $u \in \Z_p/p^2\Z_p$, let $x$ be an element in $\Z$ such that $ua+b+xd \in p^2\Z_p$, and put
    \begin{align*}
        \kappa_p' & = \left( \left( \begin{array}{cc}p^{-1}&p^{-1}x\\ &p \end{array} \right), (p,p)_{\Q_p,2} \right) \kappa_p \left( \left( \begin{array}{cc}p&p^{-1}u\\ &p^{-1} \end{array} \right), 1 \right), &                                                        & \\
        g_\infty' & = \left( \left( \begin{array}{cc}p^{-1}&p^{-1}x\\ &p \end{array} \right), (p,p)_{\R,2} \right) g_\infty,                                                                                     &                                                        & \\
        \kappa_l' & = \left( \left( \begin{array}{cc}p^{-1}&p^{-1}x\\ &p \end{array} \right), (p,p)_{\Q_l,2} \right) \kappa_l,                                                                                   & \text{for $l \in \mathfrak{S}\setminus \{p,\infty\}$}, & \\
        \kappa_l' & = \left( \begin{array}{cc}p^{-1}&p^{-1}x\\ &p \end{array} \right) \kappa_l,                                                                                                                  & \text{for $l \notin \mathfrak{S}$},                    & \\
        \kappa'   & = (\kappa_l')_{\text{$l$: all primes}},                                                                                                                                                      &                                                        & \\
        g_\Q'     & =g_\Q\left( \begin{array}{cc}p^{-1}&p^{-1}x\\ &p \end{array} \right)^{-1}.                                                                                                                   &                                                        &
    \end{align*}
    By the product formula for the Hilbert symbol, we have
    \begin{equation*}
        g \left( \left( \begin{array}{cc}p&p^{-1}u\\ &p^{-1} \end{array} \right), 1 \right)_p
        = g_\Q' g_\infty' \kappa'.
    \end{equation*}
    Notice that $\kappa_l' \in \Gamma_0(4N)_l$ for $l \neq 2$.
    Now we have
    \begin{align*}
          & \varphi_f\left( g \left( \left( \begin{array}{cc}p&p^{-1}u\\ &p^{-1} \end{array} \right), 1 \right)_p \right)                                                                                   \\
        = & \chi(\kappa') \left( j_\infty(g_\infty', i) \varepsilon_2(\kappa_2') \right)^{-2k-1} f(g_\infty' i)                                                                                             \\
        = & \chi^{\{p\}}(p) p^{-k-\frac{1}{2}} (p,p)_{\Q_2,2} \gamma_{\Q_2}(p, \psi_{\Q_2})^{2k+3} \chi(\kappa) \left( j_\infty(g_\infty, i) \varepsilon_2(\kappa_2) \right)^{-2k-1} f(p^{-2} z + p^{-2}x).
    \end{align*}
    We arrive at
    \begin{align*}
        \mathcal{U}_{p^2} \varphi_f (g)
        = & \operatorname{vol}(p^2\Z_p) \chi^{\{p\}}(p) p^{-k-\frac{1}{2}} (p,p)_{\Q_2,2} \gamma_{\Q_2}(p, \psi_{\Q_2})^{2k+3}                         \\
          & \times \chi(\kappa) \left( j_\infty(g_\infty, i) \varepsilon_2(\kappa_2) \right)^{-2k-1}  \sum_{u \in \Z_p/p^2\Z_p} f(p^{-2} z + p^{-2}x).
    \end{align*}
    Since $a, d \in \Z_p^\times$, taking the sum over all $u \in \Z_p/p^2\Z_p$ is equivalent to taking the sum over all $x \in \Z_/p^2\Z_p$.
    Moreover, we have
    \begin{align*}
        \sum_{x \in \Z_p/p^2\Z_p} f(p^{-2} z + p^{-2}x)
         & = \sum_{x \in \Z_p/p^2\Z_p} \sum_{n \geq 1} a_n e^{2 \pi i n p^{-2}z} e^{2 \pi i n p^{-2}x} \\
         & = \sum_{n \geq 1} a_n e^{2 \pi i n p^{-2}z} \sum_{x \in \Z_p/p^2\Z_p} e^{2 \pi i n p^{-2}x} \\
         & = p^2 \sum_{\substack{n \geq 1                                                              \\ p^2|n}} a_n e^{2 \pi i n p^{-2}z}\\
         & =p^2 f|U(p^2)(z).
    \end{align*}
    Consequently, we have
    \begin{equation*}
        \mathcal{U}_{p^2} \varphi_f (g)
        =\operatorname{vol}(p^2\Z_p) \chi^{\{p\}}(p) p^{-k+\frac{3}{2}} (p,p)_{\Q_2,2} \gamma_{\Q_2}(p, \psi_{\Q_2})^{2k+3} \varphi_{f|U(p^2)}(g).
    \end{equation*}
\end{proof}
In \cite{kohnew, u4}, Kohnen and Ueda defined the space of oldforms to be the subspace spanned by the images of the plus spaces of lower level under the operators $\delta_m$, $U(m)$, and $R_p$.
Since the image of $\delta_p$ is characterized by the kernel of $R_p$ (\cite[Proposition 1.10]{u3}), the proposition above suggests that our formulation is compatible with the theory of Kohnen and Ueda.

\section{Non-supercuspidal representations}\label{sec:non-sc}
In this section, we shall prove the main theorems for irreducible representations that are not supercuspidal.
Since they can be realized as constituents of principal series representations, we are able to calculate directly.
Throughout this section, let $\eta$ be an arbitrary character of $\mathcal{O}^\times$.
Moreover, as before, we fix a nontrivial additive character $\psi \colon F \to \C^\times$ such that $c(\psi)=0$.
Note that we already know Theorem \ref{thm:conductor} for irreducible genuine representations that are not supercuspidal.

\subsection{Principal series representations}\label{subsec:ps}
Let $\mu$ be a character of $F^\times$.
Since $z_\psi(\pi_\psi(\mu))=\mu(-1)$, we assume that $\eta(-1)=\mu(-1)$.
First, we state one lemma on the support of the elements in $\pi_\psi(\mu)^{K_m}_\eta$.
\begin{lemma}\label{lem:structure-of-subspaces}
    The subspace $\pi_\psi(\mu)^{K_m}_\eta$ consists of elements in $\pi_\psi(\mu)$ that are supported on
    \begin{equation*}
        \bigsqcup_{i \in I_\psi(\mu,\eta, m)} \bigsqcup_{x \in R_i} \widetilde{B} (x, 1) K_m,
    \end{equation*}
    where
    \begin{equation*}
        I_\psi(\mu,\eta, m) = \Set{0 \leq i \leq \frac{m}{2} | c(\eta\mu^{-1}) \leq i \leq m-c(\mu)} \cup \Set{\frac{m}{2} \leq i \leq m | c(\mu) \leq i \leq m-c(\eta\mu)}.
    \end{equation*}
    and
    \begin{equation*}
        R_i = \begin{cases*}
            \{w\},                                                          & if $i=0$,               \\
            \{ n^{\mathrm{op}}(\varpi^i), n^{\mathrm{op}}(\varpi^i \xi) \}, & if $1 \leq i \leq m-1$, \\
            \{1_2\},                                                        & if $i=m$.
        \end{cases*}
    \end{equation*}
\end{lemma}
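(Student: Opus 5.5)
By Lemma \ref{lem:coset_ps}, $\widetilde{G}$ is the disjoint union of the double cosets $\widetilde{B}(x,1)K_m$, where $x$ runs over $w$, $1_2$ and $n^{\mathrm{op}}(\varpi^i), n^{\mathrm{op}}(\xi\varpi^i)$ for $1\le i\le m-1$. (The double cosets $K_m\backslash G/B$ correspond to $B\backslash G/K_m$ by inversion, and inverting $n^{\mathrm{op}}(\varpi^i)$ gives $n^{\mathrm{op}}(-\varpi^i)$, which by the same argument lies in the class of $n^{\mathrm{op}}(\varpi^i)$ or of $n^{\mathrm{op}}(\xi\varpi^i)$ according to whether $-1$ is a square. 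So the list is still a complete system of representatives, with $i=0$ meaning $x=w$ and $i=m$ meaning $x=1_2$.) An element $f\in\pi_\psi(\mu)^{K_m}_\eta$ is determined by its values $f((x,1))$, one for each representative $x$. For each fixed $x$ the single-coset function can be defined if and only if a compatibility condition holds. The plan is:
\begin{enumerate}
    \item Compute $H_x=\widetilde{B}\cap (x,1)K_m(x,1)^{-1}$.
    \item Show that a nonzero $f$ supported on $\widetilde{B}(x,1)K_m$ exists iff, for every $\gamma\in K_m$ with $(x,1)(\gamma,\bm{s}(\gamma))(x,1)^{-1}=b\in\widetilde{B}$, the left character $|a|\mu(a)\chi_\psi(b)$ equals $\eta(\gamma)$.
    \item Show that this condition is exactly the one defining $I_\psi(\mu,\eta,m)$.
\end{enumerate}
Once the conditions are verified, $\pi_\psi(\mu)^{K_m}_\eta$ is the direct sum over the allowed $x$ of these one-dimensional spaces, which proves the statement.

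For $x=n^{\mathrm{op}}(\varpi^i)$, or with $\xi\varpi^i$ (the factor $\xi$ changes nothing below), an element $\gamma=\left(\begin{smallmatrix}a&b\\ c&d\end{smallmatrix}\right)\in K_m$ has $x\gamma x^{-1}$ upper triangular iff its lower-left entry $c+\varpi^i(d-a)-\varpi^{2i}b$ vanishes. In that case the diagonal entry of the resulting element of $B$ is $a+\varpi^i b=:\alpha\in\mathcal{O}^\times$.

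We then need the genuine sign. By Lemma \ref{lem:spl_lv1} and the explicit Kubota cocycle, the element $(x,1)(\gamma,\bm{s}(\gamma))(x,1)^{-1}$ should come out as $(t(\alpha)n(\ast),1)$, up to a Hilbert symbol of units. That symbol is trivial because $p$ is odd and all the entries involved are units or products of units with $\varpi^{\ge 1}$. We have $\chi_\psi(t(\alpha),1)=\gamma_F(\alpha,\psi)^{-1}=1$, since $c(\psi)=0$ and $\alpha$ is a unit. So the condition reduces to
\begin{equation*}
    \mu(a+\varpi^i b)=\eta(d)
\end{equation*}
for all such $\gamma$.

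Next we parametrise the solutions. Choose $a\in\mathcal{O}^\times$ and $b\in\mathcal{O}$ freely, and let $d$ range over values with $d\equiv a \bmod \mathfrak{p}^{m-i}$ when $i\le m$. Solving for $c=\varpi^{2i}b-\varpi^i(d-a)\in\mathfrak{p}^m$, together with the determinant condition, gives $d$ determined modulo $\mathfrak{p}^{\min(m-i,\dots)}$. From this we extract two conditions.
\begin{itemize}
    \item Varying $b$ with $a,d$ fixed forces $\mu$ to be trivial on $1+\mathfrak{p}^{i}$ relative to the $\eta$-side; combined with the relation between $d$ and $a$, this gives $c(\mu)\le i$ or a constraint on $c(\eta\mu^{-1})$.
    \item Varying $d\in a(1+\mathfrak{p}^{m-i})$ forces $\eta$ trivial on $1+\mathfrak{p}^{m-i}$ relative to $\mu$.
\end{itemize}
Using $ad-bc=1$, one has $d\approx a^{-1}$ up to $\mathfrak{p}^{\min(i,m-i)}$-corrections, so $\mu(a)\eta(a)$ versus $\mu(a)\eta(a)^{-1}$ appears according to which of $i$ and $m-i$ is smaller. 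This is where the dichotomy $i\le m/2$ (conditions $c(\eta\mu^{-1})\le i$, $c(\mu)\le m-i$) versus $i\ge m/2$ (conditions $c(\mu)\le i$, $c(\eta\mu)\le m-i$) should come from. The endpoint cases $x=w$ ($i=0$, upper-triangular stabiliser giving $\mu(d^{-1})$ against $\eta(d)$ with $N\cap K_m$ conditions) and $x=1_2$ ($i=m$, giving $\mu(a)=\eta(a^{-1})$ on $B\cap K_0$, i.e. $c(\eta\mu)=0$, and $c(\mu)\le m$) are handled the same way and match the formula.

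The main obstacle is the bookkeeping in the middle range. One must determine exactly which pairs $(\alpha,d)$ occur, namely the image of $H_x$ under $(\alpha,d)$. I would first aim to show that this image is
\begin{equation*}
    \{(\alpha,d)\in(\mathcal{O}^\times)^2 : \alpha d\in 1+\mathfrak{p}^{i},\ \alpha d^{-1}\in 1+\mathfrak{p}^{m-i}\}.
\end{equation*}
Granting this, the character condition $\mu(\alpha)=\eta(d)$ on the group is equivalent to the conjunction of:
\begin{itemize}
    \item $\eta\mu^{-1}$ is trivial on $\{\alpha d^{-1}=1\}$-type elements;
    \item $\eta\mu$ is trivial on $\{\alpha d=1\}$-type elements;
    \item compatibility along $1+\mathfrak{p}^{\min(i,m-i)}$.
\end{itemize}
This leads directly to $I_\psi(\mu,\eta,m)$. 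The sign computation for the Kubota cocycle also needs care, but should reduce to Lemma \ref{lem:spl_lv1}.
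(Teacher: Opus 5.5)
Your route is the paper's route. The paper identifies $\pi_\psi(\mu)^{K_m}_\eta$ with $\bigoplus_g \Hom_{K_m\cap{}^gB}(\eta,{}^g\mu)$ via Yamana's Mackey argument, which is exactly your single-coset compatibility criterion. It then cites the proof of Theorem 4.1 of the author's earlier paper for which summands survive, and uses Lemma~\ref{lem:coset_ps} and inversion for the coset bookkeeping, as you do.

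Your description of the relevant pairs as $S_i=\{(\alpha,d):\alpha d\in 1+\mathfrak{p}^i,\ \alpha d^{-1}\in 1+\mathfrak{p}^{m-i}\}$ for $1\le i\le m-1$ is correct. For $x=n^{\mathrm{op}}(\varpi^i)$, given such a pair, $y=(\alpha^{-1}-d)\varpi^{-i}$ reconstructs $\gamma=x^{-1}\left(\begin{smallmatrix}\alpha&y\\0&\alpha^{-1}\end{smallmatrix}\right)x$; the case with $\xi$ is similar. So you are in effect supplying the computation the paper outsources.

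To finish it cleanly, take square roots of $\alpha d$ and $\alpha d^{-1}$ (using $p$ odd). This shows $S_i$ is the product of $\{(\alpha,\alpha):\alpha\in\pm(1+\mathfrak{p}^i)\}$ and $\{(\alpha,\alpha^{-1}):\alpha\in\pm(1+\mathfrak{p}^{m-i})\}$. Hence, with the standing assumption $\eta(-1)=\mu(-1)$, the condition $\mu(\alpha)=\eta(d)$ on $S_i$ is exactly $c(\eta\mu^{-1})\le i$ and $c(\eta\mu)\le m-i$. Without that assumption the space is zero although $I_\psi(\mu,\eta,m)$ can be nonempty. No third ``compatibility'' condition arises. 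This matches the two-range form of $I_\psi(\mu,\eta,m)$ via $\mu^2=(\eta\mu)(\eta\mu^{-1})^{-1}$ and the fact that $c(\mu^2)\le k\iff c(\mu)\le k$ for $k\ge 1$.

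Two steps of your sketch are wrong as written and need fixing.

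First, at $x=w$ the relevant $\gamma\in K_m$ are lower triangular and $w\gamma w^{-1}=\left(\begin{smallmatrix}d&-c\\0&d^{-1}\end{smallmatrix}\right)$. So the condition is $\mu(d)=\eta(d)$, i.e.\ $c(\eta\mu^{-1})=0$. Your ``$\mu(d^{-1})$ against $\eta(d)$'' evaluates $\eta$ on the Borel element instead of on $\gamma$. Taken literally it gives $c(\eta\mu)=0$, which is the $i=m$ condition and does not match the formula at $i=0$. Also, the bound $c(\mu)\le m$ at both endpoints comes from no unipotent condition. It comes from the convention $\pi^{K_m}_\eta=\{0\}$ for $m<c(\eta)$, since $\eta=\mu^{\pm1}$ on $\mathcal{O}^\times$ there.

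Second, the genuine sign is indeed trivial, but not because ``Hilbert symbols of units'' are trivial: a symbol such as $(c,d)_{F,2}$ with $c\in\mathfrak{p}^m$ need not be. The correct reason is that $\bm{s}(x)=1$, so $(x,1)$ and $(\gamma,\bm{s}(\gamma))$ both lie in the image of the splitting over $K_0$. Hence the conjugate is $(x\gamma x^{-1},\bm{s}(x\gamma x^{-1}))$, and Lemma~\ref{lem:spl_lv1} gives $\bm{s}=1$ on $B\cap K_0$.

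Separately, the lower-left entry $c+\varpi^i(d-a)-\varpi^{2i}b$ you wrote belongs to $x^{-1}\gamma x$, not $x\gamma x^{-1}$. This is harmless, since $n^{\mathrm{op}}(-\varpi^i)$ represents one of the two level-$i$ classes.
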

\begin{proof}
    The lemma essentially follows from the proof of \cite[Theorem 4.1]{ish}.

    By the proof of \cite[Corollary 2.7 (1)]{yam}, we have
    \begin{equation*}
        \pi_\psi(\mu)^{K_m}_\eta \cong \bigoplus_{g \in K_m \backslash G / B} \Hom_{K_m \cap \prescript{g}{}{B}} \left( \eta, \prescript{g}{}{\mu} \right),
    \end{equation*}
    which is given by
    \begin{equation*}
        f \mapsto \left( f(g^{-1}) \right)_{g \in K_m \backslash G / B},
    \end{equation*}
    where $\prescript{g}{}{\mu}$ denotes the character of $\prescript{g}{}{B}$ which sends $g t(a)n(b) g^{-1}$ to $\mu(a)$.
    In the proof of \cite[Theorem 4.1]{ish}, it was showed that the space $\Hom_{K_m \cap \prescript{g}{}{B}} ( \eta, \prescript{g}{}{\mu} )$ vanishes if and only if $g$ is in the set
    \begin{equation*}
        \bigsqcup_{i \in I_\psi(\mu,\eta, m)} \bigsqcup_{x \in R_i} \widetilde{B} (x, 1) K_m.
    \end{equation*}
    Note that the union is disjoint thanks to Lemma \ref{lem:coset_ps}.
    Since the set is stable under taking inverse, this completes the proof.
\end{proof}

Now, we shall give a proof of the main theorem for irreducible principal series representations, after two lemmas on the actions of $\mathcal{U}_{\varpi^2}$ and $\mathcal{R}_\varpi$ on principal series representations.
\begin{lemma}\label{lem:U-op_on_unram-ps}
    Suppose that $\mu$ is an unramified character of $F^\times$.
    Then
    \begin{equation*}
        \mathcal{U}_{\varpi^2} \left( \pi_\psi(\mu)^{K_0}_1 \right) \subset \pi_\psi(\mu)^{K_0}_1
    \end{equation*}
    if and only if there exists a quadratic or trivial character $\chi$ of $F^\times$ such that
    \begin{equation*}
        \mu = \chi \cdot |-|^{-\frac{1}{2}}.
    \end{equation*}
\end{lemma}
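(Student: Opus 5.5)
The plan is to compute directly in the induced model. Since $\mu$ is unramified and $\eta=\mathbf{1}$, Lemma \ref{lem:structure-of-subspaces} together with Lemma \ref{lem:coset_ps} gives the following description of the relevant fixed spaces.
\begin{itemize}
\item $\pi_\psi(\mu)^{K_0}_1$ is spanned by the spherical vector $f_0$, which is supported on $\widetilde{B}K_0$ and normalized by $f_0((k,\bm{s}(k)))=1$ for $k\in K_0$.
\item $\pi_\psi(\mu)^{K_1}_1$ is $2$-dimensional. Its elements are determined by their values at $1$ and at $(w,1)$, because the double cosets $\widetilde{B}K_1$ and $\widetilde{B}(w,1)K_1$ exhaust $\widetilde{G}$.
\end{itemize}
By Proposition \ref{prop:U-operator}, $\mathcal{U}_{\varpi^2}f_0\in\pi_\psi(\mu)^{K_1}_1$. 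Since $f_0(1)=f_0((w,1))=1$, the inclusion $\mathcal{U}_{\varpi^2}f_0\in\pi_\psi(\mu)^{K_0}_1$ holds if and only if
\begin{equation*}
    (\mathcal{U}_{\varpi^2}f_0)(1)=(\mathcal{U}_{\varpi^2}f_0)((w,1)).
\end{equation*}
So the lemma reduces to computing these two numbers.

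\textbf{Value at $1$.} The element $\left(\begin{smallmatrix}\varpi&\varpi^{-1}u\\ &\varpi^{-1}\end{smallmatrix}\right)$ equals $t(\varpi)n(\varpi^{-2}u)$, which lies in $B$. The Kubota cocycle $\bm{c}(t(\varpi),n(\varpi^{-2}u))$ equals $(1,\varpi)_{F,2}=1$. Hence
\begin{equation*}
    (\mathcal{U}_{\varpi^2}f_0)(1)=\operatorname{vol}(\mathcal{O})\,q^{-1}\mu(\varpi)\gamma_F(\varpi,\psi)^{-1}.
\end{equation*}

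\textbf{Value at $w$.} Write $w\,t(\varpi)n(\varpi^{-2}u)=t(\varpi^{-1})\,w\,n(\varpi^{-2}u)$ in $\widetilde{G}$, keeping track of the sign from $\bm{c}$. Then split $u\in\mathcal{O}$ by $\operatorname{ord}(u)\ge2$, $=1$, $=0$.
\begin{itemize}
\item If $\operatorname{ord}(u)\ge 2$, then $n(\varpi^{-2}u)\in K_0$. The integrand is $f_0$ of $t(\varpi^{-1})$ times an element of $K_0$, giving $q\,\mu(\varpi)^{-1}\gamma_F(\varpi^{-1},\psi)^{-1}$ times $\operatorname{vol}(\mathfrak{p}^2)$.
\item If $x=\varpi^{-2}u\notin\mathcal{O}$, use the Iwasawa decomposition
\begin{equation*}
    w\,n(x)=\left(\begin{smallmatrix}x^{-1}&1\\ &x\end{smallmatrix}\right) n^{\mathrm{op}}(-x^{-1})\quad(\text{up to sign conventions}).
\end{equation*}
The torus part is $t(x^{-1})$ with $|x^{-1}|=q^{-2}$ or $q^{-1}$. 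The factor $n^{\mathrm{op}}(-x^{-1})$ lies in $K_0$, and $f_0$ of it equals $1$ times the splitting sign.
\end{itemize}
In the last case the integrand picks up $\mu$, the modulus, and $\gamma_F(\cdot,\psi)^{-1}$ evaluated at $\varpi^{-2}t(\varpi^{-1})x^{-1}$-type elements, together with Hilbert-symbol signs $(u,\varpi)_{F,2}$ coming from $\bm{c}$ and from Lemma \ref{lem:spl_lv1}. I will use the stated identities for $\gamma_F$: $\gamma_F(ac,\psi)=\gamma_F(c,\psi)$ when the parity of $\operatorname{ord}(c)$ matches $c(\psi)=0$, and the multiplicativity rule with the Hilbert symbol.
\begin{itemize}
\item For $\operatorname{ord}(u)=0$, $\operatorname{ord}(x)=-2$ is even, so all Weil indices become independent of $u$. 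The contribution is a constant times $\operatorname{vol}(\mathcal{O}^\times)$.
\item For $\operatorname{ord}(u)=1$, $\operatorname{ord}(x)=-1$ is odd, so the integrand carries a factor $(u',\varpi)_{F,2}$ with $u=\varpi u'$. This integrates to $0$ over $\mathcal{O}^\times$, since $\int_{\mathcal{O}^\times}(u,\varpi)_{F,2}\,du=0$. If an extra Weil-index factor survives instead, it is a Gauss-sum-type term that I will evaluate via Lemma \ref{lem:gausssum1}.
\end{itemize}

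\textbf{Solving the equation.} Setting $\alpha=\mu(\varpi)$, the result is an explicit expression $(\mathcal{U}_{\varpi^2}f_0)((w,1))=A\alpha^{-1}+B\alpha$, with $A,B$ involving powers of $q$, $\operatorname{vol}$'s, and $\gamma_F(\varpi,\psi)^{\pm1}$. Equating it with the value at $1$ should, after using $\gamma_F(\varpi,\psi)^2=\gamma_F(\varpi^2,\psi)(\varpi,\varpi)_{F,2}=(-1,\varpi)_{F,2}$ and $\gamma_F(\varpi^{-1},\psi)=\gamma_F(\varpi,\psi)$, collapse to
\begin{equation*}
    \mu(\varpi)^2=q,\quad\text{i.e.}\quad \mu^2=|-|^{-1}.
\end{equation*}
For unramified $\mu$ this is equivalent to $\mu=\chi\cdot|-|^{-\frac12}$ with $\chi$ unramified quadratic or trivial. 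Conversely, any such $\mu$ of the form $\chi|-|^{-1/2}$ with $\chi$ quadratic or trivial must have $\chi$ unramified, since $\mu$ is. This gives both directions.

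\textbf{Main obstacle.} The hard part is the careful bookkeeping of the cocycle $\bm{c}$ and the splitting $\bm{s}$ in the Iwasawa decomposition of $w\,t(\varpi)n(\varpi^{-2}u)$, especially for $\operatorname{ord}(u)=1$. A wrong sign there would alter whether that piece vanishes and hence the final quadratic equation. I will first verify the decomposition on the level of $(g,\epsilon)$ pairs using the explicit formula for $\bm{c}$ via $\bm{x}(\cdot)$ before integrating.
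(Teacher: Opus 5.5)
Your plan is correct and is essentially the paper's proof. Both reduce to checking $\mathcal{U}_{\varpi^2}f_0(1)=\mathcal{U}_{\varpi^2}f_0((w,1))$, compute the value at $1$ directly, and split the integral at $(w,1)$ into the ranges $\operatorname{ord}(u)\geq 2$, $=1$, $=0$; the middle range vanishes because its integrand is a nontrivial quadratic character of $u\varpi^{-1}\in\mathcal{O}^\times$.

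For the record, the correctly signed decomposition is $w\,n(x)=\left(\begin{smallmatrix}-x^{-1}&1\\ &-x\end{smallmatrix}\right)n^{\mathrm{op}}(x^{-1})$. With it, $(w,1)(n(x),1)=(t(-x^{-1}),1)(n(-x),1)(n^{\mathrm{op}}(x^{-1}),1)$ carries no cocycle sign. So the range $\operatorname{ord}(u)=0$ contributes exactly $q^{-1}\operatorname{vol}(\mathcal{O}^\times)\mu(\varpi)\gamma_F(\varpi,\psi)^{-1}$, and the difference of the two values is $\operatorname{vol}(\mathfrak{p}^2)\gamma_F(\varpi,\psi)^{-1}\bigl(\mu(\varpi)-q\mu(\varpi)^{-1}\bigr)$, as in the paper.
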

\begin{proof}
    Let $f_0$ be a nonzero element of $\pi_\psi(\mu)^{K_0}_1$.
    Since $\pi_\psi(\mu)^{K_0}_1$ is 1-dimensional,
    \begin{equation*}
        \mathcal{U}_{\varpi^2} \left( \pi_\psi(\mu)^{K_0}_1 \right) \subset \pi_\psi(\mu)^{K_0}_1
    \end{equation*}
    is equivalent to
    \begin{equation*}
        \mathcal{U}_{\varpi^2} f_0 \in \pi_\psi(\mu)^{K_0}_1.
    \end{equation*}
    Moreover, since $\mathcal{U}_{\varpi^2}f_0$ lies in $\pi_\psi(\mu)^{K_1}_1$ by Proposition \ref{prop:U-operator}, it follows from Lemma \ref{lem:structure-of-subspaces} that this is equivalent to
    \begin{equation*}
        \mathcal{U}_{\varpi^2}f_0(1) = \mathcal{U}_{\varpi^2}f_0 ((w,1)).
    \end{equation*}
    We have by definition
    \begin{align*}
        \mathcal{U}_{\varpi^2} f_0 (1)
         & = \int_\mathcal{O} f_0 \left( \left( \left( \begin{array}{cc}\varpi&\varpi^{-1} u\\ 0&\varpi^{-1} \end{array} \right), 1 \right) \right) du \\
         & = \int_\mathcal{O} |\varpi| \mu(\varpi) \chi_\psi((t(\varpi), 1)) f_0(1) du                                                                 \\
         & = q^{-1} \operatorname{vol}(\mathcal{O}) \mu(\varpi) \gamma_F(\varpi, \psi)^{-1} f_0(1).
    \end{align*}
    On the other hand, we have by definition
    \begin{align*}
        \mathcal{U}_{\varpi^2}f_0 ((w,1))
         & = \int_\mathcal{O} f_0\left( \left( \left( \begin{array}{cc}&\varpi^{-1}\\ -\varpi&-\varpi^{-1}u \end{array} \right), (-1,\varpi)_{F,2} \right) \right) du.
    \end{align*}
    When $u \in \mathcal{O}^\times$, we have
    \begin{align*}
        \left( \left( \begin{array}{cc}&\varpi^{-1}\\ -\varpi&-\varpi^{-1}u \end{array} \right), (-1,\varpi)_{F,2} \right)
        =\left( t(\varpi), 1 \right) \left( n(-\varpi^{-2}u^{-1}), 1 \right) \left( n^{\mathrm{op}}(\varpi^2 u), 1 \right) \left( t(-u^{-1}), 1 \right),
    \end{align*}
    and hence
    \begin{align*}
         & \int_{\mathcal{O}^\times} f_0\left( \left( \left( \begin{array}{cc}&\varpi^{-1}\\ -\varpi&-\varpi^{-1}u \end{array} \right), (-1,\varpi)_{F,2} \right) \right) du \\
         & = \int_{\mathcal{O}^\times} |\varpi| \mu(\varpi) \chi_\psi((t(\varpi), 1)) \bm{s}(t(-u^{-1}))^{-1} \bm{s}(n^{\mathrm{op}}(\varpi^2 u))^{-1} f_0(1) du             \\
         & = q^{-1} \operatorname{vol}(\mathcal{O}^\times) \mu(\varpi) \gamma_F(\varpi, \psi)^{-1} f_0(1).
    \end{align*}
    When $u \in \varpi \mathcal{O}^\times$, we have
    \begin{align*}
        \left( \left( \begin{array}{cc}&\varpi^{-1}\\ -\varpi&-\varpi^{-1}u \end{array} \right), (-1,\varpi)_{F,2} \right)
        = \left( n(-\varpi^{-1}), (-u,\varpi)_{F,2} \right) \left( n^{\mathrm{op}}(u), 1 \right) \left( t(-\varpi u^{-1}), 1 \right),
    \end{align*}
    and hence
    \begin{align*}
         & \int_{\varpi \mathcal{O}^\times} (-u,\varpi)_{F,2} \bm{s}(t(-\varpi u^{-1}))^{-1} \bm{s}(n^{\mathrm{op}}(u))^{-1} f_0(1) du \\
         & = f_0(1) \int_{\varpi \mathcal{O}^\times} (-u,\varpi)_{F,2} du                                                              \\
         & = 0.
    \end{align*}
    When $u \in \mathfrak{p}^2$, we have
    \begin{align*}
        \left( \left( \begin{array}{cc}&\varpi^{-1}\\ -\varpi&-\varpi^{-1}u \end{array} \right), (-1,\varpi)_{F,2} \right)
        = \left( t(\varpi^{-1}), 1 \right) \left( w , 1 \right) \left( n(\varpi^{-2} u), 1 \right),
    \end{align*}
    and hence
    \begin{align*}
         & \int_{\mathfrak{p}^2} |\varpi^{-1}| \mu(\varpi^{-1}) \chi_\psi((t(\varpi^{-1}), 1)) \bm{s}(n(\varpi^{-2} u))^{-1} \bm{s}(w)^{-1} f_0((w,1)) du \\
         & = \int_{\mathfrak{p}^2} |\varpi|^{-1} \mu(\varpi)^{-1} \gamma_F(\varpi^{-1}, \psi)^{-1} f_0(1) du                                              \\
         & = q \operatorname{vol}(\mathfrak{p}^2) \mu(\varpi)^{-1} \gamma_F(\varpi, \psi)^{-1} f_0(1).
    \end{align*}
    Therefore, we have
    \begin{align*}
        \mathcal{U}_{\varpi^2}f_0 ((w,1))
         & = q^{-1} \operatorname{vol}(\mathcal{O}^\times) \mu(\varpi) \gamma_F(\varpi, \psi)^{-1} f_0(1)
        + q \operatorname{vol}(\mathfrak{p}^2) \mu(\varpi)^{-1} \gamma_F(\varpi, \psi)^{-1} f_0(1).
    \end{align*}
    Thus we have
    \begin{align*}
         & \mathcal{U}_{\varpi^2}f_0 (1) - \mathcal{U}_{\varpi^2}f_0 ((w,1))                                         \\
         & =(\mu(\varpi) - q \mu(\varpi)^{-1}) \operatorname{vol}(\mathfrak{p}^2) \gamma_F(\varpi,\psi)^{-1} f_0(1),
    \end{align*}
    which is zero if and only if
    \begin{equation*}
        \mu(\varpi)^2 = q.
    \end{equation*}
    Since $\mu$ is unramified, the assertion follows.
\end{proof}

\begin{lemma}\label{lem:R-op_on_ps}
    Let $z$ be one of $1$ or $\xi$, and $z^\vee$ the other one.
    For any $m \geq 0$ and $f \in \pi_\psi(\mu)^{K_m}_\eta$, we have
    \begin{align*}
        \mathcal{R}_\varpi f (1)                                                         & = 0,                                                                                                                                                                                                     \\
        \mathcal{R}_\varpi f ((w,1))                                                     & = \frac{(-1,\varpi)_{F,2}}{2} q^{-1} \mu(\varpi) \gamma_F(\varpi,\psi)^{-1} \int_{\mathcal{O}^\times} \eta\mu^{-1}(x) dx                                                                                 \\
                                                                                         & \quad \times \left( \eta(-1) f\left( \left( n^{\mathrm{op}}(\varpi), 1 \right) \right) + \eta(-\xi) f\left( \left( n^{\mathrm{op}}(\xi\varpi), 1 \right) \right) \right),                                \\
        \mathcal{R}_\varpi f \left( \left( n^{\mathrm{op}}(z\varpi), 1 \right) \right)   & = \frac{(z,\varpi)}{2} \int_{\mathcal{O}^\times \setminus \pm1+\mathfrak{p}} (x^2-1, \varpi)_{F,2} \eta\mu^{-1}(x) dx \times f\left( \left( n^{\mathrm{op}}(z\varpi), 1 \right) \right)                  \\
                                                                                         & \quad -\frac{(z,\varpi)}{2} \eta(z^{-1} z^\vee) \int_{\mathcal{O}^\times} (z^{-1} z^\vee x^2-1, \varpi)_{F,2} \eta\mu^{-1}(x) dx \times f\left( \left( n^{\mathrm{op}}(z^\vee \varpi), 1 \right) \right) \\
                                                                                         & \qquad + (-1,\varpi)_{F,2} \operatorname{vol}(\mathcal{O}) \gamma_F(\varpi, \psi)^{-1} \mu(-z \varpi)^{-1} \times f\left( \left( w, 1 \right) \right),                                                   \\
        \mathcal{R}_\varpi f \left( \left( m^{\mathrm{op}}(z\varpi^i), 1 \right) \right) & = \int_{\mathcal{O}^\times} (u,\varpi) \eta\mu^{-1}(u') du \times f\left( \left(  m^{\mathrm{op}}(z\varpi^i), 1 \right) \right),
    \end{align*}
    for any $i \geq2$, where $u'$ is an element in $1+\varpi^{i-1}\mathcal{O}^\times$ such that $(u')^2=\varpi^{i-1}zu+1$.
\end{lemma}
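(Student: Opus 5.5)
The approach is direct evaluation. By definition,
\begin{equation*}
    \mathcal{R}_\varpi f(g) = \int_{\mathcal{O}^\times} (u,\varpi)_{F,2}\, f\bigl(g\,(n(\varpi^{-1}u),1)\bigr)\,du .
\end{equation*}
For each of the representatives $g = 1$, $(w,1)$, $(n^{\mathrm{op}}(z\varpi^i),1)$ from Lemma \ref{lem:coset_ps}, I would rewrite the product $g\,(n(\varpi^{-1}u),1)$ in the form $\tilde b\,(x,1)\,\kappa$. Here $\tilde b\in\widetilde{B}$, $x$ is one of the representatives, and $\kappa\in K_m$. Then
\begin{equation*}
    f\bigl(g\,(n(\varpi^{-1}u),1)\bigr) = |a|\,\mu(a)\,\chi_\psi(\tilde b)\,\eta(\kappa)\,f((x,1)).
\end{equation*}
Integrating over $u$ then gives the stated formulas. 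Throughout I would track the metaplectic signs using the Kubota cocycle, together with Lemma \ref{lem:spl_lv1} for $\bm{s}(\kappa)$, exactly as in the proof of Lemma \ref{lem:U-op_on_unram-ps}.

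\textbf{Step 1: $g=1$.} Here $n(\varpi^{-1}u)\in N\subset \widetilde{B}$, so $f(n(\varpi^{-1}u))=f(1)$. The integral then reduces to $\int_{\mathcal{O}^\times}(u,\varpi)_{F,2}\,du=0$.

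\textbf{Step 2: $g=(w,1)$.} Write
\begin{equation*}
    w\,n(\varpi^{-1}u) = \begin{pmatrix}0&1\\-1&-\varpi^{-1}u\end{pmatrix}
    = t(\varpi u^{-1})\, n(\ast)\, n^{\mathrm{op}}(\varpi u^{-1})\,\cdot(\text{diagonal unit}).
\end{equation*}
Since the $(2,1)$-entry of $n^{\mathrm{op}}(\varpi u^{-1})$ has valuation $1$, it lies in the coset of $n^{\mathrm{op}}(\varpi)$ or of $n^{\mathrm{op}}(\xi\varpi)$, according to whether $-u^{-1}$ is a square. After a unit diagonal adjustment $t(x)$ with $x^2\equiv \pm u^{-1}$, I get $\eta\mu^{-1}(x)$ factors. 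The variable change $u\leftrightarrow x$ is two-to-one, which is where the factor $\tfrac12$ and the integral $\int\eta\mu^{-1}(x)\,dx$ come from. The factor $|\varpi|\,\mu(\varpi)\,\gamma_F(\varpi,\psi)^{-1}$ comes from $t(\varpi)$, and $(-1,\varpi)_{F,2}$ from the cocycle. Summing the square and nonsquare classes produces the two terms with $\eta(-1)$ and $\eta(-\xi)$.

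\textbf{Step 3: $g=(n^{\mathrm{op}}(z\varpi),1)$.} I compute
\begin{equation*}
    n^{\mathrm{op}}(z\varpi)\,n(\varpi^{-1}u) = \begin{pmatrix}1&\varpi^{-1}u\\ z\varpi& zu+1\end{pmatrix}.
\end{equation*}
There are three regimes.
\begin{itemize}
    \item If $zu+1\in\mathfrak p$, the matrix is in $K_0$ with $(2,1)$-entry a unit. It therefore lies in $\widetilde{B}\,w\,K_m$, giving the $f((w,1))$ term with $\mu(-z\varpi)^{-1}$ and $\operatorname{vol}$ factors.
    \item If $zu+1\in\mathcal{O}^\times$, right-multiplying by $n(\ast)\,t(\ast)$ turns it into $n^{\mathrm{op}}(z\varpi(zu+1)^{\pm1})$ up to $B$. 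Its class is $z$ or $z^\vee$ according to the square class of $zu+1$. Substituting $x^2 = zu+1$, or $x^2 = z^{-1}z^\vee(zu+1)$, in the other class, and using the two-to-one map $x\mapsto x^2$, gives the two integrals $\int (x^2-1,\varpi)_{F,2}\,\eta\mu^{-1}(x)\,dx$ and $\int (z^{-1}z^\vee x^2-1,\varpi)_{F,2}\,\eta\mu^{-1}(x)\,dx$. Here $(u,\varpi)_{F,2}$ becomes $(z,\varpi)_{F,2}\,(x^2-1,\varpi)_{F,2}$.
\end{itemize}

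\textbf{Step 4: $i\ge2$.} Now $z\varpi^{i-1}u+1\in 1+\mathfrak{p}^{i-1}$ is always a square $(u')^2$, so only the same coset contributes. Conjugating by $t(u')$ gives the stated diagonal formula.

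The main obstacle is Step 3. There one must keep simultaneous track of the cocycle signs $\bm{s}$, the Weil index $\gamma_F$ coming from $\chi_\psi$, and the measure normalization of the two-to-one substitution, and also make sure the excluded sets $\pm1+\mathfrak p$ match the $w$-regime. I would first verify the signs on the unramified case by comparing with the computations in Lemma \ref{lem:U-op_on_unram-ps}.
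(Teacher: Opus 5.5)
Your proposal is the paper's proof. For each representative, the paper writes $g\,(n(\varpi^{-1}u),1)$ explicitly as an element of $\widetilde{B}$ times a representative times an element of $K_m$, uses your substitutions $u=zx^2$, $zu+1=x^2zy^{-1}$ ($y\in\{1,\xi\}$) and $x^2=z\varpi^{i-1}u+1$, and integrates via the two-to-one change of variables.

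Three small slips need fixing when you write it out:
\begin{enumerate}
    \item In Step 2 the class is that of $u$, not $-u^{-1}$: the $N^{\mathrm{op}}$-component of $w\,n(\varpi^{-1}u)$ is $n^{\mathrm{op}}(\varpi u^{-1})$, so $u=zx^2$ lands on $n^{\mathrm{op}}(z\varpi)$.
    \item In the first regime of Step 3 the matrix is not in $K_0$, since its $(1,2)$-entry is $\varpi^{-1}u$. It equals $t(-z^{-1}\varpi^{-1})\,n(z\varpi)\,w\,n(z^{-1}\varpi^{-1}(zu+1))$, whose last factor lies in $K_m$ and whose torus factor carries the cocycle sign $(-1,\varpi)_{F,2}$ in the metaplectic lift.
    \item Your substitution $x^2=z^{-1}z^\vee(zu+1)$ first yields $(z(z^\vee)^{-1}x^2-1,\varpi)_{F,2}$. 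The stated form, together with the factor $\eta(z^{-1}z^\vee)$, appears only after rescaling $x\mapsto z^{-1}z^\vee x$.
\end{enumerate}
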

\begin{proof}
    By definition, we have
    \begin{align*}
        \mathcal{R}_\varpi f (g) = \int_{\mathcal{O}^\times} (u,\varpi)_{F,2} f\left( g \left( \left( \begin{array}{cc}\varpi&\varpi^{-1} u\\ &\varpi^{-1} \end{array} \right) 1 \right) \right) du,
    \end{align*}
    for any $f \in \pi_\psi(\mu)$ and $g \in \widetilde{G}$.
    In particular, we have
    \begin{align*}
        \mathcal{R}_\varpi f (1) = 0,
    \end{align*}
    for any $f$.
    Next, we compute $\mathcal{R}_\varpi f ((w,1))$.
    For any $u\in \mathcal{O}^\times$, there exist exactly one $z\in \{1, \xi\}$ and two $x\in \mathcal{O}^\times$ such that $u=z x^2$, and in this situation we have
    \begin{align*}
        (w,1) (n(\varpi^{-1} u), 1) = (t(\varpi), 1) \left( n(-\varpi^{-1} u^{-1}), 1 \right) \left( t(x^{-1}), (-z,\varpi)_{F,2} \right) \left( n^{\mathrm{op}}(z \varpi), 1 \right) \left( t(-z^{-1} x^{-1}), 1 \right).
    \end{align*}
    This implies that
    \begin{align*}
        \mathcal{R}_\varpi f ((w,1))
         & = \frac{1}{2} \sum_{z=1,\xi} \int_{\mathcal{O}^\times} (zx^2,\varpi)_{F,2} |\varpi| \mu(\varpi) \gamma_F(\varpi, \psi)^{-1} |x^{-1}| \mu(x^{-1}) \gamma_F(x^{-1},\psi)^{-1} (-z, \varpi)_{F,2}                 \\
         & \quad\times f\left( \left( n^{\mathrm{op}}(z \varpi), 1 \right) \right) \bm{s}(t(-z^{-1} x^{-1}))^{-1} \eta(-zx) dx                                                                                            \\
         & = \frac{(-1,\varpi)_{F,2}}{2} q^{-1} \mu(\varpi) \gamma_F(\varpi, \psi)^{-1} \int_{\mathcal{O}^\times} \eta\mu^{-1}(x) dx \sum_{z=1,\xi} \eta(-z) f\left( \left( n^{\mathrm{op}}(z \varpi), 1 \right) \right).
    \end{align*}
    Next, let $z=1$ or $\xi$.
    We shall compute $\mathcal{R}_\varpi f (( n^{\mathrm{op}}(z\varpi), 1))$.
    For any $u \in \mathcal{O}^\times \setminus -z^{-1} + \mathfrak{p}$, there exist exactly one $y \in \{1, \xi\}$ and two $x\in \mathcal{O}^\times$ such that $zu+1=x^2 zy^{-1}$.
    Here, note that $x$ cannot be in $\pm1+\mathfrak{p}$ if $y=z$.
    In this situation we have
    \begin{align*}
        (n^{\mathrm{op}}(z \varpi), 1) (n(\varpi^{-1} u), 1) = \left( n(\varpi^{-1}u (zu+1)^{-1}), 1 \right) \left( t(x^{-1} z^{-1} y), (zy, \varpi)_{F,2} \right) \left( n^{\mathrm{op}}(\varpi y), 1 \right) \left( t(x^{-1}), 1 \right).
    \end{align*}
    For any $u \in -z^{-1} + \mathfrak{p}$, on the other hand, we have
    \begin{align*}
        (n^{\mathrm{op}}(z \varpi), 1) (n(\varpi^{-1} u), 1) = \left( t(-z^{-1} \varpi^{-1}), (-1, \varpi)_{F,2} \right) \left( n(z \varpi), 1 \right) \left( w, 1 \right) \left( n(z^{-1} \varpi^{-1} (zu+1)), 1 \right).
    \end{align*}
    Therefore, we have
    \begin{align*}
         & \mathcal{R}_\varpi f (( n^{\mathrm{op}}(z\varpi), 1))                                                                                                                                                                                                                                                      \\
         & = \frac{1}{2} \int_{\mathcal{O}^\times \setminus \pm1+\mathfrak{p}} \left( \frac{x^2-1}{z},\varpi \right)_{F,2} |x|^{-1} \mu(x^{-1}) \gamma_F(x^{-1}, \psi)^{-1} f \left(\left( n^{\mathrm{op}}(z\varpi), 1 \right)\right) \bm{s}(t(x^{-1}))^{-1} \eta(x) dx                                               \\
         & \quad - \frac{1}{2}\int_{\mathcal{O}^\times} \left( \frac{x^2 z(z^\vee)^{-1}-1}{z},\varpi \right)_{F,2} \left| \frac{z^\vee}{xz} \right| \mu(x^{-1}z^{-1}z^\vee) \gamma_F(x^{-1}z^{-1}z^\vee, \psi)^{-1} f \left(\left( n^{\mathrm{op}}(z^\vee \varpi), 1 \right)\right) \bm{s}(t(x^{-1}))^{-1} \eta(x) dx \\
         & \qquad + \int_{-z^{-1}+\mathfrak{p}} (u,\varpi)_{F,2} |-z^{-1} \varpi^{-1}| \mu(-z^{-1} \varpi^{-1}) \gamma_F(-z^{-1} \varpi^{-1}, \psi)^{-1} (-1, \varpi)_{F,2} f\left( \left( w,1 \right) \right) du                                                                                                     \\
         & =\frac{(z,\varpi)_{F,2}}{2} \int_{\mathcal{O}^\times \setminus \pm1+\mathfrak{p}} (x^2-1,\varpi)_{F,2} \eta\mu^{-1} (x) f \left(\left( n^{\mathrm{op}}(z\varpi), 1 \right)\right) dx                                                                                                                       \\
         & \quad - \frac{(z,\varpi)_{F,2}}{2} \eta(z^{-1}z^\vee) \int_{\mathcal{O}^\times} (x^2 z^{-1}z^\vee-1,\varpi)_{F,2} \eta\mu^{-1}(x) f \left(\left( n^{\mathrm{op}}(z^\vee \varpi), 1 \right)\right) dx                                                                                                       \\
         & \qquad + q \operatorname{vol}(\mathfrak{p}) (-1,\varpi)_{F,2}\gamma_F(\varpi, \psi)^{-1}\mu(-z\varpi)^{-1} f\left( \left( w,1 \right) \right).
    \end{align*}
    We finally compute $\mathcal{R}_\varpi f (( n^{\mathrm{op}}(z\varpi^i), 1))$, for $z \in \{1, \xi\}$ and $i \geq 2$.
    For any $u\in \mathcal{O}^\times$, there exist exactly one $x\in 1+\varpi^{i-1}\mathcal{O}^\times$ such that $x^2 = zu\varpi^{i-1}+1$, and we have
    \begin{align*}
        (n^{\mathrm{op}}(z \varpi^i), 1) (n(\varpi^{-1} u), 1) = \left(n(\varpi^{-1} u x^{-2}), 1 \right) \left( t(x^{-1}), 1 \right) \left( n^{\mathrm{op}}(z \varpi^i), 1 \right) \left( t(x^{-1}), 1 \right).
    \end{align*}
    Thus we have
    \begin{align*}
        \mathcal{R}_\varpi f (( n^{\mathrm{op}}(z\varpi^i), 1))
         & = \int_{\mathcal{O}^\times} (u, \varpi)_{F,2} |x^{-1}| \mu(x^{-1}) \gamma_F(x^{-1}, \psi)^{-1} f\left( \left( n^{\mathrm{op}}(z \varpi^i), 1 \right) \right) \eta(x) du \\
         & = \int_{\mathcal{O}^\times} (u, \varpi)_{F,2} \eta\mu^{-1}(x) du f\left( \left( n^{\mathrm{op}}(z \varpi^i), 1 \right) \right).
    \end{align*}
    Now the assertions follow.
\end{proof}

\begin{proposition}\label{prop:unram_case}
    Suppose that $\mu$ is unramified and $\eta$ is trivial.
    Assume that $\mu^2 \neq |-|^{-1}$.
    Then Theorem \ref{thm:main-generic} holds for $\pi=\pi_\psi(\mu)$ and $\eta=1$.
\end{proposition}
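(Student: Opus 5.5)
The plan is to compute everything explicitly in the model $\pi_\psi(\mu)$, using Lemma \ref{lem:structure-of-subspaces} to get bases of the spaces $\pi_\psi(\mu)^{K_m}_1$ and Lemma \ref{lem:R-op_on_ps} to compute $\mathcal{R}_\varpi$. Since $\mu$ is unramified and $\eta=1$, we have $c(\mu)=c(\eta\mu^{\pm1})=0$, so $I_\psi(\mu,1,m)=\{0,\dots,m\}$. Hence $\pi^{K_m}_1$ has a basis of functions $f_x$ supported on the single double cosets $\widetilde{B}(x,1)K_m$, $x\in R_i$, $0\le i\le m$. This gives $\dim\pi^{K_0}_1=1$ and $\dim \pi^{K_m}_1=2m$ for $m\ge1$, and in particular $c_1(\pi)=0$. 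We are in case (1) of Theorem \ref{thm:main-generic} ($\eta=\mu^{\pm1}|_{\mathcal{O}^\times}$). So we must show that $\pi^{K_0,\KUnew}_1$ is $1$-dimensional and that $\pi^{K_m,\KUold}_1=\pi^{K_m}_1$ for all $m\ge1$.

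For $m=0$ we have to show $\mathcal{R}_\varpi f_0\neq 0$, where $f_0$ spans $\pi^{K_0}_1$. Apply the second formula of Lemma \ref{lem:R-op_on_ps}. Since $f_0$ is right $K_0$-invariant, $f_0((n^{\mathrm{op}}(\varpi),1))=f_0((n^{\mathrm{op}}(\xi\varpi),1))=f_0(1)\neq0$. Moreover $\eta=1$ and $\int_{\mathcal{O}^\times}\mu^{-1}(x)dx=\operatorname{vol}(\mathcal{O}^\times)$. Hence $\mathcal{R}_\varpi f_0((w,1))$ is a nonzero multiple of $f_0(1)$, so $\operatorname{Ker}(\mathcal{R}_\varpi|_{\pi^{K_0}_1})=0$ and the new space is $\pi^{K_0}_1$, of dimension $1$.

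For $m=1$, the space $\pi^{K_1}_1$ is $2$-dimensional and contains $f_0$ and $\mathcal{U}_{\varpi^2}f_0$ (Proposition \ref{prop:U-operator}). By Lemma \ref{lem:U-op_on_unram-ps}, $\mathcal{U}_{\varpi^2}f_0\in\pi^{K_0}_1=\C f_0$ would force $\mu=\chi|-|^{-1/2}$, i.e. $\mu^2=|-|^{-1}$, which is excluded. So these two vectors span $\pi^{K_1}_1$, and the old space is everything.

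For $m\ge2$ I would first use Lemma \ref{lem:R-op_on_ps} to locate the kernel. For $i\ge2$, $u'$ is a unit and $\eta\mu^{-1}(u')=1$, so the coefficient is $\int_{\mathcal{O}^\times}(u,\varpi)_{F,2}du=0$. Together with $\mathcal{R}_\varpi f(1)=0$, this shows that $\mathcal{R}_\varpi f$ is supported on the cosets of $w$, $n^{\mathrm{op}}(\varpi)$ and $n^{\mathrm{op}}(\xi\varpi)$. It also shows that $\mathcal{R}_\varpi f$ depends only on the three values of $f$ there. Consequently the basis vectors $f_1$ and $f_{n^{\mathrm{op}}(z\varpi^i)}$ ($2\le i\le m-1$) lie in $\operatorname{Ker}\mathcal{R}_\varpi$.

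For $m\ge3$, the $K_m$-double cosets of $w$ and $n^{\mathrm{op}}(z\varpi)$ coincide with the corresponding $K_{m-1}$-double cosets, because the valuation and square class of the $(2,1)$-entry are invariants (proof of Lemma \ref{lem:coset_ps}). So $f_w$ and $f_{n^{\mathrm{op}}(z\varpi)}$ already lie in $\pi^{K_{m-1}}_1$, and $\pi^{K_m}_1=\pi^{K_{m-1}}_1+\operatorname{Ker}\mathcal{R}_\varpi$.

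The main obstacle is $m=2$. Here $\pi^{K_1}_1=\langle f_0,f^{(1)}_w\rangle$ only supplies vectors with $f(n^{\mathrm{op}}(\varpi))=f(n^{\mathrm{op}}(\xi\varpi))$, and we need $\mathcal{R}_\varpi(\pi^{K_1}_1)$ to supply the remaining direction. Modulo $\C f_1\subset\operatorname{Ker}\mathcal{R}_\varpi$, the space $\pi^{K_2}_1$ is identified with $\C^3$ via the coordinates $(f(w),f(n^{\mathrm{op}}(\varpi)),f(n^{\mathrm{op}}(\xi\varpi)))$. On $\C^3$, $\mathcal{R}_\varpi$ acts by an explicit $3\times3$ matrix read off from Lemma \ref{lem:R-op_on_ps}. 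For $\eta\mu^{-1}=1$ its entries are evaluated by Lemma \ref{lem:square-1_Hilbertsymbol}; they involve $(-1,\varpi)_{F,2}$, $\mu(\varpi)$ and $\gamma_F(\varpi,\psi)$.

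By Proposition \ref{prop:R-operator}(c), $\mathcal{R}_\varpi^3=c\,\mathcal{R}_\varpi$ with $c\neq0$, so $\pi^{K_2}_1=\operatorname{Ker}\mathcal{R}_\varpi\oplus\operatorname{Im}\mathcal{R}_\varpi$. I would then check by direct computation of the matrix that $\operatorname{Ker}\mathcal{R}_\varpi$, $f_0$, $f^{(1)}_w$, $\mathcal{R}_\varpi f_0$ and $\mathcal{R}_\varpi f^{(1)}_w$ together span the $4$-dimensional space $\pi^{K_2}_1$. Concretely, I would compute the determinant of the coordinate vectors of $f_0$, $f^{(1)}_w$ and $\mathcal{R}_\varpi f_0$, projected off the kernel, and show that it is a nonzero multiple of an expression that vanishes only when $\mu(\varpi)^2=q$. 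If this degenerates, I would instead use $\mathcal{R}_\varpi f^{(1)}_w$.
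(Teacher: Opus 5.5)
Your treatment of $m=0$ and $m=1$ is the same as the paper's. For $m\ge 2$ you take a different and correct route.

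The paper computes $\operatorname{Ker}(\mathcal{R}_\varpi|_{\pi_\psi(\mu)^{K_3}_1})$ once. By Lemmas \ref{lem:R-op_on_ps} and \ref{lem:square-1_Hilbertsymbol} it is cut out by $f((n^{\mathrm{op}}(\varpi),1))=-f((n^{\mathrm{op}}(\xi\varpi),1))=q\gamma_F(\varpi,\psi)^{-1}\mu(\varpi)^{-1}f((w,1))$. So it is $4$-dimensional and meets $\pi_\psi(\mu)^{K_1}_1$ trivially, which gives $\pi_\psi(\mu)^{K_3}_1=\pi_\psi(\mu)^{K_1}_1\oplus\operatorname{Ker}$. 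The paper then gets $m=2$ by intersecting with $\pi_\psi(\mu)^{K_2}_1$, and disposes of $m\ge4$ via the vanishing of $\pi^{K_m,\RSnew}_1$ from \cite[Corollary 4.2]{ish} together with Proposition \ref{prop:compare-KU-RS}.

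Your argument for $m\ge 3$ is different. For such $m$, the double cosets of $w$ and $n^{\mathrm{op}}(z\varpi)$ do not change from level $m-1$ to level $m$, because the square class of $g_{21}/g_{11}$ is the invariant. Every other basis vector is killed by $\mathcal{R}_\varpi$. This proves $\pi^{K_m}_1=\pi^{K_{m-1}}_1+\operatorname{Ker}(\mathcal{R}_\varpi|_{\pi^{K_m}_1})$ uniformly for all $m\ge3$. It needs no Roberts--Schmidt input and no $6$-dimensional computation, so it is more self-contained than the paper's route. The paper's route, in exchange, gets $m=2$ and $m=3$ from a single kernel computation.

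At $m=2$ you only announce a computation, and you misjudge it in two ways; neither is fatal.

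First, $\mathcal{R}_\varpi(\pi^{K_1}_1)$ is not needed. The two equations above also describe $\operatorname{Ker}(\mathcal{R}_\varpi|_{\pi^{K_2}_1})$, which is therefore $2$-dimensional. It contains a vector with $f((n^{\mathrm{op}}(\varpi),1))=-f((n^{\mathrm{op}}(\xi\varpi),1))\neq0$, so already $\pi^{K_2}_1=\pi^{K_1}_1\oplus\operatorname{Ker}(\mathcal{R}_\varpi|_{\pi^{K_2}_1})$. This is what the paper uses.

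Second, there is no degeneracy at $\mu(\varpi)^2=q$. For unramified $\mu$ and $\eta=1$, Lemma \ref{lem:R-op_on_ps} gives
\[
\mathcal{R}_\varpi f((n^{\mathrm{op}}(\varpi),1))-\mathcal{R}_\varpi f((n^{\mathrm{op}}(\xi\varpi),1))=-\operatorname{vol}(\mathfrak{p})\bigl(f((n^{\mathrm{op}}(\varpi),1))+f((n^{\mathrm{op}}(\xi\varpi),1))\bigr),
\]
since the $f((w,1))$-terms cancel. Hence, with $f_0(1)=1$, your determinant for $f_0$, $f^{(1)}_w$, $\mathcal{R}_\varpi f_0$ modulo $\C f_1$ is $\pm2\operatorname{vol}(\mathfrak{p})$, independent of $\mu$.

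The hypothesis $\mu^2\neq|-|^{-1}$ is used only at $m=1$. Once this small computation is written out, your proof is complete.
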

\begin{proof}
    Let $f_0$ be a nonzero element in $\pi_\psi(\mu)^{K_0}_1$, which is unique up to a scalar multiple.
    By Lemma \ref{lem:R-op_on_ps}, we have
    \begin{equation*}
        \mathcal{R}_\varpi f_0 ((w,1))
        = (-1,\varpi)_{F,2} q^{-1} \mu(\varpi) \gamma_F(\varpi,\psi)^{-1} \operatorname{vol}(\mathcal{O}^\times) f_0(1)
        \neq 0.
    \end{equation*}
    Thus we have $\pi_\psi(\mu)^{K_0,\KUold}_1=\{0\}$ and
    \begin{equation*}
        \pi_\psi(\mu)^{K_0, \KUnew}_1 = \pi_\psi(\mu)^{K_0}_1 = \C f_0.
    \end{equation*}

    Recall that $\pi_\psi(\mu)^{K_1}_1$ is 2-dimensional.
    Combining with the assumption that $\mu^2 \neq |-|^{-1}$, Lemma \ref{lem:U-op_on_unram-ps} implies that
    \begin{equation*}
        \pi_\psi(\mu)^{K_1}_1 = \pi_\psi(\mu)^{K_0}_1 + \mathcal{U}_{\varpi^2} \left( \pi_\psi(\mu)^{K_0}_1 \right).
    \end{equation*}
    Thus we have $\pi_\psi(\mu)^{K_1, \KUold}_1 = \pi_\psi(\mu)^{K_1}_1$ and
    \begin{equation*}
        \pi_\psi(\mu)^{K_1, \KUnew}_1 = \{0\}.
    \end{equation*}

    Next we consider $\pi_\psi(\mu)^{K_3, \KUnew}_1$.
    Recall that it is 6-dimensional and any $f \in \pi_\psi(\mu)^{K_3}_1$ is determined by the values at $1$, $(w,1)$, $(n^{\mathrm{op}}(\varpi), 1)$, $(n^{\mathrm{op}}(\xi\varpi), 1)$, $(n^{\mathrm{op}}(\varpi^2), 1)$, and $(n^{\mathrm{op}}(\xi\varpi^2), 1)$.
    Recall also that $\mathcal{R}_\varpi(\pi_\psi(\mu)^{K_3}_1)$ is contained in $\pi_\psi(\mu)^{K_3}_1$.
    Then by Lemmas \ref{lem:R-op_on_ps} and \ref{lem:square-1_Hilbertsymbol}, one can see that an element $f \in \pi_\psi(\mu)^{K_3}_1$ is in $\operatorname{Ker}(\mathcal{R}_\varpi)$ if and only if
    \begin{equation*}
        f((n^{\mathrm{op}}(\varpi), 1))) = - f((n^{\mathrm{op}}(\xi\varpi), 1)) = q \gamma_F(\varpi, \psi)^{-1} \mu(\varpi)^{-1} f((w,1)).
    \end{equation*}
    From this we can deduce that $\operatorname{Ker}(\mathcal{R}_\varpi|_{\pi_\psi(\mu)^{K_3}_1})$ is 4-dimensional and $\operatorname{Ker}(\mathcal{R}_\varpi|_{\pi_\psi(\mu)^{K_1}_1})$ is zero.
    Since $\pi_\psi(\mu)^{K_1}_1$ is 2-dimensional, we now have
    \begin{equation*}
        \pi_\psi(\mu)^{K_3}_1 = \pi_\psi(\mu)^{K_1}_1 \oplus \operatorname{Ker}(\mathcal{R}_\varpi|_{\pi_\psi(\mu)^{K_3}_1}) \subset \pi_\psi(\mu)^{K_3, \KUold}_1.
    \end{equation*}
    Thus we have $\pi_\psi(\mu)^{K_3, \KUold}_1 = \pi_\psi(\mu)^{K_3}_1$ and
    \begin{equation*}
        \pi_\psi(\mu)^{K_3, \KUnew}_1 = \{0\}.
    \end{equation*}

    Finally, we have
    \begin{align*}
        \pi_\psi(\mu)^{K_2}_1
         & = \pi_\psi(\mu)^{K_3}_1 \cap \pi_\psi(\mu)^{K_2}_1                                                                                       \\
         & = \left( \pi_\psi(\mu)^{K_1}_1 \oplus \operatorname{Ker}(\mathcal{R}_\varpi|_{\pi_\psi(\mu)^{K_3}_1}) \right) \cap \pi_\psi(\mu)^{K_2}_1 \\
         & = \pi_\psi(\mu)^{K_1}_1 \oplus \operatorname{Ker}(\mathcal{R}_\varpi|_{\pi_\psi(\mu)^{K_2}_1})                                           \\
         & \subset \pi_\psi(\mu)^{K_2, \KUold}_1,
    \end{align*}
    which implies that $\pi_\psi(\mu)^{K_2, \KUold}_1 = \pi_\psi(\mu)^{K_2}_1$ and
    \begin{equation*}
        \pi_\psi(\mu)^{K_2, \KUnew}_1 = \{0\}.
    \end{equation*}
    Since $\pi_\psi(\mu)^{K_m, \RSnew}_\eta$ is zero for $m\geq4$ (\cite[Corollary 4.2]{ish}), this completes the proof.
\end{proof}
\begin{remark}\label{rem:uni-def}
    In the setting and the notation of the proof of Proposition \ref{prop:unram_case} but without the assumption that $\mu^2 \neq |-|^{-1}$, one can see that $\mathcal{R}_\varpi f_0 \in \pi_\psi(\mu)^{K_2}_1$ does not lie in $\pi_\psi(\mu)^{K_1}_1$.
    Moreover, we have just seen that $\operatorname{Ker}(\mathcal{R}_\varpi|_{\pi_\psi(\mu)^{K_0}_\eta})$ is zero.
    Thus we may uniformly define
    \begin{equation*}
        \pi^{K_m, \KUold}_\eta = \pi^{K_{m-1}}_\eta + \mathcal{U}_{\varpi^2}\left( \pi^{K_{m-1}}_\eta \right) + \left( \mathcal{R}_\varpi\left( \pi^{K_{m-1}}_\eta \right) \cap \pi^{K_m}_\eta  \right) + \operatorname{Ker}(\mathcal{R}_\varpi|_{\pi^{K_m}_\eta}),
    \end{equation*}
    for all $m \geq 0$.
\end{remark}

\begin{proposition}\label{prop:quad-ps_case}
    Suppose that $\mu|_{\mathcal{O}^\times}$ is nontrivial quadratic, and $\eta=\mu|_{\mathcal{O}^\times}$.
    Then Theorem \ref{thm:main-generic} holds for $\pi=\pi_\psi(\mu)$ and $\eta$.
\end{proposition}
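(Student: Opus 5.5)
We compute everything explicitly in the model of $\pi_\psi(\mu)$, using Lemma~\ref{lem:structure-of-subspaces} for the spaces $\pi_\psi(\mu)^{K_m}_\eta$ and Lemma~\ref{lem:R-op_on_ps} for the action of $\mathcal{R}_\varpi$. We assume $\pi_\psi(\mu)$ is irreducible, i.e.\ $\mu^2\neq|-|^{\pm1}$.

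\textbf{Basic data.} Here $c(\mu)=c(\eta)=1$, while $\eta\mu^{-1}$ and $\eta\mu=\mu^2$ are trivial on $\mathcal{O}^\times$. So $c(\eta\mu^{\pm1})=0$, Theorem~\ref{thm:conductor} gives $c_\eta(\pi)=1$, and we are in case (1) of Theorem~\ref{thm:main-generic}: we must show that $\pi^{K_1,\KUnew}_\eta$ is $1$-dimensional and that $\pi^{K_m,\KUnew}_\eta=0$ for $m\neq1$. Plugging into Lemma~\ref{lem:structure-of-subspaces}, $I_\psi(\mu,\eta,m)=\{0,1,\dots,m\}$ for $m\geq1$ and is empty for $m=0$. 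Hence $\pi^{K_0}_\eta=0$, and $f\in\pi^{K_m}_\eta$ is determined freely by its values at $1$, $(w,1)$ and $(n^{\mathrm{op}}(z\varpi^i),1)$ for $z\in\{1,\xi\}$ and $1\le i\le m-1$. In particular $\dim\pi^{K_m}_\eta=2m$.

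\textbf{Level $1$.} Since $\pi^{K_0}_\eta=0$, we have $\pi^{K_1,\KUold}_\eta=\operatorname{Ker}(\mathcal{R}_\varpi|_{\pi^{K_1}_\eta})$. For $f\in\pi^{K_1}_\eta$, Lemma~\ref{lem:spl_lv1} and $\eta(1)=1$ give $f((n^{\mathrm{op}}(z\varpi),1))=f(1)$. By Proposition~\ref{prop:R-operator}(b), $\mathcal{R}_\varpi f\in\pi^{K_2}_\eta$, so it is determined by its values at $1$, $(w,1)$ and $(n^{\mathrm{op}}(z\varpi),1)$.
\begin{itemize}
\item $\mathcal{R}_\varpi f(1)=0$.
\item $\mathcal{R}_\varpi f((w,1))$ is a multiple of $\eta(-1)(1+\eta(\xi))f(1)$, which vanishes because $\eta(\xi)=-1$.
\item $\mathcal{R}_\varpi f((n^{\mathrm{op}}(z\varpi),1))$ is, by Lemma~\ref{lem:R-op_on_ps} with $\eta\mu^{-1}=1$ on $\mathcal{O}^\times$ and the two integrals evaluated by Lemma~\ref{lem:square-1_Hilbertsymbol}, an explicit combination $A_z f(1)+B_z f((w,1))$. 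Here $B_z=(-1,\varpi)_{F,2}\operatorname{vol}(\mathcal{O})\gamma_F(\varpi,\psi)^{-1}\mu(-z\varpi)^{-1}\neq0$.
\end{itemize}
The expected result is that these two linear forms in $(f(1),f((w,1)))$ are proportional, so the kernel is a line. Then $\pi^{K_1,\KUnew}_\eta$ is $1$-dimensional. If instead the kernel were $0$, we would get dimension $2$, contradicting the statement; so this proportionality is the computation to check carefully.

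\textbf{Levels $m\geq2$.} By Proposition~\ref{prop:compare-KU-RS}, $\dim\pi^{K_m,\KUnew}_\eta\le\dim\pi^{K_m,\RSnew}_\eta$. By Theorem~\ref{thm:rs-Mp2} and the explicit level description in \cite{ish} (as used in Proposition~\ref{prop:unram_case}), $\pi^{K_m,\RSnew}_\eta$ vanishes for $m$ large, presumably $m\ge4$; so only $m=2,3$ remain. For these we argue as in Proposition~\ref{prop:unram_case}:
\begin{itemize}
\item For $i\ge2$, the last formula of Lemma~\ref{lem:R-op_on_ps} gives the scalar $\int_{\mathcal{O}^\times}(u,\varpi)_{F,2}\,du=0$, because $\eta\mu^{-1}$ is trivial on $\mathcal{O}^\times$. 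So $\mathcal{R}_\varpi f$ does not see the coordinates with $i\ge2$, and it vanishes at those points.
\item Consequently $\operatorname{Ker}(\mathcal{R}_\varpi|_{\pi^{K_m}_\eta})$ is cut out by at most three linear conditions on the four coordinates $f(1)$, $f((w,1))$, $f((n^{\mathrm{op}}(z\varpi),1))$: the values of $\mathcal{R}_\varpi f$ at $(w,1)$ and at the two points $(n^{\mathrm{op}}(z\varpi),1)$.
\end{itemize}
We then show that $\pi^{K_{m-1}}_\eta+\operatorname{Ker}(\mathcal{R}_\varpi|_{\pi^{K_m}_\eta})$ is all of $\pi^{K_m}_\eta$. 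The inclusion $\pi^{K_{m-1}}_\eta\subset\pi^{K_m}_\eta$ just forces the coordinate at $(n^{\mathrm{op}}(z\varpi^{m-1}),1)$ to equal $f(1)$. So it suffices to check, by a dimension count, that $\pi^{K_{m-1}}_\eta$ surjects onto the quotient $\pi^{K_m}_\eta/\operatorname{Ker}\mathcal{R}_\varpi$. If necessary, we use the image $\mathcal{R}_\varpi(\pi^{K_{m-1}}_\eta)$ as well to fill the remaining directions.

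\textbf{Main obstacle.} The hard step is the exact rank computation of $\mathcal{R}_\varpi$ on $\pi^{K_1}_\eta$ and on the low-$i$ coordinates of $\pi^{K_2}_\eta$ and $\pi^{K_3}_\eta$. It requires tracking the signs $(z,\varpi)_{F,2}$, $\eta(\xi)=-1$ and $\mu(-z\varpi)^{-1}$ through Lemma~\ref{lem:square-1_Hilbertsymbol}. I would first verify the level-$1$ proportionality, splitting into the cases $(-1,\varpi)_{F,2}=\pm1$, since that case split governs both integrals.
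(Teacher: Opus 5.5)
Your route is the paper's: Lemma \ref{lem:structure-of-subspaces} describes the spaces, Lemma \ref{lem:R-op_on_ps} gives $\mathcal{R}_\varpi$, and the Roberts--Schmidt bound kills the high levels. Your preliminary data are also right: $\pi^{K_0}_\eta=0$, $\dim\pi^{K_m}_\eta=2m$, and $c_\eta(\pi)=1$. But as written the proposal does not prove the statement. Both decisive computations are deferred, and the level-$2$ step is set up in a way that fails.

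\textbf{Level 1.} The proportionality you hope for does hold, and no case split is needed. Put $\epsilon=(-1,\varpi)_{F,2}$. The two integrals of Lemma \ref{lem:square-1_Hilbertsymbol} sum to $-2\epsilon\operatorname{vol}(\mathfrak{p})$; for $z=\xi$ substitute $x\mapsto\xi x$ to reduce to the same integral. Together with $\eta(\xi)=(\xi,\varpi)_{F,2}=-1$ this gives $A_1=-\epsilon\operatorname{vol}(\mathfrak{p})=-A_\xi$, and also $B_\xi=\mu(\xi)^{-1}B_1=-B_1$. So for $f\in\pi^{K_1}_\eta$, $\mathcal{R}_\varpi f$ takes the values $\alpha$ and $-\alpha$ at $(n^{\mathrm{op}}(\varpi),1)$ and $(n^{\mathrm{op}}(\xi\varpi),1)$, where $\alpha=A_1f(1)+B_1f((w,1))$. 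Since $B_1\neq0$, the kernel on $\pi^{K_1}_\eta$ is exactly a line.

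\textbf{Level 2 (the genuine gap).} You propose to show $\pi^{K_1}_\eta+\operatorname{Ker}(\mathcal{R}_\varpi|_{\pi^{K_2}_\eta})=\pi^{K_2}_\eta$ by a dimension count. This is false.
\begin{itemize}
\item For $f\in\pi^{K_2}_\eta$, $\mathcal{R}_\varpi f((w,1))$ is a nonzero multiple of $f((n^{\mathrm{op}}(\varpi),1))-f((n^{\mathrm{op}}(\xi\varpi),1))$, again because $\eta(\xi)=-1$.
\item Hence the kernel lies in the hyperplane $H$ where these two values agree. So does $\pi^{K_1}_\eta$, on which both values equal $f(1)$.
\item The sum is therefore $H$, of dimension $2+2-1=3$. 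Equivalently, $\pi^{K_1}_\eta$ maps onto only a line in the $2$-dimensional quotient $\pi^{K_2}_\eta/\operatorname{Ker}(\mathcal{R}_\varpi)$, so the surjectivity you want fails.
\end{itemize}
The summand $\mathcal{R}_\varpi(\pi^{K_1}_\eta)$ in the definition of $\pi^{K_2,\KUold}_\eta$ is therefore indispensable, not an optional fallback. Take $f_1\in\pi^{K_1}_\eta$ with $\alpha\neq0$. Then $\mathcal{R}_\varpi f_1\in\pi^{K_2}_\eta$ vanishes at $1$ and $(w,1)$ and takes the values $\alpha,-\alpha$ at the two $n^{\mathrm{op}}$-points. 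So it lies outside $H$, and $\pi^{K_2,\KUold}_\eta=\pi^{K_2}_\eta$. This is exactly the paper's argument with $f_1$ and $\mathcal{R}_\varpi f_1$.

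\textbf{Higher levels.} You need not guess where $\pi^{K_m,\RSnew}_\eta$ vanishes. By Theorem \ref{thm:rs-Mp2} and Proposition \ref{prop:num-generic}, the total dimension is $4$. Already $\dim\pi^{K_1,\RSnew}_\eta=2$ and $\dim\pi^{K_2,\RSnew}_\eta=4-2=2$, since $\beta_2(\pi^{K_0}_\eta)=0$. Hence $\pi^{K_m,\RSnew}_\eta$, and so $\pi^{K_m,\KUnew}_\eta$, vanishes for all $m\geq3$, and no level-$3$ analysis is required.
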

\begin{proof}
    Recall from Lemma \ref{lem:structure-of-subspaces} that any element in $\pi_\psi(\mu)^{K_1}_\eta$ (resp. $\pi_\psi(\mu)^{K_2}_\eta$) is determined by the values at $1$ and $(w,1)$, (resp. $1$, $(w,1)$, $(n^{\mathrm{op}}(\varpi), 1)$, and $(n^{\mathrm{op}}(\xi\varpi), 1)$).
    For an element $f_1 \in \pi_\psi(\mu)^{K_1}_\eta$ such that $f_1(1)=0$ and $f_1((w,1)) \neq 0$, it follows from Lemma \ref{lem:R-op_on_ps} that $\mathcal{R}_\varpi f_1 (1)$ is zero but $\mathcal{R}_\varpi f_1 ((n^{\mathrm{op}}(\varpi), 1))$ is not.
    This means that $\mathcal{R}_\varpi f_1$ is an element of $\pi_\psi(\mu)^{K_2}_\eta$ by Proposition \ref{prop:R-operator}, but not of $\pi_\psi(\mu)^{K_1}_\eta$.
    In particular, $f_1$ and $\mathcal{R}_\varpi f_1$ are linearly independent and neither of them lies in $\operatorname{Ker}(\mathcal{R}_\varpi)$.
    As in the proof of Proposition \ref{prop:unram_case}, one can see that an element $f \in \pi_\psi(\mu)^{K_2}_\eta$ is in $\operatorname{Ker}(\mathcal{R}_\varpi)$ if and only if
    \begin{equation*}
        f\left( \left( n^{\mathrm{op}}(\varpi), 1 \right) \right) = f\left( \left( n^{\mathrm{op}}(\xi\varpi), 1 \right) \right) = q (-1,\varpi)_{F,2} \gamma_F(\varpi, \psi)^{-1} \mu(\varpi) f\left( \left( w, 1 \right) \right).
    \end{equation*}
    From this we can deduce that the dimension of $\operatorname{Ker}(\mathcal{R}_\varpi|_{\pi_\psi(\mu)^{K_2}_\eta})$ is $4-2=2$, and that of $\operatorname{Ker}(\mathcal{R}_\varpi|_{\pi_\psi(\mu)^{K_1}_\eta})$ is $2-1=1$.
    Hence the dimension of $\pi_\psi(\mu)^{K_1, \KUnew}_\eta$ is $2-1=1$.
    We can also deduce
    \begin{equation*}
        \C f_1 \oplus \C \mathcal{R}_\varpi f_1 \oplus \operatorname{Ker}(\mathcal{R}_\varpi|_{\pi_\psi(\mu)^{K_2}_\eta}) \subset \pi_\psi(\mu)^{K_2, \KUold}_\eta \subset \pi_\psi(\mu)^{K_2}_\eta.
    \end{equation*}
    Comparing the dimensions, we see that $\pi_\psi(\mu)^{K_2, \KUold}_\eta$ is equal to $\pi_\psi(\mu)^{K_2}_\eta$, and hence $\pi_\psi(\mu)^{K_2, \KUnew}_\eta$ is zero.

    Since $\pi_\psi(\mu)^{K_m, \RSnew}_\eta$ is zero unless $m=1$ or $2$ (\cite[Corollary 4.2]{ish}), this completes the proof.
\end{proof}

\begin{proposition}\label{prop:general-min-conductor-ps_case}
    Suppose that $\mu^2$ is nontrivial on $\mathcal{O}^\times$, and that $\eta=\mu|_{\mathcal{O}^\times}$ or $\mu^{-1}|_{\mathcal{O}^\times}$.
    Then Theorem \ref{thm:main-generic} holds for $\pi=\pi_\psi(\mu)$ and $\eta$.
\end{proposition}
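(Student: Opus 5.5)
Since $\mu^2$ is nontrivial on $\mathcal{O}^\times$, $\pi_\psi(\mu)$ is irreducible and $\pi_\psi(\mu)\cong\pi_\psi(\mu^{-1})$. I would therefore first reduce to the case $\eta=\mu|_{\mathcal{O}^\times}$. I expect this reduction to be harmless, since the definitions of $\mathcal{R}_\varpi$, $\mathcal{U}_{\varpi^2}$ and the spaces $\pi^{K_m}_\eta$ are intrinsic to the abstract representation. Put $c=c(\mu)\geq1$. Then $c(\eta)=c$, $c(\eta\mu^{-1})=0$ and $c(\eta\mu)=c(\mu^2)=c$, because $p$ is odd. Theorem \ref{thm:conductor} gives $c_\eta(\pi)=c$. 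The sets in Lemma \ref{lem:structure-of-subspaces} become
\begin{equation*}
I_\psi(\mu,\eta,m)=\{0\le i\le \min(m/2,\,m-c)\}\cup\{\max(m/2,c)\le i\le m-c\}.
\end{equation*}
For $m=c$ this is just $\{0\}$. So $\pi^{K_c}_\eta$ is spanned by a single function $f_c$ supported on $\widetilde{B}(w,1)K_c$, and $\pi^{K_m}_\eta=0$ for $m<c$.

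\textbf{Step 1 (the level $m=c$).} Here $\pi^{K_{c-1}}_\eta=0$, and also $\pi^{K_0}_\eta=\mathcal{U}_{\varpi^2}(\pi^{K_0}_\eta)=0$ when $c=1$. Hence $\pi^{K_c,\KUold}_\eta=\operatorname{Ker}(\mathcal{R}_\varpi|_{\pi^{K_c}_\eta})$, and it suffices to show $\mathcal{R}_\varpi f_c\neq0$. I would evaluate $\mathcal{R}_\varpi f_c$ at $(n^{\mathrm{op}}(\varpi),1)$ using Lemma \ref{lem:R-op_on_ps}.
\begin{itemize}
\item The two terms involving $f_c((n^{\mathrm{op}}(z\varpi),1))$ vanish. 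If $c\ge2$, these points lie outside the support of $f_c$. If $c=1$, we have $n^{\mathrm{op}}(z\varpi)\in K_1$, so $f_c((n^{\mathrm{op}}(z\varpi),1))$ is a multiple of $f_c(1)=0$.
\item The remaining term $(-1,\varpi)_{F,2}\operatorname{vol}(\mathcal{O})\gamma_F(\varpi,\psi)^{-1}\mu(-\varpi)^{-1}f_c((w,1))$ is nonzero.
\end{itemize}
So $\pi^{K_c,\KUnew}_\eta=\pi^{K_c}_\eta$ is $1$-dimensional, which is case (1) of Theorem \ref{thm:main-generic}.

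\textbf{Step 2 (the levels $m>c$).} By Proposition \ref{prop:compare-KU-RS}, $\pi^{K_m,\KUnew}_\eta=0$ whenever $\pi^{K_m,\RSnew}_\eta=0$. I would use \cite[Corollary 4.2]{ish} (equivalently, the dimension count from Lemma \ref{lem:structure-of-subspaces} together with Theorem \ref{thm:rs-Mp2}, whose total is $4$) to locate the finitely many $m>c$ with nonzero $\RSnew$. At each such $m$ I would show directly that
\begin{equation*}
\pi^{K_m}_\eta=\pi^{K_{m-1}}_\eta+\mathcal{R}_\varpi(\pi^{K_{m-1}}_\eta)+\operatorname{Ker}(\mathcal{R}_\varpi|_{\pi^{K_m}_\eta})
\end{equation*}
(with the modification from Definition \ref{def:local-KU} if $m=1$, which cannot occur here since $c\ge1$). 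The needed vectors come from two sources.
\begin{itemize}
\item At $m=c+1$ the extra direction should be $\mathcal{R}_\varpi f_c$. By Proposition \ref{prop:R-operator}(b) it lies in $\pi^{K_{\max(2,c+1)}}_\eta$, and Step 1 shows it is nonzero at $(n^{\mathrm{op}}(\varpi),1)$, so it does not lie in $\pi^{K_c}_\eta$. For $c=1$ I would check this against the $I$-set directly.
\item The other new support points are cosets $\widetilde{B}(n^{\mathrm{op}}(z\varpi^i),1)K_m$ with $i\ge2$, coming from the second piece of $I$, namely $i\ge c$. For these, the last formula of Lemma \ref{lem:R-op_on_ps} involves $\int_{\mathcal{O}^\times}(u,\varpi)_{F,2}\,\eta\mu^{-1}(u')\,du$. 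Since $\eta\mu^{-1}=1$, this equals $\int_{\mathcal{O}^\times}(u,\varpi)_{F,2}\,du=0$. I would then use these identities to produce elements of $\operatorname{Ker}(\mathcal{R}_\varpi)$ supported on these cosets.
\end{itemize}

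I expect the main obstacle to be making the kernel argument rigorous. The issue is that $\mathcal{R}_\varpi f$ at one double coset may also pick up values of $f$ at other cosets: Lemma \ref{lem:R-op_on_ps} shows this happens at $w$ and at $n^{\mathrm{op}}(z\varpi)$. One must therefore confirm that functions supported on $\widetilde{B}(n^{\mathrm{op}}(z\varpi^i),1)K_m$ with $i\ge2$ are genuinely killed by $\mathcal{R}_\varpi$. A secondary issue is the small-conductor case $c=1$, where the $i=1$ coset $n^{\mathrm{op}}(z\varpi)$ can enter the support at $m=2$. There I would instead solve the linear conditions for $\operatorname{Ker}(\mathcal{R}_\varpi)$, with coefficients given by Lemma \ref{lem:square-1_Hilbertsymbol}, exactly as in the proof of Proposition \ref{prop:quad-ps_case}. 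Finally I would compare dimensions, as in the proofs of Propositions \ref{prop:unram_case} and \ref{prop:quad-ps_case}, to conclude $\pi^{K_m,\KUold}_\eta=\pi^{K_m}_\eta$ for all $m\neq c$.
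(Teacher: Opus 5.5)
Your reduction to $\eta=\mu|_{\mathcal{O}^\times}$ via $\pi_\psi(\mu)\cong\pi_\psi(\mu^{-1})$ is legitimate, since everything in Theorem \ref{thm:main-generic} depends only on the isomorphism class. It also spares you the two subcases $\eta=\mu^{-1}|_{\mathcal{O}^\times}$ that the paper treats separately. Step 1 is correct as well.

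The gap is at level $m=c+1$. By your own formula, the first piece of $I_\psi(\mu,\eta,c+1)$ is $\{0,1\}$ for \emph{every} $c\ge1$. So $\pi^{K_{c+1}}_\eta$ is $3$-dimensional, with basis $f_1,f_2,f_3$ given by $f_j(g_k)=\delta_{jk}$ at $g_1=(w,1)$, $g_2=(n^{\mathrm{op}}(\varpi),1)$, $g_3=(n^{\mathrm{op}}(\xi\varpi),1)$. The $i=1$ cosets are therefore not a $c=1$ phenomenon, and at this level there are no $i\ge2$ cosets at all.

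Here $\pi^{K_{c+1},\KUold}_\eta=\C f_1+\C\mathcal{R}_\varpi f_1+\operatorname{Ker}(\mathcal{R}_\varpi|_{\pi^{K_{c+1}}_\eta})$, and the first two summands span only a plane. Since $\dim\pi^{K_{c+1},\RSnew}_\eta=3-1=2$, Proposition \ref{prop:compare-KU-RS} does not help. Your kernel mechanism, the vanishing of $\int_{\mathcal{O}^\times}(u,\varpi)_{F,2}\,du$, works only on the $i\ge2$ cosets. On the $i=1$ cosets $\mathcal{R}_\varpi$ genuinely mixes the values at $g_1,g_2,g_3$, so that mechanism produces nothing. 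As written, your plan therefore leaves a possibly one-dimensional $\pi^{K_{c+1},\KUnew}_\eta$ whenever $c\ge2$, and this is exactly where the real content of the proposition lies.

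What is needed, for all $c$, is the linear-algebra computation you reserved for $c=1$. Lemmas \ref{lem:R-op_on_ps} and \ref{lem:square-1_Hilbertsymbol} give
$\mathcal{R}_\varpi f_1=a(f_2+\mu(\xi)^{-1}f_3)$, $\mathcal{R}_\varpi f_2=bf_1+c_0f_2+d_0f_3$, and $\mathcal{R}_\varpi f_3=\mu(\xi)bf_1-\mu(\xi)^2d_0f_2-c_0f_3$, where
$a=(-1,\varpi)_{F,2}\operatorname{vol}(\mathcal{O})\gamma_F(\varpi,\psi)^{-1}\mu(-\varpi)^{-1}\neq0$,
$b=\tfrac{1}{2q}\operatorname{vol}(\mathcal{O}^\times)(-1,\varpi)_{F,2}\gamma_F(\varpi,\psi)^{-1}\mu(-\varpi)$,
$c_0=-\tfrac12\operatorname{vol}(\mathfrak{p})(1+(-1,\varpi)_{F,2})$, and
$d_0=\tfrac12\mu(\xi)^{-1}\operatorname{vol}(\mathfrak{p})(1-(-1,\varpi)_{F,2})$.
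Hence $(c_0+\mu(\xi)d_0)f_1-a(f_2-\mu(\xi)^{-1}f_3)$ lies in the kernel. Its nonzero $f_2-\mu(\xi)^{-1}f_3$ component makes it independent of $f_1$ and $\mathcal{R}_\varpi f_1$, so $\pi^{K_{c+1},\KUnew}_\eta=0$. This is the paper's case (1).

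By contrast, the obstacle you flagged is not one. Lemma \ref{lem:R-op_on_ps} shows directly that the values of $\mathcal{R}_\varpi f$ at $1,g_1,g_2,g_3$ depend only on the values of $f$ at $g_1,g_2,g_3$. Hence the functions $f_4,f_5$ supported on the $i=2$ cosets are killed outright. Level $c+2$ then follows immediately from $\pi^{K_{c+2}}_\eta=\pi^{K_{c+1}}_\eta\oplus\C f_4\oplus\C f_5$. Note also that for $c\ge3$ these $i=2$ cosets come from the first piece of $I$, not the second as you wrote.
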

\begin{proof}
    Put $M=c_\eta(\pi_\psi(\mu))=c(\mu)$.
    Let $f_1$, $f_2$, $f_3$, $f_4$, and $f_5$ be the nonzero elements in $\pi_\psi(\mu)^{K_{M+2}}_\eta$ such that
    \begin{align*}
        f_i(g_j)=\begin{dcases*}
                     1, & if $i=j$,     \\
                     0, & if $i\neq j$,
                 \end{dcases*}
    \end{align*}
    where
    \begin{equation*}
        g_1=(w,1), \quad g_2=(n^{\mathrm{op}}(\varpi), 1), \quad g_3=(n^{\mathrm{op}}(\xi\varpi), 1), \quad g_4=(n^{\mathrm{op}}(\varpi^2), 1), \quad g_5=(n^{\mathrm{op}}(\xi\varpi^2), 1),
    \end{equation*}
    if $\eta=\mu|_{\mathcal{O}^\times}$, and
    \begin{equation*}
        g_1=1, \quad g_2=(n^{\mathrm{op}}(\varpi^M), 1), \quad g_3=(n^{\mathrm{op}}(\xi\varpi^M), 1), \quad g_4=(n^{\mathrm{op}}(\varpi^{M+1}), 1), \quad g_5=(n^{\mathrm{op}}(\xi\varpi^{M+1}), 1),
    \end{equation*}
    if $\eta=\mu^{-1}|_{\mathcal{O}^\times}$.
    We shall divide the proof into the following three cases:
    \begin{enumerate}[(1)]
        \item $\eta=\mu|_{\mathcal{O}^\times}$;
        \item $\eta=\mu^{-1}|_{\mathcal{O}^\times}$ and $c(\mu)=1$;
        \item $\eta=\mu^{-1}|_{\mathcal{O}^\times}$ and $c(\mu)\geq 2$.
    \end{enumerate}

    Suppose first that $\eta=\mu|_{\mathcal{O}^\times}$.
    It follows from Lemma \ref{lem:structure-of-subspaces} that $\{f_1\}$, $\{f_1, f_2, f_3\}$, and $\{f_1, f_2, f_3, f_4, f_5\}$ are bases of $\pi_\psi(\mu)^{K_M}_\eta$, $\pi_\psi(\mu)^{K_{M+1}}_\eta$, and $\pi_\psi(\mu)^{K_{M+2}}_\eta$, respectively.
    By Lemma \ref{lem:R-op_on_ps}, we have
    \begin{align*}
        \mathcal{R}_\varpi f_1 & = a f_2 + \mu(\xi)^{-1} a f_3,               \\
        \mathcal{R}_\varpi f_2 & = b f_1 + c f_2 + d f_3,                     \\
        \mathcal{R}_\varpi f_3 & = \mu(\xi) b f_1 - \mu(\xi)^2 d f_2 - c f_3, \\
        \mathcal{R}_\varpi f_4 & = 0,                                         \\
        \mathcal{R}_\varpi f_5 & = 0,
    \end{align*}
    where
    \begin{align*}
        a & = \operatorname{vol}(\mathcal{O}) (-1, \varpi)_{F,2} \gamma_F(\varpi, \psi)^{-1} \mu(-\varpi)^{-1},              \\
        b & = \frac{\operatorname{vol}(\mathcal{O}^\times)}{2q} (-1, \varpi)_{F,2} \gamma_F(\varpi, \psi)^{-1} \mu(-\varpi), \\
        c & = - \frac{\operatorname{vol}(\mathfrak{p})}{2} \left( 1 + (-1, \varpi)_{F,2} \right),                            \\
        d & = \mu(\xi)^{-1} \frac{\operatorname{vol}(\mathfrak{p})}{2} \left( 1 - (-1, \varpi)_{F,2} \right).
    \end{align*}
    Note that $a$ and $b$ are nonzero, and that exactly one of $c$ or $d$ is zero and the other is nonzero.
    It follows that $\operatorname{Ker}(\mathcal{R}_\varpi|_{\pi_\psi(\mu)^{K_M}_\eta})$ is zero, and hence
    \begin{equation*}
        \pi_\psi(\mu)^{K_M, \KUnew}_\eta = \pi_\psi(\mu)^{K_M}_\eta = \C f_1.
    \end{equation*}
    It also follows that
    \begin{align*}
        \operatorname{Ker}(\mathcal{R}_\varpi|_{\pi_\psi(\mu)^{K_{M+1}}_\eta}) & = \C \left( (c+\mu(\xi)d)f_1 - a(f_2 - \mu(\xi)^{-1} f_3) \right),                                    \\
        \mathcal{R}_\varpi \left( \pi_\psi(\mu)^{K_M}_\eta \right)             & = \C \left( f_2 + \mu(\xi)^{-1} f_3 \right),                                                          \\
        \operatorname{Ker}(\mathcal{R}_\varpi|_{\pi_\psi(\mu)^{K_{M+2}}_\eta}) & = \operatorname{Ker}(\mathcal{R}_\varpi|_{\pi_\psi(\mu)^{K_{M+1}}_\eta}) \oplus \C f_4 \oplus \C f_5,
    \end{align*}
    and hence
    \begin{equation*}
        \pi_\psi(\mu)^{K_{M+1}, \KUnew}_\eta = \pi_\psi(\mu)^{K_{M+2}, \KUnew}_\eta = \{0\}.
    \end{equation*}

    Suppose next that $\eta=\mu^{-1}|_{\mathcal{O}^\times}$ and $c(\mu)=1$.
    Then $M=1$ and it follows from Lemma \ref{lem:structure-of-subspaces} that $\{f_1+ f_2 + f_3 + f_4 +f_5\}$, $\{f_1+ f_4 + f_5, f_2, f_3\}$, and $\{f_1, f_2, f_3, f_4, f_5\}$ are bases of $\pi_\psi(\mu)^{K_1}$, $\pi_\psi(\mu)^{K_2}$, and $\pi_\psi(\mu)^{K_3}$, respectively.
    By Lemma \ref{lem:R-op_on_ps}, we have
    \begin{align*}
        \mathcal{R}_\varpi f_1 & = 0,             \\
        \mathcal{R}_\varpi f_2 & = a f_2 + b f_3, \\
        \mathcal{R}_\varpi f_3 & = -b f_2 -a f_3, \\
        \mathcal{R}_\varpi f_4 & = c f_4,         \\
        \mathcal{R}_\varpi f_5 & = -c f_4,
    \end{align*}
    where
    \begin{align*}
        a & = \int_{\mathcal{O}^{\times 2} \setminus 1+\mathfrak{p}} (x-1, \varpi)_{F,2} \mu(x)^{-1} dx, \\
        b & = \int_{\xi\mathcal{O}^{\times 2}} (x-1, \varpi)_{F,2} \mu(x)^{-1} dx,                       \\
        c & = \int_{\mathcal{O}^\times} (u,\varpi)_{F,2} \mu(\varpi u + 1)^{-1} du.
    \end{align*}
    Moreover, since $c(\mu)=1$, we have
    \begin{equation*}
        c = \int_{\mathcal{O}^\times} (u,\varpi)_{F,2} du = 0.
    \end{equation*}
    Since $(x, \varpi)_{F,2}=+1$ (resp. $-1$) if $x\in \mathcal{O}^{\times 2}$ (resp. $\xi\mathcal{O}^{\times 2}$), changing the variables $x \mapsto t = x^{-1}$, we have
    \begin{equation*}
        a - b = \int_{\mathcal{O}^\times \setminus 1+\mathfrak{p}} (1-t, \varpi)_{F,2} \mu(t) dt = J(\chi_\varpi, \mu) \neq 0.
    \end{equation*}
    Thus we have
    \begin{equation*}
        \mathcal{R}_\varpi (f_1+ f_2 + f_3 + f_4 +f_5) = (a-b) (f_2-f_3) \neq 0,
    \end{equation*}
    and hence
    \begin{equation*}
        \pi_\psi(\mu)^{K_1, \KUnew}_\eta = \pi_\psi(\mu)^{K_1}_\eta = \C (f_1+ f_2 + f_3 + f_4 +f_5).
    \end{equation*}
    This also implies that $\pi_\psi(\mu)^{K_2, \KUold}_\eta$ contains $f_2-f_3$.
    Since $f_1+ f_2 + f_3 + f_4 +f_5 \in \pi_\psi(\mu)^{K_1}$ and $f_1+ f_4 + f_5 \in \operatorname{Ker}(\mathcal{R}_\varpi)$ also lie in $\pi_\psi(\mu)^{K_2, \KUold}_\eta$, we have $\pi_\psi(\mu)^{K_2, \KUold}_\eta = \pi_\psi(\mu)^{K_2}_\eta$ and
    \begin{equation*}
        \pi_\psi(\mu)^{K_2, \KUnew}_\eta = \{0\}.
    \end{equation*}
    Since $f_4$ and $f_5$ belong to $\operatorname{Ker}(\mathcal{R}_\varpi)$, we have
    \begin{equation*}
        \pi_\psi(\mu)^{K_3, \KUnew}_\eta = \{0\}.
    \end{equation*}

    Suppose finally that $\eta=\mu^{-1}|_{\mathcal{O}^\times}$ and $c(\mu)\geq 2$.
    As in the last case, one can see that $\{f_1+ f_2 + f_3 + f_4 +f_5\}$, $\{f_1+ f_4 + f_5, f_2, f_3\}$, and $\{f_1, f_2, f_3, f_4, f_5\}$ are bases of $\pi_\psi(\mu)^{K_M}$, $\pi_\psi(\mu)^{K_{M+1}}$, and $\pi_\psi(\mu)^{K_{M+2}}$, respectively.
    Again by Lemma \ref{lem:R-op_on_ps}, we have
    \begin{align*}
        \mathcal{R}_\varpi f_1 & = 0,      \\
        \mathcal{R}_\varpi f_2 & = a f_2,  \\
        \mathcal{R}_\varpi f_3 & = -a f_3, \\
        \mathcal{R}_\varpi f_4 & = b f_4,  \\
        \mathcal{R}_\varpi f_5 & = -b f_5,
    \end{align*}
    where
    \begin{align*}
        a & = \int_{\mathcal{O}^\times} (u, \varpi)_{F,2} \mu(\varpi^{M-1} u + 1)^{-1} du, \\
        b & = \int_{\mathcal{O}^\times} (u, \varpi)_{F,2} \mu(\varpi^M u + 1)^{-1} du.
    \end{align*}
    Since $M=c(\mu)\geq 2$, the mapping
    \begin{equation*}
        \mathcal{O} \ni u \mapsto \mu(\varpi^{M-1} u + 1)^{-1} \in \C^\times
    \end{equation*}
    is a nontrivial additive character of $\mathcal{O}$ of conductor 1, for which we shall write $\psi_\mu$.
    Then we have
    \begin{equation*}
        a = g(\chi_\varpi, \psi_\mu) \neq 0.
    \end{equation*}
    On the other hand, since  $M=c(\mu)$, we have
    \begin{equation*}
        b = \int_{\mathcal{O}^\times} (u, \varpi)_{F,2} du = 0.
    \end{equation*}
    As above, we can see that
    \begin{equation*}
        \pi_\psi(\mu)^{K_M, \KUnew}_\eta = \pi_\psi(\mu)^{K_M}_\eta = \C (f_1+ f_2 + f_3 + f_4 +f_5)
    \end{equation*}
    and
    \begin{equation*}
        \pi_\psi(\mu)^{K_{M+1}, \KUnew}_\eta = \pi_\psi(\mu)^{K_{M+2}, \KUnew}_\eta = \{0\}.
    \end{equation*}

    Since $\pi_\psi(\mu)^{K_m, \RSnew}_\eta$ is zero unless $M \leq m \leq M+2$ (\cite[Corollary 4.2]{ish}), this completes the proof.
\end{proof}

\begin{proposition}\label{prop:general-non-min-ps_case}
    Suppose that $c(\eta\mu) > 0$ and $c(\eta\mu^{-1}) > 0$.
    Then Theorem \ref{thm:main-generic} holds for $\pi=\pi_\psi(\mu)$ and $\eta$.
\end{proposition}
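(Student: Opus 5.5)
Throughout, put $M=c_\eta(\pi_\psi(\mu))$, which by Theorem \ref{thm:conductor} equals $c(\mu)+\min(c(\eta\mu),c(\eta\mu^{-1}))$ if $c(\eta)\le c(\mu)$ and $c(\eta\mu)+c(\eta\mu^{-1})$ otherwise. Since $\pi_\psi(\mu)\cong\pi_\psi(\mu^{-1})$, we may assume $c(\eta\mu^{-1})\le c(\eta\mu)$. The plan follows the pattern of Proposition \ref{prop:general-min-conductor-ps_case}. We use Lemma \ref{lem:structure-of-subspaces} to describe bases of $\pi_\psi(\mu)^{K_m}_\eta$ by the double cosets in $I_\psi(\mu,\eta,m)$. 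Then Lemma \ref{lem:R-op_on_ps} computes $\mathcal{R}_\varpi$ on these bases. Finally, by \cite[Corollary 4.2]{ish} together with Proposition \ref{prop:compare-KU-RS}, it suffices to treat the finitely many levels where $\pi^{K_m,\RSnew}_\eta\neq0$; we expect these to be $m\in\{M,M+1,M+2\}$.

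First we determine $I_\psi(\mu,\eta,m)$ for $m=M,M+1,M+2$. The hypotheses $c(\eta\mu)>0$ and $c(\eta\mu^{-1})>0$ force $0\notin I$ and $m\notin I$. So every function in these spaces is supported on cosets $\widetilde{B}(n^{\mathrm{op}}(z\varpi^i),1)K_m$ with $z\in\{1,\xi\}$ and $1\le i\le m-1$. At $m=M$ the relevant index set should be a single index $i_0$, where $i_0=c(\eta\mu^{-1})$ in the first regime and $i_0=c(\eta\mu^{-1})$ (or symmetrically $c(\mu)$) in the second. This gives $\dim\pi^{K_M}_\eta=2$, with basis the functions $f_1,f_2$ supported on $z=1,\xi$ respectively. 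Passing to levels $M+1$ and $M+2$, each step adds at most two new indices, that is, new pairs of cosets.

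Next we compute $\mathcal{R}_\varpi$. For $i\ge2$, Lemma \ref{lem:R-op_on_ps} shows that $\mathcal{R}_\varpi$ acts on the function supported at $n^{\mathrm{op}}(z\varpi^i)$ by the scalar $\int_{\mathcal{O}^\times}(u,\varpi)\eta\mu^{-1}(u')\,du$. The map $u\mapsto\eta\mu^{-1}(u')$ is (up to the square root) an additive character of $u$ of conductor determined by $c(\eta\mu^{-1})-(i-1)$. The case $i=1$ with $c(\eta\mu^{-1})=1$ must instead be handled via the formulas at $n^{\mathrm{op}}(z\varpi)$, using the Jacobi-type sums $I,I'$ of Lemma \ref{lem:jacobisum2}. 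When that conductor equals $1$, the scalar is a quadratic Gauss sum of the form $\pm g(\chi_\varpi,\psi')$. Its absolute value is $q^{-1/2}\operatorname{vol}(\mathcal{O})$ by Lemma \ref{lem:gausssum1}, and by the standard evaluation it equals $\pm\gamma_F(\varpi,\psi)q^{-1/2}\operatorname{vol}(\mathcal{O})$; the sign is opposite for $z=1$ and $z=\xi$, because replacing $z$ by $\xi z$ twists by $(\xi,\varpi)=-1$. At $i=i_0$ this conductor is exactly $1$, so $f_1,f_2$ are eigenvectors with eigenvalues $\pm\gamma_F(\varpi,\psi)q^{-1/2}\operatorname{vol}(\mathcal{O})$. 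At larger $i$ the conductor is $0$, so the scalar is $\int(u,\varpi)\,du=0$ and these functions lie in $\operatorname{Ker}\mathcal{R}_\varpi$.

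With this, the conclusion at each level is as follows. At $m=M$, $\pi^{K_{M-1}}_\eta=0$, so the space $\pi^{K_M,\KUold}_\eta$ reduces to $\operatorname{Ker}(\mathcal{R}_\varpi|_{\pi^{K_M}_\eta})$, which is $0$; hence $\pi^{K_M,\KUnew}_\eta=\pi^{K_M}_\eta$ has basis $v_\pm=f_1,f_2$ satisfying the required eigen-relation. At $m=M+1$ and $m=M+2$, every new basis vector is either supported at larger $i$, and so lies in the kernel, or comes from restricting the support to a smaller double coset. In both cases the old space fills $\pi^{K_m}_\eta$, which gives $\pi^{K_m,\KUnew}_\eta=0$. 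When $M\le1$, the definition of $\KUold$ at $m=1$ includes $\mathcal{U}_{\varpi^2}$, but this case is excluded because $c(\eta\mu^{\pm1})>0$ forces $M\ge2$.

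The main obstacle is the bookkeeping of $I_\psi(\mu,\eta,m)$ in the various regimes: whether $c(\eta)\le c(\mu)$ or not, whether $c(\eta\mu^{-1})=c(\eta\mu)$, and the boundary case $i_0=1$. The case $i_0=1$ occurs when $c(\eta\mu^{-1})=1$ and $c(\mu)$ is small. There the eigenvector computation uses the $n^{\mathrm{op}}(z\varpi)$ formula, which mixes $z$ and $z^\vee$, together with Lemma \ref{lem:jacobisum2}. In that case we would diagonalize the resulting $2\times2$ matrix directly and check that its eigenvalues are $\pm\gamma_F(\varpi,\psi)q^{-1/2}\operatorname{vol}(\mathcal{O})$, using the relation $\mathcal{R}_\varpi^3=(-1,\varpi)q^{-1}\operatorname{vol}(\mathcal{O})^2\mathcal{R}_\varpi$ from Proposition \ref{prop:R-operator}(c) and the fact that the trace vanishes.
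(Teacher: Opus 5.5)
Your proposal is correct and follows essentially the same route as the paper. The paper likewise uses Lemma~\ref{lem:structure-of-subspaces} to get the bases $\{f_1,f_2\}$ and $\{f_1,\dots,f_4\}$ of $\pi_\psi(\mu)^{K_M}_\eta$ and $\pi_\psi(\mu)^{K_{M+1}}_\eta$, computes $\mathcal{R}_\varpi$ via Lemma~\ref{lem:R-op_on_ps} (Lemma~\ref{lem:jacobisum2} for $C=1$, a nonvanishing quadratic Gauss sum for $C\ge 2$, zero scalar at index $C+1$), and concludes with the trace-zero plus Proposition~\ref{prop:R-operator}(c) argument and the Roberts--Schmidt bound.

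The bookkeeping you flag resolves uniformly, with $C=c(\eta\mu^{-1})$ and no WLOG needed:
\begin{itemize}
\item $I_\psi(\mu,\eta,M)=\{C\}$ and $I_\psi(\mu,\eta,M+1)=\{C,C+1\}$ in every regime, so exactly one new index appears at level $M+1$.
\item $i_0=1$ occurs exactly when $C=1$, whatever the size of $c(\mu)$.
\item $\pi^{K_m,\RSnew}_\eta=0$ for $m\neq M,M+1$ by \cite[Corollary 4.2]{ish}, so level $M+2$ need not be treated.
\end{itemize}
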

\begin{proof}
    Put $M=c_\eta(\pi_\psi(\mu)) \geq 2$ and $C=c(\eta\mu^{-1})$.
    Let $f_1$, $f_2$, $f_3$, and $f_4$ be the nonzero elements in $\pi_\psi(\mu)^{K_{M+1}}_\eta$ such that
    \begin{align*}
        f_i(g_j)=\begin{dcases*}
                     1, & if $i=j$,     \\
                     0, & if $i\neq j$,
                 \end{dcases*}
    \end{align*}
    where
    \begin{equation*}
        g_1 = (n^{\mathrm{op}}(\varpi^C), 1), \quad g_2=(n^{\mathrm{op}}(\xi\varpi^C), 1), \quad g_3=(n^{\mathrm{op}}(\varpi^{C+1}), 1), \quad g_4=(n^{\mathrm{op}}(\xi\varpi^{C+1}), 1).
    \end{equation*}
    These elements exist uniquely by Lemma \ref{lem:structure-of-subspaces}.
    More precisely, by a straightforward calculation we have $I_\psi(\mu, \eta, M) = \{C\}$, and the lemma tells us that $\{f_1, f_2\}$ and $\{f_1, f_2, f_3, f_4\}$ are bases of $\pi_\psi(\mu)^{K_M}_\eta$ and $\pi_\psi(\mu)^{K_{M+1}}_\eta$, respectively.

    By Lemma \ref{lem:R-op_on_ps}, we have
    \begin{align*}
        \mathcal{R}_\varpi f_1 & = a f_1 + b f_2,  \\
        \mathcal{R}_\varpi f_2 & = -b f_1 -a f_2 , \\
        \mathcal{R}_\varpi f_3 & = c f_3,          \\
        \mathcal{R}_\varpi f_4 & = -c f_4,
    \end{align*}
    where
    \begin{align*}
        a & = \begin{dcases*}
                  \frac{1}{2} \int_{\mathcal{O}^\times \setminus \pm1+\mathfrak{p}} (x^2-1, \varpi)_{F,2} \eta\mu^{-1}(x) dx, & if $C=1$,    \\
                  \int_{\mathcal{O}^\times} (u, \varpi)_{F,2} \chi(\varpi^{C-1} u + 1) du,                                    & if $C\geq2$,
              \end{dcases*} \\
        b & = \begin{dcases*}
                  \frac{1}{2} \eta(\xi) \int_{\mathcal{O}^\times} (\xi x^2-1, \varpi)_{F,2} \eta\mu^{-1}(x) dx , & if $C=1$,    \\
                  0,                                                                                             & if $C\geq2$,
              \end{dcases*}                         \\
        c & = \int_{\mathcal{O}^\times} (u, \varpi)_{F,2} \chi(\varpi^C u + 1) du,
    \end{align*}
    where $\chi$ is a character of $1+\mathfrak{p}$ such that $\chi^2=\eta\mu^{-1}|_{1+\mathfrak{p}}$.
    If $C=1$, by Lemma \ref{lem:jacobisum2}, at least one of $a$ or $b$ is nonzero.
    If $C\geq2$, since $c(\chi)=c(\eta\mu^{-1})=C$, the mapping
    \begin{equation*}
        \mathcal{O} \ni u \mapsto \chi(\varpi^{C-1} u + 1)^{-1} \in \C^\times
    \end{equation*}
    is a nontrivial additive character of $\mathcal{O}$ of conductor 1, for which we shall write $\psi_\chi$.
    Then we have $a=g(\chi_\varpi, \psi_\chi)\neq 0$.
    In any case, $\mathcal{R}_\varpi|_{\pi_\psi(\mu)^{K_M}_\eta}$ is a nonzero linear map from $\pi_\psi(\mu)^{K_M}_\eta$ to itself with trace $0$.
    Combining this with Proposition \ref{prop:R-operator}, there exist $f_+$ and $f_-$ such that
    \begin{align*}
        \pi_\psi(\mu)^{K_M, \KUnew}_\eta = \pi_\psi(\mu)^{K_M}_\eta = \C f_+ \oplus \C f_-, \\
        \mathcal{R}_\varpi f_\pm = \pm \gamma_F(\varpi, \psi) q^{-\frac{1}{2}} \operatorname{vol}(\mathcal{O}) f_\pm.
    \end{align*}
    Since $c(\chi)=C$, we have $c=0$.
    This implies that
    \begin{equation*}
        \pi_\psi(\mu)^{K_{M+1}, \KUnew}_\eta = \{0\}.
    \end{equation*}
    Since $\pi_\psi(\mu)^{K_m, \RSnew}_\eta$ is zero unless $m=M$ or $M+1$ (\cite[Corollary 4.2]{ish}), this completes the proof.
\end{proof}

\subsection{Even Weil and Steinberg representations}\label{subsec:evenWeil/Steinberg}
Next we shall consider the nontrivial irreducible subquotients of principal series representations, i.e., the even Weil representations and the Steinberg representations.
\begin{proposition}\label{prop:evenWeil}
    Let $\chi$ be a trivial or quadratic character of $F^\times$ such that $\chi(-1)=\eta(-1)$.
    Then Theorems \ref{thm:main-nongeneric} and \ref{thm:main-generic} hold for $\pi=\omega_{\psi,\chi}^+$ and $\eta$.
\end{proposition}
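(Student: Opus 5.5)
The plan is to compute everything explicitly in the Schr\"odinger model, rather than going through the principal series. Write $\omega_{\psi,\chi}^+=\omega_{\psi_a}^+$ on $\mathcal{S}^+(F)$ with $a\in\{1,\xi,\varpi,\xi\varpi\}$, so $\chi=\chi_a$. The first step is a description of $(\omega_{\psi_a}^+)^{K_m}_\eta$ for every $m$. By the formulas for $\omega_{\psi_a}$:
\begin{itemize}
\item invariance under $n(\mathcal{O})$ forces $\operatorname{supp}\varphi\subset\{y \mid ay^2\in\mathcal{O}\}$;
\item the condition under $t(\mathcal{O}^\times)$ gives $\varphi(dy)=\eta(d)\gamma_F(d,\psi_a)\varphi(y)$;
\item since $n^{\mathrm{op}}(c)=w\,n(-c)\,w^{-1}$ with $\bm{s}=1$ on these elements, the condition under $n^{\mathrm{op}}(\mathfrak{p}^m)$ translates into a support condition on the Fourier transform $\widehat{\varphi}$ (with respect to $\psi_{2a}$), namely $a^{-1}\varpi^{2m}y^2\in\mathcal{O}$ roughly, i.e. $\varphi$ is invariant under translation by a lattice $\mathfrak{p}^{\lceil (m-\operatorname{ord}(a))/2\rceil}$ up to the twist.
\end{itemize}
Since $K_m$ is generated by these three kinds of elements (using Lemma \ref{lem:spl_lv1} to control the splitting), this gives $(\omega_{\psi_a}^+)^{K_m}_\eta$ as the space of even functions supported on a fixed lattice, invariant under a smaller lattice, and $\eta\gamma_F(-,\psi_a)$-equivariant under $\mathcal{O}^\times$. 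A natural basis consists of characteristic-type functions on the shells $\varpi^j\mathcal{O}^\times$, twisted by $\eta$ (nonzero only when the conductor of $\eta\chi$ fits on the shell). From this one recovers $c_\eta=2c(\eta\chi)+c(\chi)$ as a consistency check against Theorem \ref{thm:conductor}.

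Next I compute the two operators on this basis. The operator $\mathcal{R}_\varpi$ acts diagonally in $y$:
\begin{equation*}
    [\mathcal{R}_\varpi\varphi](y)=\varphi(y)\int_{\mathcal{O}^\times}(u,\varpi)_{F,2}\,\psi(\varpi^{-1}auy^2)\,du .
\end{equation*}
By Lemma \ref{lem:gausssum1} this scalar is a quadratic Gauss sum, nonzero exactly on the shell $\operatorname{ord}(ay^2)=-1$ and zero elsewhere. Hence $\operatorname{Ker}(\mathcal{R}_\varpi)$ consists of the functions vanishing on that shell, and $\mathcal{R}_\varpi$ acts on its image by multiplication by an explicit Gauss sum. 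The operator $\mathcal{U}_{\varpi^2}$ is the composite of $t(\varpi)$ (a dilation by $\varpi$ with the factor $q^{-1/2}\gamma_F(\varpi,\psi_a)^{-1}$) with averaging over $n(\varpi^{-2}\mathcal{O})$, which cuts the support down to $ay^2\in\mathfrak{p}^{2}$ before dilating. Both are therefore transparent in the shell basis.

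Then the argument splits according to $\operatorname{ord}(a)$.

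\emph{Case $a$ a unit.} This is the $\psi_{a'}$-generic case with $a'\in\mathcal{O}^\times$, so $c(\chi)=0$. No vector in $(\omega^+)^{K_m}_\eta$ meets the shell $\operatorname{ord}(ay^2)=-1$. I would then compare the lattice description at levels $m-1$ and $m$: the new shell (or new translation-invariance level) appearing at level $m$ should be produced from level $m-1$ by $\mathcal{U}_{\varpi^2}$ or by $\mathcal{R}_\varpi$ of level $m-1$. This would show that $\KUold$ is everything except at $m=c_\eta$, where one dimension survives.

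\emph{Case $a\in\varpi\mathcal{O}^\times$.} This is the non-generic case. The shell $\operatorname{ord}(ay^2)=-1$ is exactly where $\mathcal{R}_\varpi$ is nonzero, and the functions living there are hit by $\mathcal{R}_\varpi$ applied to level $m-1$ (since $\mathcal{R}_\varpi$ is invertible there). Everything else lies in $\operatorname{Ker}(\mathcal{R}_\varpi)$. So $\KUold$ is the whole space for every $m$, including $m=c_\eta$, because $\mathcal{R}_\varpi$ raises the level by at most the amount allowed in Proposition \ref{prop:R-operator}(b).

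The main obstacle is the bookkeeping in the generic case at $m=c_\eta$ and $m=c_\eta+1$. There I must check precisely whether $\operatorname{Ker}(\mathcal{R}_\varpi)$ already exhausts $(\omega^+)^{K_m}_\eta$ when $a$ is a unit, which a naive count suggests, and this would contradict a one-dimensional newspace. So I would first recheck the lattice description carefully, in particular the effect of $\gamma_F(d,\psi_a)$ and of the Fourier side on which shells actually occur, and the special role of $m=0$ in Definition \ref{def:local-KU}. If needed, I would instead realize $\omega^+_{\psi,\chi}$ as the quotient in \eqref{eq:SESstd}, or the subrepresentation in \eqref{eq:SESdual}, of $\pi_\psi(\chi|-|^{\pm1/2})$, and import the computations of Lemma \ref{lem:R-op_on_ps} and Lemma \ref{lem:U-op_on_unram-ps}, the latter being exactly the reducible unramified case.
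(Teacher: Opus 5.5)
There is a genuine gap, located exactly at the point you flag as the main obstacle. Its source is not the lattice description but the shell on which $\mathcal{R}_\varpi$ is nonzero.

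Your scalar is $g(\chi_\varpi,\psi_{a\varpi^{-1}y^2})$. Because $c(\psi)=0$, one has $c(\psi_{a\varpi^{-1}y^2})=1-\operatorname{ord}(ay^2)$. Since $c(\chi_\varpi)=1$, Lemma \ref{lem:gausssum1} says the scalar is nonzero exactly when $\operatorname{ord}(ay^2)=0$, not $-1$. Consequently:
\begin{itemize}
\item for $a\in\mathcal{O}^\times$, $\mathcal{R}_\varpi$ is multiplication by the nonzero constant $\chi_\varpi(a)\,g(\chi_\varpi,\psi_{\varpi^{-1}})$ on $\mathcal{O}^\times$ and by $0$ off $\mathcal{O}^\times$;
\item for $a\in\varpi\mathcal{O}^\times$, the order of $ay^2$ is always odd, so $\mathcal{R}_\varpi$ vanishes identically on $\mathcal{S}(F)$.
\end{itemize}
Your non-generic conclusion is therefore right, but for a vacuous reason: no vector of $(\omega^+)^{K_m}_\eta$ lives on $\operatorname{ord}(ay^2)=-1$, so nothing is ``hit by $\mathcal{R}_\varpi$ from level $m-1$''. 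In the generic case, your computation as written puts all of $(\omega^+)^{K_m}_\eta$ into $\operatorname{Ker}(\mathcal{R}_\varpi)$. That would give $\KUnew=0$ at every level, contradicting the statement. You leave this contradiction unresolved, and the higher-level step (``the new shell at level $m$ is produced from level $m-1$ by $\mathcal{U}_{\varpi^2}$ or $\mathcal{R}_\varpi$'') is only asserted, so the generic half of the statement is not proved.

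Two smaller slips in the lattice description:
\begin{itemize}
\item the torus condition is $\varphi(dy)=\eta(d)^{-1}\gamma_F(d,\psi_a)\varphi(y)$;
\item the translation lattice is $\mathfrak{p}^{\lfloor(m-\operatorname{ord}(a))/2\rfloor}$, not the ceiling. With the ceiling, the characteristic function of $\mathfrak{p}$ would wrongly be $K_1$-fixed for $a=1$, $\eta=1$.
\end{itemize}

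Once the shell is corrected, your direct route closes quickly. For $a\in\mathcal{O}^\times$ one has $\gamma_F(d,\psi_a)=1$ for $d\in\mathcal{O}^\times$, so $\varphi|_{\mathcal{O}^\times}$ is determined by $\varphi(1)$. Hence $\operatorname{Ker}(\mathcal{R}_\varpi|_{\pi^{K_m}_\eta})=\{\varphi \mid \varphi(1)=0\}$ has codimension at most one, and $\dim\pi^{K_m,\KUnew}_\eta\le 1$.
\begin{itemize}
\item At $m=c_\eta(\pi)=2c(\eta)$ the dimension is $1$. Here $\pi^{K_{c_\eta(\pi)}}_\eta$ is spanned by the function equal to $\eta^{-1}$ on $\mathcal{O}^\times$ and $0$ elsewhere (the characteristic function of $\mathcal{O}$ if $\eta=1$). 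This vector is nonzero at $1$, and nothing lies at lower level.
\item For $m>c_\eta(\pi)$ the dimension is $0$, because that vector lies in $\pi^{K_{m-1}}_\eta$ and outside the kernel.
\end{itemize}

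The paper avoids this bookkeeping. It uses $\KUold\supset\RSold$ (Proposition \ref{prop:compare-KU-RS}), and the fact that for a Weil representation $\RSnew$ is one-dimensional and concentrated at $c_\eta(\pi)$ (\cite[Theorem 4.4]{ish}, or Theorem \ref{thm:rs-Mp2} with $\#F_\psi(\pi)/F^{\times 2}=1$). It then only has to evaluate $\mathcal{R}_\varpi$ on the single vector $\varphi$ spanning $\pi^{K_{c_\eta(\pi)}}_\eta$, which has $\operatorname{supp}\varphi\subset\mathcal{O}$ and $\varphi(1)\neq0$. This gives $[\mathcal{R}_\varpi\varphi](1)=g(\chi_\varpi,\psi_{a\varpi^{-1}})\varphi(1)\neq0$ in the generic case and $\mathcal{R}_\varpi\varphi=0$ otherwise. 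Your corrected version is self-contained and does not need the Roberts--Schmidt count. The paper's version needs no explicit description of $(\omega^+)^{K_m}_\eta$ beyond level $c_\eta(\pi)$.
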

\begin{proof}
    Before the proof, note that the assumption $\chi(-1)=\eta(-1)$ is equivalent to $z_\psi(\pi)=\eta(-1)$, which is a condition of the theorems we are to prove.

    Let $\psi'$ be a nontrivial additive character such that $c(\psi')=0$ or $-1$ and we realize $\pi$ as $(\omega_{\psi'}^+, \mathcal{S}^+(F))$.
    Then by the proof of \cite[Theorem 4.4]{ish}, there exists $\varphi \in \mathcal{S}^+(F)$ with $\operatorname{supp}(\varphi) \subset \mathcal{O}$ such that
    \begin{itemize}
        \item $\varphi(1) \neq 0$;
        \item $\pi^{K_{c_\eta(\pi)}}_\eta = \C \varphi$.
    \end{itemize}
    Note that $\mathcal{R}_\varpi \varphi$ is given by
    \begin{equation*}
        [\mathcal{R}_\varpi \varphi] (y) = \int_{\mathcal{O}^\times}( u, \varpi )_{F,2} \psi'(\varpi^{-1} u y) \varphi(y) du.
    \end{equation*}
    If $\pi$ is $\psi$- or $\psi_\xi$-generic, then $c(\psi')=0$ and we have
    \begin{equation*}
        \mathcal{R}_\varpi \varphi (1)
        = g(\chi_\varpi, \psi'_{\varpi^{-1}}) \varphi(1)
        \neq 0.
    \end{equation*}
    This means that $\pi^{K_{c_\eta(\pi)}, \KUold}_\eta = \{0\}$ and
    \begin{equation*}
        \pi^{K_{c_\eta(\pi)}, \KUnew}_\eta = \C \varphi.
    \end{equation*}
    If $\pi$ is neither $\psi$- nor $\psi_\xi$-generic, then $c(\psi')=-1$ and we have
    \begin{equation*}
        \mathcal{R}_\varpi \varphi (y)
        = \int_{\mathcal{O}^\times} \chi_\varpi(u) du \varphi(y)
        = 0,
    \end{equation*}
    for any $y \in \mathcal{O}$.
    Hence we have $\varphi \in \operatorname{Ker}(\mathcal{R}_\varpi)$ and
    \begin{equation*}
        \pi^{K_{c_\eta(\pi)}, \KUnew}_\eta = \{0\}.
    \end{equation*}
    For any $m > c_\eta(\pi)$, by \cite[Theorem 4.4]{ish} we have $\pi^{K_m, \RSnew}_\eta = \{0\}$ and hence
    \begin{equation*}
        \pi^{K_m, \KUnew}_\eta = \{0\}.
    \end{equation*}
    This completes the proof.
\end{proof}

\begin{proposition}\label{prop:Steinberg}
    Let $\chi$ be a trivial or quadratic character of $F^\times$ such that $\chi(-1)=\eta(-1)$.
    Then Theorems \ref{thm:main-nongeneric} and \ref{thm:main-generic} hold for $\pi=\mathit{St}_{\psi, \chi}$ and $\eta$.
\end{proposition}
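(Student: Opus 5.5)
We use the short exact sequence \eqref{eq:SESstd}, $0 \to \mathit{St}_{\psi,\chi} \to \pi_\psi(\mu) \to \omega_{\psi,\chi}^+ \to 0$ with $\mu = \chi|-|^{\frac{1}{2}}$, and realize $\mathit{St}_{\psi,\chi}$ as a subspace of the induced model. Since $K_m$ is compact and $\eta$ is a character, taking $(K_m,\eta)$-isotypic parts is exact. This gives
\begin{equation*}
    0 \to (\mathit{St}_{\psi,\chi})^{K_m}_\eta \to \pi_\psi(\mu)^{K_m}_\eta \to (\omega_{\psi,\chi}^+)^{K_m}_\eta \to 0
\end{equation*}
for every $m$. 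The operators $\mathcal{R}_\varpi$ and $\mathcal{U}_{\varpi^2}$ are integrals of group elements, so they commute with these maps. Hence every $f \in (\mathit{St}_{\psi,\chi})^{K_m}_\eta$ is an element of $\pi_\psi(\mu)^{K_m}_\eta$ in the explicit model, and the formulas of Lemma \ref{lem:R-op_on_ps} and the support description of Lemma \ref{lem:structure-of-subspaces} apply to it. Moreover, $\dim (\mathit{St}_{\psi,\chi})^{K_m}_\eta = \dim \pi_\psi(\mu)^{K_m}_\eta - \dim (\omega_{\psi,\chi}^+)^{K_m}_\eta$, where the last dimension is known from \cite[Theorem 4.4]{ish} and the proof of Proposition \ref{prop:evenWeil}.

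Note that $\mathit{St}_{\psi,\chi}$ is always $\psi_a$-generic for some $a\in\mathcal{O}^\times$: it is generic for three of the four square classes by Proposition \ref{prop:num-generic}, so it cannot miss both $1$ and $\xi$. Thus only Theorem \ref{thm:main-generic} is at stake. We split into the two cases of Theorem \ref{thm:conductor}.

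\emph{Case $\eta = \chi|_{\mathcal{O}^\times}$.} Here $c_\eta = 1$, and $\pi_\psi(\mu)^{K_1}_\eta$ is spanned by functions determined by their values at $1$ and $(w,1)$. The kernel of the projection to $\omega^+$ is one-dimensional; we identify it as the line cut out by the linear condition that the image vanishes in $\omega^+$. This condition can be read off from the $K_0$-fixed vector (the case $c(\chi)=0$) or from the $\psi'$-model of $\omega^+$.

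For this vector $f$ we compute $\mathcal{R}_\varpi f$ via Lemma \ref{lem:R-op_on_ps}, whose value at $(n^{\mathrm{op}}(\varpi),1)$ involves $f((w,1))$, and check that it is nonzero. This gives $\KUold = 0$ at level $1$ (the $K_0$-part is zero since $c_\eta=1$), so $\KUnew$ is one-dimensional. We then show that $\KUnew$ vanishes for $m \geq 2$. By Proposition \ref{prop:compare-KU-RS} it suffices to treat the levels where $\RSnew$ of the Steinberg is nonzero. Using Theorem \ref{thm:rs-Mp2}, the total $\RSnew$ dimension is $3$, which bounds these to levels $1$ to $3$. For $m=2,3$ we reuse the kernel and image computations from the proofs of Propositions \ref{prop:unram_case} and \ref{prop:quad-ps_case}, intersected with the subspace $\mathit{St}$, and show that $\pi^{K_{m-1}}_\eta + \mathcal{R}_\varpi(\pi^{K_{m-1}}_\eta) + \operatorname{Ker}$ fills everything.

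\emph{Case $\eta \neq \chi|_{\mathcal{O}^\times}$.} Here $c_\eta = 2c(\eta\chi) =: M$, and in $\pi_\psi(\mu)$ we have $c(\eta\mu)=c(\eta\mu^{-1})=c(\eta\chi)$, both positive. So we are in the setting of Proposition \ref{prop:general-non-min-ps_case}, with $\pi_\psi(\mu)^{K_M}_\eta$ two-dimensional, spanned by $f_1,f_2$ supported on the cells at $n^{\mathrm{op}}(\varpi^C)$ and $n^{\mathrm{op}}(\xi\varpi^C)$ with $C=M/2$. By Theorem \ref{thm:conductor}, $c_\eta(\omega^+_{\psi,\chi}) = 2c(\eta\chi)+c(\chi) \geq M$. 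When this is strictly bigger than $M$, the Steinberg part equals the full principal series part at level $M$ and at level $M+1$ it has codimension at most $\dim(\omega^+)^{K_{M+1}}_\eta \leq 1$. The trace-zero nonzero $\mathcal{R}_\varpi$ argument then transfers verbatim, giving the basis $v_\pm$ via Proposition \ref{prop:R-operator}(c), and $\KUnew$ vanishes at $M+1$ because $c=0$ there.

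The main obstacle is the subcase $c(\chi)=0$, where $c_\eta(\omega^+)=M$ and the Steinberg $K_M$-space would be only one-dimensional, contradicting the claimed dimension $2$. I would first recheck this using the dimension formulas of \cite{ish}: the $\pi_\psi(\mu)^{K_M}_\eta$ count might be $3$ with $C$ shared by both terms of $I_\psi$. If so, the argument becomes a $3\times3$ computation with $\mathcal{R}_\varpi$, showing that the two-dimensional Steinberg subspace has $\KUold=0$, which is again handled by the trace argument restricted to the $\mathcal{R}_\varpi$-stable complement.
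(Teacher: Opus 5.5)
Your framework is sound: the $(K_m,\eta)$-isotypic parts are exact, $\mathcal{R}_\varpi$ commutes with the maps in the sequence, and you correctly observe that $\mathit{St}_{\psi,\chi}$ is automatically $\psi$- or $\psi_\xi$-generic. For $\chi$ ramified and $\eta\neq\chi|_{\mathcal{O}^\times}$ you follow the paper. The genuine gap is the subcase $c(\chi)=0$, $\eta\neq1$, which you leave open, and your proposed escape fails. By Lemma \ref{lem:structure-of-subspaces}, $I_\psi(\mu,\eta,M)=\{C\}$ with $C=c(\eta)$, and $R_C$ has two elements. So $\pi_\psi(\mu)^{K_M}_\eta=\C f_+\oplus\C f_-$ is exactly two-dimensional, and the Steinberg space at level $M$ really is a line.

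Your dimension count is right. The paper's own proof (its case ``$\chi$ unramified, $\eta$ nontrivial'') also concludes dimension $1$, so this subcase belongs to part (1) of Theorem \ref{thm:main-generic}, not part (2) as listed. What you still need is the argument that this line is new:
\begin{itemize}
\item $(\omega^+_{\psi,\chi})^{K_M}_\eta=(\omega^+_{\psi,\chi})^{K_{M+1}}_\eta$ is spanned by an $\mathcal{R}_\varpi$-eigenvector with nonzero eigenvalue. This follows from Proposition \ref{prop:evenWeil} (for unramified $\chi$, $\omega^+_{\psi,\chi}$ is $\psi$- or $\psi_\xi$-generic) together with Proposition \ref{prop:R-operator}(b), which gives $m'=M$.
\item The quotient map $\mathcal{M}$ commutes with $\mathcal{R}_\varpi$, and $f_\pm$ have opposite nonzero eigenvalues. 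Hence exactly one of $\mathcal{M}f_\pm$ vanishes, so $(\mathit{St}_{\psi,\chi})^{K_M}_\eta$ is one of the eigenlines $\C f_\pm$, and $\KUold=0$ there.
\item At level $M+1$, $\mathcal{M}$ maps $\C f_3\oplus\C f_4\subset\operatorname{Ker}\mathcal{R}_\varpi$ into $\operatorname{Ker}(\mathcal{R}_\varpi|_{(\omega^+_{\psi,\chi})^{K_{M+1}}_\eta})=0$. So $f_3,f_4$ lie in the Steinberg, and $\KUnew$ vanishes at $M+1$.
\end{itemize}

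For $\eta=\chi|_{\mathcal{O}^\times}$ with $\chi$ ramified, the whole content is that the Steinberg line in $\pi_\psi(\mu)^{K_1}_\eta$ is not the one-dimensional kernel of $\mathcal{R}_\varpi$ from Proposition \ref{prop:quad-ps_case}. With $\mathit{St}_{\psi,\chi}$ as a subrepresentation, this needs the explicit intertwiner onto $\omega^+_{\psi,\chi}$, which you only gesture at (``read off''). No abstract input, neither RS counts nor Whittaker functionals, forces it. The paper instead uses \eqref{eq:SESdual}, where $\omega^+_{\psi,\chi}$ is the sub. Since $\omega^+_{\psi,\chi}$ is not $\psi$- or $\psi_\xi$-generic, $\mathcal{R}_\varpi$ kills $(\omega^+_{\psi,\chi})^{K_1}_\eta$, so this space must be the kernel line $\C f_0$. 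The Steinberg line is then $\C\mathcal{J}f_1$, and $\mathcal{R}_\varpi\mathcal{J}f_1=\mathcal{J}\mathcal{R}_\varpi f_1\neq0$ because $\mathcal{R}_\varpi^2f_1\neq0$.

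Having $\mathit{St}_{\psi,\chi}$ as a quotient also makes level $2$ automatic, since KU-oldness of all of $\pi_\psi(\mu)^{K_2}_\eta$ passes to quotients. By contrast, intersecting with a subrepresentation does not commute with sums, so your ``intersected with $\mathit{St}$'' step needs an explicit dimension count.

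Finally, $\sum_m\dim(\mathit{St}_{\psi,\chi})^{K_m,\RSnew}_\eta=3$ does not by itself confine the nonzero levels to $1,2,3$ or to $M,M+1$, and you never treat $m\geq M+2$. Cite \cite[Corollary 4.8]{ish}, or compute the RS-new dimensions level by level.
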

\begin{proof}
    Before the proof, note that the assumption $\chi(-1)=\eta(-1)$ is equivalent to $z_\psi(\pi)=\eta(-1)$, which is a condition of the theorems we are to prove.

    We shall divide the proof into the following four cases:
    \begin{enumerate}[(1)]
        \item $\chi$ is unramified and $\eta$ is trivial;
        \item $\chi$ is ramified and $\eta=\chi|_{\mathcal{O}^\times}$;
        \item $\chi$ is unramified and $\eta$ is not trivial;
        \item $\chi$ is ramified and $\eta \neq \chi|_{\mathcal{O}^\times}$;
    \end{enumerate}

    First we consider the case (1).
    In this case, by \cite[Theorem 4.7]{ish}, there are $v_1, v_2, v_3, v_4 \in \mathit{St}_{\psi, \chi}$ such that
    \begin{align*}
        (\mathit{St}_{\psi, \chi})^{K_1}_1 & = \C v_1,                                           \\
        (\mathit{St}_{\psi, \chi})^{K_2}_1 & = \C v_1 \oplus \C v_2,                             \\
        (\mathit{St}_{\psi, \chi})^{K_3}_1 & = \C v_1 \oplus \C v_2 \oplus \C v_3 \oplus \C v_4.
    \end{align*}
    Thanks to \eqref{eq:SESstd}, we may regard $\mathit{St}_{\psi, \chi}$ as a subrepresentation of $\pi_\psi(\mu)$, where $\mu=\chi \cdot |-|^{\frac{1}{2}}$.
    Since $\operatorname{Ker}(\mathcal{R}_\varpi|_{\pi_\psi(\mu)^{K_1}_1})$ is zero by the proof of Proposition \ref{prop:unram_case}, so is $\operatorname{Ker}(\mathcal{R}_\varpi|_{(\mathit{St}_{\psi, \chi})^{K_1}_1})$.
    Hence we have
    \begin{equation*}
        (\mathit{St}_{\psi, \chi})^{K_1, \KUnew}_1 = (\mathit{St}_{\psi, \chi})^{K_1}_1 = \C v_1.
    \end{equation*}
    On the other hand, again by the proof of Proposition \ref{prop:unram_case} we have
    \begin{equation*}
        v_2 \in (\mathit{St}_{\psi, \chi})^{K_2}_1 \subset \pi_\psi(\mu)^{K_2}_1 = \pi_\psi(\mu)^{K_1}_1 \oplus \operatorname{Ker}(\mathcal{R}_\varpi|_{\pi_\psi(\mu)^{K_2}_1}),
    \end{equation*}
    which implies that there are $v_2' \in \pi_\psi(\mu)^{K_1}_1$ and $v_2^\circ \in \operatorname{Ker}(\mathcal{R}_\varpi|_{\pi_\psi(\mu)^{K_2}_1})$ such that $v_2=v_2' + v_2^\circ$.
    Proposition \ref{prop:R-operator} (b) tells us that $\mathcal{R}_\varpi$ sends $(\mathit{St}_{\psi, \chi})^{K_2}_1$ to itself.
    In particular, we see that $\mathcal{R}_\varpi v_2 = v_2'$ is an element in $\pi_\psi(\mu)^{K_1}_1 \cap (\mathit{St}_{\psi, \chi})^{K_2}_1 = \C v_1$.
    Therefore, we have
    \begin{align*}
        (\mathit{St}_{\psi, \chi})^{K_2}_1
         & = \C v_1 \oplus \C v_2                                                                                                         \\
         & = \C v_1 \oplus \C v_2^\circ                                                                                                   \\
         & \subset (\mathit{St}_{\psi, \chi})^{K_1}_1 \oplus \operatorname{Ker}(\mathcal{R}_\varpi|_{(\mathit{St}_{\psi, \chi})^{K_2}_1})
        = (\mathit{St}_{\psi, \chi})^{K_2, \KUold}_1,
    \end{align*}
    and hence
    \begin{equation*}
        (\mathit{St}_{\psi, \chi})^{K_2, \KUnew}_1 = \{0\}.
    \end{equation*}
    Similarly, one can deduce that
    \begin{equation*}
        (\mathit{St}_{\psi, \chi})^{K_3, \KUnew}_1 = \{0\}.
    \end{equation*}
    Since $(\mathit{St}_{\psi, \chi})^{K_m, \RSnew}_1$ vanishes unless $1 \leq m \leq 3$ (\cite[Corollary 4.8]{ish}), this completes the proof of the first case.

    Second, we consider the case (2).
    Let $\mathcal{J}$ denote the quotient map in the short exact sequence \eqref{eq:SESdual}.
    By the proof of Proposition \ref{prop:quad-ps_case}, we can choose a basis $\{f_1, f_0\}$ of $\pi_\psi(\mu)^{K_1}_\eta$ such that $\mathcal{R}_\varpi f_1 \neq 0$ and $\mathcal{R}_\varpi f_0 = 0$, where $\mu=\chi\cdot|-|^{-\frac{1}{2}}$.
    Since $\mathcal{R}_\varpi ((\omega_{\psi, \chi}^+)^{K_1}_\eta) = 0$ by Proposition \ref{prop:evenWeil}, we deduce that $(\omega_{\psi, \chi}^+)^{K_1}_\eta = \C f_0$ and $\mathcal{J} f_1 \neq 0$.
    Hence we have
    \begin{equation*}
        \left( \mathit{St}_{\psi, \chi} \right)^{K_1}_1 = \C \mathcal{J} f_1.
    \end{equation*}
    Since $\mathcal{R}_\varpi f_1 \neq 0$, by Proposition \ref{prop:R-operator} (c), we have $\mathcal{R}_\varpi^2 f_1 \neq 0$, i.e., $\mathcal{R}_\varpi f_1 \notin \operatorname{Ker}(\mathcal{R}_\varpi)$.
    Noting that $\mathcal{R}_\varpi f_1 \in \pi_\psi(\mu)^{K_2}_\eta$ and $(\omega_{\psi, \chi}^+)^{K_2}_\eta = (\omega_{\psi, \chi}^+)^{K_1}_\eta$, we have $\mathcal{R}_\varpi f_1 \notin (\omega_{\psi, \chi}^+)^{K_1}_\eta$, which implies that
    \begin{equation*}
        \mathcal{R}_\varpi \mathcal{J} f_1 = \mathcal{J} \mathcal{R}_\varpi f_1 \neq 0.
    \end{equation*}
    Note that $\mathcal{R}_\varpi$ commutes with $\mathcal{J}$ by definition.
    Therefore, we have
    \begin{equation*}
        \left( \mathit{St}_{\psi, \chi} \right)^{K_1, \KUnew}_1 = \left( \mathit{St}_{\psi, \chi} \right)^{K_1}_1,
    \end{equation*}
    which is 1-dimensional.
    Now Proposition \ref{prop:quad-ps_case} implies that
    \begin{equation*}
        \left( \mathit{St}_{\psi, \chi} \right)^{K_2, \KUnew}_1 = 0.
    \end{equation*}
    Since $(\mathit{St}_{\psi, \chi})^{K_m, \RSnew}_1$ vanishes unless $1 \leq m \leq 2$ (\cite[Corollary 4.8]{ish}), this completes the second case.

    We next consider the case (3).
    Let $\mathcal{M}$ denotes the quotient map in the short exact sequence \eqref{eq:SESstd}, where $\mu=\chi \cdot |-|^{\frac{1}{2}}$.
    Put $M=2c(\eta)$, which is equal to $c_\eta(\mathit{St}_{\psi, \chi})$, $c_\eta(\pi_\psi(\mu))$, and $c_\eta(\omega_{\psi, \chi}^+)$.
    By the proof of Proposition \ref{prop:general-non-min-ps_case}, we have two elements $f_+$ and $f_-$ in $\pi_\psi(\mu)$ such that
    \begin{equation*}
        \mathcal{R}_\varpi f_\pm = \pm \gamma_F(\varpi, \psi) q^{-\frac{1}{2}} \operatorname{vol}(\mathcal{O}) f_\pm
    \end{equation*}
    and
    \begin{equation*}
        \pi_\psi(\mu)^{K_M}_\eta = \C f_+ \oplus \C f_-.
    \end{equation*}
    By Propositions \ref{prop:R-operator} and \ref{prop:evenWeil}, we also know that $(\omega_{\psi, \chi}^+)^{K_M}_\eta = (\omega_{\psi, \chi}^+)^{K_{M+1}}_\eta$ is a 1-dimensional vector space spanned by an eigenvector of $\mathcal{R}_\varpi$ with nonzero eigenvalue.
    Then, since $\mathcal{R}_\varpi$ and $\mathcal{M}$ commute, one of $\mathcal{M}f_+$ or $\mathcal{M}f_-$ is zero.
    In particular, $(\mathit{St}_{\psi, \chi})^{K_M}_\eta = \operatorname{Ker}(\mathcal{M}|_{\pi_\psi(\mu)^{K_M}_\eta})$ is spanned by exactly one of $f_+$ or $f_-$.
    Thus we have
    \begin{equation*}
        \left( \mathit{St}_{\psi, \chi} \right)^{K_M, \KUnew}_\eta = \left( \mathit{St}_{\psi, \chi} \right)^{K_M}_\eta,
    \end{equation*}
    which is 1-dimensional.
    Again by the proof of Proposition \ref{prop:general-non-min-ps_case}, we know that $\pi_\psi(\mu)^{K_{M+1}}_\eta$ is a direct sum of $\pi_\psi(\mu)^{K_M}_\eta$ and a two dimensional subspace on which $\mathcal{R}_\varpi$ is zero.
    Hence we have
    \begin{equation*}
        \left( \mathit{St}_{\psi, \chi} \right)^{K_{M+1}, \KUnew}_\eta = \{0\}.
    \end{equation*}
    The assertion in the third case now follows from the fact that $(\mathit{St}_{\psi, \chi})^{K_m, \RSnew}_\eta$ is zero unless $M \leq m \leq M+1$ (\cite[Corollary 4.8]{ish}).

    Now we come to the case (4).
    Put $M=c_\eta(\mathit{St}_{\psi, \chi})$.
    In this case, by Theorem \ref{thm:conductor}, $c_\eta(\pi_\psi(\mu))=M$ and $c_\eta(\omega_{\psi, \chi}^+)=M+1$.
    Thus we have
    \begin{equation*}
        \left( \mathit{St}_{\psi, \chi} \right)^{K_M}_\eta = \pi_\psi(\mu)^{K_M}_\eta,
    \end{equation*}
    and therefore by the proof of Proposition \ref{prop:general-non-min-ps_case}, there $v_+$ and $v_-$ in $\mathit{St}_{\psi, \chi}$ such that
    \begin{equation*}
        \mathcal{R}_\varpi v_\pm = \pm \gamma_F(\varpi, \psi) q^{-\frac{1}{2}} \operatorname{vol}(\mathcal{O}) v_\pm
    \end{equation*}
    and
    \begin{equation*}
        \left( \mathit{St}_{\psi, \chi} \right)^{K_M, \KUnew}_\eta = \left( \mathit{St}_{\psi, \chi} \right)^{K_M}_\eta = \C v_+ \oplus \C v_-.
    \end{equation*}
    One can see that $\left( \mathit{St}_{\psi, \chi} \right)^{K_m, \KUnew}_\eta$ is zero for $m \neq M$ in the similar way to the third case.
    This completes the proof.
\end{proof}

\section{Supercuspidal representations}\label{sec:sc}
In this section, we shall treat the supercuspidal representations.
Although they can be realized as compactly induced representations from some compact subgroups, the model is too abstract to calculate local newforms directly.
Instead, we will make a correspondence between irreducible (genuine) supercuspidal representations of $\SL_2(F)$ and $\widetilde{\SL_2}(F)$, and then transfer the local newforms for $\SL_2(F)$ to $\widetilde{\SL_2}(F)$.
As in the last section, let $\psi$ be a nontrivial additive character on $F$ of conductor 0.

\subsection{Constructions of supercuspidal representations}\label{subsec:sc-constr}
We begin by reviewing the constructions and the classifications of irreducible supercuspidal representations of $G=\SL_2(F)$ and $\widetilde{G}=\widetilde{\SL_2}(F)$ given by Manderscheid\cite{man1} and those of $G$ given by Kutzko--Sally\cite{ks}.
Put $J_0=\SL_2(\mathcal{O})$ and $J_1 = \beta J_0 \beta^{-1}$.
For $\delta\in\{0,1\}$ and $l\geq1$, let $J_{\delta, l}$ be an compact subgroup of $J_\delta$ defined by
\begin{align*}
    J_{\delta, l} = \Set{\left( \begin{array}{cc}a&b\\ c&d \end{array} \right) \in G | a, d\in 1+\mathfrak{p}^l,\ b\in\mathfrak{p}^{l-\delta},\ c\in\mathfrak{p}^{l+\delta} }.
\end{align*}
For any integer $j$, let $N_j$ (resp. $N^{\mathrm{op}}_j$) denote the subgroup of $G$ consisting of $n(b)$ (resp. $n^{\mathrm{op}}(b)$) where $b\in\mathfrak{p}^j$.

Let $\lambda$ be a nontrivial finite dimensional representation of $J_\delta$.
Then $\lambda$ is trivial on $J_{\delta, l}$ for some $l\geq1$.
We shall write $c(\lambda)$ for the minimum of such $l$, and call it the conductor of $\lambda$.
A representation $\lambda$ is said to be strongly cuspidal if
\begin{align*}
    \Hom_{N_{\lambda, \delta}} (\lambda|_{N_{\lambda, \delta}}, \mathbf{1}_{N_{\lambda, \delta}})=\{0\},
\end{align*}
where $N_{\lambda, \delta}$ denotes $N_{c(\lambda)-2\delta-1}$ unless $\delta=c(\lambda)=1$, in which case it denotes $N_{-1}$.
Moreover, a strongly cuspidal representation $\lambda$ of $J_1$ is said to have defect 1 if
\begin{align*}
    \Hom_{N_{c(\lambda)-2}} (\lambda|_{N_{c(\lambda)-2}}, \mathbf{1}_{N_{c(\lambda)-2}})\neq\{0\}.
\end{align*}
Otherwise, we say that $\lambda$ has defect 0.
Every strongly cuspidal representation of $J_0$ is also said to have defect 0.
In this paper, following \cite{man1}, we write $d(\lambda)$ for the defect of $\lambda$.

For two representations $\lambda_1$ and $\lambda_2$ of $J_{\delta_1}$ and $J_{\delta_2}$ respectively, we say that $\lambda_1$ and $\lambda_2$ are equivalent if $\delta_1=\delta_2$ and moreover $\lambda_1$ and $\lambda_2$ are equivalent as representations of $J_{\delta_1}=J_{\delta_2}$.
Manderscheid \cite{man1} gave the following constructions and classifications of irreducible supercuspidal representations of $\SL_2(F)$.
\begin{theorem}[\cite{man1}]\label{thm:mansl2}
    For any irreducible strongly cuspidal representation $\lambda$ of $J_\delta$, the compactly induced representation $\cInd^G_{J_\delta}(\lambda)$ is an irreducible supercuspidal representation of $G$, and every irreducible supercuspidal representation of $G$ can be obtained in this way.
    Moreover, inequivalent irreducible strongly cuspidal representations give inequivalent irreducible supercuspidal representations.
\end{theorem}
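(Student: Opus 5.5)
This is Manderscheid's theorem, cited from \cite{man1}. The plan is to prove it along the standard lines of the Kutzko--Sally treatment of $\SL_2(F)$, in three steps: irreducibility via Mackey theory, exhaustion via the Bruhat--Tits fixed point argument, and inequivalence via intertwining.

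\emph{Irreducibility and supercuspidality.} Since $J_\delta$ is compact open and $\lambda$ is finite dimensional, $\cInd^G_{J_\delta}(\lambda)$ is irreducible and supercuspidal as soon as the intertwining of $\lambda$ is trivial. Concretely, we need
\begin{equation*}
\Hom_{J_\delta \cap \prescript{g}{}{J_\delta}}\left(\lambda, \prescript{g}{}{\lambda}\right)=\{0\} \quad \text{for all } g\in G\setminus J_\delta,
\end{equation*}
which is the standard Mackey irreducibility criterion. Admissibility then follows because the matrix coefficients are compactly supported.

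To check the vanishing, we use the Cartan decomposition $G=\bigsqcup_{n} J_\delta\, t(\varpi^{n})\, J_\delta$, in its appropriate form relative to $J_1$ when $\delta=1$. It suffices to treat $g=t(\varpi^{-n})$ conjugated into the correct normalization with $n\geq1$. For such $g$, the group $J_\delta\cap \prescript{g}{}{J_\delta}$ contains a unipotent subgroup of the form $N_{j}$, where $j\leq c(\lambda)-2\delta-1$. On this subgroup $\prescript{g}{}{\lambda}$ is trivial, because $g^{-1}N_jg\subset J_{\delta,c(\lambda)}$. A nonzero intertwiner would therefore give a nonzero element of $\Hom_{N_{\lambda,\delta}}(\lambda,\mathbf 1)$, contradicting strong cuspidality. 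The exceptional convention $N_{-1}$ when $\delta=c(\lambda)=1$ is exactly what makes this containment work in that edge case.

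\emph{Exhaustion.} Let $\pi$ be an irreducible supercuspidal representation of $G$. Take a minimal $l$ and $\delta$ such that $\pi^{J_{\delta,l}}\neq0$. This is a Moy--Prasad minimal-depth type choice, running over the two vertices of the tree. The finite-dimensional space $\pi^{J_{\delta,l}}$ is a $J_\delta$-module. Cuspidality of $\pi$, in the sense that the Jacquet module vanishes, forces some irreducible constituent $\lambda$ of it to have no $N_{\lambda,\delta}$-fixed vectors: otherwise averaging over a large unipotent subgroup would produce a nonzero Jacquet-module image, using the usual argument with $t(\varpi)$-translates shrinking the level. Hence $\lambda$ is strongly cuspidal. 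Frobenius reciprocity then gives a nonzero map $\cInd^G_{J_\delta}(\lambda)\to\pi$, and this map is an isomorphism because the source is irreducible by the first step.

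\emph{Inequivalence.} Suppose $\cInd(\lambda_1)\cong\cInd(\lambda_2)$. Mackey theory then gives some $g$ that intertwines $\lambda_1$ with $\lambda_2$. The same unipotent-fixed-vector argument as in the first step forces $g\in J_{\delta_1}$, and hence $\delta_1=\delta_2$, since $J_0$ and $J_1$ are not conjugate in $\SL_2(F)$. It then gives $\lambda_1\cong\lambda_2$.

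\emph{Main obstacle.} The hard step is exhaustion. The difficulty is producing a strongly cuspidal constituent, and in particular proving that a minimal-level constituent of $\pi^{J_{\delta,l}}$ fails to have $N_{\lambda,\delta}$-invariants. This requires a careful case analysis over the depth $l$ and the defect, and it is where Kutzko--Sally's detailed classification is really used. I would first try the fundamental stratum argument: a constituent with $N_{\lambda,\delta}$-invariants would contain a non-fundamental stratum, which would contradict the minimality of the level.
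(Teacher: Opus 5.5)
The paper does not prove this theorem; it quotes it from Manderscheid. So the only question is whether your argument works on its own. The irreducibility step does work: for $g=t(\varpi^{-n})$ with $n\ge 1$, one checks that $N_{\lambda,\delta}\subset J_\delta\cap\prescript{g}{}{J_\delta}$ and $g^{-1}N_{\lambda,\delta}\,g\subset J_{\delta,c(\lambda)}$ in every case, including $\delta=c(\lambda)=1$.

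\textbf{The gap is in exhaustion.} You rely on a dichotomy: at a minimal pair $(l,\delta)$ with $\pi^{J_{\delta,l}}\neq 0$, either some constituent is strongly cuspidal, or $\pi$ has a nonzero Jacquet module. Your fallback form says a constituent with $N_{\lambda,\delta}$-invariants gives a non-fundamental stratum contradicting minimality of $l$. This dichotomy is false.
\begin{itemize}
\item Take a ramified supercuspidal $\tau=\cInd^G_{J_1}(\lambda)$ with $\lambda$ of defect $1$ and conductor $c$, as in Lemma \ref{lem:compatib-cpt_Man-KS}(b).
\item Since $\prescript{\beta}{}{\tau}\cong\tau$ (used in the proof of Lemma \ref{lem:testvector_ram_sl2}), also $\tau\cong\cInd^G_{J_0}(\lambda_0)$ with $\lambda_0(x)=\lambda(\beta x\beta^{-1})$.
\item $\lambda_0$ has conductor $c$. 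Because $\beta N_j\beta^{-1}=N_{j-1}$ and $\lambda$ has $N_{c-2}$-fixed vectors, $\lambda_0$ has nonzero $N_{c-1}$-fixed vectors.
\item The depth of $\tau$ is $c-\tfrac32$. Hence $\tau^{J_{\delta,c-1}}=0$ for both $\delta$, and $(c,0)$ is a minimal pair.
\item Yet no constituent of $\tau^{J_{0,c}}$ is strongly cuspidal for $J_0$. Otherwise $\tau$ would be induced both from a strongly cuspidal representation of $J_0$ and from $\lambda$ on $J_1$, contradicting the inequivalence assertion.
\end{itemize}
So $N_{c-1}$-invariants at minimal vertex level do not produce a Jacquet module. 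The vertex stratum is indeed non-fundamental, but its level drops only to a half-integer. That half-integral level is realized on the Iwahori filtration, not on any $J_{\delta,l}$, so it contradicts nothing in your setup.

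\textbf{What is missing is this third case.} You must show that when the minimal-level vertex stratum is non-fundamental, $\pi$ contains an irreducible representation $\lambda'$ of $I\cap G$ that is trivial on $I_{c-1}\cap G$ and has no $N_{c-2}$-fixed vectors. You must then show that $\cInd^{J_1}_{I\cap G}(\lambda')$ is strongly cuspidal of defect $1$. This is exactly why the definition is asymmetric in $\delta$: it uses $N_{c-1}$ on $J_0$ but $N_{c-3}=\beta N_{c-2}\beta^{-1}$ on $J_1$. Any choice of $\delta$ must therefore send this case to $J_1$. A cleaner route, in the spirit of Subsection \ref{subsec:sc-constr}, is as follows. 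Realize $\pi$ inside $\varPi|_G$ for a twist-minimal supercuspidal $\varPi=\cInd^{\GL_2(F)}_{\Gamma}(\rho)$ of $\GL_2(F)$, apply Kutzko's classification, and read off $\lambda$ from the Mackey decomposition of $\varPi|_G$ together with Lemma \ref{lem:compatib-cpt_Man-KS}.

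\textbf{The same example affects your inequivalence step.} You cannot appeal to $J_0$ and $J_1$ being non-conjugate, since a single supercuspidal is induced from irreducible representations of both. What you need is $\Hom_{J_0\cap\prescript{g}{}{J_1}}(\lambda_1,\prescript{g}{}{\lambda_2})=0$ for all representatives $g=t(\varpi^{-n})$, $n\ge 0$, including $g=1$. This does follow from unipotent arguments, split by conductors:
\begin{itemize}
\item if $c(\lambda_1)\ge c(\lambda_2)$, use $N_{c(\lambda_1)-1}$;
\item otherwise, use the lower unipotent group obtained from $N_{\lambda_2,1}$ by conjugating with $wt(\varpi)\in J_1$.
\end{itemize}
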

Moreover, he gave such a theory also for $\widetilde{\SL_2}(F)$.
Since $J_0=K_0$, the splitting over $K_0$ which we have recalled in \S\ref{subsec:groups} is also a splitting over $J_0$.
Let us write $\bm{s}_0=\bm{s}$.
Similarly, a mapping  $h \mapsto(h,\bm{s}_1(h))$ from $J_1$ to $\widetilde{G}$ given by
\begin{align*}
    \bm{s}_1(n(b))               & =1,                                        & b & \in\mathfrak{p}^{-1},  \\
    \bm{s}_1(t(a))               & =\gamma_F(a,\psi_\varpi) = \chi_\varpi(a), & a & \in\mathcal{O}^\times, \\
    \bm{s}_1(n^{\mathrm{op}}(c)) & =1,                                        & c & \in\mathfrak{p},       \\
    \bm{s}_1(w t(\varpi))        & = 1                                        &   &
\end{align*}
is a splitting over $J_1$, and we have a following lemma:
\begin{lemma}\label{lem:spl_lv1_J_1}
    For
    \begin{align*}
        h=\left( \begin{array}{cc}a&b\\ c&d \end{array} \right) \in J_1
    \end{align*}
    such that $d\in \mathcal{O}^\times$, we have
    \begin{align*}
        \bm{s}_1(h) = \begin{cases*}
                          (\varpi c,d)_{F,2}, & if $c\neq 0$, \\
                          (\varpi,d)_{F,2},   & if $c=0$.
                      \end{cases*}
    \end{align*}
\end{lemma}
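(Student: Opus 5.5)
The plan mirrors the proof of Lemma \ref{lem:spl_lv1}: factor $h$ into elements on which $\bm{s}_1$ is prescribed and track the Kubota cocycle. There is one subtlety. The generators for $J_1$ are $n(b)$ with $b\in\mathfrak{p}^{-1}$, $t(a)$ with $a\in\mathcal{O}^\times$, and $n^{\mathrm{op}}(c)$ with $c\in\mathfrak{p}$. Since $d\in\mathcal{O}^\times$, the decomposition
\begin{equation*}
    h = n(d^{-1}b)\, t(d^{-1})\, n^{\mathrm{op}}(d^{-1}c)
\end{equation*}
has $d^{-1}b\in\mathfrak{p}^{-1}$ and $d^{-1}c\in\mathfrak{p}$, so each factor lies in $J_1$ and its $\bm{s}_1$-value is given. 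We also take as given that $\bm{s}_1$, being a splitting, is multiplicative on $J_1$. The product $(n(d^{-1}b),1)(t(d^{-1}),\chi_\varpi(d^{-1}))(n^{\mathrm{op}}(d^{-1}c),1)$ is therefore $(h,\bm{s}_1(h))$. Note that $\chi_\varpi(d^{-1})=(d,\varpi)_{F,2}$.

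The second step computes the two cocycle values in $\widetilde{G}$. For the left product, $\bm{c}(n(x),g)=1$ whenever $g$ is lower-triangular-type, because $\bm{x}(n(x)g)=\bm{x}(g)$ when the $(2,1)$-entry is unchanged, and $\bm{x}(n(x))=1$. So left multiplication by $(n(x),1)$ adds no sign. It remains to compute $\bm{c}(t(d^{-1}),n^{\mathrm{op}}(d^{-1}c))$. If $c=0$, both factors are diagonal or unipotent and the value is $(d^{-1}\cdot d, d)=1$ up to squares, hence trivial; then $\bm{s}_1(h)=(\varpi,d)_{F,2}$. If $c\neq0$, the product $t(d^{-1})n^{\mathrm{op}}(d^{-1}c)$ has $(2,1)$-entry $c$, so $\bm{x}$ of it is $c$. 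This gives
\begin{equation*}
    \bm{c} = \left(\frac{d}{c},\frac{d^{-1}c}{c}\right)_{F,2} = (dc^{-1},d^{-1})_{F,2} = (c,d)_{F,2}(d,d)_{F,2},
\end{equation*}
and $(d,d)_{F,2}=(d,-1)_{F,2}=1$ because $d$ is a unit and $p$ is odd. Hence $\bm{s}_1(h)=(\varpi,d)_{F,2}(c,d)_{F,2}=(\varpi c,d)_{F,2}$.

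The only real obstacle is justifying that the prescribed values define the splitting $\bm{s}_1$, that is, that $\bm{s}_1$ is a homomorphism consistent with the value $\bm{s}_1(wt(\varpi))=1$. I would treat this as given by the statement preceding the lemma. If it has to be checked, I would conjugate the splitting $\bm{s}_0$ by $(\beta,1)$-type elements, or more cleanly by $(t(\varpi),1)$ in $\widetilde{\GL}$-free form. Alternatively, I would verify directly that the formula just derived satisfies the cocycle identity on the big cell, and then check it against the value at $wt(\varpi)$.
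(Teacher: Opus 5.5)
Your argument is correct and is the same as the paper's, which just says the proof is similar to that of Lemma \ref{lem:spl_lv1}. You factor $h=n(d^{-1}b)\,t(d^{-1})\,n^{\mathrm{op}}(d^{-1}c)$ inside $J_1$, insert the prescribed values of $\bm{s}_1$, and evaluate the two Kubota cocycles; the torus factor $\chi_\varpi(d^{-1})=(\varpi,d)_{F,2}$ and the identity $(d,d)_{F,2}=1$ then give $(\varpi c,d)_{F,2}$.

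Like the paper, you are right to take the splitting property of $\bm{s}_1$ as given. Your suggested fallback of conjugating by $t(\varpi)$ would not produce it, since $J_1$ is not $\SL_2(F)$-conjugate to $J_0$.
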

\begin{proof}
    The proof is similar to that of Lemma \ref{lem:spl_lv1}.
\end{proof}
These splittings give bijective correspondences between the representations of $J_\delta$ and the genuine representations of $\widetilde{J}_\delta$ for $\delta=0,1$.
For an irreducible strongly cuspidal representation $\lambda$ of $J_\delta$, let $\widetilde{\lambda} = \lambda\bm{s}_\delta$ denote the genuine irreducible representation $(h,\epsilon) \mapsto \epsilon\bm{s}_\delta(h)\lambda(h)$.
Then, as follows, it is known by Manderscheid \cite{man1} that irreducible genuine supercuspidal representations of $\widetilde{G}$ are classified by irreducible strongly cuspidal representations of $J_0$ and $J_1$.
\begin{theorem}[\cite{man1}]\label{thm:manmp2}
    For any irreducible strongly cuspidal representation $\lambda$ of $J_\delta$, the compactly induced representation $\cInd^{\widetilde{G}}_{\widetilde{J}_\delta}(\widetilde{\lambda})$ is an irreducible genuine supercuspidal representation of $\widetilde{G}$, and every irreducible genuine supercuspidal representation of $\widetilde{G}$ can be obtained in this way.
    Moreover, inequivalent irreducible strongly cuspidal representations give inequivalent irreducible genuine supercuspidal representations.
\end{theorem}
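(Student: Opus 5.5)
The plan follows the classical Mackey-theoretic argument, transported to the cover through the splittings $\bm{s}_\delta$. I would prove the three assertions in turn.

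\emph{Supercuspidality and irreducibility.} Put $\pi=\cInd^{\widetilde{G}}_{\widetilde{J}_\delta}(\widetilde{\lambda})$. Since $\widetilde{J}_\delta$ is compact open and contains the centre $\{\pm1\}$ of the covering, irreducibility follows from the standard criterion. It suffices to show that for every $g\in\widetilde{G}\setminus\widetilde{J}_\delta$ we have
\begin{equation*}
    \Hom_{\widetilde{J}_\delta\cap \prescript{g}{}{\widetilde{J}_\delta}}\left(\widetilde{\lambda}, \prescript{g}{}{\widetilde{\lambda}}\right)=\{0\}.
\end{equation*}
Once this holds, $\pi$ is irreducible and admissible. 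Being compactly induced from a compact-mod-centre subgroup, it is then supercuspidal, since its matrix coefficients are compactly supported.

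To verify the intertwining condition, note that both $\widetilde{\lambda}$ and $\prescript{g}{}{\widetilde{\lambda}}$ are genuine. The intertwining space therefore depends only on the image of $g$ in $G$, and on the restriction to $J_\delta\cap \prescript{g}{}{J_\delta}$ it equals
\begin{equation*}
    \Hom_{J_\delta\cap \prescript{g}{}{J_\delta}}\left(\lambda\, \bm{s}_\delta,\ \prescript{g}{}{\lambda}\,\bm{s}_\delta^{g}\right).
\end{equation*}
Here $\bm{s}_\delta^g$ is the conjugated splitting twisted by the cocycle. Any two splittings on a group differ by a character, so this space is $\Hom(\lambda,\prescript{g}{}{\lambda}\otimes\theta_g)$ for a quadratic character $\theta_g$ of $J_\delta\cap\prescript{g}{}{J_\delta}$.

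I would then use the Cartan decomposition $G=\bigsqcup J_\delta\, t(\varpi^{k}) J_\delta$, together with $w$-type representatives for $J_1$, and follow Manderscheid/Kutzko--Sally. For $g=t(\varpi^k)$ with $k\neq0$, the intersection contains a unipotent group $N_j$ with $j$ large relative to $c(\lambda)$. By Lemmas \ref{lem:spl_lv1} and \ref{lem:spl_lv1_J_1}, $\theta_g$ is trivial on the unipotent radicals $N_j$ and $N^{\mathrm{op}}_j$, because the splittings equal $1$ on unipotent elements. Strong cuspidality, meaning no $N_{\lambda,\delta}$-fixed vectors, then kills the intertwiner exactly as in the linear case.

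\emph{Exhaustion.} Let $\pi$ be an irreducible genuine supercuspidal representation. I would first show that $\pi$ contains some $\widetilde{\lambda}$ with $\lambda$ strongly cuspidal. The approach is to restrict $\pi$ to $\widetilde{J}_\delta$, which is isomorphic to $J_\delta\times\{\pm1\}$ via $\bm{s}_\delta$, and run the Kutzko--Sally minimal-$K$-type argument. Consider the filtrations by $J_{\delta,l}$. A minimal-level vector that is fixed under some $N_{\lambda,\delta}$ can be averaged over $N$, and Jacquet-module nonvanishing would contradict supercuspidality. This argument is purely about the unipotent and congruence subgroups, where the cover is canonically split. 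It therefore transfers verbatim, so some $\lambda$ with strong cuspidality occurs. Frobenius reciprocity gives a nonzero map $\cInd(\widetilde{\lambda})\to\pi$, which is an isomorphism by irreducibility.

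\emph{Injectivity on equivalence classes.} If $\cInd(\widetilde{\lambda}_1)\cong\cInd(\widetilde{\lambda}_2)$, Mackey theory produces a $g$ that intertwines $\widetilde{\lambda}_1$ with $\widetilde{\lambda}_2$. The vanishing computation above forces $g\in \widetilde{J}_{\delta}$, and also forces $\delta_1=\delta_2$, since $J_0$ and $J_1$ are not conjugate in $G$. Hence $\lambda_1\cong\lambda_2$.

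The main obstacle is controlling the twist $\theta_g$ on the intersections $J_\delta\cap\prescript{g}{}{J_\delta}$. One must check that it is trivial, or at least harmless, on the subgroups where strong cuspidality is applied. I would handle this by computing the Kubota cocycle on $t(\varpi^k)$-conjugates of unipotent elements directly, using Lemmas \ref{lem:spl_lv1} and \ref{lem:spl_lv1_J_1}.
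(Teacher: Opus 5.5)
The paper gives no proof of this theorem; it quotes it from Manderscheid. Your plan therefore has to stand on its own. The irreducibility half essentially does, but the exhaustion step has a genuine gap.

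\textbf{The main gap: exhaustion.} You claim that a minimal-level vector fixed by $N_{\lambda,\delta}$ forces a nonzero Jacquet module. This is false, already for $G$. (Also, ``averaging over $N$'' is not meaningful, since $N$ is not compact.)

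For a counterexample, take a ramified $\tau=\cInd^G_{J_1}(\lambda)$, where $\lambda=\cInd^{J_1}_{I\cap G}(\lambda')$ has defect $1$ and conductor $c$, as in Lemma \ref{lem:compatib-cpt_Man-KS}(b).
\begin{enumerate}
\item A Mackey computation over $J_\delta\backslash G/J_1$, with representatives $t(\varpi^k)$ and using strong cuspidality of $\lambda$, gives $\tau^{J_{0,c-1}}=\tau^{J_{1,c-1}}=0$. So $c$ is the minimal level along both filtrations.
\item Take a nonzero $\varphi\in\lambda$ supported on $I\cap G$, and let $f\in\tau$ be the element supported on $J_1$ with $f(1)=\varphi$. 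Then $f$ is fixed by $J_{0,c}$ and by $N_{c-1}$, because both groups lie in $I_{c-1}\cap G$, on which $\lambda'$ is trivial.
\item Hence some $J_0$-constituent of $\tau$ of conductor $c$ has $N_{c-1}$-fixed vectors, even though $\tau$ is supercuspidal.
\end{enumerate}

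What is missing is the actual case analysis at minimal level. Say $\delta=0$ and the minimal level is $l\ge 2$. Then $J_{0,l-1}/J_{0,l}\cong\mathfrak{sl}_2(\mathbb{F}_q)$ acts through characters $\psi_A$. Vectors fixed by $N_{l-1}$ occur exactly when $A$ is split semisimple or nilpotent.
\begin{itemize}
\item Only the split case contradicts supercuspidality via the Jacquet module.
\item The nilpotent case instead yields a vector fixed by a conjugate of $I_{l-1}\cap G$, which has strictly smaller depth.
\end{itemize}
So minimality must be measured over the Iwahori filtration as well. At a minimal Iwahori level you must then show that the resulting type of $I\cap G$ induces to a defect-$1$ strongly cuspidal representation of $J_1$. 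This is exactly why $N_{\lambda,1}=N_{c(\lambda)-3}$ rather than $N_{c(\lambda)-2}$. Your sketch never produces defect-$1$ types, so it cannot reach the ramified supercuspidals. Level $1$ (cuspidal representations of $\SL_2(\mathbb{F}_q)$) also needs separate treatment.

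Once the linear argument is correct, transferring it to $\widetilde G$ is plausible, since pro-$p$ and unipotent subgroups split canonically. However, torus conjugations introduce quadratic twists such as $\bm{s}_1(t(a))=\chi_\varpi(a)$, and these must be tracked; ``verbatim'' is too quick.

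\textbf{Two smaller points.}

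(i) In the intertwining step the twist is indeed harmless. The Kubota cocycle gives $(t(a),1)(n^{\mathrm{op}}(c),1)(t(a),1)^{-1}=(n^{\mathrm{op}}(a^{-2}c),1)$, and the $\bm{s}_\delta$ are $1$ on unipotents. But the subgroup to use is not ``$N_j$ with $j$ large'': such a group lies in $\ker\lambda$, where strong cuspidality gives nothing. By Cartan it suffices to treat $g=t(\varpi^k)$ with $k\ge1$; these are also the representatives for $J_1$, so no $w$-type representatives are needed. For such $g$, take the $w$-conjugate (resp.\ $wt(\varpi)$-conjugate) of $N_{\lambda,\delta}$.
\begin{itemize}
\item It lies in $N^{\mathrm{op}}\cap J_\delta\cap\prescript{g}{}{J_\delta}$.
\item $\prescript{g}{}{\lambda}$ is trivial on it, because $g^{-1}(\cdot)g$ pushes it into $J_{\delta,c(\lambda)}$.
\item $\lambda$ has no vectors fixed by it.
\end{itemize}

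(ii) For injectivity, the non-conjugacy of $J_0$ and $J_1$ does not exclude intertwining between a representation of $J_0$ and one of $J_1$. You must show $\Hom_{J_0\cap\prescript{g}{}{J_1}}(\lambda_1,\prescript{g}{}{\lambda_2})=0$ for $g=t(\varpi^k)$, $k\le 0$, which represent $J_0\backslash G/J_1$. For $g=1$ this takes place on the Iwahori subgroup: compare $c(\lambda_1)$ with $c(\lambda_2)$ and test on $N$ or on $N^{\mathrm{op}}$. Even when $\delta_1=\delta_2$, the vanishing argument must be rerun with two conductors, swapping the roles of $\lambda_1$ and $\lambda_2$ if needed.
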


Next, we shall review the result of Kutzko--Sally\cite{ks} after recalling Kutzko's construction(\cite{kut1, kut2}) of minimal supercuspidal representations of $\GL_2(F)$.
See also \cite{car}, \cite{bk}, or \cite[Chapter 4]{bh}.
Put
\begin{align*}
    I=I_0 & =\Set{\left( \begin{array}{cc}a&b\\ c&d \end{array} \right) \in \GL_2(F) | a,d \in \mathcal{O}^\times,\ b\in\mathcal{O},\ c\in\mathfrak{p}}, \\
    H=H_0 & =\GL_2(\mathcal{O}).
\end{align*}
For $l\geq1$, let $I_l$ and $H_l$ be subgroups of $I$ and $H$, respectively, defined by
\begin{align*}
    I_l & =\Set{\left( \begin{array}{cc}a&b\\ c&d \end{array} \right) \in I | a,d \in 1+\mathfrak{p}^l,\ b\in\mathfrak{p}^l,\ c\in\mathfrak{p}^{l+1}}, \\
    H_l & =\Set{\left( \begin{array}{cc}a&b\\ c&d \end{array} \right) \in H | a,d \in 1+\mathfrak{p}^l,\ b,c\in\mathfrak{p}^l}.
\end{align*}
Let $Z$ be the center of $\GL_2(F)$ and $Z'$ the subgroup of $\GL_2(F)$ generated by
\begin{align*}
    \left( \begin{array}{cc}&1\\ \varpi& \end{array} \right).
\end{align*}
Consider an irreducible finite dimensional representation $\rho$ of $\Gamma=ZH$ or $Z'I$.
It is said to be very cuspidal of level $l\geq1$ if the following two conditions hold:
\begin{enumerate}[(1)]
    \item $\rho$ is trivial on $\Gamma_l$, where $\Gamma_l=H_l$ if $\Gamma=ZH$ and $I_l$ if $Z'I$;
    \item $\Hom_{N_{l-1}} (\rho|_{N_{l-1}}, \mathbf{1}_{N_{l-1}})=\{0\}$.
\end{enumerate}
Note that such $\rho$ satisfies
\begin{equation*}
    \Hom_{N^{\mathrm{op}}_{l'}} (\rho|_{N^{\mathrm{op}}_{l'}}, \mathbf{1}_{N^{\mathrm{op}}_{l'}})=\{0\},
\end{equation*}
where $l'=l-1$ if $\Gamma=ZH$ and $l$ if $Z'I$.
In this paper, we shall write $l(\rho)$ for the level of $\rho$.

An irreducible admissible representation $\varPi$ of $\GL_2(F)$ is said to be minimal if
\begin{equation*}
    c(\varPi) \leq c(\varPi \otimes(\chi\circ\det))
\end{equation*}
for any character $\chi$ of $F^\times$, where $c(\varPi)$ denotes the conductor of $\varPi$ defined by Casselman \cite{cas}.
Kutzko \cite{kut1,kut2} gave the following classifications of irreducible minimal supercuspidal representations of $\GL_2(F)$.
\begin{proposition}\label{prop:Kut-clas}
    Let $\Gamma=ZH$ or $Z'I$.
    For any irreducible very cuspidal representation $\rho$ of $\Gamma$, $\cInd^{\GL_2(F)}_\Gamma(\rho)$ is an irreducible minimal supercuspidal representation of $\GL_2(F)$, and every irreducible minimal supercuspidal representation of $\GL_2(F)$ can be obtained in this way.
    Moreover, inequivalent irreducible very cuspidal representations compactly induce inequivalent irreducible minimal supercuspidal representations.
\end{proposition}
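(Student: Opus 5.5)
The plan is to follow the standard route of Kutzko (and its reformulation by Bushnell--Kutzko and Bushnell--Henniart, \cite[Chapter 4]{bh}): an intertwining computation, then an exhaustion argument via strata. Since $\Gamma$ is open and compact modulo $Z$, the representation $\cInd^{\GL_2(F)}_\Gamma(\rho)$ is irreducible and supercuspidal as soon as the intertwining set
\begin{equation*}
    I_{\GL_2(F)}(\rho)=\Set{g\in\GL_2(F) | \Hom_{\Gamma\cap \prescript{g}{}{\Gamma}}(\rho, \prescript{g}{}{\rho})\neq\{0\}}
\end{equation*}
equals $\Gamma$. Its matrix coefficients are then compactly supported modulo $Z$. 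So the first step is to show $I_{\GL_2(F)}(\rho)=\Gamma$.

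\textbf{Intertwining.} For $\Gamma=ZH$ I would use the Cartan decomposition $\GL_2(F)=\bigsqcup_{n\geq0} ZH\operatorname{diag}(\varpi^n,1)H$. For $\Gamma=Z'I$ I would use the affine Bruhat decomposition relative to $I$. Take a double coset representative $g\notin\Gamma$. Then $\Gamma\cap \prescript{g}{}{\Gamma}$ contains $N_{l-1}$, or its conjugate by an element of $\Gamma$, inside the part where $\prescript{g}{}{\rho}$ acts trivially. This is because conjugation by $g$ pushes the opposite unipotent part $N^{\mathrm{op}}_{l'}$ of $\Gamma_l$ onto a subgroup on which $\prescript{g}{}{\rho}$ is trivial. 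An intertwining operator would then produce a nonzero $N_{l-1}$-fixed vector in $\rho$, or an $N^{\mathrm{op}}_{l'}$-fixed vector in the conjugated form. This contradicts the very cuspidal condition (2) and its companion statement for $N^{\mathrm{op}}_{l'}$ noted after the definition.

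\textbf{Minimality and inequivalence.} For minimality, I would compute Casselman's conductor of $\cInd(\rho)$ from the level: it should equal $2l$ for $ZH$ and $2l+1$ for $Z'I$. One way is to find the first $K_1(\mathfrak{p}^n)$-fixed vector by Mackey restriction to $\GL_2(\mathcal{O})$. Condition (2) says that the restriction of $\rho$ to $\Gamma_{l-1}/\Gamma_l$ contains no character trivial on $N_{l-1}$, i.e.\ $\rho$ determines a non-scalar stratum modulo the centre. Twisting by $\chi\circ\det$ shifts this stratum only by a scalar, so it cannot lower the level. Hence $c(\varPi)\leq c(\varPi\otimes\chi\circ\det)$. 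For inequivalence I would use Mackey's formula for $\Hom(\cInd\rho_1,\cInd\rho_2)$ together with the same intertwining computation applied to the pair $(\rho_1,\rho_2)$. This shows that the only possible intertwiners lie in $\Gamma$, which forces $\rho_1\cong\rho_2$ and the same $\Gamma$, since the levels and parities of conductors differ otherwise.

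\textbf{Exhaustion (main obstacle).} The hard part is showing that every minimal supercuspidal $\varPi$ arises this way. I would first use that $\varPi$ contains a fundamental stratum, i.e.\ an unrefined minimal $K$-type in the sense of Moy--Prasad, for $H_{l}\subset H_{l-1}$ or $I_l\subset I_{l-1}$. Minimality lets one twist so that this stratum is non-scalar. Supercuspidality excludes split strata, so the stratum is unramified or ramified elliptic. Its normalizer is then contained in $ZH$ or $Z'I$. By Clifford theory, $\varPi$ restricted to $\Gamma$ contains an irreducible $\rho$ extending the stratum character, and this $\rho$ is automatically very cuspidal. Frobenius reciprocity then gives a nonzero map $\cInd(\rho)\to\varPi$, which is an isomorphism by irreducibility. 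Since the exhaustion is genuinely Kutzko's theorem, in the paper I would simply cite \cite{kut1,kut2} for this step.
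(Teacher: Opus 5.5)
\textbf{Overall.} The paper gives no proof of this proposition. It is quoted from Kutzko \cite{kut1,kut2}, with \cite{car}, \cite{bk}, \cite[Chapter 4]{bh} as further references. Your outline is the standard intertwining-and-strata argument behind those references, and you also take exhaustion from Kutzko, so it is consistent with the paper. What you add is an actual argument for irreducibility, and, granted the conductor formula of Proposition~\ref{prop:Kut-cond}, for inequivalence. The intertwining computation at the core of this is sound. For both $\Gamma=ZH$ and $\Gamma=Z'I$, every nontrivial double coset in $\Gamma\backslash\GL_2(F)/\Gamma$ has a representative $g=\operatorname{diag}(1,\varpi^n)$ with $n\geq1$. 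Then $g^{-1}N_{l-1}g=N_{l-1+n}\subset\Gamma_l$, so $\prescript{g}{}{\rho}$ is trivial on $N_{l-1}\subset\Gamma\cap\prescript{g}{}{\Gamma}$, and condition (2) forces every intertwiner to vanish. Your phrase about the ``opposite unipotent part $N^{\mathrm{op}}_{l'}$ of $\Gamma_l$'' is inaccurate, since $N^{\mathrm{op}}_{l'}\not\subset\Gamma_l$, but the mechanism is the one just described.

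\textbf{Minimality step.} This is where the sketch is loosest. The inference ``a twist cannot lower the level, hence cannot lower the conductor'' needs a conductor--level relation for $\varPi\otimes(\chi\circ\det)$ itself. Your Mackey computation only gives $c(\cInd_\Gamma\rho)$.
\begin{itemize}
\item For $c(\chi)\leq l$ the gap is harmless. Since $\det(\Gamma_l)\subset 1+\mathfrak{p}^l$ and $\det$ is trivial on $N$, the character $\chi\circ\det$ is trivial on $\Gamma_l$ and on $N$. Hence $\rho\otimes(\chi\circ\det)|_\Gamma$ is again very cuspidal of level $l$, and $\varPi\otimes(\chi\circ\det)\cong\cInd_\Gamma(\rho\otimes(\chi\circ\det)|_\Gamma)$ has the same conductor.
\item For $c(\chi)>l$ the inducing representation is no longer very cuspidal, so you need a separate lower bound. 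You could run the Mackey computation for it as well, or quote the general relation between the conductor and the normalized level of a cuspidal representation of $\GL_2(F)$.
\end{itemize}

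\textbf{Smaller points.}
\begin{itemize}
\item The stratum language does not apply to $\Gamma=ZH$, $l=1$. There $H_0/H_1\cong\GL_2(\mathbb{F}_q)$ and condition (2) is just cuspidality over $\mathbb{F}_q$. Minimality is then automatic because the conductor is $2$, and exhaustion goes through cuspidal representations of $\GL_2(\mathbb{F}_q)$ rather than fundamental strata.
\item In the Clifford step, $\rho$ should \emph{contain} the stratum character, not extend it. It should also be chosen inside $\varPi^{\Gamma_l}$; that choice is what makes it very cuspidal.
\end{itemize}
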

An irreducible minimal supercuspidal representation $\cInd^{\GL_2(F)}_\Gamma(\rho)$ is said to be unramified (resp. ramified) if $\Gamma=ZH$ (resp. $Z'I$).
\begin{proposition}\label{prop:Kut-cond}
    If $\rho$ is an irreducible very cuspidal representation of $\Gamma=ZH$ (resp. $Z'I$) of level $l$, then the conductor of $\cInd^{\GL_2(F)}_\Gamma(\rho)$ is $2l$ (resp. $2l+1$).
\end{proposition}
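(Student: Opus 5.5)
The plan is to argue directly from the definition of Casselman's conductor: $c(\varPi)$ is the least $n\geq0$ for which $\varPi$ has a nonzero vector fixed by
\begin{equation*}
    K_1(\mathfrak{p}^n)=\Set{\left( \begin{array}{cc}a&b\\ c&d \end{array} \right)\in H | c\in\mathfrak{p}^n,\ d\in 1+\mathfrak{p}^n}.
\end{equation*}
For $\varPi=\cInd^{\GL_2(F)}_\Gamma(\rho)$, Mackey theory gives
\begin{equation*}
    \varPi^{K_1(\mathfrak{p}^n)}\cong\bigoplus_{g\in \Gamma\backslash \GL_2(F)/K_1(\mathfrak{p}^n)} \Hom_{\Gamma\cap \prescript{g}{}{K_1(\mathfrak{p}^n)}}\left(\mathbf{1},\rho\right),
\end{equation*}
with the sum over a complete set of representatives. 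So I would show that every summand vanishes for $n<2l$ (resp. $n<2l+1$), and exhibit one representative $g$ whose summand is nonzero when $n=2l$ (resp. $n=2l+1$).

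I would proceed in three steps.

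\begin{enumerate}[(1)]
    \item Use the Cartan or Iwahori decomposition to choose representatives. For $\Gamma=ZH$ these are of the form $g=\left( \begin{array}{cc}\varpi^{r}&\\ &1 \end{array} \right) n^{\mathrm{op}}(x)$ with $r\geq0$, analogous to Lemma \ref{lem:coset_ps}. For $\Gamma=Z'I$ the representatives are similar, refined by the Iwahori Bruhat cells.
    \item For $n$ below the claimed value, show that $\Gamma\cap\prescript{g}{}{K_1(\mathfrak{p}^n)}$ always contains either a conjugate of $N_{l-1}$ (resp. of $N^{\mathrm{op}}_{l}$) inside $\Gamma$ or the relevant unipotent subgroup. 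This uses that conjugating by $\mathrm{diag}(\varpi^r,1)$ shifts upper and lower unipotent valuations by $\pm r$, and that the constraints $c\in\mathfrak{p}^n$, $b\in\mathcal{O}$ together with $n<2l$ force one of the shifted groups to contain $N_{l-1}$ or $N^{\mathrm{op}}_{l-1}$. The very cuspidal condition (2), together with the noted consequence for $N^{\mathrm{op}}_{l'}$, then kills the $\Hom$ space.
    \item For $n=2l$ take $r=l$ (resp. for $n=2l+1$ use the element of $Z'$ combined with $\mathrm{diag}(\varpi^{l},1)$). The intersection is then contained in $Z_{\mathcal{O}}\cdot\Gamma_l$ times a torus part of $\Gamma$ on which $\det$-type constraints force the relevant character to be trivial. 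Since $\rho$ is trivial on $\Gamma_l$ and the torus part acts through the central character, which is trivial on the $d\in1+\mathfrak{p}^n$ part, a nonzero fixed vector exists.
\end{enumerate}

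The main obstacle is step (2), the bookkeeping over all double cosets, especially in the ramified case, where the Iwahori cells and the element $\left( \begin{array}{cc}&1\\ \varpi& \end{array} \right)$ interleave valuations with a half-step shift; that shift produces the odd value $2l+1$.

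If this becomes unwieldy, the fallback is to cite the conductor formula of Bushnell--Henniart \cite[Chapter 4]{bh}: for a cuspidal representation with normalized level $\ell(\varPi)$ and ramification index $e$, one has $c(\varPi)=2(1+\ell(\varPi))$. Here $\ell=l-1$ in the unramified case ($e=1$), giving $2l$, and $\ell=l-\tfrac12$ in the ramified case ($e=2$), giving $2l+1$. It would then remain only to check that Kutzko's level $l$ matches these normalized levels, which follows from comparing $\Gamma_l$ with the filtration subgroups $U^{\ell+1}$ of the corresponding hereditary orders.
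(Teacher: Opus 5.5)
The paper gives no proof of this statement. It is quoted as part of the review of Kutzko's construction, with \cite{kut1,kut2}, \cite{car}, \cite{bk}, \cite[Chapter 4]{bh} as sources. So your fallback is essentially the paper's route: the formula $c(\varPi)=2+2\ell(\varPi)$ for cuspidal $\varPi$, plus Casselman's identification of the $\varepsilon$-exponent with the $K_1(\mathfrak{p}^n)$-conductor, plus the identification $\Gamma_l=U^{\ell+1}_{\mathfrak{A}}$, which gives $\ell=l-1$, resp.\ $\ell=l-\frac{1}{2}$, with very cuspidality making the stratum simple.

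Your main plan, a direct Mackey computation, is a genuinely different and more self-contained route. Its vanishing half is sound. When $n<2l$ (resp.\ $n\le 2l$), for every double coset one of $\Gamma\cap{}^gN(\mathcal{O})$ or $\Gamma\cap{}^gN^{\mathrm{op}}(\mathfrak{p}^n)$ contains a $\Gamma$-conjugate of $N_{l-1}$. Two small corrections to that half:
\begin{enumerate}[(i)]
\item The representatives are $\mathrm{diag}(\varpi^r,1)n^{\mathrm{op}}(x)$ with $r\in\Z$, not only $r\ge0$. Negative $r$ are killed trivially by $N(\mathcal{O})$.
\item When $\operatorname{ord}(x)=s<r$, the group ${}^gN(\mathcal{O})\cap H$ is not a diagonal shift of $N(\mathcal{O})$. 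It is an $H$-conjugate of $N_{\max(0,2s-r)}$, so the bookkeeping has to track $2s-r$. Horoballs in the tree make this painless.
\end{enumerate}

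The genuine gap is in step (3), which is exactly the half of the statement that needs an idea. Take $g=\mathrm{diag}(\varpi^l,1)$ and $n=2l$ (resp.\ $n=2l+1$). Then $\Gamma\cap{}^gK_1(\mathfrak{p}^n)={}^gK_1(\mathfrak{p}^n)$ consists of the matrices with $(1,1)$-entry in $\mathcal{O}^\times$, upper-right entry in $\mathfrak{p}^l$, lower-left entry in $\mathfrak{p}^l$ (resp.\ $\mathfrak{p}^{l+1}$), and $(2,2)$-entry in $1+\mathfrak{p}^n$. Modulo $\Gamma_l$ this is the split torus $T'=\{\mathrm{diag}(a,1):a\in\mathcal{O}^\times\}$. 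This torus is not central, and there is no determinant constraint, since $\det$ ranges over all of $\mathcal{O}^\times$. For instance, when $l=1$, $\rho|_{T'}$ is the regular representation of $\mathbb{F}_q^\times$. So your claim that the torus part acts through the central character is false, and it does not show $\rho^{T'}\neq0$.

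A correct argument uses very cuspidality once more.
\begin{enumerate}[(a)]
\item Since $\rho$ is trivial on $N_l\subset\Gamma_l$, decompose $\rho|_{N(\mathcal{O})}=\bigoplus_b V_b$ into $\psi(b\,\cdot)$-isotypic parts, $b\in\mathfrak{p}^{-l}/\mathcal{O}$.
\item The absence of $N_{l-1}$-fixed vectors forces $V_b=0$ unless $\operatorname{ord}(b)=-l$.
\item $\rho(\mathrm{diag}(a,1))$ carries $V_b$ onto $V_{a^{-1}b}$. Moreover $(\mathcal{O}/\mathfrak{p}^l)^\times$ acts simply transitively on $\varpi^{-l}\mathcal{O}^\times/\mathcal{O}$, while $\mathrm{diag}(1+\mathfrak{p}^l,1)\subset\Gamma_l$ acts trivially.
\item Hence $\rho|_{T'}$ is induced from the trivial action of $T'\cap\Gamma_l$ on $V_{b_0}$, and $\rho^{T'}\cong V_{b_0}\neq0$.
\end{enumerate}
This works verbatim for $\Gamma=Z'I$, since $N(\mathcal{O})\cap I_l=N_l$. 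In that case $g=\mathrm{diag}(\varpi^l,1)$ itself already works; multiplying by an element of $Z'\subset\Gamma$ does not change the double coset. With this replacement your direct proof goes through.
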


Let us now recall the result of Kutzko--Sally\cite{ks}.
We shall begin with the unramified case.
Let $\rho$ be an irreducible very cuspidal representation of $ZH$ of level $l$, and $\lambda$ its restriction to $ZH\cap G= J_0$.
Put $\varPi=\cInd^{\GL_2(F)}_{ZH}(\rho)$, which is an irreducible supercuspidal representation of $\GL_2(F)$.
Suppose first that $\lambda$ is irreducible.
Then two representations $\cInd^G_{J_0}(\lambda)$ and $\cInd^G_{J_1}(\prescript{\beta}{}{\lambda})$ are irreducible, and the restriction of $\varPi$ to $G$ is a direct sum of those two irreducible supercuspidal representations.
The set of those two is called an unramified supercuspidal $L$-packet of cardinality two.
It is also known that $\cInd^G_{J_0}(\lambda)$ (resp. $\cInd^G_{J_1}(\prescript{\beta}{}{\lambda})$) is $\psi_a$-generic if and only if $\operatorname{ord}_F(a)+l$ is even (resp. odd).
For the last assertion, see \cite[Proposition 3.3.5]{lr}, for example.
Suppose next that $\lambda$ is reducible.
In this case, we have $l=1$ and  $\lambda=\lambda_1 \oplus \lambda_2$, where $\lambda_1$ and $\lambda_2$ are irreducible representations of dimension $(q-1)/2$.
Then the restriction of $\varPi$ to $G$ is a direct sum of four irreducible supercuspidal representations $\cInd^G_{J_0}(\lambda_1)$, $\cInd^G_{J_1}(\prescript{\beta}{}{\lambda_1})$, $\cInd^G_{J_0}(\lambda_2)$, and $\cInd^G_{J_1}(\prescript{\beta}{}{\lambda_2})$.
The set of those four is called the unramified supercuspidal $L$-packet of cardinality four.
Also, exactly one of $\cInd^G_{J_0}(\lambda_1)$ and $\cInd^G_{J_0}(\lambda_2)$ is $\psi_\varpi$-generic, and the other is $\psi_{\xi \varpi}$-generic.
Moreover, for any nontrivial additive character $\Psi$, the representation $\cInd^G_{J_1}(\prescript{\beta}{}{\lambda_i})$ is $\Psi_\varpi$-generic if and only if $\cInd^G_{J_0}(\lambda_i)$ is $\Psi$-generic, for each $i=1,2$.
In addition, for any $\Psi$, exactly one of those four representations is $\Psi$-generic.
For the last, for example see \cite[Corollary 3.3.7]{lr} for example.

Now we come to the ramified case.
Let $\rho$ be an irreducible very cuspidal representation of $Z'I$ of level $l$, and $\lambda'$ its restriction to $Z'I\cap G= I\cap G$.
Put $\varPi=\cInd^{\GL_2(F)}_{Z'I}(\rho)$, which is an irreducible supercuspidal representation of $\GL_2(F)$.
Then we have $\lambda'=\lambda'_1 \oplus \lambda'_2$, where $\lambda'_1$ and $\lambda'_2$ are irreducible representations of $I\cap G$.
The restriction of $\varPi$ to $G$ is a direct sum of two irreducible supercuspidal representations $\cInd^G_{I\cap G}(\lambda'_1)$ and $\cInd^G_{I\cap G}(\lambda'_2)$.
The set of those two is called a ramified supercuspidal $L$-packet.
Exactly one of them is $\psi_a$-generic for $a \in \varpi^\Z \mathcal{O}^{\times 2}$, and the other is so for $a \in \xi \varpi^\Z \mathcal{O}^{\times 2}$.
For the last assertion, see \cite[Proposition 3.3.9]{lr}, for example.

Every element in those supercuspidal $L$-packets is irreducible supercuspidal representation of $G$.
Conversely, every irreducible supercuspidal representation of $G$ can be obtained in this way.

Now, we finish this subsection showing that the construction of Manderscheid is compatible with that of Kutzko--Sally.
\begin{lemma}\label{lem:compatib-cpt_Man-KS}
    Let $\rho$ be an irreducible very cuspidal representation of $ZH$ or $Z'I$ of level $l$.
    \begin{enumerate}
        \item Assume that $\rho$ is a representation of $ZH$, and let $\lambda$ be an irreducible component of its restriction to $ZH\cap G= J_0$.
              Then $\lambda$ and $\prescript{\beta}{}{\lambda}$ are strongly cuspidal representations of conductor $l$ and defect $0$, of $J_0$ and $J_1$, respectively.
        \item Assume that $\rho$ is a representation of $Z'I$, and let $\lambda'$ be one of the two irreducible components of its restriction to $Z'I\cap G= I\cap G$.
              Then $\lambda = \cInd^{J_1}_{I\cap G}(\lambda')$ is an irreducible strongly cuspidal representation of $J_1$ of conductor $l+1$ and defect $1$.
    \end{enumerate}
\end{lemma}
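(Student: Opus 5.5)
The plan is to deduce everything from the very cuspidal conditions on $\rho$. Since $\lambda$ (resp. $\lambda'$) is a subrepresentation of $\rho|_{J_0}$ (resp. $\rho|_{I\cap G}$), any vector of $\lambda$ or $\lambda'$ fixed by a unipotent subgroup would give a fixed vector of $\rho$. So $\lambda$ and $\lambda'$ inherit from $\rho$ the vanishing of invariants under $N_{l-1}$, and under $N^{\mathrm{op}}_{l-1}$ (resp. $N^{\mathrm{op}}_l$). They also inherit triviality on $H_l\cap G=J_{0,l}$ (resp. $I_l\cap G$).

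For (a), $\lambda$ is trivial on $J_{0,l}$. Since $N_{l-1}\subset J_{0,l-1}$ and $\lambda$ has no $N_{l-1}$-invariants, $\lambda$ is nontrivial on $J_{0,l-1}$, so $c(\lambda)=l$. Strong cuspidality is exactly $\Hom_{N_{l-1}}(\lambda,\mathbf{1})=0$, because $N_{\lambda,0}=N_{l-1}$.

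For $\prescript{\beta}{}{\lambda}$, we use $\beta J_{0,j}\beta^{-1}=J_{1,j}$, which holds because $\beta n(b)\beta^{-1}=n(\varpi^{-1}b)$ and $\beta n^{\mathrm{op}}(c)\beta^{-1}=n^{\mathrm{op}}(\varpi c)$. This gives $c(\prescript{\beta}{}{\lambda})=l$. Moreover $\Hom_{N_j}(\prescript{\beta}{}{\lambda},\mathbf{1})=\Hom_{N_{j+1}}(\lambda,\mathbf{1})$. Taking $j=l-3$, or $j=-1$ when $l=1$, the subgroup $N_{j+1}$ contains $N_{l-1}$, so this space vanishes and $\prescript{\beta}{}{\lambda}$ is strongly cuspidal. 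Taking $j=l-2$ gives $\Hom_{N_{l-1}}(\lambda,\mathbf{1})=0$, so the defect is $0$.

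For (b), first note that $I\cap G\subset J_1$ with index $q+1$. The representatives of $J_1/(I\cap G)$ are $n(\varpi^{-1}x)$ for $x\in\mathcal{O}/\mathfrak{p}$, together with one element $w_1=wt(\varpi)$ or a similar element. The steps are as follows.

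\begin{enumerate}
\item \emph{Irreducibility.} By Kutzko--Sally, $\cInd^G_{I\cap G}(\lambda')\cong\cInd^G_{J_1}(\lambda)$ is irreducible by transitivity of induction. A decomposition of $\lambda$ would decompose it, so $\lambda$ is irreducible.
\item \emph{Conductor.} $J_{1,l+1}=\beta J_{0,l+1}\beta^{-1}$ is normal in $J_1$ and contained in $I_l\cap G$, so $\lambda$ is trivial on it. Nontriviality on $J_{1,l}$ will follow from step 3, since $N_{l-2}\subset J_{1,l}$. Hence $c(\lambda)=l+1$.
\item \emph{Strong cuspidality and defect.} Apply Mackey to $\lambda|_{N_j}$ for $j=l-2$ (strong cuspidality) and $j=l-1$ (defect). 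The summands are $\Hom_{N_j\cap \prescript{g}{}{(I\cap G)}}(\prescript{g}{}{\lambda'},\mathbf{1})$ over the representatives $g$ above.
\begin{itemize}
\item For $g=n(\varpi^{-1}x)$, conjugation fixes $N_j$, and the intersection contains $N_{l-1}$. So these summands vanish for both values of $j$.
\item For $g=w_1$, conjugation carries $N_j$ to a subgroup $N^{\mathrm{op}}_{j+2}$ (up to sign) inside $I\cap G$. For $j=l-2$ this is $N^{\mathrm{op}}_l$, on which $\lambda'$ has no invariants, so the summand is $0$ and $\lambda$ is strongly cuspidal. For $j=l-1$ it is $N^{\mathrm{op}}_{l+1}\subset I_l$, on which $\lambda'$ is trivial, so the summand is nonzero and the defect is $1$.
\end{itemize}
\end{enumerate}

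The main obstacle is step 3. It requires the exact conjugation exponents for $w_1$ and the precise coset representatives, and it must be checked that $N_j\cap\prescript{g}{}{(I\cap G)}$ is what is claimed. I would verify this by explicit matrix multiplication.
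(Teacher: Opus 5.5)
\textbf{Gap in step 2.} The step that fails as written is the lower bound $c(\lambda)\geq l+1$ in your step 2. The containment $N_{l-2}\subset J_{1,l}$ is false: the upper-right entries of $J_{1,l}$ lie in $\mathfrak{p}^{l-1}$. So $N_{l-1}\subset J_{1,l}$, while $N_{l-2}$ only lies in $J_{1,l-1}$.

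Consequently, your $j=l-2$ computation $\Hom_{N_{l-2}}(\lambda,\mathbf{1})=0$ only shows that $\lambda$ is nontrivial on $J_{1,l-1}$, i.e.\ $c(\lambda)\in\{l,l+1\}$. You also cannot get nontriviality on $N_{l-1}$ from vanishing of invariants, because your own defect computation shows that $\lambda$ \emph{does} have nonzero $N_{l-1}$-invariants.

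This is not cosmetic. Strong cuspidality and defect for $J_1$ are tested on the subgroups $N_{c(\lambda)-3}$ and $N_{c(\lambda)-2}$, which are indexed by the conductor. Your step 3 therefore checks the right subgroups only once $c(\lambda)=l+1$ is actually established.

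\textbf{The fix.} The repair is already inside your $j=l-1$ Mackey computation. There the $q$ summands at $g=n(\varpi^{-1}x)$ vanish, and the $w_1$-summand is all of $\lambda'^{*}$, since $\lambda'$ is trivial on $N^{\mathrm{op}}_{l+1}\subset I_l$. Hence $\dim\Hom_{N_{l-1}}(\lambda,\mathbf{1})=\dim\lambda'<(q+1)\dim\lambda'=\dim\lambda$, so $N_{l-1}\subset J_{1,l}$ acts nontrivially on $\lambda$. The paper says the same thing directly: since $\Hom_{N_{l-1}}(\lambda',\mathbf{1})=0$, $N_{l-1}$ acts nontrivially on the functions in $\lambda$ supported on $I\cap G$.

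\textbf{Comparison with the paper.} With this fix your proposal is the paper's proof.
\begin{itemize}
\item Part (a) is the same argument. Your explicit shift $\Hom_{N_j}(\prescript{\beta}{}{\lambda},\mathbf{1})=\Hom_{N_{j+1}}(\lambda,\mathbf{1})$ fills in what the paper leaves as ``hence so is $\prescript{\beta}{}{\lambda}$''.
\item In (b), the paper evaluates functions at the representatives $n(b)$ and $wt(\varpi)$ by hand. That is exactly your Mackey computation with $w_1^{-1}n(b)w_1=n^{\mathrm{op}}(-\varpi^{2}b)$, which is correct.
\end{itemize}
A minor point: for $l=1$ and $j=-1$, $N_{-1}\not\subset I\cap G$. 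The cosets $n(\varpi^{-1}x)(I\cap G)$ then merge into a single $N_{-1}$-double coset with summand $\Hom_{N_0}(\lambda',\mathbf{1})$, which is still $0$, so your conclusion there is unaffected.
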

\begin{proof}
    We shall first show (a).
    Since $\rho$ is trivial on $H_l$, it is trivial that $\lambda$ is also trivial on $J_{0, l}$.
    The condition $\Hom_{N_{l-1}} (\rho, \mathbf{1})=\{0\}$ implies that $\Hom_{N_{l-1}} (\lambda, \mathbf{1})=\{0\}$.
    Thus, $\lambda$ is a strongly cuspidal representation of conductor $l$ and defect $0$, and hance so is $\prescript{\beta}{}{\lambda}$.

    To show (b), we realize the representation $\lambda$ as the left action provided by the right translation on the vector space of functions $\varphi \colon J_1 \to V_{\lambda'}$ such that $\varphi(xy) = \lambda'(x) \varphi(y)$ for any $x \in I \cap G$ and $y \in J_1$, where $V_{\lambda'}$ denotes a fixed representation space of $\lambda'$.
    From the result of Kutzko--Sally, we know that
    \begin{equation*}
        \cInd^G_{J_1}(\lambda) = \cInd^G_{I\cap G}(\lambda')
    \end{equation*}
    is irreducible.
    The representation $\lambda$ is therefore irreducible.
    Since $\rho$ is trivial on $I_l$, $\lambda'$ is also trivial on $J_{1, l+1} \subset I_l \cap G$.
    Combining this with the fact that $J_{1, l+1}$ is a normal subgroup of $J_1$, one can see that $\lambda = \cInd^{J_1}_{I\cap G}(\lambda')$ is also trivial on $J_{1, l+1}$.
    On the other hand, since $\rho$ satisfies the condition $\Hom_{N_{l-1}} (\rho, \mathbf{1})=\{0\}$, we have $\Hom_{N_{l-1}} (\lambda', \mathbf{1})=\{0\}$.
    Hence the action of $N_{l-1}$ on the elements in $\lambda$ supported on $I \cap G$ is not trivial.
    In particular, $\lambda$ is not trivial on $J_{1, l}$.
    We have now showed that $\lambda$ is irreducible representation of $J_1$ with conductor $l+1$, and it is enough to show that there exists a nonzero function $\varphi_0 \in \lambda$ such that $\lambda(n)\varphi_0 = \varphi_0$ for any $n \in N_{l-1}$ and that there does not exist a nonzero function $\varphi \in \lambda$ such that $\lambda(n)\varphi = \varphi$ for any $n \in N_{l-2}$.
    Let $\varphi_0$ be a nonzero function in $\lambda = \cInd^{J_1}_{I\cap G}(\lambda')$ supported on
    \begin{equation*}
        \left( I \cap G \right) \cdot \left( \begin{array}{cc}&\varpi^{-1}\\ -\varpi& \end{array} \right).
    \end{equation*}
    Then, for any $a \in \mathcal{O}$, we have
    \begin{align*}
        [\lambda(n(\varpi^{l-1}a)) \varphi_0] \left( \left( \begin{array}{cc}&\varpi^{-1}\\ -\varpi& \end{array} \right) \right)
         & = \varphi_0 \left( \left( \begin{array}{cc}&\varpi^{-1}\\ -\varpi& \end{array} \right) \left( \begin{array}{cc}1&\varpi^{l-1}a\\ &1 \end{array} \right) \right)  \\
         & = \varphi_0 \left( \left( \begin{array}{cc}1&\\ -\varpi^{l+1}a&1 \end{array} \right) \left( \begin{array}{cc}&\varpi^{-1}\\ -\varpi& \end{array} \right) \right) \\
         & = \lambda'(n^{\mathrm{op}}(-\varpi^{l+1}a)) \varphi_0\left( \left( \begin{array}{cc}&\varpi^{-1}\\ -\varpi& \end{array} \right) \right)                          \\
         & = \varphi_0 \left( \left( \begin{array}{cc}&\varpi^{-1}\\ -\varpi& \end{array} \right) \right),
    \end{align*}
    since $\lambda'$ is trivial on $J_{1, l+1}$.
    Since the natural right action of $J_1$ on $(I \cap G) \backslash J_1$ is transitive, it follows from the calculation above that the support of $\lambda(n(\varpi^{l-1}a)) \varphi_0$ is the same as that of $\varphi_0$.
    Now it also follows that $\lambda(n(\varpi^{l-1}a)) \varphi_0 = \varphi_0$.
    Finally, suppose that $\varphi \in \lambda$ satisfies $\lambda(n)\varphi = \varphi$ for any $n \in N_{l-2}$.
    We are going to show that $\varphi$ is identically zero.
    By the same calculation as above, we have
    \begin{equation*}
        [\lambda(n(\varpi^{l-2}a)) \varphi] \left( \left( \begin{array}{cc}&\varpi^{-1}\\ -\varpi& \end{array} \right) \right)
        = \lambda'(n^{\mathrm{op}}(-\varpi^l a)) \varphi\left( \left( \begin{array}{cc}&\varpi^{-1}\\ -\varpi& \end{array} \right) \right),
    \end{equation*}
    for any $a \in \mathcal{O}$, and thus
    \begin{equation*}
        \varphi\left( \left( \begin{array}{cc}&\varpi^{-1}\\ -\varpi& \end{array} \right) \right) \in \left( \lambda' \right)^{N^{\mathrm{op}}_l} \subset \rho^{N^{\mathrm{op}}_l} = \{0\}.
    \end{equation*}
    We also have
    \begin{equation*}
        [\lambda(n(\varpi^{l-2}a))\varphi](n(b)) = \lambda'(\varpi^{l-2}a) \varphi(n(b)),
    \end{equation*}
    for any $a \in \mathcal{O} \cap \mathfrak{p}^{2-l}$ and $b \in \mathfrak{p}^{-1}$.
    In particular,
    \begin{equation*}
        \varphi(n(b)) \in \left( \lambda' \right)^{N_{l-1}} \subset \rho^{N_{l-1}} = \{0\}.
    \end{equation*}
    Since a set
    \begin{align*}
        \Set{n(b) | b \in \mathfrak{p^{-1}}/\mathcal{O}} \cup \Set{\left( \begin{array}{cc}&\varpi^{-1}\\ -\varpi& \end{array} \right)}
    \end{align*}
    is a representative system for $(I \cap G) \backslash J_1$, this leads to $\varphi = 0$.
\end{proof}
Moreover, the indices $c(\lambda)$, $d(\lambda)$, and $l(\rho)$ are related as follows.
\begin{proposition}\label{prop:compatib-indices_Man-KS}
    Let $\pi$ be an irreducible supercuspidal representation of $G$.
    Assume that $\rho$ is an irreducible very cuspidal representation of $\Gamma\in\{ZH, Z'I\}$ such that $\pi$ is a subrepresentation of $\cInd^{\GL_2(F)}_\Gamma(\rho)$.
    In addition, assume that $\lambda$ is an irreducible strongly cuspidal representation of $J_\delta$ ($\delta=0,1$), such that $\pi=\cInd^G_{J_\delta}(\lambda)$.
    Then we have
    \begin{align*}
        l(\rho) & =c(\lambda)-d(\lambda),    \\
        \Gamma  & =\begin{cases*}
                       ZH,  & if $d(\lambda)=0$, \\
                       Z'I, & if $d(\lambda)=1$.
                   \end{cases*}
    \end{align*}
\end{proposition}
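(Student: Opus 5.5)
The proof identifies $\lambda$ explicitly through the Kutzko--Sally description of $\cInd^{\GL_2(F)}_\Gamma(\rho)|_G$. The uniqueness part of Theorem \ref{thm:mansl2} then reduces the statement to reading off the indices computed in Lemma \ref{lem:compatib-cpt_Man-KS}.

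First, put $\varPi=\cInd^{\GL_2(F)}_\Gamma(\rho)$ and let $l=l(\rho)$. By the results of Kutzko--Sally recalled above, $\varPi|_G$ is a finite direct sum of pairwise inequivalent irreducible supercuspidal representations. These summands are as follows.
\begin{itemize}
\item If $\Gamma=ZH$: the summands are $\cInd^G_{J_0}(\lambda_0)$ and $\cInd^G_{J_1}(\prescript{\beta}{}{\lambda_0})$, where $\lambda_0$ runs over the irreducible components of $\rho|_{J_0}$ (one or two of them).
\item If $\Gamma=Z'I$: the summands are $\cInd^G_{I\cap G}(\lambda'_i)=\cInd^G_{J_1}\bigl(\cInd^{J_1}_{I\cap G}(\lambda'_i)\bigr)$ for $i=1,2$, using transitivity of compact induction.
\end{itemize}
Since $\pi$ is an irreducible subrepresentation of this finite semisimple sum, it is isomorphic to one of the summands, by comparing isotypic components.

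Second, Lemma \ref{lem:compatib-cpt_Man-KS} says that each inducing representation in that list is irreducible and strongly cuspidal, with the following data.
\begin{itemize}
\item In the case $\Gamma=ZH$: $\lambda_0$ (of $J_0$) and $\prescript{\beta}{}{\lambda_0}$ (of $J_1$) have conductor $l$ and defect $0$. For $J_0$ this is automatic, since every strongly cuspidal representation of $J_0$ has defect $0$.
\item In the case $\Gamma=Z'I$: $\cInd^{J_1}_{I\cap G}(\lambda'_i)$ is a representation of $J_1$ with conductor $l+1$ and defect $1$.
\end{itemize}
Hence $\pi\cong\cInd^G_{J_{\delta'}}(\lambda'')$ for some irreducible strongly cuspidal $\lambda''$ whose conductor and defect are given above. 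By hypothesis we also have $\pi=\cInd^G_{J_\delta}(\lambda)$. The last assertion of Theorem \ref{thm:mansl2} (inequivalent strongly cuspidal representations induce inequivalent representations), read contrapositively, shows that $\lambda$ and $\lambda''$ are equivalent in the sense defined above. That is, $\delta=\delta'$ and $\lambda\cong\lambda''$ as representations of $J_\delta$. Consequently $c(\lambda)=c(\lambda'')$ and $d(\lambda)=d(\lambda'')$.

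Third, read off the indices.
\begin{itemize}
\item If $\Gamma=ZH$, then $d(\lambda)=0$ and $c(\lambda)=l$, so $l(\rho)=c(\lambda)-d(\lambda)$.
\item If $\Gamma=Z'I$, then $d(\lambda)=1$ and $c(\lambda)=l+1$, so again $l(\rho)=c(\lambda)-d(\lambda)$.
\end{itemize}
Since these two cases exhaust the possibilities for $\Gamma$ and produce different defects, $\Gamma$ is determined by $d(\lambda)$ exactly as stated.

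The main point requiring care is the first step: the identification of $\pi$ with a specific summand. This needs the complete list of summands, including the reducible case $l=1$ in which $\rho|_{J_0}=\lambda_1\oplus\lambda_2$. That is why the argument uses the form of Lemma \ref{lem:compatib-cpt_Man-KS} that applies to any irreducible component of $\rho|_{J_0}$. No further computation should be needed.
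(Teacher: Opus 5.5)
Your proposal is correct and follows essentially the same route as the paper, whose proof consists of citing Lemma \ref{lem:compatib-cpt_Man-KS} together with the uniqueness part of Theorem \ref{thm:mansl2}. You have simply made explicit the intermediate steps the paper leaves implicit: identifying $\pi$ with one of the Kutzko--Sally summands, deducing $\delta=\delta'$ and $\lambda\cong\lambda''$ from Manderscheid's uniqueness, and then reading off the conductor and defect.
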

\begin{proof}
    The proposition follows from Lemma\ref{lem:compatib-cpt_Man-KS} and Theorem \ref{thm:mansl2}.
\end{proof}

\subsection{Explicit mapping of invariant subspaces}\label{subsec:preliminary (tentative)}
In the last subsection, we have reviewed that irreducible supercuspidal representations of $G=\SL_2(F)$ and $\widetilde{G}=\widetilde{\SL_2}(F)$ can be constructed in parallel.
For the proof of the main theorems, we need a more detailed correspondence.
In this subsection, we will construct an explicit mapping between the subspace $\pi^{K_m}_\eta$ of an irreducible supercuspidal representation $\pi$ of $\widetilde{G}$ and that of $G$.
\begin{lemma}\label{lem:coset_sc0}
    Set $g_i=t(\varpi^i)$ and $g_{i,j,z} = n(z \varpi^{-j}) t(\varpi^{i-j})$, for $i,j \in \Z$ and $z \in \mathcal{O}^\times$.
    For $m \geq 0$, put
    \begin{equation*}
        \mathscr{R}_{0,m} =
        \begin{dcases*}
            \Set{ g_i | i \geq 0},                                                                  & if $m=0$,      \\
            \Set{ g_i | i \in \Z},                                                                  & if $m=1$,      \\
            \Set{ g_i | i \in \Z} \cup \Set{ g_{i,j,z} | i \in \Z_{>0},\ z=1,\xi,\ j=1,\ldots,N-1}, & if $m \geq 2$,
        \end{dcases*}
    \end{equation*}
    where $N=\min(2i, m)$.
    Then $\mathscr{R}_{0,m}$ is a set of complete representatives of $K_m \backslash G / J_0$.
\end{lemma}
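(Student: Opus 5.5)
The plan is to parametrize $G/J_0$ through the Iwasawa decomposition $G=BJ_0$, and then study the orbits of $K_m$ acting on the left.

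\textbf{Parametrizing $G/J_0$.} Every coset can be written as $n(x)t(\varpi^k)J_0$ with $k\in\Z$ and $x\in F$. Two such cosets coincide exactly when the values of $k$ agree and $x\equiv x' \bmod \mathfrak{p}^{2k}$, because $t(\varpi^k)J_0t(\varpi^{-k})\cap B=T(\mathcal{O}^\times)N_{2k}$. So $G/J_0$ is in bijection with pairs $(k,\,x \bmod \mathfrak{p}^{2k})$.

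\textbf{Orbits of $B\cap K_m$.} For every $m$, $K_m$ contains $t(a)$ with $a\in\mathcal{O}^\times$ and $n(b)$ with $b\in\mathcal{O}$. Their action on the left is
\begin{equation*}
(k,x)\mapsto(k,\,a^2x+b).
\end{equation*}
Hence, modulo $B\cap K_0$, only $k$ and the class of $x$ in $F/(\mathcal{O}+\mathfrak{p}^{2k})$ up to unit squares matter.
\begin{itemize}
\item If $x\in\mathcal{O}+\mathfrak{p}^{2k}$, the coset is $t(\varpi^k)J_0$, that is, $g_k$.
\item Otherwise $\operatorname{ord}(x)=-j$ with $j\geq1$ and $-j<2k$. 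Using $\mathcal{O}^\times=\mathcal{O}^{\times2}\sqcup\xi\mathcal{O}^{\times2}$ and discarding the part of $x$ in $\mathcal{O}$, we can take $x=z\varpi^{-j}$ with $z\in\{1,\xi\}$.
\end{itemize}
Rewriting $n(z\varpi^{-j})t(\varpi^k)$ in the form $g_{i,j,z}$ (so $k=i-j$) matches the shape of $\mathscr{R}_{0,m}$. The condition $x\notin\mathcal{O}+\mathfrak{p}^{2k}$ becomes the constraint relating $j$ to $i$, which I expect to produce the bound $j\leq 2i-1$. For $m=0$ there are no such cosets in the final list, because all of $K_0$ acts.

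\textbf{The lower unipotent part $N^{\mathrm{op}}_m\subset K_m$.} I will use the identity
\begin{equation*}
n^{\mathrm{op}}(c)n(x)=n\!\left(\tfrac{x}{1+cx}\right)t\!\left(\tfrac{1}{1+cx}\right)n^{\mathrm{op}}\!\left(\tfrac{c}{1+cx}\right)\qquad (1+cx\neq0),
\end{equation*}
together with the case where $1+cx$ is small, which brings in $w$. The two regimes behave as follows.
\begin{itemize}
\item When $|cx|<1$, the element $n^{\mathrm{op}}(c)$ only changes $x$ by a unit-square factor and conjugates into $J_0$. The pair $(k,j,z)$ is then unchanged.
\item When $j\geq m$, one can choose $c\in\mathfrak{p}^m$ with $cx\in-1+\mathfrak{p}^{\mathrm{large}}$. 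This moves $n(x)t(\varpi^k)J_0$ to some $g_{k'}J_0$.
\end{itemize}
This second regime is where the upper bound $j\leq N-1=\min(2i,m)-1$ comes from. It is also where negative indices $g_i$ with $i<0$ first appear for $m\geq1$, and for $m=0$ the Weyl element $w\in K_0$ identifies $g_i$ with $g_{-i}$. This completes the proof that $\mathscr{R}_{0,m}$ exhausts $K_m\backslash G/J_0$.

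\textbf{Disjointness, the main obstacle.} I will attach to a coset $gJ_0$, with $g=\left(\begin{smallmatrix}a&b\\c&d\end{smallmatrix}\right)$, invariants of $K_m$-orbits. Viewing $gJ_0$ as a lattice $g\mathcal{O}^2$, i.e. a vertex of the tree, the candidates are:
\begin{enumerate}[(1)]
\item the distance to the standard vertex, i.e. $\min$ of the orders of the entries of $g$, which is a $K_0$-invariant;
\item the distance to the vertex $t(\varpi^{m})$-translate fixed by $K_m$, which is a $K_m$-invariant;
\item for the $g_{i,j,z}$, the square class of a suitably normalized leading coefficient, modulo $1+\mathfrak{p}$.
\end{enumerate}
Invariant (3) separates $z=1$ from $z=\xi$, exactly as at the end of the proof of Lemma \ref{lem:coset_ps}, using that $d^2+\mathfrak{p}\subset\mathcal{O}^{\times2}$. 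I expect the bookkeeping of which $(i,j)$ give distinct pairs of distances, so as to pin down the range $j<\min(2i,m)$ precisely, to be the main difficulty. I would first check small $m$ (such as $m=2,3$) by hand on the tree to fix the indexing.
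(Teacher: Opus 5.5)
Your existence half is correct, and it takes a different route from the paper. The paper starts from the Cartan decomposition $G=\bigsqcup_{i\ge0}K_0t(\varpi^i)J_0$ and uses the coset representatives $n^{\mathrm{op}}(y)$, $n^{\mathrm{op}}(y)w$ of $K_m\backslash K_0$. You instead parametrize $G/J_0$ by pairs $(k,\,x \bmod \mathfrak{p}^{2k})$, classify the $(B\cap K_0)$-orbits as $g_k$ ($k\in\Z$) and $g_{i,j,z}$ ($i\ge1$, $1\le j\le 2i-1$), and then merge orbits using $N^{\mathrm{op}}_m$. Your second regime is right: for $j\ge m$, the element $n^{\mathrm{op}}(-z^{-1}\varpi^{j})\in K_m$ carries $g_{i,j,z}J_0$ to $t(\varpi^{-i})J_0=g_{-i}J_0$. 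So $\mathscr{R}_{0,m}$ does exhaust $K_m\backslash G/J_0$.

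\textbf{The gap is disjointness, which you do not prove.} You only list candidate invariants, and none of them is established.
\begin{itemize}
\item Invariant (2) is misidentified. $K_m$ does not fix $t(\varpi^m)J_0$: for example $n(1)\in K_m$, but $n(1)t(\varpi^m)J_0=t(\varpi^m)n(\varpi^{-2m})J_0\neq t(\varpi^m)J_0$. The vertex fixed by $K_m=K_0\cap\beta^mK_0\beta^{-m}$ is the class of $\mathcal{O}\oplus\mathfrak{p}^m$, which for odd $m$ is not even a point of $G/J_0$.
\item Invariant (3) is the only one that could separate $z=1$ from $z=\xi$, yet it is never defined, let alone shown to be $K_m$-invariant.
\item Your first-regime claim that $n^{\mathrm{op}}(c)$ ``conjugates into $J_0$'' needs $\operatorname{ord}(c)+2k\ge0$. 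This holds for $g_{i,j,z}$ with $j<\min(2i,m)$, since there $\operatorname{ord}(c)+2k>2i-j\ge1$. It fails for $g_k$ with $k<0$, which is exactly where orbits merge. So disjointness cannot be read off from your regimes as they stand.
\end{itemize}

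\textbf{The missing idea that closes the gap with what you already have.} For $m\ge1$, every $\gamma\in K_m$ has a unit $(2,2)$-entry $d$ and factors as $n(d^{-1}b)t(d^{-1})n^{\mathrm{op}}(d^{-1}c)$. Hence $K_m=(B\cap K_0)N^{\mathrm{op}}_m$, and $K_msJ_0=\bigcup_{c\in\mathfrak{p}^m}(B\cap K_0)\,n^{\mathrm{op}}(c)sJ_0$.
\begin{itemize}
\item With this, your first-regime computation shows that the $K_m$-orbit of $g_k$ ($k\ge0$) and of $g_{i,j,z}$ ($j<\min(2i,m)$) is a single $(B\cap K_0)$-orbit.
\item You already showed that $(B\cap K_0)$-orbits are separated by $k$, $\operatorname{ord}(x)$ and the square class of the leading coefficient of $x$. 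None of these changes under $x\mapsto a^2x+b$ modulo $\mathcal{O}+\mathfrak{p}^{2k}$.
\item So these representatives lie in pairwise distinct double cosets, distinct from all others in $\mathscr{R}_{0,m}$.
\item The remaining representatives $g_{-i}$ are separated from one another by the Cartan index, which is a $(K_0,J_0)$-invariant. For $m=0$ the whole statement is just the Cartan decomposition.
\end{itemize}
No tree bookkeeping is then needed.

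The paper proves disjointness differently, by explicit matrix equations. It shows that whether the $(2,2)$-entry of $h$ is a unit is an invariant of $K_mht(\varpi^i)J_0$. From $\gamma n^{\mathrm{op}}(y)t(\varpi^{-i})h=n^{\mathrm{op}}(y')t(\varpi^{-i})$ it then derives $av\in1+\mathfrak{p}$, and hence $y'\in y\mathcal{O}^{\times2}$.
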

\begin{proof}
    Since $J_0=K_0=\SL_2(\mathcal{O})$, the assertion for $m=0$ is well-known.
    Suppose that $m>0$.
    We have
    \begin{align*}
        G & =\bigsqcup_{i \geq 0} K_0 t(\varpi^i) J_0                                                          \\
          & =K_m 1_2 J_0 \sqcup \bigsqcup_{i \geq 1} \bigcup_{h \in K_m \backslash K_0} K_m h t(\varpi^i) J_0.
    \end{align*}
    Here, whether the $(2,2)$-component of $h$ is in $\mathcal{O}^\times$ or $\mathfrak{p}$ does not depend on the choice of $h$, but of the coset $K_m h t(\varpi^i) K_0$.
    Indeed, let $\gamma \in K_m$, $h' \in J_0$, and $h'' \in K_0$ satisfy $\gamma h t(\varpi^i) h' = h'' t(\varpi^i)$, and we shall write
    \begin{align*}
         & \gamma = \left( \begin{array}{cc}*&*\\ u&v \end{array} \right),      &
         & h = \left( \begin{array}{cc}a&b\\ c&d \end{array} \right),           &
         & h' = \left( \begin{array}{cc}a'&b'\\ c'&d' \end{array} \right),      &
         & h'' = \left( \begin{array}{cc}a''&b''\\ c''&d'' \end{array} \right). &
    \end{align*}
    One can then deduce that $\varpi^{-2i} c' \in \mathcal{O}$.
    Since $i>0$ now, we have $c' \in \mathfrak{p}$ and hence $d' \in \mathcal{O}^\times$.
    We also have
    \begin{equation*}
        d'' = (ua+vc)\varpi^{2i}b' + ubd' + vdd'.
    \end{equation*}
    Noting that $i$ and $m$ are positive, it follows that $d'' \in d \mathcal{O}^\times + \mathfrak{p}$.
    Therefore, $d \in \mathcal{O}^\times$ if and only if $d'' \in \mathcal{O}^\times$.

    If $d \in \mathcal{O}^\times$, then
    \begin{equation*}
        \left( \begin{array}{cc}a&b\\ c&d \end{array} \right) = \left( \begin{array}{cc}d^{-1}&b\\ &d \end{array} \right) \left( \begin{array}{cc}1&\\ cd^{-1}&1 \end{array} \right) \in K_m n^{\mathrm{op}}(cd^{-1}).
    \end{equation*}
    If $d \in \mathfrak{p}$, then $c \in \mathcal{O}^\times$ and hence
    \begin{equation*}
        \left( \begin{array}{cc}a&b\\ c&d \end{array} \right) = \left( \begin{array}{cc}b&-a\\ d&-c \end{array} \right) \left( \begin{array}{cc}&1\\-1 & \end{array} \right) \in K_m n^{\mathrm{op}}(-c^{-1}d) w.
    \end{equation*}
    It thus follows that
    \begin{equation*}
        G  = K_m 1_2 J_0 \sqcup \bigsqcup_{i \geq 1} \left( \bigcup_{y \in \mathcal{O}/\mathfrak{p}^m} K_m n^{\mathrm{op}}(y) t(\varpi^i) J_0 \sqcup \bigcup_{y \in \mathfrak{p}/\mathfrak{p}^m} K_m n^{\mathrm{op}}(y) w t(\varpi^i) J_0 \right).
    \end{equation*}
    Since $n^{\mathrm{op}}(y) t(\varpi^i) = t(\varpi^i) n^{\mathrm{op}}(\varpi^{2i}y)$ and $n^{\mathrm{op}}(\varpi^{2i}y) \in J_0$ for any $i \geq 1$ and $y \in \mathcal{O}$, this equals
    \begin{equation*}
        \bigsqcup_{i \geq 0} K_m t(\varpi^i) J_0 \sqcup \bigsqcup_{i \geq 1} \bigcup_{y \in \mathfrak{p}/\mathfrak{p}^m} K_m n^{\mathrm{op}}(y) w t(\varpi^i) J_0.
    \end{equation*}
    Moreover, since $w \in J_0$, this equals
    \begin{align*}
         & \bigsqcup_{i \geq 0} K_m t(\varpi^i) J_0 \sqcup \bigsqcup_{i \geq 1} \bigcup_{y \in \mathfrak{p}/\mathfrak{p}^m} K_m n^{\mathrm{op}}(y) w t(\varpi^i) w J_0             \\
         & =\bigsqcup_{i \geq 0} K_m t(\varpi^i) J_0 \sqcup \bigsqcup_{i \geq 1} \bigcup_{y \in \mathfrak{p}/\mathfrak{p}^m} K_m t(\varpi^{-i}) n^{\mathrm{op}}(\varpi^{-2i}y) J_0 \\
         & =\bigsqcup_{i \geq 0} K_m t(\varpi^i) J_0 \sqcup \bigsqcup_{i \geq 1} \bigcup_{y \in \mathfrak{p}/\mathfrak{p}^N} K_m t(\varpi^{-i}) n^{\mathrm{op}}(\varpi^{-2i}y) J_0 \\
         & =\bigsqcup_{i \geq 0} K_m t(\varpi^i) J_0 \sqcup \bigsqcup_{i \geq 1} \bigcup_{y \in \mathfrak{p}/\mathfrak{p}^N} K_m n^{\mathrm{op}}(y) t(\varpi^{-i}) J_0,
    \end{align*}
    where $N=\min(2i, m)$.
    Now the assertion for $m=1$ follows since $\mathfrak{p}/\mathfrak{p}^N=\{0\}$ in this case.

    Assume that $m \geq 2$.
    Fix $i \geq 1$ and consider $\cup_{y \in \mathfrak{p}/\mathfrak{p}^N} K_m n^{\mathrm{op}}(y) t(\varpi^{-i}) J_0$.
    Since $t(a)$ belongs to both $K_m$ and $J_0$ for any $a\in \mathcal{O}^\times$, we have
    \begin{equation*}
        \bigcup_{y \in \mathfrak{p}/\mathfrak{p}^N} K_m n^{\mathrm{op}}(y) t(\varpi^{-i}) J_0
        = K_m t(\varpi^{-i}) J_0 \cup \bigcup_{j=1}^{N-1} \bigcup_{z\in\{1,\xi\}} K_m n^{\mathrm{op}}(z\varpi^j) t(\varpi^{-i}) J_0.
    \end{equation*}
    This union is disjoint.
    Indeed, suppose that $y$ and $y' \in \mathfrak{p}$ with $1 \leq \operatorname{ord}(y), \operatorname{ord}(y') \leq N-1$ satisfy
    \begin{equation*}
        \gamma n^{\mathrm{op}}(y) t(\varpi^{-i}) h = n^{\mathrm{op}}(y') t(\varpi^{-i}),
    \end{equation*}
    for some $\gamma \in K_m$ and $h \in J_0$.
    If we write
    \begin{align*}
         & \gamma = \left( \begin{array}{cc}*&*\\ u&v \end{array} \right), &
         & h = \left( \begin{array}{cc}a&b\\ c&d \end{array} \right),      &
    \end{align*}
    then a direct calculation shows that
    \begin{equation}\label{eq:lemcoset1}
        au + yav + \varpi^{2i}cv =y'
    \end{equation}
    and
    \begin{equation}\label{eq:lemcoset2}
        \varpi^{-2i}bu + \varpi^{-2i}bvy + vd = 1.
    \end{equation}
    Moreover, since
    \begin{equation*}
        t(\varpi^{-i}) h t(\varpi^{-i})^{-1} = n^{\mathrm{op}}(y)^{-1} \gamma^{-1} n^{\mathrm{op}}(y') \in K_0,
    \end{equation*}
    we have $b \in \mathfrak{p}^{2i}$, and hence $ad = 1-bc \in 1+\mathfrak{p}$.
    It follows from \eqref{eq:lemcoset2} that $vd \in 1+\mathfrak{p}$.
    Thus $av$ is an element in $1 + \mathfrak{p}$ and in particular $\mathcal{O}^{\times 2}$.
    Then it follows from \eqref{eq:lemcoset1} that
    \begin{equation*}
        y' \in y \mathcal{O}^{\times 2}.
    \end{equation*}
    Therefore, we have
    \begin{equation*}
        G = \bigsqcup_{i \in \Z} K_m t(\varpi^i) J_0 \sqcup \bigsqcup_{i \geq 1} \bigsqcup_{j=1}^{N-1} \bigsqcup_{z\in\{1,\xi\}} K_m n^{\mathrm{op}}(z\varpi^j) t(\varpi^{-i}) J_0.
    \end{equation*}
    Since
    \begin{equation*}
        n^{\mathrm{op}}(z\varpi^j) t(\varpi^{-i}) J_0 = n(z^{-1} \varpi^{-j}) t(\varpi^{-j+i}) J_0,
    \end{equation*}
    this completes the proof.
\end{proof}

\begin{lemma}\label{lem:coset_sc1}
    For $m \geq 0$, put
    \begin{equation*}
        \mathscr{R}_{1,m} =
        \begin{dcases*}
            \Set{ g_i | i \geq 1},                                                                  & if $m=0$,      \\
            \Set{ g_i | i \in \Z},                                                                  & if $m=1$,      \\
            \Set{ g_i | i \in \Z} \cup \Set{ g_{i,j,z} | i \in \Z_{>1},\ z=1,\xi,\ j=1,\ldots,N-1}, & if $m \geq 2$,
        \end{dcases*}
    \end{equation*}
    where $N=\min(2i-1, m)$.
    Then $\mathscr{R}_{1,m}$ is a set of complete representatives of $K_m \backslash G / J_1$.
\end{lemma}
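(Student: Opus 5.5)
The plan is to reduce Lemma~\ref{lem:coset_sc1} to Lemma~\ref{lem:coset_sc0} by conjugating with $\beta$. Since $J_1=\beta J_0\beta^{-1}$, right multiplication by $\beta$ gives a bijection
\begin{equation*}
    K_m\backslash G/J_1 \longrightarrow K_m\backslash G\beta/J_0, \quad K_m g J_1 \mapsto K_m g\beta J_0 .
\end{equation*}
However, $\beta\notin G$, so it is cleaner to use $t(\varpi^{-1})=\varpi^{-1}\beta\cdot\operatorname{diag}(1,\varpi^{-1})\cdot\ldots$ and simply write $J_1 = t(\varpi)^{\pm}$-type conjugates. Concretely, $J_1=\{\gamma\in G : \gamma_{12}\in\mathfrak p^{-1},\ \gamma_{21}\in\mathfrak p\}$. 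I would not push the conjugation literally. Instead I would repeat the proof of Lemma~\ref{lem:coset_sc0} with the Cartan-type decomposition for $J_1$, namely
\begin{equation*}
    G=\bigsqcup_{i\geq1}K_0\,t(\varpi^i)\,J_1 .
\end{equation*}
This is the $m=0$ case, and it follows from $G=\bigsqcup_{i\ge0}K_0t(\varpi^i)K_0$ together with $K_0 t(\varpi^i)K_0\beta$-translation, or from the elementary-divisor theory for the lattice pair $(\mathcal O\oplus\mathcal O,\ \mathcal O\oplus\mathfrak p)$.

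For $m\ge1$, I would decompose $K_0=\bigsqcup K_m h$ as before. The first step is to check that whether $h_{22}\in\mathcal O^\times$ is well defined on $K_mh\,t(\varpi^i)J_1$. This should go through because the relevant entries of $t(\varpi^i)J_1t(\varpi^{-i})$ have $(2,1)$-entry in $\mathfrak p^{2i+1}$ and $(1,2)$-entry in $\mathfrak p^{2i-1}$, which is why $i\ge1$ is exactly what is needed. This yields the two families $K_m n^{\mathrm{op}}(y)t(\varpi^i)J_1$ and $K_m n^{\mathrm{op}}(y)w t(\varpi^i)J_1$. For the first family, $n^{\mathrm{op}}(y)t(\varpi^i)=t(\varpi^i)n^{\mathrm{op}}(\varpi^{2i}y)$ with $n^{\mathrm{op}}(\varpi^{2i}y)\in J_1$, so these collapse to $K_mt(\varpi^i)J_1$.

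For the second family, $w\notin J_1$, but $wt(\varpi)\in J_1$. Hence
\begin{equation*}
    w\,t(\varpi^i)\,J_1 = w\,t(\varpi^i)\,wt(\varpi)\,J_1 = t(\varpi^{-i+1})\,J_1
\end{equation*}
up to sign. The coset then becomes $K_m n^{\mathrm{op}}(y)t(\varpi^{-(i-1)})J_1$. Rewriting gives $t(\varpi^{-(i-1)})n^{\mathrm{op}}(\varpi^{-2(i-1)}y)$, and $n^{\mathrm{op}}(\varpi^{-2i+2}y)\in J_1$ exactly when $\operatorname{ord}(y)\ge 2i-1$. So $y$ only matters modulo $\mathfrak p^{N}$ with $N=\min(2i-1,m)$, after reindexing $i-1\mapsto$ the negative exponents. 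This produces all $g_i$ with $i\in\Z$ (for $m\ge1$), together with the extra cosets $K_mn^{\mathrm{op}}(z\varpi^j)t(\varpi^{-i'})J_1$, using $t(\mathcal O^\times)\subset K_m\cap J_1$ to normalize $y=z\varpi^j$.

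Next comes disjointness, which I expect to be the main obstacle. It amounts to bookkeeping of the shift by one in the exponents, so that the ranges $i\in\Z_{>1}$ and $1\le j\le N-1$ come out exactly as stated. For disjointness, I would mimic equations \eqref{eq:lemcoset1} and \eqref{eq:lemcoset2}. From $\gamma n^{\mathrm{op}}(y)t(\varpi^{-i})h=n^{\mathrm{op}}(y')t(\varpi^{-i})$ with $h\in J_1$, I would derive $b\in\mathfrak p^{2i-1}\cdot$(unit), and from this $ad\in1+\mathfrak p$ and then $av\in1+\mathfrak p$. This requires $2i-1\ge1$, and it is the point where the index constraint enters. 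It gives $y'\in y\mathcal O^{\times2}$. Distinctness of different $i$ is read off from the valuations of matrix entries, as in the $J_0$ case. Finally, I would convert with $n^{\mathrm{op}}(z\varpi^j)t(\varpi^{-i})J_1=n(z^{-1}\varpi^{-j})t(\varpi^{i-j})J_1$, which holds because $w t(\varpi)$-type elements lie in $J_1$. I would verify this identity directly with the explicit matrix, and adjust signs via $t(-1)\in J_1$.
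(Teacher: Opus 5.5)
Your proposal is correct and takes the paper's approach: the paper's proof just says to rerun the argument of Lemma~\ref{lem:coset_sc0} with $wt(\varpi)\in J_1$ in place of $w\in J_0$, and the ingredients you list ($i\ge1$ making the $(2,2)$-entry test well defined, $wt(\varpi^i)J_1=t(\varpi^{1-i})J_1$, the truncation $y \bmod \mathfrak p^{\min(2i-1,m)}$, and the square-class disjointness argument) are what that adaptation needs. One correction: your final identity $n^{\mathrm{op}}(z\varpi^j)t(\varpi^{-i})J_1=n(z^{-1}\varpi^{-j})t(\varpi^{i-j})J_1$ is the $J_0$ version and is off by one for $J_1$. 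Because $w^{-1}J_1=t(\varpi)J_1$, the correct statement is $n^{\mathrm{op}}(z\varpi^j)\,t(\varpi^{-(i-1)})J_1=n(z^{-1}\varpi^{-j})\,t(\varpi^{i-j})J_1$ for $1\le j\le 2i-1$, and this shift is what produces the stated ranges $i\in\Z_{>1}$ and $N=\min(2i-1,m)$.
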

\begin{proof}
    Notice that $w t(\varpi)$ is in $J_1$, instead of $w$.
    Then the proof is similar to that of Lemma \ref{lem:coset_sc0}.
\end{proof}
For $\delta =0, 1$, we shall write $\widetilde{\mathscr{R}}_{\delta, m}$ for a complete system of representatives of $K_m \backslash \widetilde{G} / \widetilde{J}_\delta$ consisting of $(g,1) \in \widetilde{G}$ where $g \in \mathscr{R}_{\delta, m}$.
By abuse of notation, we sometimes identify $\widetilde{\mathscr{R}}_{\delta, m}$ with $\mathscr{R}_{\delta, m}$ via $(g,1) \leftrightarrow g$.
Let $\pi=\cInd^{\widetilde{G}}_{\widetilde{J}_\delta}(\widetilde{\lambda})$ be an irreducible supercuspidal representation of $\widetilde{G}$, and $\eta$ a character of $\mathcal{O}^\times$.
Notice that $\pi^{K_m}_\eta$ can be identified with $\Hom_{K_m}(\eta, \pi)$ via $f(1) \leftrightarrow f$.
Also notice that $( \prescript{g}{}{\widetilde{\lambda}} )^{K_m \cap \prescript{g}{}{(\widetilde{J}_\delta)}}_\eta$ can be identified with $\Hom_{K_m \cap \prescript{g}{}{(\widetilde{J}_\delta)}}(\eta, \prescript{g}{}{\widetilde{\lambda}})$ for any $g \in G$, in a similar manner.
By the argument for \cite[Corollary 2.7 (2)]{yam}, for any $m\geq0$, we have a linear isomorphism
\begin{alignat*}{3}
    \mathcal{G}_m \colon & \pi^{K_m}_\eta &  & \overset{\simeq}{\longrightarrow} &  & \bigoplus_{g \in \widetilde{\mathscr{R}}_{\delta, m}} \left( \prescript{g}{}{\widetilde{\lambda}} \right)^{K_m \cap \prescript{g}{}{(\widetilde{J}_\delta)}}_\eta, \\
                         & \phi           &  & \mapsto                           &  & \mathcal{G}_m \phi = \left( (\mathcal{G}_m \phi)_g \right)_{g \in \widetilde{\mathscr{R}}_{\delta, m}},
\end{alignat*}
given by $(\mathcal{G}_m \phi)_g= \phi(g^{-1})$.
Let $\tau=\cInd^G_{J_\delta}(\lambda)$.
We define $\eta'$ to be $\eta$ itself if $\delta = 0$ and $\eta \cdot \chi_\varpi$ if $\delta=1$.
Namely, $\eta'=\eta \cdot \chi_\varpi^\delta$.
Then similarly, we have a linear isomorphism from $\tau^{K_m}_{\eta'}$ to $\bigoplus_{g \in \mathscr{R}_{\delta, m}} (\prescript{g}{}{\lambda})^{K_m \cap \prescript{g}{}{(J_\delta)}}_{\eta'}$, for which we shall write $\mathcal{G}'_m$.
If $m=0$, both $\pi^{K_0}_\eta$ and $\tau^{K_0}_{\eta'}$ are zero, in particular isomorphic.
If $m \geq 1$, thanks to Lemmas \ref{lem:spl_lv1} and \ref{lem:coset_sc0} (resp. \ref{lem:spl_lv1_J_1} and \ref{lem:coset_sc1}), we can calculate $\bm{s}_0(\gamma^g)$ (resp. $\bm{s}_1(\gamma^g)$) to obtain $\bm{s}_0(\gamma^g) \bm{s}(\gamma) = 1$ (resp. $\bm{s}_1(\gamma^g)\bm{s}(\gamma) = \chi_\varpi(\gamma)$), for any $g \in \mathscr{R}_{0,m}$ (resp. $\mathscr{R}_{1,m}$) and $\gamma \in K_m \cap \prescript{g}{}{(\widetilde{J}_0)}$ (resp. $K_m \cap \prescript{g}{}{(\widetilde{J}_1)}$).
Hence for any $g \in \mathscr{R}_{\delta, m}$, we have
\begin{equation*}
    \left( \prescript{g}{}{\widetilde{\lambda}} \right)^{K_m \cap \prescript{g}{}{(\widetilde{J}_\delta)}}_\eta
    = \left( \prescript{g}{}{\lambda} \right)^{K_m \cap \prescript{g}{}{(J_\delta)}}_{\eta'}.
\end{equation*}
Indeed, if $\upsilon$ is an element in the left hand side, for any $\gamma = (\gamma, \bm{s}(\gamma)) \in K_m\cap \prescript{g}{}{(\widetilde{J}_\delta)}$, we have
\begin{align*}
    \prescript{g}{}{\lambda}(\gamma) \upsilon
     & =\bm{s}_\delta(\gamma^g) \bm{s}(\gamma) \cdot \prescript{g}{}{\widetilde{\lambda}}(\gamma) \upsilon \\
     & =\bm{s}_\delta(\gamma^g) \bm{s}(\gamma) \cdot \eta(\gamma) \upsilon                                 \\
     & =\eta'(\gamma) \upsilon.
\end{align*}
At the first equality, we use an equation $(\gamma, \bm{s}(\gamma))^g = (\gamma^g, \bm{s}(\gamma))$ which follows from \cite[(2.15)]{szpthesis}.
Then we have a linear isomorphism $(\mathcal{G}'_m)^{-1} \circ \mathcal{G}_m \colon \pi^{K_m}_\eta \to \tau^{K_m}_{\eta'}$.
Let $\mathscr{G}_m$ denote it.
Now we can see that the conductors of supercuspidal representations of $\widetilde{G}$ are greater than 1:
\begin{proposition}\label{prop:sc_<2}
    For any irreducible supercuspidal representation $\tau$ of $G$ and any character $\mu$ of $\mathcal{O}^\times$, the subspace $\tau^{K_m}_\mu$ is zero if $m=0,1$.
    Moreover, for any irreducible genuine supercuspidal representation $\pi$ of $\widetilde{G}$ and any character $\eta$ of $\mathcal{O}^\times$, the subspace $\pi^{K_m}_\eta$ is zero if $m=0,1$.
\end{proposition}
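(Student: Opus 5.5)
The genuine case reduces to the linear one. By the isomorphism $\mathscr{G}_m \colon \pi^{K_m}_\eta \to \tau^{K_m}_{\eta'}$ just constructed, with $\tau = \cInd^G_{J_\delta}(\lambda)$ and $\eta' = \eta\chi_\varpi^\delta$, it suffices to prove the first assertion for $\tau$. For $m=0$ this has already been noted. For $m=1$ the identification of the local pieces $( \prescript{g}{}{\widetilde{\lambda}} )^{K_1 \cap \prescript{g}{}{(\widetilde{J}_\delta)}}_\eta = ( \prescript{g}{}{\lambda} )^{K_1 \cap \prescript{g}{}{(J_\delta)}}_{\eta'}$ has been established. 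So I would concentrate on showing that $\tau^{K_m}_\mu = 0$ for $m \le 1$ and every $\mu$.

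By Mackey theory (the isomorphism $\mathcal{G}'_m$), $\tau^{K_m}_\mu \cong \bigoplus_{g} (\prescript{g}{}{\lambda})^{K_m \cap \prescript{g}{}{J_\delta}}_\mu$, where $g$ runs over $\mathscr{R}_{\delta,m}$. Since $K_1 \subset K_0$, it is enough to treat $m=1$. By Lemmas \ref{lem:coset_sc0} and \ref{lem:coset_sc1}, the representatives are then just $g_i = t(\varpi^i)$ for $i \in \Z$.

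Fix such an $i$ and conjugate back to $J_\delta$. A vector in the $g_i$-summand is a vector $v \in \lambda$ on which $g_i^{-1} K_1 g_i \cap J_\delta$ acts by $\mu$ (transported). Now $g_i^{-1} K_1 g_i$ contains $n(\varpi^{-2i}\mathcal{O})$ and $n^{\mathrm{op}}(\varpi^{2i+1}\mathcal{O})$, and $\mu$ is trivial on these unipotent elements, since it only sees the $(2,2)$-entry $d$ and $d=1$ for them.

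There are two cases, according to the sign of $i$.
\begin{enumerate}
\item If $2i \le 0$, which is automatic when $i \le 0$, then the intersection contains all of $N \cap J_\delta \supset N_{0}$ (respectively $N_{-1}$ when $\delta = 1$). So $v$ would be fixed by $N_{\lambda,\delta}$, because $N_{\lambda,\delta} \subseteq N_{-\delta}$ whenever $c(\lambda) \ge 1$; when $\delta = 1$ this needs a small check, using $c(\lambda)-3 \ge -1$ or the special case $N_{-1}$. Strong cuspidality, $\Hom_{N_{\lambda,\delta}}(\lambda, \mathbf{1}) = 0$, then forces $v = 0$.
\item If $i \ge 1$, the intersection contains $N^{\mathrm{op}}$ at depth $2i+1 \ge 3$, but only a shrinking piece of $N$. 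Here I would instead use the lower unipotent subgroup of $J_\delta$: the corresponding vanishing $\Hom_{N^{\mathrm{op}}_{l'}}(\lambda, \mathbf{1}) = 0$, obtained by conjugating with the Weyl element $w \in J_0$ (respectively $w t(\varpi) \in J_1$), which swaps $N$ and $N^{\mathrm{op}}$ inside $J_\delta$.
\end{enumerate}

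The hard step is the case $i \ge 1$. The difficulty is that $n^{\mathrm{op}}(\varpi^{2i+1}\mathcal{O})$ may lie inside $\ker\lambda$ when $2i+1 \ge c(\lambda)+\delta$, and then this subgroup gives no constraint. I expect to handle it by also exploiting the torus part together with the relation $g_i$ versus $g_{-i}$. Alternatively, I would use the fact that $\tau^{K_1}_\mu \neq 0$ would give a vector fixed by a group containing an Iwahori-type subgroup up to a character. Then $\tau$ would have a nonzero vector of depth zero with Iwahori-type invariance, hence would embed in a principal series by the Borel--Casselman theory of Iwahori-spherical-with-character vectors (Roche types for depth-zero principal series). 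This would contradict supercuspidality.

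I think this type-theoretic argument is the cleanest route. I would first check directly that $K_1$ with a character $\mu$ (extended through $d$) is a principal-series type: for $c(\mu) \le 1$ it is the Iwahori with a tamely ramified character, a principal series type. Supercuspidals contain no such types, which gives the vanishing for both $\tau$ and, via $\mathscr{G}_1$, for $\pi$.
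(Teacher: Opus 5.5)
Your reduction of the genuine case to the linear one via $\mathscr{G}_1$, together with $\tau^{K_0}_\mu\subset\tau^{K_1}_\mu$, is the same as the paper's. For the linear statement the paper simply cites Lansky--Raghuram (their Propositions 3.3.4, 3.3.6, 3.3.8). Your closing argument is a legitimate substitute citation: $(K_1,\mu)$ with $c(\mu)\le 1$ is the Iwahori subgroup with a character trivial on its pro-unipotent radical, hence a principal-series type (Moy--Prasad, Roche) that no supercuspidal contains. However, it is much heavier than needed, and you were pushed to it by a sign error in your Mackey computation.

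Here is the error. $g_i^{-1}K_1g_i$ consists of the elements of $G$ with diagonal entries in $\mathcal{O}$, upper-right entry in $\mathfrak{p}^{-2i}$ and lower-left entry in $\mathfrak{p}^{2i+1}$. Hence $g_i^{-1}K_1g_i\cap J_\delta$ contains:
\begin{itemize}
\item all of $N\cap J_\delta=N_{-\delta}$ exactly when $2i\ge\delta$;
\item all of $N^{\mathrm{op}}\cap J_\delta=N^{\mathrm{op}}_\delta$ exactly when $2i+1\le\delta$.
\end{itemize}
Since $2i$ is an integer, one of the two always holds.

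Your case (1) is therefore wrong whenever $2i<\delta$. For $i<0$ you only get $N_{-2i}\subsetneq N_0$, and for $i=0$, $\delta=1$ you only get $N_0\subsetneq N_{-1}$. Conversely, in your \emph{hard} case $i\ge 1$ you actually get the full $N_{-\delta}$. So your strong-cuspidality argument applies there verbatim, because $N_{\lambda,\delta}\subseteq N_{-\delta}$.

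In the remaining range ($i\le -1$ for $\delta=0$, $i\le 0$ for $\delta=1$), the full $N^{\mathrm{op}}_\delta$ lies in the intersection. There $\lambda^{N^{\mathrm{op}}_\delta}=\lambda(w_\delta)\lambda^{N_{-\delta}}=0$, where $w_\delta=w$ or $wt(\varpi)$ lies in $J_\delta$; this is exactly your case (2) idea. The transported $\mu$ is trivial on unipotent elements, so every Mackey summand for $m=1$ vanishes.

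So there is no hard step. With the signs corrected, your Mackey route gives a short self-contained proof of the linear case. Neither the paper's citation of Lansky--Raghuram nor your type-theoretic fallback provides that.
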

\begin{proof}
    The first assertion is a corollary of \cite[Propositions 3.3.4, 3.3.6, and 3.3.8]{lr}.
    The second follows from the first via the isomorphism $\mathscr{G}_m$.
\end{proof}

For an irreducible representation $(\tau, V_\tau)$ of $G$ and a character $\mu$ of $\mathcal{O}^\times$, let $\mathcal{R}_\varpi'$ be a linear operator on $\tau^{K_\infty}_\mu$ defined by
\begin{equation*}
    \mathcal{R}_\varpi' v     = \int_{\mathcal{O}^\times} (u,\varpi)_{F,2} \tau \left( \left( \begin{array}{cc}1&\varpi^{-1} u\\ 0&1 \end{array} \right) \right) v du,
\end{equation*}
for $v \in V_\tau$.
\begin{lemma}\label{lem:comm_R-operator_sc}
    For any $m \geq 0$, we have
    \begin{equation*}
        \mathscr{G}_m \circ \mathcal{R}_\varpi = \mathcal{R}_\varpi' \circ \mathscr{G}_m,
    \end{equation*}
    i.e., the following diagram commutes:
    \[\begin{tikzcd}
            \pi^{K_m}_\eta \arrow[d, "\mathcal{R}_\varpi"] \arrow[r, "\mathscr{G}_m"] &  \tau^{K_m}_{\eta'} \arrow[d, "\mathcal{R}_\varpi'"]\\
            \pi^{K_m}_\eta \arrow[r, "\mathscr{G}_m"] & \tau^{K_m}_{\eta'}.
        \end{tikzcd}\]
\end{lemma}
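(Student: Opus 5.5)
The plan is to unwind both isomorphisms $\mathscr{G}_m = (\mathcal{G}'_m)^{-1}\circ\mathcal{G}_m$ and reduce the claim to a pointwise comparison. Functions in $\pi$ and $\tau$ are determined by their values on the double coset representatives. So, for $\phi\in\pi^{K_m}_\eta$ and each $g\in\mathscr{R}_{\delta,m}$, it suffices to show
\begin{equation*}
    (\mathcal{R}_\varpi\phi)((g,1)^{-1}) = \bigl(\mathcal{R}'_\varpi(\mathscr{G}_m\phi)\bigr)(g^{-1}),
\end{equation*}
with both sides viewed as vectors in the common space of $\lambda$. By definition $(\mathscr{G}_m\phi)(g^{-1})=\phi((g,1)^{-1})$ for representatives $g$. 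We also use Proposition \ref{prop:R-operator}(b), which shows that $\mathcal{R}_\varpi\phi$ lies in $\pi^{K_{m'}}_\eta$. By the same computation, $\mathcal{R}'_\varpi$ preserves the $\tau$-side. Hence we may replace $m$ by $m'$, and everything is compared inside a single space where $\mathscr{G}_{m'}$ restricts to $\mathscr{G}_m$. This compatibility is clear, because the maps are defined by evaluation at representatives and $\mathscr{R}_{\delta,m}\subset\mathscr{R}_{\delta,m'}$ after adjusting by $K$-translates.

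Next, expand the left side as $\int_{\mathcal{O}^\times}(u,\varpi)_{F,2}\,\phi((g,1)^{-1}(n(\varpi^{-1}u),1))\,du$. For each $u$, factor $g^{-1}n(\varpi^{-1}u) = \gamma_u^{-1}\, h^{-1}\, j_u$ in $G$, with $\gamma_u\in K_{m'}$, $h$ a representative in $\mathscr{R}_{\delta,m'}$, and $j_u\in J_\delta$. The coset computations of Lemmas \ref{lem:coset_sc0} and \ref{lem:coset_sc1} produce these factorizations explicitly: $g_i$ and $g_{i,j,z}$ are upper triangular up to $n^{\mathrm{op}}$ factors, and conjugating $n(\varpi^{-1}u)$ past them either lands in $J_\delta$ or moves to another representative. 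On the $\tau$ side the same factorization gives the value $\eta'(\gamma_u)\lambda(j_u)\,(\mathscr{G}\phi)(h^{-1})$. On the $\pi$ side, lifting to $\widetilde{G}$ gives $\eta(\gamma_u)\,\bm{s}(\gamma_u)\,\bm{s}_\delta(j_u)\,\epsilon_u\,\lambda(j_u)\,\phi((h,1)^{-1})$, where $\epsilon_u$ is the Kubota cocycle sign coming from the product.

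The key step is then to check the sign identity
\begin{equation*}
    \epsilon_u\,\bm{s}(\gamma_u)\,\bm{s}_\delta(j_u) = \chi_\varpi^\delta(\gamma_u),
\end{equation*}
which is exactly what $\eta'=\eta\chi_\varpi^\delta$ absorbs. This is the same kind of identity already used to show $(\prescript{g}{}{\widetilde{\lambda}})_\eta = (\prescript{g}{}{\lambda})_{\eta'}$, namely $\bm{s}_\delta(\gamma^g)\bm{s}(\gamma)=\chi_\varpi^\delta(\gamma)$. I would prove it by computing the cocycle with Lemmas \ref{lem:spl_lv1} and \ref{lem:spl_lv1_J_1}, using that the $(2,2)$-entries involved are units.

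The main obstacle is the bookkeeping of these signs. It is cleanest to avoid explicit factorizations altogether. Since $(n(\varpi^{-1}u),1)$ lies in the canonically split unipotent subgroup $N$, right translation by it commutes with the identification $\widetilde{\lambda}\leftrightarrow\lambda$ wherever both sides are defined. So I would first try to show abstractly that the map $\phi\mapsto$ ``$\phi$ viewed as a function on $G$ via the section $g\mapsto(g,1)$ restricted to $K_{m'}\,\mathscr{R}\,J_\delta$-cosets'' is $N$-equivariant up to the sign identity above. That makes the commutation immediate after integrating against $(u,\varpi)_{F,2}$. Only if this abstract route fails would I fall back on the case-by-case factorization over $g_i$ and $g_{i,j,z}$.
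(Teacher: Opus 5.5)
Your skeleton is the paper's: compare values at the representatives in $\mathscr{R}_{\delta,m}$, and factor $g^{-1}n(\varpi^{-1}u)$ as $j_u h^{-1}\gamma_u$ with $j_u\in J_\delta$, $h\in\mathscr{R}_{\delta,m}$, $\gamma_u\in K_m$. The same factorization then computes both sides, so $\mathcal{R}_\varpi$ and $\mathcal{R}'_\varpi$ become one operator on $\bigoplus_g(\prescript{g}{}{\lambda})^{K_m\cap\prescript{g}{}{(J_\delta)}}_{\eta'}$. The order must be $J_\delta$ on the left and $K_m$ on the right, as your value formula presupposes; your displayed $\gamma_u^{-1}h^{-1}j_u$ cannot be evaluated.

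\textbf{The gap.} The only step with real content, $\epsilon_u\bm{s}(\gamma_u)\bm{s}_\delta(j_u)=\chi_\varpi^\delta(\gamma_u)$, is never established.
\begin{enumerate}
\item Lemmas \ref{lem:coset_sc0} and \ref{lem:coset_sc1} only list representatives. They do not tell you which double coset $g^{-1}n(\varpi^{-1}u)$ lies in, nor what $j_u$ and $\gamma_u$ are. Without that there is nothing to feed into Lemmas \ref{lem:spl_lv1} and \ref{lem:spl_lv1_J_1}.
\item The identity does not follow from the stabilizer identity $\bm{s}_\delta(\gamma^g)\bm{s}(\gamma)=\chi_\varpi^\delta(\gamma)$. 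That identity only makes $\mathcal{G}_m$ and $\mathcal{G}'_m$ well defined on each double coset separately. Values on different double cosets are normalized independently by the lifts $(g,1)$, and $n(\varpi^{-1}u)$ moves between them: $g_i\to g_{i+1,1,z}$, and $g_{i,1,z}\to g_{i-1}$ or $g_{i,1,z'}$.
\end{enumerate}

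\textbf{The abstract route cannot close this.} The function $x\mapsto\phi((x,1))$ is trivially right $N$-equivariant, since the Kubota cocycle is trivial when one argument lies in $N$. But it is not $\mathscr{G}_m\phi$; the discrepancy is exactly the sign in question. Moreover, no sign normalization on $\widetilde{G}$ can be both right-equivariant under $(\bm{s}(K_m),\chi_\varpi^\delta)$ and right $N$-invariant everywhere. Since $K_m$ and $N$ generate $G$, it would give a genuine character of $\widetilde{G}$, whose kernel would split the cover.

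So $N$-compatibility can only hold at the particular points $g^{-1}n(\varpi^{-1}u)$, and must be checked there. That is precisely the case analysis you hoped to avoid, and it is what the paper's proof does:
\begin{itemize}
\item for $i\ge\delta$ and $u=zx^2$: $(n(\varpi^{-1}u),1)(g_i,1)=(t(x),1)(g_{i+1,1,z},1)(t(x^{-1}),1)$;
\item for $i<\delta$: $(n(\varpi^{-1}u),1)(g_i,1)=(g_i,1)(n(\varpi^{-2i-1}u),1)$;
\item for $g_{i,1,z}$: either $(n(\varpi^{-1}(u+z)),1)(g_{i-1},1)$ or a $t(x)$-conjugate of $(g_{i,1,z'},1)$;
\item for $j\ge 2$: a $t(x)$-conjugate of $(g_{i,j,z},1)$ itself.
\end{itemize}
Each identity holds in $\widetilde{G}$ with trivial Kubota sign; in the $t(x)$-conjugations the two cocycle values coincide and cancel. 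The only surviving sign is $\bm{s}_1(t(x))=\chi_\varpi(x)$ when $\delta=1$. It is matched by $\chi_\varpi$ of the $K_m$-factor $t(x^{-1})$, which is why $\eta'=\eta\chi_\varpi^\delta$ is the right character.

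\textbf{A smaller point.} Your claim that $\mathscr{G}_{m'}$ restricts to $\mathscr{G}_m$ is not ``clear'', for the same reason. Take $\delta=0$ and $2i>m$. The new representative $g_{i,m,z}$ lies in $K_m g_{-i}J_0$ via $g_{i,m,z}=n^{\mathrm{op}}(z^{-1}\varpi^m)\,g_{-i}\,j_0$, where $j_0=n(-z\varpi^{2i-m})\,w\,t(z^{-1})$. In $\widetilde{G}$ one finds $(n^{\mathrm{op}}(z^{-1}\varpi^m),1)(g_{-i},1)(j_0,\bm{s}(j_0))=(g_{i,m,z},\epsilon)$ with $\epsilon=(\varpi,z)_{F,2}^{m}(\varpi,-1)_{F,2}^{im}$. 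This equals $-1$, for instance, when $z=\xi$, $m$ is odd and $(\varpi,-1)_{F,2}=1$. Compatibility still holds, but only because the $g_{-i}$-component at level $m$ vanishes when $2i>m$: strong cuspidality leaves $\lambda$ with no $N^{\mathrm{op}}_0$-fixed vectors. You need to say this rather than appeal to ``adjusting by $K$-translates''.
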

\begin{proof}
    We know from Proposition \ref{prop:sc_<2} that $\pi^{K_m}_\eta$ and $\tau^{K_m}_{\eta'}$ are zero and hence the lemma is trivial unless $m \geq 2$.
    Suppose that $m \geq 2$.
    We are going to calculate $\mathcal{G}_m \circ \mathcal{R}_\varpi$ explicitly, and show that there exists a linear endomorphism $\mathcal{R}_{\varpi,m}$ of
    \begin{equation*}
        \bigoplus_{g \in \widetilde{\mathscr{R}}_{\delta, m}} \left( \prescript{g}{}{\widetilde{\lambda}} \right)^{K_m \cap \prescript{g}{}{(\widetilde{J}_\delta)}}_\eta = \bigoplus_{g \in \mathscr{R}_{\delta, m}} (\prescript{g}{}{\lambda})^{K_m \cap \prescript{g}{}{(J_\delta)}}_{\eta'},
    \end{equation*}
    such that the following diagram commutes:
    \[\begin{tikzcd}
            \pi^{K_m}_\eta \arrow[d, "\mathcal{R}_\varpi"] \arrow[r, "\mathcal{G}_m"] & \bigoplus_{g \in \widetilde{\mathscr{R}}_{\delta, m}} \left( \prescript{g}{}{\widetilde{\lambda}} \right)^{K_m \cap \prescript{g}{}{(\widetilde{J}_\delta)}}_\eta \arrow[d, "\mathcal{R}_{\varpi,m}"] \arrow[r, equal] & \bigoplus_{g \in \mathscr{R}_{\delta, m}} (\prescript{g}{}{\lambda})^{K_m \cap \prescript{g}{}{(J_\delta)}}_{\eta'} \arrow[d, "\mathcal{R}_{\varpi,m}"] & \tau^{K_m}_{\eta'} \arrow[d, "\mathcal{R}_\varpi'"] \arrow[l, "\mathcal{G}'_m"]\\
            \pi^{K_m}_\eta \arrow[r, "\mathscr{G}_m"] & \bigoplus_{g \in \widetilde{\mathscr{R}}_{\delta, m}} \left( \prescript{g}{}{\widetilde{\lambda}} \right)^{K_m \cap \prescript{g}{}{(\widetilde{J}_\delta)}}_\eta \arrow[r, equal] & \bigoplus_{g \in \mathscr{R}_{\delta, m}} (\prescript{g}{}{\lambda})^{K_m \cap \prescript{g}{}{(J_\delta)}}_{\eta'} & \tau^{K_m}_{\eta'} \arrow[l, "\mathcal{G}'_m"].
        \end{tikzcd}\]
    Recall that $\mathscr{R}_{\delta, m}$ is explicitly given in Lemmas \ref{lem:coset_sc0} and \ref{lem:coset_sc1}.
    Let $\phi \in \pi^{K_m}_\eta$.
    We have
    \begin{equation*}
        \left( \mathcal{G}_m \mathcal{R}_\varpi \phi \right)_g = \int_{\mathcal{O}^\times} (-u,\varpi)_{F,2} \phi \left( \left( ( n(\varpi^{-1} u), 1 ) (g,1) \right)^{-1} \right) du.
    \end{equation*}
    Straightforward calculations show that $( n(\varpi^{-1} u), 1 ) (g,1)$ for $u \in \mathcal{O}^\times$ and $g \in \mathscr{R}_{\delta, m}$ are given as follows.
    \begin{itemize}
        \item If $g=g_i$ with $i \geq \delta$, then
              \begin{equation*}
                  ( n(\varpi^{-1} u), 1 ) (g_i,1) = (t(x), 1) (g_{i+1, 1, z}, 1) (t(x^{-1}), 1),
              \end{equation*}
              where $z\in\{1,\xi\}$ and $x \in \mathcal{O}^\times$ such that $u=zx^2$.
        \item If $g=g_i$ with $i < \delta$, then
              \begin{equation*}
                  ( n(\varpi^{-1} u), 1 ) (g_i,1) = (g_i, 1) (n(\varpi^{-2i-1}u), 1).
              \end{equation*}
        \item If $g=g_{i,1,z}$ with $i \geq 1$ and $u+z \in \mathfrak{p}$, then
              \begin{equation*}
                  ( n(\varpi^{-1} u), 1 ) (g_{i,1,z},1) = (n(\varpi^{-1}(u+z)), 1) (g_{i-1}, 1).
              \end{equation*}
        \item If $g=g_{i,1,z}$ with $i \geq 1$ and $u+z \in \mathcal{O}^\times$, then
              \begin{equation*}
                  ( n(\varpi^{-1} u), 1 ) (g_{i,1,z},1) = (t(x), 1) (g_{i,1,z'}, 1) (t(x^{-1}), 1),
              \end{equation*}
              where $z'\in\{1, \xi\}$ and $x \in \mathcal{O}^\times$ such that $u+z = z' x^2$.
        \item If $g=g_{i,j,z}$ with $j \geq 2$, $i \geq 1$, and $z \in \{1,\xi\}$, then
              \begin{equation*}
                  ( n(\varpi^{-1} u), 1 ) (g_{i,j,z},1) = (t(x), 1) ( g_{i,j,z} , 1) (t(x^{-1}), 1),
              \end{equation*}
              where $x \in \mathcal{O}^\times$ such that $z + \varpi^{j-1} u = z x^2$.
    \end{itemize}
    In particular, there exist $h_g(u) \in J_\delta$, $\gamma_g(u) \in K_m$, and $\breve{g}(u) \in \mathscr{R}_{\delta, m}$ such that
    \begin{equation}\label{eq:pf(comm_R)}
        \left( ( n(\varpi^{-1} u), 1 ) \ (g,1) \right)^{-1}
        = (h_g(u), \bm{s}_\delta(h_g(u))) \ (\breve{g}(u),1)^{-1} \ (\gamma_g(u), \bm{s}(\gamma_g(u)) \chi_\varpi^\delta(\gamma_g(u))).
    \end{equation}
    Notice also that $h_g(u)$, $\gamma_g(u)$, and $\breve{g}(u)$ are all continuous in $(g, u)$ if we equip $\mathscr{R}_{\delta, m}$ with the discrete topology.
    Therefore, we have $\mathcal{G}_m \circ \mathcal{R}_\varpi = \mathcal{R}_{\varpi, m} \circ \mathcal{G}_m$ if we define a linear endomorphism
    \begin{alignat*}{3}
        \mathcal{R}_{\varpi, m} \colon & \bigoplus_{g \in \widetilde{\mathscr{R}}_{\delta, m}} \left( \prescript{g}{}{\widetilde{\lambda}} \right)^{K_m \cap \prescript{g}{}{(\widetilde{J}_\delta)}}_\eta &  & {}\longrightarrow{} &  & \bigoplus_{g \in \widetilde{\mathscr{R}}_{\delta, m}} \left( \prescript{g}{}{\widetilde{\lambda}} \right)^{K_m \cap \prescript{g}{}{(\widetilde{J}_\delta)}}_\eta, \\
                                       & \bm{\upsilon}=(\upsilon_g)_{g \in \widetilde{\mathscr{R}}_{\delta, m}}                                                                                            &  & {}\mapsto{}         &  & \left( \mathcal{R}_g \bm{\upsilon} \right)_{g \in \widetilde{\mathscr{R}}_{\delta, m}},
    \end{alignat*}
    by
    \begin{equation*}
        \mathcal{R}_g \bm{\upsilon} = \int_{\mathcal{O}^\times} ( u, \varpi )_{F,2} \eta'(\gamma_g(u)) \lambda(h_g(u))\upsilon_{\breve{g}(u)} du.
    \end{equation*}

    From \eqref{eq:pf(comm_R)}, we have
    \begin{equation*}
        ( n(\varpi^{-1} u)g)^{-1}
        = h_g(u) \ \breve{g}(u)^{-1} \ \gamma_g(u).
    \end{equation*}
    Hence we also have $\mathcal{G}'_m \circ \mathcal{R}_\varpi' = \mathcal{R}_{\varpi, m} \circ \mathcal{G}'_m$.
    Then we have two commutative diagrams
    \[\begin{tikzcd}
            \pi^{K_m}_\eta \arrow[d, "\mathcal{R}_\varpi"] \arrow[r, "\mathcal{G}_m"] & \bigoplus_{g \in \widetilde{\mathscr{R}}_{\delta, m}} \left( \prescript{g}{}{\widetilde{\lambda}} \right)^{K_m \cap \prescript{g}{}{(\widetilde{J}_\delta)}}_\eta \arrow[d, "\mathcal{R}_{\varpi,m}"]  & \bigoplus_{g \in \mathscr{R}_{\delta, m}} (\prescript{g}{}{\lambda})^{K_m \cap \prescript{g}{}{(J_\delta)}}_{\eta'} \arrow[d, "\mathcal{R}_{\varpi,m}"] & \tau^{K_m}_{\eta'} \arrow[d, "\mathcal{R}_\varpi'"] \arrow[l, "\mathcal{G}'_m"]\\
            \pi^{K_m}_\eta \arrow[r, "\mathscr{G}_m"] & \bigoplus_{g \in \widetilde{\mathscr{R}}_{\delta, m}} \left( \prescript{g}{}{\widetilde{\lambda}} \right)^{K_m \cap \prescript{g}{}{(\widetilde{J}_\delta)}}_\eta, & \bigoplus_{g \in \mathscr{R}_{\delta, m}} (\prescript{g}{}{\lambda})^{K_m \cap \prescript{g}{}{(J_\delta)}}_{\eta'} & \tau^{K_m}_{\eta'} \arrow[l, "\mathcal{G}'_m"].
        \end{tikzcd}\]
    This completes the proof.
\end{proof}
Similarly, we also have the next lemma.
For an irreducible representation $(\pi, V_\pi)$ of $\widetilde{G}$, we introduce a linear operator $\mathcal{Q}_\varpi$ defined by
\begin{equation*}
    \mathcal{Q}_\varpi v     = \int_{\mathcal{O}^\times} (u,\varpi)_{F,2} \pi \left( \left( \left( \begin{array}{cc}1&\varpi^{-2} u\\ 0&1 \end{array} \right), 1 \right) \right) v du,
\end{equation*}
on $V_\pi$.
Then for any integer $m$, we have
\begin{equation}\label{eq:prop_Q-operator}
    \beta_2(\pi^{K_m}_\eta) \subset \operatorname{Ker}(\mathcal{Q}_\varpi),
\end{equation}
which can be shown in the same way as (a) of Proposition \ref{prop:R-operator}.
For an irreducible representation $(\tau, V_\tau)$ of $G$, we introduce a linear operator $\mathcal{Q}_\varpi'$ defined by
\begin{equation*}
    \mathcal{Q}_\varpi' v     = \int_{\mathcal{O}^\times} (u,\varpi)_{F,2} \tau \left( \left( \begin{array}{cc}1&\varpi^{-2} u\\ 0&1 \end{array} \right) \right) v du,
\end{equation*}
on $V_\tau$.
\begin{lemma}\label{lem:comm_Q-operator_sc}
    For any $m \geq 0$, we have
    \begin{equation*}
        \mathscr{G}_m \circ \mathcal{Q}_\varpi = \mathcal{Q}_\varpi' \circ \mathscr{G}_m,
    \end{equation*}
    i.e., the following diagram commutes:
    \[\begin{tikzcd}
            \pi^{K_m}_\eta \arrow[d, "\mathcal{Q}_\varpi"] \arrow[r, "\mathscr{G}_m"] &  \tau^{K_m}_{\eta'} \arrow[d, "\mathcal{Q}_\varpi'"]\\
            \pi^{K_m}_\eta \arrow[r, "\mathscr{G}_m"] & \tau^{K_m}_{\eta'}.
        \end{tikzcd}\]
\end{lemma}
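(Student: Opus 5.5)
The proof will follow that of Lemma \ref{lem:comm_R-operator_sc} step by step, with $n(\varpi^{-1}u)$ replaced by $n(\varpi^{-2}u)$. By Proposition \ref{prop:sc_<2}, both $\pi^{K_m}_\eta$ and $\tau^{K_m}_{\eta'}$ vanish for $m\le 1$, so we may assume $m\ge2$. The strategy is to construct a single endomorphism $\mathcal{Q}_{\varpi,m}$ of
\begin{equation*}
\bigoplus_{g \in \mathscr{R}_{\delta, m}} (\prescript{g}{}{\lambda})^{K_m \cap \prescript{g}{}{(J_\delta)}}_{\eta'}
\end{equation*}
that satisfies both $\mathcal{G}_m\circ\mathcal{Q}_\varpi=\mathcal{Q}_{\varpi,m}\circ\mathcal{G}_m$ and $\mathcal{G}'_m\circ\mathcal{Q}'_\varpi=\mathcal{Q}_{\varpi,m}\circ\mathcal{G}'_m$. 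Since $\mathscr{G}_m=(\mathcal{G}'_m)^{-1}\circ\mathcal{G}_m$, the claim $\mathscr{G}_m\circ\mathcal{Q}_\varpi=\mathcal{Q}'_\varpi\circ\mathscr{G}_m$ then follows.

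The main step is an explicit decomposition of $n(\varpi^{-2}u)g$ for $u\in\mathcal{O}^\times$ and each representative $g\in\mathscr{R}_{\delta,m}$ from Lemmas \ref{lem:coset_sc0} and \ref{lem:coset_sc1}. We will write it as $\gamma\,\breve g\,h$ with $\gamma\in K_m$, $\breve g\in\mathscr{R}_{\delta,m}$ and $h\in J_\delta$. The cases are:
\begin{itemize}
\item $g=g_i$ with $i\ge\delta$: here $n(\varpi^{-2}u)g_i$ should be conjugate by $t(x)$, where $u=zx^2$, to $g_{i+2,2,z}$.
\item $g=g_i$ with $i<\delta$: $n(\varpi^{-2}u)$ moves to the right as $n(\varpi^{-2i-2}u)$, and this lies in $J_\delta$ because $i<0$ or $\delta=1$.
\item $g=g_{i,j,z}$: we use $n(\varpi^{-2}u)n(z\varpi^{-j})=n(\varpi^{-j}(z+\varpi^{j-2}u))$ and split by $j$. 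When $j\ge3$, $z+\varpi^{j-2}u$ is $z$ times a square, so we get $t(x)g_{i,j,z}t(x^{-1})$. When $j=2$, either $z+u\in\mathcal{O}^\times$, giving $g_{i,2,z'}$ up to conjugation by $t(x)$, or $z+u\in\mathfrak{p}$, which drops the level to $g_{i',1,z''}$ or $g_{i-2}$. When $j=1$, the factor $n(\varpi^{-2}u)$ dominates, and we should land in some $g_{i+1,2,z'}$.
\end{itemize}
In every case we record $h_g(u)$, $\gamma_g(u)$ and $\breve g(u)$, and check that they are locally constant in $u$.

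Next comes the cover. We lift each identity to $\widetilde G$ with the splittings $\bm{s}$ and $\bm{s}_\delta$, in the form
\begin{equation*}
\bigl((n(\varpi^{-2}u),1)(g,1)\bigr)^{-1}=(h,\bm{s}_\delta(h))(\breve g,1)^{-1}(\gamma,\bm{s}(\gamma)\chi_\varpi^\delta(\gamma)),
\end{equation*}
and verify that no extra sign appears. For this we use Lemmas \ref{lem:spl_lv1} and \ref{lem:spl_lv1_J_1}, the Kubota cocycle, and the relation $\bm{s}_\delta(\gamma^g)\bm{s}(\gamma)=\chi_\varpi^\delta(\gamma)$ already established before the definition of $\mathscr{G}_m$. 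With this in hand we define
\begin{equation*}
\mathcal{Q}_g\bm{\upsilon}=\int_{\mathcal{O}^\times}(u,\varpi)_{F,2}\,\eta'(\gamma_g(u))\,\lambda(h_g(u))\,\upsilon_{\breve g(u)}\,du.
\end{equation*}
The same matrix identity read in $G$ gives the intertwining for $\mathcal{G}'_m$, and the lifted identity gives the one for $\mathcal{G}_m$.

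I expect the main obstacle to be the sign bookkeeping in the lifting step, in the cases where $n(\varpi^{-2}u)$ changes the double coset (the $j\le2$ cases and $g_i\mapsto g_{i+2,2,z}$). There the Kubota cocycle involves $\varpi$-adic entries, and the sign must match $\chi_\varpi^\delta$. I would first compute the cocycle directly from $\bm{x}$ of each factor, exactly as in the displayed identities in the proof of Lemma \ref{lem:R-op_on_ps}.
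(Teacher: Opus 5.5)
Your plan is the paper's proof. You reduce to $m\ge2$ via Proposition \ref{prop:sc_<2}, decompose $(n(\varpi^{-2}u),1)(g,1)$ for every representative $g$, and build one endomorphism $\mathcal{Q}_{\varpi,m}$ intertwined by both $\mathcal{G}_m$ and $\mathcal{G}'_m$. Your cases for $g_{i,j,z}$ agree, in coarser form, with the paper's list: $j=1$ goes to $g_{i+1,2,z'}$, $j=2$ splits according to $z+u$, and $j\ge3$ is fixed up to $t(x)$-conjugation.

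\textbf{The error.} One case boundary is wrong. You carried over the split $i\ge\delta$ versus $i<\delta$ for $g=g_i$ from Lemma \ref{lem:comm_R-operator_sc}. For $\mathcal{Q}_\varpi$ the split has to be $i\ge0$ versus $i<0$, for both values of $\delta$. The reason is that $J_1$ contains $n(b)$ only for $b\in\mathfrak{p}^{-1}$, so $n(\varpi^{-2i-2}u)$ lies in $J_\delta$ exactly when $i\le -1$, independently of $\delta$. Hence for $\delta=1$ and $g=g_0=1_2$, your assertion that $n(\varpi^{-2}u)\in J_1$ is false.

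\textbf{The fix.} This $g_0$ belongs to your first case: $n(\varpi^{-2}u)=t(x)\,g_{2,2,z}\,t(x^{-1})$ with $u=zx^2$. Once the boundary is moved, the rest of your argument goes through exactly as in the paper, including the sign bookkeeping on the cover.

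\textbf{A side remark.} For $m=2$ the elements $g_{i,2,z}$ are not in $\mathscr{R}_{\delta,2}$. So the cases that land on them need one further rewriting in terms of $\mathscr{R}_{\delta,2}$. The paper's own list is equally silent on this point.
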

\begin{proof}
    The idea of the proof is as same as Lemma \ref{lem:comm_R-operator_sc}.
    Since the assertion is trivial unless $m \geq 2$, we suppose it.
    For $u \in \mathcal{O}^\times$ and $g \in \mathscr{R}_{\delta, m}$, we can calculate $( n(\varpi^{-2} u), 1 ) (g,1)$.
    \begin{itemize}
        \item If $g=g_i$ with $i \geq 0$, then
              \begin{equation*}
                  ( n(\varpi^{-2} u), 1 ) (g_i,1) = (t(x), 1) (g_{i+2, 2, z}, 1) (t(x^{-1}), 1),
              \end{equation*}
              where $z\in\{1,\xi\}$ and $x \in \mathcal{O}^\times$ such that $u=zx^2$.
        \item If $g=g_i$ with $i < 0$, then
              \begin{equation*}
                  ( n(\varpi^{-2} u), 1 ) (g_i,1) = (g_i, 1) (n(\varpi^{-2i-2}u), 1).
              \end{equation*}
        \item If $g=g_{i,1,z}$ with $i \geq 1$, then
              \begin{equation*}
                  ( n(\varpi^{-2} u), 1 ) (g_{i,1,z},1) = (t(x), 1) (g_{i+1,2,z'}, 1) (t(x^{-1}), 1),
              \end{equation*}
              where $z'\in\{1,\xi\}$ and $x \in \mathcal{O}^\times$ such that $z\varpi+u=z'x^2$.
        \item If $g=g_{i,2,z}$ with $i \geq 1$ and $u+z \in \mathfrak{p}^2$, then
              \begin{equation*}
                  ( n(\varpi^{-2} u), 1 ) (g_{i,2,z},1) = (n(\varpi^{-2}(u+z)), 1) (g_{i-2}, 1).
              \end{equation*}
        \item If $g=g_{i,2,z}$ with $i \leq 1+\delta$ and $u+z \in \mathfrak{p} \setminus \mathfrak{p}^2$, then
              \begin{equation*}
                  ( n(\varpi^{-2} u), 1 ) (g_{i,2,z},1) = (g_{i-2}, 1) (n(\varpi^{2-2i}(u+z)), 1).
              \end{equation*}
        \item If $g=g_{i,2,z}$ with $i \geq 2+\delta$ and $u+z \in \mathfrak{p} \setminus \mathfrak{p}^2$, then
              \begin{equation*}
                  ( n(\varpi^{-2} u), 1 ) (g_{i,2,z},1) = (t(x), 1) (g_{i-1,1,z'}, 1) (t(x^{-1}), 1),
              \end{equation*}
              where $z'\in\{1, \xi\}$ and $x \in \mathcal{O}^\times$ such that $u+z = \varpi z' x^2$.
        \item If $g=g_{i,2,z}$ with $i \geq 1$ and $u+z \in \mathcal{O}^\times$, then
              \begin{equation*}
                  ( n(\varpi^{-2} u), 1 ) (g_{i,2,z},1) = (t(x), 1) (g_{i,2,z'}, 1) (t(x^{-1}), 1),
              \end{equation*}
              where $z'\in\{1, \xi\}$ and $x \in \mathcal{O}^\times$ such that $u+z = z' x^2$.
        \item If $g=g_{i,j,z}$ with $j \geq 3$, $i \geq 1$, and $z \in \{1,\xi\}$, then
              \begin{equation*}
                  ( n(\varpi^{-2} u), 1 ) (g_{i,j,z},1) = (t(x), 1) ( g_{i,j,z} , 1) (t(x^{-1}), 1),
              \end{equation*}
              where $x \in \mathcal{O}^\times$ such that $z + \varpi^{j-2} u = z x^2$.
    \end{itemize}
    In particular, there exist $h_{g,\mathcal{Q}}(u) \in J_\delta$, $\gamma_{g,\mathcal{Q}}(u) \in K_m$, and $\breve{g}_{\mathcal{Q}}(u) \in \mathscr{R}_{\delta, m}$, which are all continuous in $(g,u)$, such that
    \begin{equation*}
        \left( ( n(\varpi^{-2} u), 1 ) \ (g,1) \right)^{-1}
        = (h_{g,\mathcal{Q}}(u), \bm{s}_\delta(h_{g,\mathcal{Q}}(u))) \ (\breve{g}_{\mathcal{Q}}(u),1)^{-1} \ (\gamma_{g,\mathcal{Q}}(u), \bm{s}(\gamma_{g,\mathcal{Q}}(u)) \chi_\varpi^\delta(\gamma_{g,\mathcal{Q}}(u))).
    \end{equation*}

    Therefore, we have $\mathcal{G}_m \circ \mathcal{Q}_\varpi = \mathcal{Q}_{\varpi, m} \circ \mathcal{G}_m$ if we define a linear endomorphism
    \begin{alignat*}{3}
        \mathcal{Q}_{\varpi, m} \colon & \bigoplus_{g \in \widetilde{\mathscr{R}}_{\delta, m}} \left( \prescript{g}{}{\widetilde{\lambda}} \right)^{K_m \cap \prescript{g}{}{(\widetilde{J}_\delta)}}_\eta &  & {}\longrightarrow{} &  & \bigoplus_{g \in \widetilde{\mathscr{R}}_{\delta, m}} \left( \prescript{g}{}{\widetilde{\lambda}} \right)^{K_m \cap \prescript{g}{}{(\widetilde{J}_\delta)}}_\eta, \\
                                       & \bm{\upsilon}=(\upsilon_g)_{g \in \widetilde{\mathscr{R}}_{\delta, m}}                                                                                            &  & {}\mapsto{}         &  & \left( \mathcal{Q}_g \bm{\upsilon} \right)_{g \in \widetilde{\mathscr{R}}_{\delta, m}},
    \end{alignat*}
    by
    \begin{equation*}
        \mathcal{Q}_g \bm{\upsilon} = \int_{\mathcal{O}^\times} ( u, \varpi )_{F,2} \eta'(\gamma_{g,\mathcal{Q}}(u)) \lambda(h_{g,\mathcal{Q}}(u))\upsilon_{\breve{g}_\mathcal{Q}(u)} du.
    \end{equation*}
    We also have $\mathcal{G}'_m \circ \mathcal{Q}_\varpi' = \mathcal{Q}_{\varpi, m} \circ \mathcal{G}'_m$, and the assertion follows.
\end{proof}

\subsection{Test vectors for $\SL_2(F)$}\label{subsec:sc-newvec_sl2}
In this subsection, we fix some test vectors in irreducible supercuspidal representations of $G$.
These vectors will be shown to correspond to local newforms in the corresponding supercuspidal representations of $\widetilde{G}$ in the sense of last subsection.
Moreover, it will be shown that they are themselves the local newforms for $G$, after the proof of the main theorem.

We shall start by reviewing the Kirillov model for $\GL_2(F)$.
For an irreducible infinite dimensional representation $\varPi$ of $\GL_2(F)$, let $\omega_\varPi$ denote its central character and $K(\varPi)$ its Kirillov model with respect to the fixed additive character $\psi$.
Recall that $\varPi$ is supercuspidal if and only if $K(\varPi)$ is the space of locally constant compactly supported $\C$-valued functions on $F^\times$.
Let $\varPi$ be an irreducible supercuspidal representation of $\GL_2(F)$.
The action of the upper triangular matrices is given by
\begin{equation}\label{eq:Kirillov_action_Borel}
    \left[ \varPi\left( \left( \begin{array}{cc}a&b\\ 0&d \end{array} \right) \right) f \right] (x) = \omega_\varPi(d) \psi(d^{-1} b x) f(d^{-1} a x),
\end{equation}
for any $a,d,x \in F^\times$, $b \in F$, and $f \in K(\varPi)$.
It follows from this that the evaluation at $a \in F^\times$ is a nonzero $\psi_a$-Whittaker functional.
We shall write $\operatorname{ev}_{a}$ for the evaluation map $f \mapsto f(a)$ at $a \in F^\times$.
For any character $\mu$ of $\mathcal{O}^\times$ with $\mu(-1)=\omega_\varPi(-1)$, we define $f_\mu \in K(\varPi)$ by
\begin{equation*}
    f_\mu (x) = \begin{dcases*}
        \chi(x), & if $x\in \mathcal{O}^{\times 2}$,     \\
        0,       & if $x \notin \mathcal{O}^{\times 2}$,
    \end{dcases*}
\end{equation*}
where $\chi$ is a character of $\mathcal{O}^{\times 2}$ such that $\chi(a^2) = \mu(a^{-1}) \omega_\varPi(a)$ for any $a \in \mathcal{O}^\times$.

Put
\begin{equation*}
    \alpha = \left( \begin{array}{cc}1&\\ &\xi \end{array} \right).
\end{equation*}
\begin{lemma}\label{lem:testvector_unram2_sl2}
    Let $\lambda_0$ be an irreducible strongly cuspidal representation of $J_0$ such that $\{\tau_0, \tau_1\}$ is an unramified supercuspidal $L$-packet for $G$ of cardinality two, where we set $\lambda_1 = \prescript{\beta}{}{(\lambda_0)}$ and $\tau_\delta = \cInd^G_{J_\delta}(\lambda_\delta)$ for $\delta=0,1$.
    Put $l = c(\lambda_0)$, which is equal to $c(\lambda_1)$.
    Let $\rho$ be an irreducible very cuspidal representation of $ZH$ whose restriction to $J_0$ is $\lambda_0$.
    Put $\varPi = \cInd^{\GL_2(F)}_{ZH}(\rho)$ and we realize it as $K(\varPi)$.
    Let $\mu$ be a character of $\mathcal{O}^\times$ such that $\mu(-1)=\omega_\varPi(-1)$, and put $M=2 \max(l,c(\mu))$.
    Then we have:
    \begin{enumerate}
        \item $f_\mu, \varPi(\alpha) f_\mu \in (\tau_{\delta_l})^{K_M}_\mu \setminus (\tau_{\delta_l})^{K_{M-1}}_\mu$, where $\delta_l$ denotes the one of $0$ or $1$ with the same parity as $l$;
        \item $\varPi(\beta)f_\mu, \varPi(\alpha\beta)f_\mu \in (\tau_{1-\delta_l})^{K_{M+1}}_\mu \setminus (\tau_{1-\delta_l})^{K_M}_\mu$;
        \item $\mathcal{R}_\varpi' f_\mu = g(\chi_\varpi, \psi_{\varpi^{-1}}) f_\mu$, $\mathcal{R}_\varpi' \varPi(\alpha) f_\mu = -g(\chi_\varpi, \psi_{\varpi^{-1}}) \varPi(\alpha) f_\mu$, and $\mathcal{R}_\varpi' \varPi(\beta) f_\mu = \mathcal{R}_\varpi' \varPi(\alpha\beta) f_\mu = 0$;
        \item $\mathcal{Q}_\varpi' \varPi(\beta) f_\mu = g(\chi_\varpi, \psi_{\varpi^{-1}}) \varPi(\beta) f_\mu$ and $\mathcal{Q}_\varpi' \varPi(\alpha\beta) f_\mu = -g(\chi_\varpi, \psi_{\varpi^{-1}}) \varPi(\alpha\beta) f_\mu$.
    \end{enumerate}
\end{lemma}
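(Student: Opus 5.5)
The plan is to work entirely in the Kirillov model $K(\varPi)$. There we control the action of the Borel subgroup by \eqref{eq:Kirillov_action_Borel}, and we can identify the two $G$-summands $\tau_0,\tau_1$ of $\varPi|_G$ through their Whittaker support.

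\textbf{Locating the vectors in $\tau_0$ or $\tau_1$.} By the review of Kutzko--Sally, $\tau_{\delta_l}$ is $\psi_a$-generic exactly when $\operatorname{ord}(a)$ is even. Similarly, $\tau_{1-\delta_l}$ is $\psi_a$-generic exactly when $\operatorname{ord}(a)$ is odd. Each $\tau_\delta$ is $B\cap G$-stable and is determined by its restriction to the mirabolic. Hence $\tau_{\delta_l}$ should be realized inside $K(\varPi)$ as the functions supported on $\varpi^{2\Z}\mathcal{O}^\times$, and $\tau_{1-\delta_l}$ as those supported on $\varpi^{1+2\Z}\mathcal{O}^\times$. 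To justify this I would use that $\operatorname{ev}_a$ is a $\psi_a$-Whittaker functional and that Whittaker functionals are unique. The function $f_\mu$ is supported on $\mathcal{O}^{\times2}$, and $\varPi(\alpha)f_\mu(x)=\omega_\varPi(\xi)f_\mu(\xi^{-1}x)$ is supported on $\xi\mathcal{O}^{\times2}$. Both supports lie in the even-order part. Applying $\beta$ moves the support to $\varpi^{-1}\mathcal{O}^{\times}$, which lies in the odd part.

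\textbf{Invariance under $K_M$ and $K_{M+1}$.} By \eqref{eq:Kirillov_action_Borel}, $f_\mu$ transforms by $\mu$ under $t(a)$ for $a\in\mathcal{O}^\times$. This is where the choice of $\chi$ with $\chi(a^2)=\mu(a^{-1})\omega_\varPi(a)$ enters. Moreover $f_\mu$ is fixed by $n(b)$ for $b\in\mathcal{O}$, since its support lies in $\mathcal{O}$. It remains to show that $n^{\mathrm{op}}(c)$ fixes $f_\mu$ for $c\in\mathfrak{p}^M$. For this I would write $n^{\mathrm{op}}(c)=w\,n(-c)\,w^{-1}$ and use the action of $w$ on the Kirillov model of a supercuspidal representation via the Bessel/$\varepsilon$-factor description. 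The alternative is the standard result that $\varPi$ has newform level $2l$ together with twisting arguments in the spirit of Casselman's theory. The precise exponent $M=2\max(l,c(\mu))$ should come from the conductor of $\varPi\otimes\chi'$ for a character $\chi'$ extending the relevant restriction of $\mu$ or $\chi$.

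Non-membership in level $M-1$ would follow from Theorem \ref{thm:conductor}-type bounds transported through $\mathscr{G}_m$, or directly by showing that the projection onto $\mathfrak{p}^{M-1}$-invariants kills the vector. The $\beta$-translates then land in $K_{M+1}$ because $\beta K_{M+1}\beta^{-1}\subset K_M$ up to the diagonal twist, and $\beta$ acts by $\omega_\varPi$ on the $d$-entry, which preserves the character $\mu$. \textbf{This invariance step under $n^{\mathrm{op}}$ is the main obstacle.} I would attack it first through the explicit $w$-action formula $\varPi(w)\,\xi_{\nu,n}=C(\nu,n)\,\xi_{\nu^{-1}\omega^{-1},n+n_\nu}$ on the basis of the Kirillov model.

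\textbf{The eigenvalue computations (c) and (d).} These are direct. For $f$ supported on $\mathcal{O}^\times$,
\[
\mathcal{R}_\varpi'f(x)=\int_{\mathcal{O}^\times}(u,\varpi)_{F,2}\,\psi(\varpi^{-1}ux)\,du\,f(x)=\chi_\varpi(x)\,g(\chi_\varpi,\psi_{\varpi^{-1}})\,f(x).
\]
This gives the sign $+$ on $\mathcal{O}^{\times2}$ and the sign $-$ on $\xi\mathcal{O}^{\times2}$. For vectors supported on $\varpi^{-1}\mathcal{O}^\times$, the factor $\psi(\varpi^{-1}ux)$ has conductor $2$ in $u$. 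The integral is therefore $0$ by Lemma \ref{lem:gausssum1}, which gives $\mathcal{R}_\varpi'=0$. For $\mathcal{Q}_\varpi'$, the character $\psi(\varpi^{-2}ux)$ with $x\in\varpi^{-1}\mathcal{O}^\times$ also produces the Gauss sum $g(\chi_\varpi,\psi_{\varpi^{-1}})$. After rescaling $u$, the sign is again governed by the square class of $\varpi x$, which distinguishes $\varPi(\beta)f_\mu$ from $\varPi(\alpha\beta)f_\mu$.
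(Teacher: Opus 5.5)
Your architecture matches the paper's: Whittaker supports in $K(\varPi)$ to separate $\tau_0$ from $\tau_1$, the Jacquet--Langlands $w$-action for $n^{\mathrm{op}}$-invariance, and direct Gauss sums for (c) and (d). However, there are two genuine gaps.

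\textbf{First gap: part (a) is not proved.} You leave (a) open, and the missing steps are exactly what produce $M$. The identity $w\,\mathrm{diag}(\varpi^k,1)\,n^{\mathrm{op}}(c)=n(-\varpi^{-k}c)\,w\,\mathrm{diag}(\varpi^k,1)$ shows that $f_\mu$ is fixed by $n^{\mathrm{op}}(\mathfrak{p}^k)$ if and only if $\operatorname{supp}\varPi(w)f'_\mu\subset\mathcal{O}$, where $f'_\mu=\varPi(\mathrm{diag}(\varpi^k,1))f_\mu$. The Mellin transform of $f'_\mu$ equals $\frac12 t^{-k}$ for the two $\nu$ with $\nu^2=\mu\omega_\varPi^{-1}$, and $0$ otherwise. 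By the functional equation (nonzero constant, $n_\nu=-c(\varPi\otimes(\nu_F^{-1}\circ\det))$), the condition becomes $k\ge c(\varPi\otimes(\nu_F^{-1}\circ\det))$ for the $\nu$ with $\nu^2=\mu^{-1}\omega_\varPi^{-1}$. The decisive input, absent from your proposal, is Tunnell's formula $c(\varPi\otimes(\chi\circ\det))=\max(c(\varPi),2c(\chi))$ for \emph{minimal} $\varPi$. Combined with $c(\nu)=c(\mu\omega_\varPi)$ ($p$ odd) and $c(\omega_\varPi)\le l$, it gives exactly $\max(2l,2c(\mu))=M$. Because this is an equivalence, it also gives $f_\mu\notin(\tau_{\delta_l})^{K_{M-1}}_\mu$. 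Your suggested fallback of transporting Theorem \ref{thm:conductor} through $\mathscr{G}_m$ is circular: the supercuspidal case of that theorem (Propositions \ref{prop:sc-unram2}--\ref{prop:sc-ram}) is deduced from this very lemma.

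\textbf{Second gap: the support of the $\beta$-translates is wrong.} We have $[\varPi(\beta)f](x)=\omega_\varPi(\varpi)f(\varpi^{-1}x)$. So $\varPi(\beta)f_\mu$ and $\varPi(\alpha\beta)f_\mu$ are supported on $\varpi\mathcal{O}^{\times2}$ and $\xi\varpi\mathcal{O}^{\times2}$, not on $\varpi^{-1}\mathcal{O}^\times$. With your support, $u\mapsto\psi(\varpi^{-2}ux)$ has conductor $3$, so Lemma \ref{lem:gausssum1} would give $\mathcal{Q}'_\varpi\varPi(\beta)f_\mu=0$, contradicting (d). Such vectors would not even be fixed by $n(\mathcal{O})$, contradicting (b).

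With the correct support, (c) and (d) come out right. Since $\psi(\varpi^{-1}ux)\equiv 1$ for $u\in\mathcal{O}^\times$, $\mathcal{R}'_\varpi$ gives $\int_{\mathcal{O}^\times}\chi_\varpi(u)\,du=0$. Writing $\psi(\varpi^{-2}ux)=\psi(\varpi^{-1}u\cdot\varpi^{-1}x)$, $\mathcal{Q}'_\varpi$ gives $\chi_\varpi(\varpi^{-1}x)\,g(\chi_\varpi,\psi_{\varpi^{-1}})$.

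The same $\beta$/$\beta^{-1}$ confusion affects your argument for (b). What you need is $\beta^{-1}K_{M+1}\beta\subset K_M$, with the $(2,2)$-entry unchanged. By contrast, $\beta K_{M+1}\beta^{-1}$ is not even integral. Non-membership at level $M$ then follows from the non-membership in (a) via $\beta n^{\mathrm{op}}(c)\beta^{-1}=n^{\mathrm{op}}(\varpi c)$.
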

\begin{proof}
    We know from Propositions \ref{prop:Kut-cond}, \ref{prop:compatib-indices_Man-KS} and the arguments around them that $l(\rho) = l$ and $\varPi$ is a minimal irreducible supercuspidal representation of $\GL_2(F)$ of conductor $2l$ whose restriction to $G$ is a direct sum of $\tau_0$ and $\tau_1$.
    Let
    \begin{equation}\label{eq:decomp-Kirillov}
        K(\varPi) = K(\tau_0, \varPi) \oplus K(\tau_1, \varPi)
    \end{equation}
    be the decomposition parallel to
    \begin{equation*}
        \varPi|_G = \tau_0 \oplus \tau_1.
    \end{equation*}
    Recall that $\tau_{\delta}$ is not $\psi_a$-generic if $\operatorname{ord}_F(a) + l + \delta$ is odd.
    Since the evaluation map $\operatorname{ev}_{a}$ gives a $\psi_a$-Whittaker functional, this implies that $\operatorname{ev}_a$ vanishes on $K(\tau_\delta, \varPi)$ if $\operatorname{ord}_F(a) + l + \delta$ is odd.
    In view of the direct decomposition \eqref{eq:decomp-Kirillov}, we can see that $K(\tau_\delta, \varPi)$ is the space of locally constant compactly supported $\C$-valued functions on $\varpi^{l+\delta+2\Z}\mathcal{O}^\times$.
    In particular, both $f_\mu$ and $\varPi(\alpha) f_\mu$ belong to $K(\tau_{\delta_l}, \varPi)$, while both $\varPi(\beta) f_\mu$ and $\varPi(\alpha\beta) f_\mu$ belong to $K(\tau_{1-\delta_l}, \varPi)$.

    We are now ready to start proving (a).
    The assertion (a) for $\varPi(\alpha)f_\mu$ follows from that for $f_\mu$, since $\prescript{\alpha}{}{K_m}=K_m$ for any $m$.
    Thus we shall show only for $f_\mu$.
    The idea of the proof is similar to that of ``the absolutely cuspidal case'' of \cite[Theorem 1]{cas}.
    It follows from \eqref{eq:Kirillov_action_Borel} that for any $a \in \mathcal{O}^\times$ and $b \in \mathcal{O}$, we have
    \begin{equation*}
        \tau_{\delta_l}\left( t(a)n(b) \right) f_\mu = \mu(a^{-1}) f_\mu.
    \end{equation*}
    It now remains to show that $\tau_{\delta_l}(n^{\mathrm{op}}(c)) f_\mu = f_\mu$ for any $c \in \mathfrak{p}^k$ if and only if $k \geq M$.
    Fix a positive integer $k$ and put
    \begin{equation*}
        f_\mu' = \varPi\left( \left( \begin{array}{cc}\varpi^k&\\ &1 \end{array} \right) \right) f_\mu,
    \end{equation*}
    i.e., $f_\mu'(x) = f_\mu(\varpi^k x)$.
    Since
    \begin{equation*}
        \left( \begin{array}{cc}&1\\ -1& \end{array} \right) \left( \begin{array}{cc}\varpi^k&\\ &1 \end{array} \right) \left( \begin{array}{cc}1&\\ c&1 \end{array} \right)
        = \left( \begin{array}{cc}1&-\varpi^{-k}c\\ &1 \end{array} \right) \left( \begin{array}{cc}&1\\ -1& \end{array} \right) \left( \begin{array}{cc}\varpi^k&\\ &1 \end{array} \right),
    \end{equation*}
    we see that $\tau_{\delta_l}(n^{\mathrm{op}}(c)) f_\mu = f_\mu$ for any $c \in \mathfrak{p}^k$ if and only if $\varPi(n(b)) \varPi(w)f_\mu' = \varPi(w)f_\mu'$ for any $b \in \mathcal{O}$.
    By the formula \eqref{eq:Kirillov_action_Borel}, this is equivalent to $\operatorname{supp}(\varPi(w)f_\mu') \subset \mathcal{O}$.

    The action of $w$ on the Kirillov model was studied by Jacquet--Langlands \cite{jl} and is characterized in the following way.
    See Proposition 2.10 and Theorem 2.18 in loc. cit. and the proofs of them for details.
    For any $f \in K(\varPi)$ and any character $\nu$ of $\mathcal{O}^\times$, define
    \begin{equation*}
        \widehat{f}_n(\nu) = \operatorname{vol}(\mathcal{O}^\times)^{-1} \int_{u \in \mathcal{O}^\times} f(\varpi^n u) \nu(u) du,
        \qquad n \in \Z.
    \end{equation*}
    Define further a formal power series
    \begin{equation*}
        \widehat{f}(\nu, t) = \sum_{n \in \Z} \widehat{f}_n(\nu) t^n,
    \end{equation*}
    which is actually a Laurent polynomial in $t$, since $f$ is compactly supported.
    For each $\nu$, there are a complex number $C_0(\nu)=C_0(\nu, \varPi)$ and an integer $n_\nu=n_\nu(\varPi)$ such that
    \begin{equation*}
        \widehat{\varPi(w)f} (\nu, t) = C_0(\nu) t^{n_\nu} \widehat{f}(\nu^{-1}\omega_\varPi^{-1}, \omega_\varPi(\varpi)^{-1} t^{-1}),
    \end{equation*}
    for any $f \in K(\varPi)$.
    From the equation (2.18.1) of \cite{jl} and Remark in pp.306-307 of \cite{cas}, we have
    \begin{equation*}
        n_\nu(\varPi) = -c(\varPi \otimes (\nu_F^{-1} \circ \det)),
    \end{equation*}
    where $\nu_F$ is any character of $F^\times$ such that $\nu_F|_{\mathcal{O}^\times} = \nu$.
    Note that this theory holds for not only our $\varPi$ but also any irreducible supercuspidal representation of $\GL_2(F)$.
    We now return to our proof of Lemma \ref{lem:testvector_unram2_sl2} (a).
    The condition $\operatorname{supp}(\varPi(w)f_\mu') \subset \mathcal{O}$ is satisfied if and only if $\widehat{\varPi(w)f_\mu'} (\nu, t)$ has no negative degree terms for any $\nu$.
    For any character $\nu$, a direct calculation shows that
    \begin{align*}
        \widehat{f_\mu'} (\nu, t)
        = \begin{dcases*}
              \frac{1}{2} t^{-k}, & if $\nu^2 = \mu \omega_\varPi^{-1}$ on $\mathcal{O}^\times$, \\
              0,                  & otherwise.
          \end{dcases*}
    \end{align*}
    Hence we get
    \begin{align*}
        \widehat{\varPi(w)f_\mu'} (\nu, t)
        =\begin{dcases*}
             C_1(\nu) t^{n_\nu + k}, & if $\nu^2 = \mu^{-1} \omega_\varPi^{-1}$ on $\mathcal{O}^\times$, \\
             0,                      & otherwise,
         \end{dcases*}
    \end{align*}
    where $C_1(\nu) = C_1(\nu, \varPi)$ is a nonzero constant.
    Therefore, $\widehat{\varPi(w)f_\mu'} (\nu, t)$ has no negative degree terms for any $\nu$ if and only if
    \begin{equation*}
        k \geq c(\varPi \otimes (\nu_F^{-1} \circ \det))
    \end{equation*}
    for any character $\nu$ such that $\nu^2 = \mu^{-1} \omega_\varPi^{-1}$ on $\mathcal{O}^\times$.
    Since the residual character is odd, the conductor of such $\nu$ equals that of $\mu \omega_\varPi$.
    Noting that $\varPi$ is minimal supercuspidal, by \cite[Theorem 3.4]{tun}, we have
    \begin{align*}
        c(\varPi \otimes (\nu_F^{-1} \circ \det))
         & =\max(c(\varPi), 2c(\mu \omega_\varPi)) \\
         & =\max(c(\varPi), 2c(\mu))               \\
         & =\max(2l, 2c(\mu)).
    \end{align*}
    This completes the proof of (a).
    The assertion (b) follows from (a), since $\beta n^{\mathrm{op}}(c) \beta^{-1} = n^{\mathrm{op}}(\varpi c)$.

    We now come to the proof of (c).
    By \eqref{eq:Kirillov_action_Borel}, we have
    \begin{align*}
        [\mathcal{R}_\varpi' f_\mu] (x)
         & =\int_{\mathcal{O}^\times} ( u, \varpi )_{F,2} \psi(\varpi^{-1} u x) f_\mu(x) du     \\
         & =\int_{\mathcal{O}^\times} ( ux^{-1}, \varpi )_{F,2} \psi(\varpi^{-1} u) f_\mu(x) du \\
         & = g(\chi_\varpi, \psi_{\varpi^{-1}}) f_\mu(x).
    \end{align*}
    Here the last equality holds since $f_\mu$ is supported on $\mathcal{O}^{\times 2}$.
    Similarly, we have
    \begin{align*}
        \mathcal{R}_\varpi' \varPi(\alpha) f_\mu      & = -g(\chi_\varpi, \psi_{\varpi^{-1}}) \varPi(\alpha) f_\mu, \\
        \mathcal{R}_\varpi' \varPi(\beta) f_\mu       & = g(\chi_\varpi, \psi_{\varpi^{-2}}) \varPi(\alpha) f_\mu,  \\
        \mathcal{R}_\varpi' \varPi(\alpha\beta) f_\mu & = -g(\chi_\varpi, \psi_{\varpi^{-2}}) \varPi(\alpha) f_\mu.
    \end{align*}
    The assertion (c) now follows from Lemma \ref{lem:gausssum1}.
    The proof of (d) is similar to that of (c).
\end{proof}

\begin{lemma}\label{lem:testvector_unram4_sl2}
    Let $\rho$ be an irreducible very cuspidal representation of $ZH$ whose restriction to $J_0$ is a direct sum of two irreducible strongly cuspidal representations.
    Let $\lambda_{0,1}$ be the one of them such that $\cInd^G_{J_0}(\lambda_{0,1})$ is $\psi_\varpi$-generic, and $\lambda_{0,2}$ the other one.
    Put $\tau_{0,i} = \cInd^G_{J_0}(\lambda_{0,i})$ and $\tau_{1,i} = \cInd^G_{J_1}(\prescript{\beta}{}{\lambda_{0,i}})$ for $i=1,2$.
    Put $\varPi = \cInd^{\GL_2(F)}_{ZH}(\rho)$ and we realize it as $K(\varPi)$.
    Let $\mu$ be a character of $\mathcal{O}^\times$ such that $\mu(-1)=\omega_\varPi(-1)$, and put $M=2 \max(1,c(\mu))$.
    Then we have:
    \begin{enumerate}
        \item $f_\mu \in (\tau_{1,1})^{K_M}_\mu \setminus (\tau_{1,1})^{K_{M-1}}_\mu$, $\varPi(\alpha) f_\mu \in (\tau_{1,2})^{K_M}_\mu \setminus (\tau_{1,2})^{K_{M-1}}_\mu$, $\varPi(\beta) f_\mu \in (\tau_{0,1})^{K_{M+1}}_\mu \setminus (\tau_{0,1})^{K_M}_\mu$, and $\varPi(\alpha\beta) f_\mu \in (\tau_{0,2})^{K_{M+1}}_\mu \setminus (\tau_{0,2})^{K_M}_\mu$;
        \item $\mathcal{R}_\varpi' f_\mu = g(\chi_\varpi, \psi_{\varpi^{-1}}) f_\mu$, $\mathcal{R}_\varpi' \varPi(\alpha) f_\mu = -g(\chi_\varpi, \psi_{\varpi^{-1}}) \varPi(\alpha) f_\mu$, and $\mathcal{R}_\varpi' \varPi(\beta) f_\mu = \mathcal{R}_\varpi' \varPi(\alpha\beta) f_\mu = 0$;
        \item $\mathcal{Q}_\varpi' \varPi(\beta) f_\mu = g(\chi_\varpi, \psi_{\varpi^{-1}}) \varPi(\beta) f_\mu$ and $\mathcal{Q}_\varpi' \varPi(\alpha\beta) f_\mu = -g(\chi_\varpi, \psi_{\varpi^{-1}}) \varPi(\alpha\beta) f_\mu$.
    \end{enumerate}
\end{lemma}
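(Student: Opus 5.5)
I would follow the proof of Lemma \ref{lem:testvector_unram2_sl2} closely, changing only the step that identifies the Kirillov-model subspaces. First I record the structure. The representation $\varPi$ is minimal of conductor $2$ (Proposition \ref{prop:Kut-cond} with $l=1$), and $\varPi|_G = \tau_{0,1}\oplus\tau_{0,2}\oplus\tau_{1,1}\oplus\tau_{1,2}$. So we get a decomposition $K(\varPi)=\bigoplus_{\delta,i}K(\tau_{\delta,i},\varPi)$. The genericity facts from Kutzko--Sally recalled in \S\ref{subsec:sc-constr} give the following:
\begin{itemize}
\item for each nontrivial additive character $\Psi$, exactly one of the four is $\Psi$-generic;
\item $\tau_{0,1}$ is $\psi_\varpi$-generic and $\tau_{0,2}$ is $\psi_{\xi\varpi}$-generic;
\item $\tau_{1,i}$ is $\Psi_\varpi$-generic if and only if $\tau_{0,i}$ is $\Psi$-generic.
\end{itemize}
Applying these with $\Psi=\psi_a$ for $a$ running over representatives of $F^\times/F^{\times2}$ determines, for each $a$, the unique summand on which $\operatorname{ev}_a$ is nonzero. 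Since $\operatorname{ev}_a$ is a $\psi_a$-Whittaker functional, it vanishes on the other three summands. Hence each $K(\tau_{\delta,i},\varPi)$ is exactly the space of locally constant compactly supported functions on one coset-union of $\varpi^{2\Z}\mathcal{O}^{\times2}$, $\xi\varpi^{2\Z}\mathcal{O}^{\times2}$, $\varpi^{1+2\Z}\mathcal{O}^{\times2}$, or $\xi\varpi^{1+2\Z}\mathcal{O}^{\times2}$.

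Concretely, $\tau_{0,1}$ is $\psi_\varpi$-generic, so $\tau_{1,1}$ is $\psi_{\varpi^2}$-generic, which is the same as $\psi$-generic. Similarly $\tau_{1,2}$ is $\psi_\xi$-generic. Then, by the "exactly one" property and the fact that genericity depends only on $a$ modulo squares, the supports are as follows:
\begin{itemize}
\item $K(\tau_{1,1},\varPi)$ is supported on $\varpi^{2\Z}\mathcal{O}^{\times2}$;
\item $K(\tau_{1,2},\varPi)$ is supported on $\xi\varpi^{2\Z}\mathcal{O}^{\times2}$;
\item $K(\tau_{0,1},\varPi)$ is supported on $\varpi^{1+2\Z}\mathcal{O}^{\times2}$;
\item $K(\tau_{0,2},\varPi)$ is supported on $\xi\varpi^{1+2\Z}\mathcal{O}^{\times2}$.
\end{itemize}
From \eqref{eq:Kirillov_action_Borel}, $f_\mu$ is supported on $\mathcal{O}^{\times2}$, $\varPi(\alpha)f_\mu$ on $\xi\mathcal{O}^{\times2}$ (up to the central factor), $\varPi(\beta)f_\mu$ on $\varpi^{-1}\mathcal{O}^{\times2}$, and $\varPi(\alpha\beta)f_\mu$ on the $\xi$-twisted version of that set. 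This places the four vectors in $\tau_{1,1}$, $\tau_{1,2}$, $\tau_{0,1}$, $\tau_{0,2}$ respectively. This matching of the sign and $\xi$ bookkeeping is the step I expect to need the most care: in particular, whether $\varPi(\beta)$ lands in the $\mathcal{O}^{\times 2}$ or the $\xi\mathcal{O}^{\times2}$ class, which depends on $\omega_\varPi$ and on the normalization $\beta=\operatorname{diag}(1,\varpi)$. I would fix this by a direct evaluation of $\varPi(\beta)f_\mu(x)=\omega_\varPi(\varpi)f_\mu(\varpi^{-1}x)$.

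Next, the level statements in (a). The argument of Lemma \ref{lem:testvector_unram2_sl2}(a) applies verbatim, since it used only the Jacquet--Langlands functional equation for $w$ and Tunnell's conductor formula for the minimal $\varPi$. These give that $f_\mu$ is fixed up to $\mu$ by $K_k$ if and only if $k\ge\max(c(\varPi),2c(\mu))=2\max(1,c(\mu))=M$. Conjugating by $\alpha$ (which normalizes $K_m$) gives the statement for $\varPi(\alpha)f_\mu$. Conjugating by $\beta$, using $\beta n^{\mathrm{op}}(c)\beta^{-1}=n^{\mathrm{op}}(\varpi c)$, gives the shift from $M$ to $M+1$ for $\varPi(\beta)f_\mu$ and $\varPi(\alpha\beta)f_\mu$.

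Finally, (b) and (c) are direct computations in the Kirillov model. One has $[\mathcal{R}'_\varpi f](x)=\int_{\mathcal{O}^\times}(u,\varpi)_{F,2}\psi(\varpi^{-1}ux)\,du\cdot f(x)$, and similarly for $\mathcal{Q}'_\varpi$ with $\varpi^{-2}$. On the support of $f_\mu$, the substitution $u\mapsto ux^{-1}$ turns this integral into $g(\chi_\varpi,\psi_{\varpi^{-1}})$. On $\xi\mathcal{O}^{\times2}$ the same substitution picks up the factor $(\xi,\varpi)_{F,2}=-1$. On $\varpi^{-1}\mathcal{O}^\times$ the $\mathcal{R}'_\varpi$-integral becomes a Gauss sum with conductor $2\ne c(\chi_\varpi)=1$, which vanishes by Lemma \ref{lem:gausssum1}. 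The $\mathcal{Q}'_\varpi$-integral there reduces back to $\pm g(\chi_\varpi,\psi_{\varpi^{-1}})$ after writing $x=\varpi^{-1}y$.
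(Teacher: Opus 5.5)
Your proposal follows the paper's proof. There, too, $K(\varPi)$ is decomposed along $\varPi|_G=\tau_{0,1}\oplus\tau_{0,2}\oplus\tau_{1,1}\oplus\tau_{1,2}$. The Kutzko--Sally genericity facts, together with the fact that $\operatorname{ev}_a$ is a $\psi_a$-Whittaker functional, identify $K(\tau_{\delta,i},\varPi)$ with the functions supported on the square class $\xi^{1+i}\varpi^{1+\delta}F^{\times 2}$. The four vectors are placed in $\tau_{1,1},\tau_{1,2},\tau_{0,1},\tau_{0,2}$, and the rest is declared identical to Lemma \ref{lem:testvector_unram2_sl2}. Your supports and your matching agree with this.

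There is one concrete slip, in exactly the step you flagged. From your own formula $\varPi(\beta)f_\mu(x)=\omega_\varPi(\varpi)f_\mu(\varpi^{-1}x)$, the vector $\varPi(\beta)f_\mu$ is supported on $\varpi\mathcal{O}^{\times 2}$, not on $\varpi^{-1}\mathcal{O}^{\times2}$. Likewise $\varPi(\alpha\beta)f_\mu$ is supported on $\xi\varpi\mathcal{O}^{\times2}$.

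This does not change which summand the vectors lie in, but it breaks (c) as you wrote it. With $x=\varpi^{-1}y$, the $\mathcal{Q}'_\varpi$-integral becomes $\int_{\mathcal{O}^\times}(u,\varpi)_{F,2}\psi(\varpi^{-3}uy)\,du$. This is a Gauss sum whose additive character has conductor $3\neq c(\chi_\varpi)=1$, so it vanishes by Lemma \ref{lem:gausssum1}, contradicting the claimed eigenvalue.

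With the correct substitution $x=\varpi y$, $y\in\mathcal{O}^\times$, both computations work:
\begin{itemize}
\item The $\mathcal{R}'_\varpi$-integral is $\int_{\mathcal{O}^\times}(u,\varpi)_{F,2}\psi(uy)\,du=\int_{\mathcal{O}^\times}(u,\varpi)_{F,2}\,du=0$. The additive conductor is $0$, not $2$, and the vanishing survives.
\item The $\mathcal{Q}'_\varpi$-integral is $\int_{\mathcal{O}^\times}(u,\varpi)_{F,2}\psi(\varpi^{-1}uy)\,du=(y,\varpi)_{F,2}\,g(\chi_\varpi,\psi_{\varpi^{-1}})$. This gives the sign $+$ on $\varpi\mathcal{O}^{\times2}$ and $-$ on $\xi\varpi\mathcal{O}^{\times2}$, as claimed.
\end{itemize}

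The paper's proof of Lemma \ref{lem:testvector_unram2_sl2}(c) contains the same $\psi_{\varpi^{-2}}$ misprint. It is harmless there, since only the vanishing of $\mathcal{R}'_\varpi\varPi(\beta)f_\mu$ is drawn from it. With this correction your argument is complete.
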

\begin{proof}
    Notice that $\{\tau_{0,1}, \tau_{0,2}, \tau_{1,1}, \tau_{1,2}\}$ is an unramified supercuspidal $L$-packet for $G$ of cardinality four, and that $c(\lambda_{\delta, i})=l(\rho)=1$.
    Let
    \begin{equation*}
        K(\varPi) = K(\tau_{0,1}, \varPi) \oplus K(\tau_{0,2}, \varPi) \oplus K(\tau_{1,1}, \varPi) \oplus K(\tau_{1,2}, \varPi)
    \end{equation*}
    be the decomposition parallel to
    \begin{equation*}
        \varPi|_G = \tau_{0,1} \oplus \tau_{0,2} \oplus \tau_{1,1} \oplus \tau_{1,2}.
    \end{equation*}
    Then $K(\tau_{\delta, i}, \varPi)$ is the space of locally constant compactly supported $\C$-valued functions on $\xi^{1+i+2\Z} \varpi^{1+\delta+2\Z} \mathcal{O}^\times$, since $\tau_{\delta, i}$ is $\psi_{\xi^j \varpi^\epsilon}$-generic if and only if both $j+i$ and $\epsilon+\delta$ are odd.
    In particular, $f_\mu$ (resp. $\varPi(\alpha) f_\mu$, resp. $\varPi(\beta)f_\mu$, resp. $\varPi(\alpha\beta)f_\mu$) belongs to $K(\tau_{1,1}, \varPi)$ (resp. $K(\tau_{1,2}, \varPi)$, resp. $K(\tau_{0,1}, \varPi)$, resp. $K(\tau_{0,2}, \varPi)$).
    Then the rest of the proof is same as that of Lemma \ref{lem:testvector_unram2_sl2}.
\end{proof}

\begin{lemma}\label{lem:testvector_ram_sl2}
    Let $\rho$ be an irreducible very cuspidal representation of $Z'I$ of level $l$, and let $\lambda'_1$ and $\lambda'_2$ be the two irreducible components of $\Res^{Z'I}_{I\cap G}(\rho)$.
    Put $\lambda_i = \cInd^{J_1}_{I\cap G}(\lambda'_i)$ and $\tau_i=\cInd^G_{J_1}(\lambda_i)$, for $i=1,2$.
    Suppose that $\tau_1$ is $\psi$-generic.
    Put $\varPi = \cInd^{\GL_2(F)}_{Z'I}(\rho)$ and we realize it as $K(\varPi)$.
    Let $\mu$ be a character of $\mathcal{O}^\times$ such that $\mu(-1)=\omega_\varPi(-1)$, and put $M=\max(2l+1, 2c(\mu))$.
    Then we have:
    \begin{enumerate}
        \item $f_\mu \in (\tau_1)^{K_M}_\mu \setminus (\tau_1)^{K_{M-1}}_\mu$ and $\varPi(\alpha)f_\mu \in (\tau_2)^{K_M}_\mu \setminus (\tau_2)^{K_{M-1}}_\mu$;
        \item $\varPi(\beta) f_\mu \in (\tau_1)^{K_{M+1}}_\mu \setminus (\tau_1)^{K_M}_\mu$ and $\varPi(\alpha\beta) f_\mu \in (\tau_2)^{K_{M+1}}_\mu \setminus (\tau_2)^{K_M}_\mu$;
        \item $\mathcal{R}_\varpi' f_\mu = g(\chi_\varpi, \psi_{\varpi^{-1}}) f_\mu$, $\mathcal{R}_\varpi' \varPi(\alpha) f_\mu = -g(\chi_\varpi, \psi_{\varpi^{-1}}) \varPi(\alpha) f_\mu$, and $\mathcal{R}_\varpi' \varPi(\beta) f_\mu = \mathcal{R}_\varpi' \varPi(\alpha\beta) f_\mu = 0$;
        \item $\mathcal{Q}_\varpi' \varPi(\beta) f_\mu = g(\chi_\varpi, \psi_{\varpi^{-1}}) \varPi(\beta) f_\mu$ and $\mathcal{Q}_\varpi' \varPi(\alpha\beta) f_\mu = -g(\chi_\varpi, \psi_{\varpi^{-1}}) \varPi(\alpha\beta) f_\mu$.
    \end{enumerate}
\end{lemma}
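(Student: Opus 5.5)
The proof will follow the pattern of Lemma \ref{lem:testvector_unram2_sl2}, with the Kirillov model of the ramified minimal supercuspidal $\varPi = \cInd^{\GL_2(F)}_{Z'I}(\rho)$ in place of the unramified one.

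First I determine how $K(\varPi)$ splits along $\varPi|_G = \tau_1 \oplus \tau_2$. By Proposition \ref{prop:compatib-indices_Man-KS} and the recollection of Kutzko--Sally, exactly one of $\tau_1,\tau_2$ is $\psi_a$-generic for $a \in \varpi^\Z\mathcal{O}^{\times2}$, and the other for $a\in\xi\varpi^\Z\mathcal{O}^{\times2}$. Since $\tau_1$ is $\psi$-generic, and since $\operatorname{ev}_a$ is a $\psi_a$-Whittaker functional that must vanish on the non-generic summand, the decomposition follows. The summand $K(\tau_1,\varPi)$ consists of the functions supported on $\varpi^\Z\mathcal{O}^{\times 2}$, and $K(\tau_2,\varPi)$ of those supported on $\xi\varpi^\Z\mathcal{O}^{\times2}$. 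Hence $f_\mu$ and $\varPi(\beta)f_\mu$ lie in $\tau_1$, while $\varPi(\alpha)f_\mu$ and $\varPi(\alpha\beta)f_\mu$ lie in $\tau_2$. Unlike the unramified case, $\beta$ preserves the packet member here.

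For (a), it suffices to treat $f_\mu$, because $\alpha$ normalizes each $K_m$. By \eqref{eq:Kirillov_action_Borel}, $t(a)n(b)$ with $a\in\mathcal{O}^\times$ and $b\in\mathcal{O}$ acts on $f_\mu$ by $\mu(a)^{-1}$. The $w$-conjugation identity used in Lemma \ref{lem:testvector_unram2_sl2} shows that invariance under $n^{\mathrm{op}}(\mathfrak{p}^k)$ is equivalent to $\operatorname{supp}\varPi(w)f'_\mu\subset\mathcal{O}$. Applying the Jacquet--Langlands functional equation exactly as before, this holds if and only if $k\ge c(\varPi\otimes\nu_F^{-1}\circ\det)$ for all $\nu$ with $\nu^2=\mu^{-1}\omega_\varPi^{-1}$. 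The new input is Tunnell's formula for a minimal ramified $\varPi$ of conductor $2l+1$ (Proposition \ref{prop:Kut-cond}): it gives $c(\varPi\otimes\nu_F^{-1}\circ\det)=\max(2l+1,2c(\nu))=\max(2l+1,2c(\mu))=M$. Here I use that $c(\mu\omega_\varPi)$ and $c(\mu)$ agree in the relevant range. I expect this to be the main obstacle. One must check that $\omega_\varPi$ has conductor at most $l$, so that twisting by $\omega_\varPi$ does not change the maximum, and that Tunnell's theorem applies in the ramified case in exactly this form. Part (b) then follows from (a), since $\beta n^{\mathrm{op}}(c)\beta^{-1}=n^{\mathrm{op}}(\varpi c)$ shifts the level by one and $\beta$ preserves each $\tau_i$.

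For (c) and (d), I compute directly in the Kirillov model, as in Lemma \ref{lem:testvector_unram2_sl2}(c). For $x\in\mathcal{O}^{\times2}$ we get $[\mathcal{R}'_\varpi f_\mu](x)=g(\chi_\varpi,\psi_{\varpi^{-1}})f_\mu(x)$, and the same computation on $\xi\mathcal{O}^{\times2}$ produces the sign $(\xi,\varpi)_{F,2}=-1$. For the $\beta$-translates the support lies in $\varpi^{-1}\mathcal{O}^\times$. There $\mathcal{R}'_\varpi$ produces $g(\chi_\varpi,\psi_{\varpi^{-2}})$, which vanishes by Lemma \ref{lem:gausssum1} because its conductor is $2\neq1=c(\chi_\varpi)$. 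Meanwhile $\mathcal{Q}'_\varpi$ produces $g(\chi_\varpi,\psi_{\varpi^{-1}})$, up to the sign $(\varpi^{-1},\varpi)_{F,2}$ coming from rescaling the variable, which I must track along with $(\xi,\varpi)_{F,2}$ for the $\alpha\beta$ translate.
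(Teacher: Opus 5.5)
Your proposal follows the paper's proof essentially step for step: the square-class description of $K(\tau_i,\varPi)$, the reduction of the $\alpha$- and $\beta$-translates to $f_\mu$, and the Jacquet--Langlands/Tunnell argument with $c(\varPi)=2l+1$. The point you flag as the main obstacle is harmless. Since $Z\subset Z'I$ and $\rho$ is trivial on $Z\cap I_l=1+\mathfrak{p}^l$, you have $c(\omega_\varPi)\le l$. If $c(\mu)\le l$, then $2c(\nu)\le 2l<2l+1$, using $c(\nu)=c(\nu^2)=c(\mu\omega_\varPi)$ when $c(\nu)\ge 2$ and $2c(\nu)\le 2$ otherwise. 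If $c(\mu)>l$, then $c(\nu)=c(\mu\omega_\varPi)=c(\mu)$.

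There is one computational slip in (c) and (d). Since $[\varPi(\beta)f](x)=\omega_\varPi(\varpi)f(\varpi^{-1}x)$, the vector $\varPi(\beta)f_\mu$ is supported on $\varpi\mathcal{O}^{\times2}$, and $\varPi(\alpha\beta)f_\mu$ on $\xi\varpi\mathcal{O}^{\times2}$; neither is supported on $\varpi^{-1}\mathcal{O}^\times$. With the support you wrote, $\mathcal{Q}'_\varpi$ would involve $\psi(\varpi^{-3}uy)$ and give $g(\chi_\varpi,\psi_{\varpi^{-3}})=0$, which contradicts (d). So your stated value for $\mathcal{Q}'_\varpi$ does not follow from your own computation.

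With the correct support, write $x=\varpi y$ with $y\in\mathcal{O}^\times$. Then $\mathcal{R}'_\varpi$ produces $\int_{\mathcal{O}^\times}(u,\varpi)_{F,2}\,du=0$, because $\psi(uy)=1$; the relevant Gauss sum is $g(\chi_\varpi,\psi)$, with conductor $0\neq 1$. The operator $\mathcal{Q}'_\varpi$ produces $\int_{\mathcal{O}^\times}(u,\varpi)_{F,2}\,\psi(\varpi^{-1}uy)\,du=(y,\varpi)_{F,2}\,g(\chi_\varpi,\psi_{\varpi^{-1}})$. The only sign is $(y,\varpi)_{F,2}$: it is $+1$ for the $\beta$-translate and $-1$ for the $\alpha\beta$-translate. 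No factor $(\varpi^{-1},\varpi)_{F,2}$ appears, since the Hilbert symbol involves only the unit variable $u$.

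The paper's display for $\mathcal{R}'_\varpi\varPi(\beta)f_\mu$ in the proof of Lemma \ref{lem:testvector_unram2_sl2}(c) contains the same $\psi_{\varpi^{-2}}$. There it is harmless, since the value is $0$ either way, but for (d) the correct support is essential.
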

\begin{proof}
    We know from Lemma \ref{lem:compatib-cpt_Man-KS} and Proposition \ref{prop:compatib-indices_Man-KS} that $\lambda_i$ is an irreducible strongly cuspidal representation of $J_1$ of conductor $l+1$ and defect $1$.
    Moreover, we know that $\varPi$ is a minimal supercuspidal representation of $\GL_2(F)$ of conductor $2l+1$ whose restriction to $G$ is a direct sum of $\tau_1$ and $\tau_2$.

    Let
    \begin{equation*}
        K(\varPi) = K(\tau_1, \varPi) \oplus K(\tau_2, \varPi)
    \end{equation*}
    be the decomposition parallel to
    \begin{equation*}
        \varPi|_G = \tau_1 \oplus \tau_2.
    \end{equation*}
    Then $K(\tau_i, \varPi)$ is the space of locally constant compactly supported $\C$-valued functions on $\xi^{i-1} \varpi^\Z \mathcal{O}^{\times 2}$.
    In particular, both $f_\mu$ and $\varPi(\beta) f_\mu$ belong to $K(\tau_1, \varPi)$, while both $\varPi(\alpha) f_\mu$ and $\varPi(\alpha\beta) f_\mu$ belong to $K(\tau_2, \varPi)$.

    Since $\prescript{\alpha}{}{\tau_1}=\tau_2$, the assertion (a) for $\varPi(\alpha) f_\mu$ follows from that for $f_\mu$.
    Since $\prescript{\beta}{}{\tau_i}=\tau_i$, the assertion (b) follows from the assertion (a).
    The proofs of (a) for $f_\mu$, (c), and (d) are same as those for Lemma \ref{lem:testvector_unram2_sl2} except that $c(\varPi)=2l+1$ now.
\end{proof}

\subsection{The final step}\label{subsec:sc-final}
Now, we start the proof of Theorems \ref{thm:conductor}, \ref{thm:main-nongeneric}, and \ref{thm:main-generic} for supercuspidal representations.
Let $\pi$ be an irreducible genuine supercuspidal representation of $\widetilde{G}$ and $\eta$ a character of $\mathcal{O}^\times$ with $\eta(-1)=z_\psi(\pi)$.
Take a compact subgroup $J=J_0$ or $J_1$ of $G$ and an irreducible strongly cuspidal representation $\lambda$ of $J$ such that $\pi=\cInd^{\widetilde{G}}_{\widetilde{J}}(\widetilde{\lambda})$, and let $\tau=\cInd^G_J(\lambda)$.
Let $\varPi$ be an irreducible minimal supercuspidal representation of $\GL_2(F)$ as in Lemma \ref{lem:testvector_unram2_sl2}, \ref{lem:testvector_unram4_sl2}, or \ref{lem:testvector_ram_sl2}.
In addition, let $\eta'$ be the corresponding character in the sense of the subsection \ref{subsec:preliminary (tentative)}.

The proof is based on the comparison of $\pi^{K_m, \RSnew}_\eta$ and $\pi^{K_m, \KUnew}_\eta$.
However, if $\pi$ is not $\psi_a$-generic for any $a \in \mathcal{O}^\times$, the latter will be zero and the comparison does not work well.
To address the problem, we introduce another subspace
\begin{equation*}
    \pi^{K_m, \Qold}_\eta = \pi^{K_{m-1}}_\eta + \operatorname{Ker}(\mathcal{Q}|_{\pi^{K_m}_\eta}),
\end{equation*}
and define $\pi^{K_m, \Qnew}_\eta$ to be the quotient of $\pi^{K_m}_\eta$ by $\pi^{K_m, \Qold}_\eta$.
It follows from \eqref{eq:prop_Q-operator} that
\begin{equation*}
    \pi^{K_m, \Qold}_\eta \supset \pi^{K_m, \RSold}_\eta,
\end{equation*}
and hence
\begin{equation*}
    \dim_\C \pi^{K_m, \Qnew}_\eta \leq \dim_\C \pi^{K_M, \RSnew}_\eta.
\end{equation*}

\begin{lemma}\label{lem:sc-generic}
    For any nontrivial additive character $\Psi$ of $F$, $\pi$ is $\Psi$-generic if and only if $\tau$ is $\Psi$-generic.
\end{lemma}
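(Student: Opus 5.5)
We transfer Whittaker functionals along compact induction, exactly as $\mathscr{G}_m$ was used to transfer $K_m$-invariants. We take $\pi=\cInd^{\widetilde{G}}_{\widetilde{J}}(\widetilde{\lambda})$ and $\tau=\cInd^G_J(\lambda)$. By Frobenius reciprocity for compact induction together with Mackey theory, we have
\begin{equation*}
    \Hom_N(\pi,\Psi)\cong \prod_{g} \Hom_{\widetilde{J}\cap {}^{g^{-1}}N}\bigl(\widetilde{\lambda}, {}^{g^{-1}}\Psi\bigr),
\end{equation*}
where $g$ runs over representatives of $\widetilde{J}\backslash\widetilde{G}/N$. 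The same formula holds for $\tau$, with $J\backslash G/N$ and $\lambda$ in place of $\widetilde{J}\backslash\widetilde{G}/N$ and $\widetilde{\lambda}$. Since $N$ and $J$ sit inside $\widetilde{G}$, the double coset spaces are naturally identified. By the Iwasawa decomposition, $G=J_\delta\, T\, N$ up to finitely many Weyl-type elements. It is therefore enough to use representatives of the form $(g,1)$ with $g\in T\cup wT$, or $g\in T\cup wt(\varpi)T$ when $J=J_1$.

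For each representative $g$, the group $J\cap {}^{g^{-1}}N$ is a compact open subgroup of a unipotent group, namely a conjugate of some $N_j$ or of some $N^{\mathrm{op}}_j$. The key step is to check that for every $n$ in this subgroup the splitting satisfies $\bm{s}_\delta(n)=1$, or more precisely that $\bm{s}_\delta(n)$ matches the sign with which $(n,1)$ is conjugated by $(g,1)$. For $J_0$ this follows from $\bm{s}(n(b))=\bm{s}(n^{\mathrm{op}}(c))=1$. For $J_1$ it follows from the corresponding values of $\bm{s}_1$ on $n(b)$ with $b\in\mathfrak{p}^{-1}$ and on $n^{\mathrm{op}}(c)$ with $c\in\mathfrak{p}$, together with Lemma~\ref{lem:spl_lv1_J_1}. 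We also need that conjugating a unipotent element by $(t(a),1)$ or by $(w,1)$ introduces no cocycle sign; this is the identity $(\gamma,\bm{s}(\gamma))^g=(\gamma^g,\bm{s}(\gamma))$ quoted from \cite{szpthesis}. Once this is in place, $\widetilde{\lambda}$ and $\lambda$ agree on every relevant subgroup, and likewise ${}^{g^{-1}}\Psi$ restricted to these subgroups is the same on both sides. Hence each Hom space for $\pi$ equals the corresponding Hom space for $\tau$.

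The sum is effectively finite, because for all but finitely many $g$ the character ${}^{g^{-1}}\Psi$ is nontrivial on a subgroup where $\lambda$ is trivial, and then the Hom space vanishes. Consequently the two Whittaker spaces are isomorphic, and in particular one is nonzero if and only if the other is. This gives the claim.

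The main obstacle is the sign bookkeeping for the $T$-part of the representatives. The elements $(t(\varpi^i),1)$ lie in $\widetilde{T}$, and $\widetilde{T}$ is not abelian modulo signs in the naive sense. We must make sure that conjugating $(n(b),1)$ by $(t(\varpi^i),1)$ gives exactly $(n(\varpi^{2i}b),1)$ with no sign. This follows from the Kubota cocycle, since $\bm{x}(n(b))=1$ and so the Hilbert symbols appearing there are trivial. Conjugation by $w$ or by $wt(\varpi)$ turns $N$ into $N^{\mathrm{op}}$, and there we would rely on Lemma~\ref{lem:spl_lv1_J_1} to check that $\bm{s}_1$ is trivial on $N^{\mathrm{op}}_1$.
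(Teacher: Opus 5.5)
Your proposal is correct and is essentially the paper's argument. Both use Frobenius reciprocity plus Mackey theory, then identify the two products factor by factor, using that $\bm{s}_\delta$ is trivial on $J\cap N$ and that conjugating $(n(b),1)$ by $(t(a),1)$ introduces no sign.

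The paper streamlines two points. First, $G=J_\delta TN$ holds exactly (Iwasawa, conjugated by $\beta$ for $J_1$), so representatives in $T$ suffice and ${}^{g}N=N$; your extra $wT$ and $wt(\varpi)T$ cosets are redundant because $w\in J_0$ and $wt(\varpi)\in J_1$. Second, no finiteness of the product is needed, since the two products agree factor by factor. Your finiteness justification would in any case need strong cuspidality in the direction where ${}^{g^{-1}}\Psi$ becomes trivial on $J\cap N$.
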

\begin{proof}
    Using Frobenius reciprocity and Mackey theory (\cite[Corollary 2.7 (1)]{yam}), we have
    \begin{align*}
        \Hom_N(\pi, \Psi)
         & =\Hom_{\widetilde{G}}(\cInd^{\widetilde{G}}_{\widetilde{J}} (\widetilde{\lambda}), \Ind^{\widetilde{G}}_N (\Psi))                                        \\
         & \cong \prod_{g\in \widetilde{J} \backslash \widetilde{G} / N} \Hom_{\widetilde{J} \cap \prescript{g}{}{N}} (\widetilde{\lambda}, \prescript{g}{}{\Psi}).
    \end{align*}
    Similarly, we have
    \begin{equation*}
        \Hom_N(\tau, \Psi)
        \cong \prod_{g\in J \backslash G / N} \Hom_{J \cap \prescript{g}{}{N}} (\lambda, \prescript{g}{}{\Psi}).
    \end{equation*}
    By the Iwasawa decomposition and the Levi decomposition, we know that $G=JTN$, and thus we may choose a representative system $R(J \backslash G / N)$ consists of elements in $T$.
    Then we have $\prescript{g}{}{N}=N$ for every $g \in R(J \backslash G / N)$.
    Of course $\widetilde{J} \backslash \widetilde{G} / N$ can be identified with $J \backslash G / N$, and we now have
    \begin{equation*}
        \Hom_N(\tau, \Psi)
        \cong \prod_{g\in R(J \backslash G / N)} \Hom_{J \cap N} (\lambda, \prescript{g}{}{\Psi})
    \end{equation*}
    and
    \begin{equation*}
        \Hom_N(\pi, \Psi)
        \cong \prod_{g\in R(J \backslash G / N)} \Hom_{\widetilde{J} \cap N} (\widetilde{\lambda}, \prescript{g}{}{\Psi}).
    \end{equation*}
    Since $\bm{s}_\delta$ is trivial on $J \cap N$, we have
    \begin{equation*}
        \Hom_{\widetilde{J} \cap N} (\widetilde{\lambda}, \prescript{g}{}{\Psi})
        = \Hom_{J \cap N} (\lambda, \prescript{g}{}{\Psi}).
    \end{equation*}
    Consequently, we have
    \begin{align*}
        \Hom_N(\pi, \Psi) \cong \Hom_N(\tau, \Psi).
    \end{align*}
    This completes the proof.
\end{proof}

\begin{proposition}\label{prop:sc-unram2}
    Assume that $\tau$ is an element of unramified supercuspidal $L$-packet of cardinality two.
    Then Theorems \ref{thm:conductor}, \ref{thm:main-nongeneric}, and \ref{thm:main-generic} hold for $\pi$ and $\eta$.
\end{proposition}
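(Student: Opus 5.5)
We will transfer everything to $\tau=\cInd^G_{J_\delta}(\lambda)$ through the isomorphisms $\mathscr{G}_m\colon \pi^{K_m}_\eta\to\tau^{K_m}_{\eta'}$. By Lemmas \ref{lem:comm_R-operator_sc} and \ref{lem:comm_Q-operator_sc}, these isomorphisms intertwine $\mathcal{R}_\varpi,\mathcal{Q}_\varpi$ with $\mathcal{R}'_\varpi,\mathcal{Q}'_\varpi$. They are also compatible with the inclusions $\pi^{K_{m-1}}_\eta\subset\pi^{K_m}_\eta$, since both $\mathcal{G}_m$ and $\mathcal{G}'_m$ are evaluations $\phi\mapsto(\phi(g^{-1}))_g$. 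Consequently the dimensions of $\pi^{K_m}_\eta$ and of $\pi^{K_m,\KUold}_\eta$, $\pi^{K_m,\Qold}_\eta$ agree with those of the analogous spaces for $\tau$ and $\eta'$. Note that $\KUold$ for $m\ge2$ uses only inclusions, $\mathcal{R}_\varpi$ and $\operatorname{Ker}\mathcal{R}_\varpi$, and that by Proposition \ref{prop:sc_<2} the levels $m\le1$ are zero.

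We now realize $\tau\oplus\tau^\beta$ inside the Kirillov model $K(\varPi)$ as in Lemma \ref{lem:testvector_unram2_sl2}, applied with $\mu=\eta'$ (observe $\eta'(-1)=\omega_\varPi(-1)$ follows from $\eta(-1)=z_\psi(\pi)$). Set $M=2\max(l,c(\eta'))$; since $c(\chi_\varpi)\le1\le l$, we have $\max(l,c(\eta'))=\max(l,c(\eta))$. There are two cases. If $\delta=\delta_l$, then $\tau$ is $\psi_a$-generic for some unit $a$, and $f_{\eta'},\varPi(\alpha)f_{\eta'}\in\tau^{K_M}_{\eta'}\setminus\tau^{K_{M-1}}_{\eta'}$ are $\mathcal{R}'_\varpi$-eigenvectors with eigenvalues $\pm g(\chi_\varpi,\psi_{\varpi^{-1}})$. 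Lemma \ref{lem:gausssum1} and the standard evaluation of the quadratic Gauss sum give $\pm\gamma_F(\varpi,\psi)q^{-1/2}\operatorname{vol}(\mathcal{O})$. If $\delta\ne\delta_l$, then $\tau$ is generic only for $a$ of odd valuation, by Lemma \ref{lem:sc-generic}, and we have the vectors $\varPi(\beta)f_{\eta'},\varPi(\alpha\beta)f_{\eta'}$ at level $M+1$, killed by $\mathcal{R}'_\varpi$ but eigenvectors of $\mathcal{Q}'_\varpi$ with nonzero eigenvalues.

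The upper bound comes from the Roberts--Schmidt count. By Theorem \ref{thm:rs-Mp2} and Proposition \ref{prop:num-generic}, $\sum_m\dim\pi^{K_m,\RSnew}_\eta=2$. We combine this with Proposition \ref{prop:compare-KU-RS}, and with $\Qold\supset\RSold$ in the non-unit-generic case. In the generic case, the two eigenvectors above lie outside $\operatorname{Ker}\mathcal{R}_\varpi$ and outside $\pi^{K_{M-1}}_\eta$; distinct nonzero eigenvalues also keep any combination out of these. The remaining point is that $\mathcal{R}_\varpi(\pi^{K_{M-1}}_\eta)$ vanishes because $\pi^{K_{M-1}}_\eta=0$.

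The step I expect to be the main obstacle is proving exactly that $\pi^{K_m}_\eta=0$ for $m<M$ and $\dim\pi^{K_M}_\eta=2$, i.e.\ that $\tau^{K_M}_{\eta'}$ is spanned by $f_{\eta'}$ and $\varPi(\alpha)f_{\eta'}$. I would argue it through the $\RSnew$ budget. The two vectors give $\dim\pi^{K_M,\KUnew}_\eta\ge2$ provided $\pi^{K_M,\KUold}_\eta=\operatorname{Ker}(\mathcal{R}_\varpi|)$ meets their span trivially, so the total $\RSnew$ count of $2$ forces all other $\RSnew$, hence all other $\KUnew$, to vanish for $m\ne M$. It also forces $\pi^{K_m}_\eta=0$ for $m<M$: a nonzero space at the minimal level $c_\eta(\pi)$ contributes at least one $\RSnew$ dimension, so $c_\eta(\pi)<M$ would give a third dimension, a contradiction. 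The same count shows $\pi^{K_M,\RSold}_\eta=0$, hence $\pi^{K_M,\KUold}_\eta=\operatorname{Ker}\mathcal{R}_\varpi=0$ and $\pi^{K_M}_\eta$ is exactly 2-dimensional; this yields $c_\eta(\pi)=2\max(l,c(\eta))$ and the statement of Theorem \ref{thm:main-generic}(2).

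In the non-unit-generic case we run the same argument with $\Qnew$ and the level $M+1$ vectors. This gives $c_\eta(\pi)=M+1=1+2\max(c(\lambda),c(\eta))$, using $c(\lambda)=l$ for defect $0$. It also gives that $\pi^{K_{M+1}}_\eta$ is 2-dimensional and lies in $\operatorname{Ker}\mathcal{R}_\varpi$ by transport of Lemma \ref{lem:testvector_unram2_sl2}(c), and that all other levels are $\RSold$. Hence $\KUnew=0$ everywhere, which is Theorem \ref{thm:main-nongeneric}.
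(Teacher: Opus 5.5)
There is a genuine gap, and it sits exactly at the step you flag as the main obstacle: the argument is circular. To get $\dim\pi^{K_M,\KUnew}_\eta\ge 2$, you replace $\pi^{K_M,\KUold}_\eta$ by $\operatorname{Ker}(\mathcal{R}_\varpi|_{\pi^{K_M}_\eta})$ and discard the term $\mathcal{R}_\varpi(\pi^{K_{M-1}}_\eta)$, both by invoking $\pi^{K_{M-1}}_\eta=0$. You then derive $\pi^{K_{M-1}}_\eta=0$ from the Roberts--Schmidt budget, but the budget yields it only once a lower bound of $2$ at level $M$ is already known.

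Nothing available beforehand gives this vanishing. Lemma \ref{lem:testvector_unram2_sl2}(a) only says $f_{\eta'}\notin\tau^{K_{M-1}}_{\eta'}$, not that $\tau^{K_{M-1}}_{\eta'}=0$. The $\SL_2$ conductor statement (Corollary \ref{cor:local-newform_sc_sl2}) is itself deduced from this proposition. What you do establish is that no nonzero combination of $v_+,v_-$ lies in $\pi^{K_{M-1}}_\eta$ or in $\operatorname{Ker}\mathcal{R}_\varpi$ separately. That does not exclude a nonzero combination from the \emph{sum} $\pi^{K_{M-1}}_\eta+\operatorname{Ker}(\mathcal{R}_\varpi|_{\pi^{K_M}_\eta})$, which is what linear independence in $\pi^{K_M,\KUnew}_\eta$ requires.

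The missing idea is Proposition \ref{prop:R-operator}(b). Since $M=2\max(l,c(\eta))$, there are two cases. Either $M-1\le 1$, and then $\pi^{K_{M-1}}_\eta=0$ by Proposition \ref{prop:sc_<2}. Or $M-1\ge\max(2,c(\eta)+1)$, and then $\mathcal{R}_\varpi$ maps $\pi^{K_{M-1}}_\eta$ into itself. Hence $\pi^{K_M,\KUold}_\eta=\pi^{K_{M-1}}_\eta+\operatorname{Ker}(\mathcal{R}_\varpi|_{\pi^{K_M}_\eta})$ without knowing any vanishing.

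Now suppose $v_0=av_++bv_-=x+k$ with $x\in\pi^{K_{M-1}}_\eta$ and $\mathcal{R}_\varpi k=0$. Then $\mathcal{R}_\varpi v_0=\mathcal{R}_\varpi x$ and $\mathcal{R}_\varpi^2 v_0=\mathcal{R}_\varpi^2 x$ both lie in $\pi^{K_{M-1}}_\eta$. These equal $g(av_+-bv_-)$ and $g^2(av_++bv_-)$, where $g=g(\chi_\varpi,\psi_{\varpi^{-1}})\neq 0$. So $av_+$ and $bv_-$ lie in $\pi^{K_{M-1}}_\eta$, which forces $a=b=0$.

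With this non-circular lower bound, your budget argument goes through as you describe: vanishing below level $M$, $\pi^{K_M}_\eta=\C v_+\oplus\C v_-$, and $\KUnew=0$ at all other levels. The non-unit-generic case needs the same repair, run with $\mathcal{Q}_\varpi$ and $\Qold$ in place of $\mathcal{R}_\varpi$ and $\KUold$, which is how the paper proceeds.
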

\begin{proof}
    Suppose first that $\tau$ is $\psi$-generic.
    Then by Lemmas \ref{lem:testvector_unram2_sl2} and \ref{lem:comm_R-operator_sc} we have
    \begin{equation*}
        \mathcal{R}_\varpi v_\pm = \pm g(\chi_\varpi, \psi_{\varpi^{-1}}) v_\pm \in \pi^{K_M}_\eta \setminus \pi^{K_{M-1}}_\eta,
    \end{equation*}
    where $v_+=\mathscr{G}_M^{-1}(f_{\eta'})$, $v_-= \mathscr{G}_M^{-1}(\varPi(\alpha)f_{\eta'})$, and $M=2 \max(c(\lambda), c(\eta'))$.
    We note that $M$ is equal to $2 \max(c(\lambda), c(\eta))$, since $c(\lambda) \geq 1$.
    In particular we have $M \geq 2$.
    By Propositions \ref{prop:U-operator}, \ref{prop:R-operator}, and \ref{prop:sc_<2}, we have
    \begin{equation*}
        \pi^{K_M, \KUold}_\eta = \pi^{K_{M-1}}_\eta + \operatorname{Ker}(\mathcal{R}_\varpi|_{\pi^{K_M}_\eta}).
    \end{equation*}
    We are now going to show that any nonzero element of $\C v_+ \oplus \C v_-$ does not lie in $\pi^{K_M, \KUold}_\eta$.
    Suppose that $v_0$ is an element in the intersection of $\C v_+ \oplus \C v_-$ and $\pi^{K_M, \KUold}_\eta$.
    Then both of $\mathcal{R}_\varpi v_0$ and $\mathcal{R}_\varpi^2 v_0$ lie in the intersection of $\C v_+ \oplus \C v_-$ and $\pi^{K_{M-1}}_\eta$.
    Since $g(\chi_\varpi, \psi_{\varpi^{-1}}) \neq 0$, this means that $\pi^{K_{M-1}}_\eta$ contains at least one of $v_+$ or $v_-$ unless $v_0 = 0$.
    Thus $v_0$ must be zero.
    Consequently, $v_+$ and $v_-$ are linearly independent in $\pi^{K_M, \KUnew}_\eta$.
    In particular, we have
    \begin{equation*}
        \dim_\C \pi^{K_M, \KUnew}_\eta \geq 2.
    \end{equation*}
    Recalling that
    \begin{equation*}
        \dim_\C \pi^{K_M, \KUnew}_\eta \leq \dim_\C \pi^{K_M, \RSnew}_\eta \leq \sum_{m \geq 0} \dim_\C \pi^{K_m, \RSnew}_\eta = 2,
    \end{equation*}
    we get
    \begin{align*}
        \dim_\C \pi^{K_M, \KUnew}_\eta & =2,              \\
        \dim_\C \pi^{K_m, \RSnew}_\eta & =\begin{cases*}
                                              2, & if $m=M$,  \\
                                              0, & otherwise,
                                          \end{cases*}
    \end{align*}
    and hence
    \begin{align*}
        \dim_\C \pi^{K_m, \KUnew}_\eta & =\begin{cases*}
                                              2, & if $m=M$,  \\
                                              0, & otherwise.
                                          \end{cases*}
    \end{align*}
    Since $\pi^{K_{c_\eta(\pi)}, \RSnew}_\eta$ is necessarily nonzero and equal to $\pi^{K_{c_\eta(\pi)}}_\eta$, we have
    \begin{align*}
        c_\eta(\pi)            & = M = 2 \max(c(\lambda), c(\eta)),       \\
        \pi^{K_M, \KUnew}_\eta & = \pi^{K_M}_\eta = \C v_+ \oplus \C v_-.
    \end{align*}
    Note that in this case $\pi$ is $\psi_a$-generic for every $a \in \mathcal{O}^\times$, as implied by the result of Kutzko--Sally and Lemma \ref{lem:sc-generic}.

    Suppose next that $\tau$ is not $\psi$-generic.
    Then by Lemmas \ref{lem:testvector_unram2_sl2} and \ref{lem:comm_Q-operator_sc} we have
    \begin{equation*}
        \mathcal{Q}_\varpi v_\pm = \pm g(\chi_\varpi, \psi_{\varpi^{-1}}) v_\pm \in \pi^{K_{M+1}}_\eta \setminus \pi^{K_M}_\eta,
    \end{equation*}
    where $v_+=\mathscr{G}_{M+1}^{-1}(\varPi(\beta)f_{\eta'})$, $v_-= \mathscr{G}_{M+1}^{-1}(\varPi(\alpha\beta)f_{\eta'})$, and $M=2 \max(c(\lambda), c(\eta'))$.
    By arguing as above and appealing to $\mathcal{Q}_\varpi$, $\pi^{K_m, \Qold}_\eta$, and $\pi^{K_m, \Qnew}_\eta$ instead of $\mathcal{R}_\varpi$, $\pi^{K_m, \KUold}_\eta$, and $\pi^{K_m, \KUnew}_\eta$ respectively, we deduce that $v_+$ and $v_-$ are linearly independent in $\pi^{K_m, \Qnew}_\eta$ and obtain
    \begin{align*}
        c_\eta(\pi)               & = M+1 = 2 \max(c(\lambda), c(\eta)) + 1,                                  \\
        \pi^{K_{M+1}, \Qnew}_\eta & = \pi^{K_{M+1}, \RSnew}_\eta = \pi^{K_{M+1}}_\eta = \C v_+ \oplus \C v_-,
    \end{align*}
    and
    \begin{equation}\label{eq:sc-unram4-nongen}
        \pi^{K_m, \RSnew}_\eta    = \{0\},
    \end{equation}
    for $m \neq M+1$.
    Lemma \ref{lem:comm_R-operator_sc} and (c) of Lemma \ref{lem:testvector_unram2_sl2} imply that $\mathcal{R}_\varpi$ vanishes both of $v_\pm$.
    In particular, $\pi^{K_{M+1}}_\eta$, which is spanned by $v_+$ and $v_-$, is contained in $\pi^{K_{M+1}, \KUold}_\eta$.
    Combined this with \eqref{eq:sc-unram4-nongen} above, we conclude that
    \begin{equation*}
        \pi^{K_m, \KUnew}_\eta = \{0\},
    \end{equation*}
    for every integer $m$.
    Note that in this case $\pi$ is not $\psi_a$-generic for any $a \in \mathcal{O}^\times$, as implied by the result of Kutzko--Sally and Lemma \ref{lem:sc-generic}.
    This completes the proof.
\end{proof}

\begin{proposition}\label{prop:sc-unram4}
    Assume that $\tau$ is an element of an unramified supercuspidal $L$-packet of cardinality four.
    Then Theorems \ref{thm:conductor}, \ref{thm:main-nongeneric}, and \ref{thm:main-generic} hold for $\pi$ and $\eta$.
\end{proposition}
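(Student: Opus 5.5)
I will mirror the proof of Proposition \ref{prop:sc-unram2}, using Lemma \ref{lem:testvector_unram4_sl2} in place of Lemma \ref{lem:testvector_unram2_sl2}. Write $\tau = \tau_{\delta,i}$ for some $\delta \in \{0,1\}$ and $i \in \{1,2\}$, with the notation of Lemma \ref{lem:testvector_unram4_sl2}. Here $c(\lambda)=1$ and $d(\lambda)=0$, and we put $M = 2\max(1, c(\eta')) = 2\max(c(\lambda), c(\eta))$. The main difference from the cardinality-two case is that each $\tau_{\delta,i}$ now receives only \emph{one} test vector, not two. Correspondingly, by Proposition \ref{prop:num-generic} and Theorem \ref{thm:rs-Mp2}, the total dimension $\sum_m \dim \pi^{K_m,\RSnew}_\eta$ is still $2$, as $\pi$ is supercuspidal and not an odd Weil representation.

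I will split into the two cases $\delta=1$ and $\delta=0$.

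\emph{Case $\delta=1$.} By Lemma \ref{lem:sc-generic} and Kutzko--Sally, $\tau_{1,i}$ is $\psi_a$-generic for $a$ in one square class of $\mathcal{O}^\times$ (namely $\psi$- or $\psi_\xi$-generic). Put $v = \mathscr{G}_M^{-1}(f_{\eta'})$ if $i=1$, and $v = \mathscr{G}_M^{-1}(\varPi(\alpha)f_{\eta'})$ if $i=2$. By Lemmas \ref{lem:testvector_unram4_sl2} and \ref{lem:comm_R-operator_sc}, $v \in \pi^{K_M}_\eta \setminus \pi^{K_{M-1}}_\eta$ and $\mathcal{R}_\varpi v = \pm g(\chi_\varpi,\psi_{\varpi^{-1}}) v$ with nonzero eigenvalue. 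Since $M\geq 2$, Propositions \ref{prop:U-operator}, \ref{prop:R-operator} and \ref{prop:sc_<2} reduce $\pi^{K_M,\KUold}_\eta$ to $\pi^{K_{M-1}}_\eta + \operatorname{Ker}(\mathcal{R}_\varpi|_{\pi^{K_M}_\eta})$. The eigenvector argument from Proposition \ref{prop:sc-unram2} then shows $v \notin \pi^{K_M,\KUold}_\eta$: if $v$ were old, applying $\mathcal{R}_\varpi$ would put $v$ in $\pi^{K_{M-1}}_\eta$. Hence $\dim \pi^{K_M,\KUnew}_\eta \ge 1$.

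For $\delta=1$, the same Lemma also supplies a second, independent source of newness. Since $\tau_{0,i}$ gives the $J_0$ partner with vector $\varPi(\beta)f_{\eta'}$ or $\varPi(\alpha\beta)f_{\eta'}$, the partner that lands in $\tau_{1,i}$ is instead $\mathcal{Q}$-new at level $M+1$. So I expect to show $\dim\pi^{K_{M+1},\Qnew}_\eta\ge 1$ for $\pi$ itself. The plan is to identify the $\mathcal{Q}_\varpi'$-eigenvector in $\tau_{1,i}$ at level $M+1$: I would try $\varPi(\beta)$ applied to the appropriate vector of $\tau_{0,i}$, and check its support in the Kirillov model, namely $\xi^{1+i+2\Z}\varpi^{2+2\Z}\mathcal{O}^\times$.

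The two contributions together use up the total of $2$, which forces $\pi^{K_m,\RSnew}_\eta$ to be $1$-dimensional at $m=M$ and at $m=M+1$. Since the RS-new space at level $c_\eta(\pi)$ is all of $\pi^{K_{c_\eta(\pi)}}_\eta$, we get $c_\eta(\pi)=M$, $\pi^{K_M}_\eta=\C v$, and $\pi^{K_M,\KUnew}_\eta$ is $1$-dimensional. At $m=M+1$ we must show the KU-new space vanishes. The $\mathcal{Q}$-eigenvector lies in $\operatorname{Ker}\mathcal{R}_\varpi$, by part (b) of Lemma \ref{lem:testvector_unram4_sl2} together with Lemma \ref{lem:comm_R-operator_sc}, so it is KU-old. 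This gives Theorem \ref{thm:main-generic}(1) together with the conductor $2\max(c(\lambda),c(\eta))$.

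\emph{Case $\delta=0$.} Here $\tau_{0,i}$ is $\psi_{\xi^j\varpi}$-generic only, so $\pi$ is not $\psi_a$-generic for any $a\in\mathcal{O}^\times$. Use $v=\mathscr{G}_{M+1}^{-1}(\varPi(\beta)f_{\eta'})$ (or its $\alpha$-twist). By part (c) of Lemma \ref{lem:testvector_unram4_sl2} and Lemma \ref{lem:comm_Q-operator_sc}, $\mathcal{Q}_\varpi v=\pm g\,v$ with $v\in\pi^{K_{M+1}}_\eta\setminus\pi^{K_M}_\eta$, so $v$ is $\mathcal{Q}$-new. The second unit of RS-newness should come from the companion vector at level $M$, and the conductor should come out as $1+2\max(c(\lambda),c(\eta))$. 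Finally, $\mathcal{R}_\varpi v=0$ by part (b), so $v$ is KU-old, and every KU-new space vanishes; this is Theorem \ref{thm:main-nongeneric}.

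The main obstacle is accounting for the full RS-count of $2$ in each case. Only one test vector per representation is handed to us directly, so the second RS-new direction must be located and shown to be KU-old (in the generic case) or to lie at the right level (in the non-generic case). If the companion vector is hard to pin down, my fallback is to first bound $\dim\pi^{K_M}_\eta\le 1$ directly. This would use $\mathscr{G}_M$ and the coset description of Lemma \ref{lem:coset_sc1} or Lemma \ref{lem:coset_sc0}, noting that $\lambda$ has dimension $(q-1)/2$, which constrains the fixed vectors.
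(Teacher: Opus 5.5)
Your proposal has a genuine gap, and it lies in the count you start from. You claim $\sum_m\dim_\C\pi^{K_m,\RSnew}_\eta=2$ because $\pi$ is ``supercuspidal and not an odd Weil representation''. In the cardinality-four case the opposite holds: $\pi$ \emph{is} an odd Weil representation, as the paper notes in a remark right after this proposition. You observe the relevant fact yourself. By the Kutzko--Sally facts in Subsection~\ref{subsec:sc-constr}, for every $\Psi$ exactly one member of the packet is $\Psi$-generic. More precisely, $\tau_{\delta,i}$ is $\psi_{\xi^j\varpi^\epsilon}$-generic exactly when $j+i$ and $\epsilon+\delta$ are both odd (see the proof of Lemma~\ref{lem:testvector_unram4_sl2}). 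So each $\tau_{\delta,i}$ is generic for a single class in $F^\times/F^{\times 2}$. By Lemma~\ref{lem:sc-generic} the same holds for $\pi$, so $\#F_\psi(\pi)/F^{\times 2}=1$. Proposition~\ref{prop:num-generic} and Theorem~\ref{thm:rs-Mp2} then give total RS-dimension $1$, not $2$.

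Everything built on the number $2$ breaks. For $\delta=1$, the second source of newness does not exist. Your candidate is $\varPi(\beta)\varPi(\beta)f_{\eta'}=\omega_\varPi(\varpi)\varPi(t(\varpi^{-1}))f_{\eta'}$, the vector supported on $\varpi^2\mathcal{O}^{\times 2}$. It is the $\beta_2$-type translate of the level-$M$ vector, so it lies at level $M+2$ rather than $M+1$, is RS-old, and is killed by $\mathcal{Q}_\varpi'$. For $\delta=0$, a ``companion vector at level $M$'' would contradict the conductor $M+1$ you are trying to prove. More seriously, with a claimed total of $2$ and only one test vector, you have no upper bound. You cannot conclude $\dim\pi^{K_M}_\eta=1$, $c_\eta(\pi)=M$, or vanishing of the KU-new spaces at other levels, since an unlocated RS-new direction could sit at any level and be KU-new. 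Your coset fallback is unspecified and would at best control level $M$.

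Once the count is corrected, your single test vectors suffice, and the argument is the paper's. For $\delta=1$, take $v=\mathscr{G}_M^{-1}(f_\tau)$ with $\mathcal{R}_\varpi v=\pm g(\chi_\varpi,\psi_{\varpi^{-1}})v\neq 0$. It is not KU-old, so $1\le\dim\pi^{K_M,\KUnew}_\eta\le\dim\pi^{K_M,\RSnew}_\eta\le 1$. This forces all other RS-new spaces, hence all other KU-new spaces, to vanish, and gives $c_\eta(\pi)=M$ and $\pi^{K_M}_\eta=\C v$.

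For $\delta=0$, take $v_1=\mathscr{G}_{M+1}^{-1}(\varPi(\beta)f_{\eta'})$, or its $\alpha$-twist. It is $\mathcal{Q}$-new, so $1\le\dim\pi^{K_{M+1},\Qnew}_\eta\le\dim\pi^{K_{M+1},\RSnew}_\eta\le 1$. Hence $c_\eta(\pi)=M+1$, $\pi^{K_{M+1}}_\eta=\C v_1$, and RS-new vanishes at all other levels. Since $\mathcal{R}_\varpi v_1=0$, every KU-new space is zero.
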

\begin{proof}
    We will only sketch the proof since its method is essentially same as that of Proposition \ref{prop:sc-unram2} above.

    Suppose first that $\tau$ is $\psi_a$-generic for some $a \in \mathcal{O}^\times$, and let $f_\tau \in \tau^{K_M}_{\eta'} \setminus \tau^{K_{M-1}}_{\eta'}$ be either $f_{\eta'}$ or $\varPi(\alpha)f_{\eta'}$, where $M=2 \max(1, c(\eta'))$
    Recall from Lemma \ref{lem:testvector_unram4_sl2} that exactly one of them is an element in $\tau^{K_M}_{\eta'} \setminus \tau^{K_{M-1}}_{\eta'}$.
    By Lemma \ref{lem:comm_R-operator_sc} and (b) of Lemma \ref{lem:testvector_unram4_sl2}, we have
    \begin{equation*}
        \mathcal{R}_\varpi v_0 = \pm g(\chi_\varpi, \psi_{\varpi^{-1}}) v_0 \in \pi^{K_M}_\eta \setminus \pi^{K_{M-1}}_\eta,
    \end{equation*}
    where $v_0=\mathscr{G}_M^{-1}(f_\tau)$.
    This implies that $v_0 \notin \pi^{K_M, \KUold}_\eta$ and
    \begin{equation*}
        1 \leq \dim_\C \pi^{K_M, \KUnew}_\eta \leq \dim_\C \pi^{K_M, \RSnew}_\eta \leq \sum_{m \geq 0} \dim_\C \pi^{K_m, \RSnew}_\eta = 1.
    \end{equation*}
    Consequently, we have
    \begin{align*}
        c_\eta(\pi)                    & = M = 2 \max(1, c(\eta)),                        \\
        \dim_\C \pi^{K_m, \KUnew}_\eta & =\dim_\C \pi^{K_m, \RSnew}_\eta =\begin{cases*}
                                                                              1, & if $m=M$,  \\
                                                                              0, & otherwise.
                                                                          \end{cases*}
    \end{align*}
    Note that in this case $\pi$ is $\psi_a$-generic for some $a \in \mathcal{O}^\times$.

    Suppose next that $\tau$ is $\psi_{a\varpi}$-generic for some $a \in \mathcal{O}^\times$.
    By Lemma \ref{lem:testvector_unram4_sl2}, there exists $f_\tau \in \tau^{K_{M+1}}_{\eta'} \setminus \tau^{K_M}_{\eta'}$ and $a_\tau \in \C^\times$ such that $\mathcal{R}_\varpi' f_\tau = 0$ and $\mathcal{Q}_\varpi' f_\tau = a_\tau f_\tau$.
    Then we see from Lemma \ref{lem:comm_Q-operator_sc} that $v_1=\mathscr{G}_M^{-1}(f_\tau)$ lies in $\pi^{K_{M+1}}_\eta \setminus \pi^{K_M}_\eta$, and satisfies $\mathcal{R}_\varpi v_1 = 0$ and $\mathcal{Q}_\varpi v_1 = a_\tau v_1$.
    Arguing as above, we obtain
    \begin{align*}
        c_\eta(\pi)                   & = M+1 = 2 \max(1, c(\eta)) + 1,                   \\
        \dim_\C \pi^{K_m, \Qnew}_\eta & =\dim_\C \pi^{K_m, \RSnew}_\eta =\begin{cases*}
                                                                             1, & if $m=M+1$, \\
                                                                             0, & otherwise,
                                                                         \end{cases*}
    \end{align*}
    and
    \begin{equation*}
        \pi^{K_m, \KUnew}_\eta = \{0\},
    \end{equation*}
    for any integer $m$.
    Note that in this case $\pi$ is not $\psi_a$-generic for any $a \in \mathcal{O}^\times$.
    This completes the proof.
\end{proof}
\begin{remark}
    If $\tau$ is an element of an unramified supercuspidal $L$-packet of cardinality four, then $\pi$ is an odd Weil representation.
\end{remark}

\begin{proposition}\label{prop:sc-ram}
    Assume that $\tau$ is an element of a ramified supercuspidal $L$-packet.
    Then Theorems \ref{thm:conductor} and \ref{thm:main-generic} hold for $\pi$ and $\eta$.
\end{proposition}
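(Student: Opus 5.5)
The plan follows the pattern of Proposition \ref{prop:sc-unram2}, adapted to the ramified $L$-packet via Lemma \ref{lem:testvector_ram_sl2}. In this case $J=J_1$ and $d(\lambda)=1$. By Lemma \ref{lem:compatib-cpt_Man-KS} and Proposition \ref{prop:compatib-indices_Man-KS}, $\tau$ is one of the two members $\tau_1,\tau_2$ of a ramified packet coming from $\varPi=\cInd^{\GL_2(F)}_{Z'I}(\rho)$ with $l(\rho)=c(\lambda)-1$. The generic characters of $\tau_1$ and $\tau_2$ are $\psi_a$ with $a\in\varpi^\Z\mathcal{O}^{\times2}$ and $a\in\xi\varpi^\Z\mathcal{O}^{\times2}$ respectively. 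By Lemma \ref{lem:sc-generic} the same holds for $\pi$, so $\pi$ is always $\psi$- or $\psi_\xi$-generic, and we are in case (1) of Theorem \ref{thm:main-generic}. Relabelling, I may assume $\tau=\tau_1$ and use $f_{\eta'}$; the case $\tau=\tau_2$ uses $\varPi(\alpha)f_{\eta'}$ identically. Put $M=\max(2l+1,2c(\eta'))$, where $l=c(\lambda)-1$. Since $c(\lambda)\ge1$ and $c(\eta')=c(\eta\chi_\varpi)$, the tame quadratic twist does not change the maximum except when $c(\eta)\le1$, and there $2l+1\ge 1\ge 2c(\eta')-2$ needs checking. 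I would verify directly that $M=\max(2c(\lambda)-1,2c(\eta))$, possibly needing $c(\lambda)\ge 2$ when $c(\eta)=0$ (because $c(\chi_\varpi)=1$). This bookkeeping is the first step.

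Next, put $v_0=\mathscr{G}_M^{-1}(f_{\eta'})$. Lemma \ref{lem:testvector_ram_sl2}(a),(c) together with Lemma \ref{lem:comm_R-operator_sc} give $v_0\in\pi^{K_M}_\eta\setminus\pi^{K_{M-1}}_\eta$ and $\mathcal{R}_\varpi v_0=g(\chi_\varpi,\psi_{\varpi^{-1}})v_0$. This eigenvalue is nonzero by Lemma \ref{lem:gausssum1}, since $c(\psi_{\varpi^{-1}})=1=c(\chi_\varpi)$. Moreover $M\ge 3$, so by Propositions \ref{prop:U-operator}, \ref{prop:R-operator} and \ref{prop:sc_<2},
\[
\pi^{K_M,\KUold}_\eta=\pi^{K_{M-1}}_\eta+\mathcal{R}_\varpi(\pi^{K_{M-1}}_\eta)+\operatorname{Ker}(\mathcal{R}_\varpi|_{\pi^{K_M}_\eta}).
\]
Suppose $v_0$ lay in this space. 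The same argument as in Proposition \ref{prop:sc-unram2}, applying $\mathcal{R}_\varpi$ and $\mathcal{R}_\varpi^2$ and using Proposition \ref{prop:R-operator}(c), would force $v_0$ into $\pi^{K_{M-1}}_\eta$, which is a contradiction. The middle summand $\mathcal{R}_\varpi(\pi^{K_{M-1}}_\eta)$ lies in $\pi^{K_{M-1}}_\eta$ whenever $M-1\ge\max(2,c(\eta)+1)$, which holds here. Hence $\dim\pi^{K_M,\KUnew}_\eta\ge1$.

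For the upper bound, Proposition \ref{prop:num-generic} gives $\#F_\psi(\pi)/F^{\times2}=2$, so Theorem \ref{thm:rs-Mp2} yields $\sum_m\dim\pi^{K_m,\RSnew}_\eta=2$. I also need to produce the second RS-newform in order to pin down the conductor. Take $v_1=\mathscr{G}_{M+1}^{-1}(\varPi(\beta)f_{\eta'})$. By Lemma \ref{lem:testvector_ram_sl2}(b),(c),(d) and Lemmas \ref{lem:comm_R-operator_sc}, \ref{lem:comm_Q-operator_sc}, it lies in $\pi^{K_{M+1}}_\eta\setminus\pi^{K_M}_\eta$, is killed by $\mathcal{R}_\varpi$, and is a nonzero $\mathcal{Q}_\varpi$-eigenvector. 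By \eqref{eq:prop_Q-operator} and the analogous argument, $v_1$ is nonzero in $\pi^{K_{M+1},\Qnew}_\eta$, and hence in $\pi^{K_{M+1},\RSnew}_\eta$. Likewise $v_0$ is nonzero in $\pi^{K_M,\RSnew}_\eta$ by Proposition \ref{prop:compare-KU-RS}. The total of 2 is therefore exhausted: $\dim\pi^{K_m,\RSnew}_\eta=1$ for $m=M,M+1$ and $0$ otherwise.

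It follows that $c_\eta(\pi)=M$ and $\pi^{K_M}_\eta=\C v_0=\pi^{K_M,\KUnew}_\eta$ is one-dimensional. At level $M+1$, the RS-new part is spanned modulo oldforms by $v_1\in\operatorname{Ker}\mathcal{R}_\varpi$, so $\pi^{K_{M+1},\KUnew}_\eta=0$. At all other levels the KU-new space vanishes because the RS-new space does. The main obstacle is the conductor bookkeeping relating $c(\eta')$ to $c(\eta)$ and checking that Lemma \ref{lem:testvector_ram_sl2} applies with $\mu=\eta'$, in particular the parity condition $\eta'(-1)=\omega_\varPi(-1)$ and the matching of central signs under the twist by $\chi_\varpi$.
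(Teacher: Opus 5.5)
Your proposal is correct and follows the paper's proof essentially step for step. Both use the $\mathcal{R}_\varpi$-eigenvector $v_0=\mathscr{G}_M^{-1}(f_{\eta'})$ as a KU-new vector at level $M$ and the $\mathcal{Q}_\varpi$-eigenvector $v_1=\mathscr{G}_{M+1}^{-1}(\varPi(\beta)f_{\eta'})$ as a $\mathcal{Q}$-new vector at level $M+1$, and both exhaust the Roberts--Schmidt total of $2$.

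The bookkeeping you flag is harmless. When $c(\eta)\ge2$ we have $c(\eta')=c(\eta)$; otherwise $l\ge1$ gives $2c(\lambda)-1\ge3>2\ge 2c(\eta),2c(\eta')$, so $\max(2c(\lambda)-1,2c(\eta'))=\max(2c(\lambda)-1,2c(\eta))$. Also, with a single eigenvector one application of $\mathcal{R}_\varpi$ already suffices, so you do not need $\mathcal{R}_\varpi^2$.
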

\begin{proof}
    The method of proof is essentially same as that of Proposition \ref{prop:sc-unram2} above.

    By Lemma \ref{lem:testvector_ram_sl2}, there exist $f_\tau \in \tau^{K_M}_{\eta'} \setminus \tau^{K_{M-1}}_{\eta'}$ and $a_\tau \in \C^\times$ such that $\varPi(\beta)f_\tau \in \tau^{K_{M+1}}_{\eta'} \setminus \tau^{K_M}_{\eta'}$, $\mathcal{R}_\varpi' f_\tau = a_\tau f_\tau$, $\mathcal{R}_\varpi' \varPi(\beta)f_\tau = 0$, and $\mathcal{Q}_\varpi' \varPi(\beta)f_\tau= a_\tau \varPi(\beta)f_\tau$, where $M=\max(2c(\lambda)-1, 2c(\eta'))$.
    Then $v_0=\mathscr{G}_M^{-1}(f_\tau)$ and $v_1=\mathscr{G}_{M+1}^{-1}(\varPi(\beta)f_\tau)$ lie in $\pi^{K_M}_\eta \setminus \pi^{K_{M+1}}_\eta$ and $\pi^{K_{M+1}}_\eta \setminus \pi^{K_M}_\eta$, respectively.
    Moreover, we have $\mathcal{R}_\varpi v_0 = a_\tau v_0$, $\mathcal{R}_\varpi v_1 = 0$, and $\mathcal{Q}_\varpi v_1 = a_\tau v_1$, thanks to Lemmas \ref{lem:comm_R-operator_sc} and \ref{lem:comm_Q-operator_sc}.
    These imply that $v_0 \notin \pi^{K_M, \KUold}_\eta$ and $v_1 \notin \pi^{K_{M+1}, \Qold}_\eta$.
    In particular, we have
    \begin{align*}
        \dim_\C \pi^{K_M, \KUnew}_\eta    & \geq 1, \\
        \dim_\C \pi^{K_{M+1}, \Qnew}_\eta & \geq 1,
    \end{align*}
    and
    \begin{align*}
        2 & \leq \dim_\C \pi^{K_M, \KUnew}_\eta + \dim_\C \pi^{K_{M+1}, \Qnew}_\eta                                                           \\
          & \leq \dim_\C \pi^{K_M, \RSnew}_\eta + \dim_\C \pi^{K_{M+1}, \RSnew}_\eta \leq \sum_{m \geq 0} \dim_\C \pi^{K_m, \RSnew}_\eta = 2.
    \end{align*}
    Therefore, we have
    \begin{align*}
        c_\eta(\pi)                    & = M = \max(2c(\lambda)-1, 2c(\eta)), \\
        \pi^{K_M}_\eta                 & = \C v_0,                            \\
        \pi^{K_{M+1}}_\eta             & = \C v_0 \oplus \C v_1,              \\
        \dim_\C \pi^{K_m, \RSnew}_\eta & =\begin{cases*}
                                              1, & if $m=M, M+1$, \\
                                              0, & otherwise,
                                          \end{cases*}
    \end{align*}
    and
    \begin{align*}
        \dim_\C \pi^{K_m, \KUnew}_\eta  =\begin{cases*}
                                             1, & if $m=M$,  \\
                                             0, & otherwise.
                                         \end{cases*}
    \end{align*}
    Note that $\pi$ is either $\psi$-generic or $\psi_\xi$-generic.
    This completes the proof.
\end{proof}
Now, we have finished the proof of our main theorems.
\begin{corollary}\label{cor:local-newform_sc_sl2}
    For any character $\mu$ of $\mathcal{O}^\times$ such that $\tau(-1_2)$ is a scalar multiplication by $\mu(-1)$, we have
    \begin{align*}
        c_\mu(\tau) = \begin{cases*}
                          \max(c(\varPi), 2c(\mu)),   & if $\tau$ is $\psi_a$-generic for some $a \in \mathcal{O}^\times$,    \\
                          1+\max(c(\varPi), 2c(\mu)), & if $\tau$ is not $\psi_a$-generic for any $a \in \mathcal{O}^\times$.
                      \end{cases*}
    \end{align*}
    Moreover, $\tau^{K_{c_\mu(\tau)}}_\mu$ is spanned by the vectors given in (a) of Lemmas \ref{lem:testvector_unram2_sl2}, \ref{lem:testvector_unram4_sl2}, and \ref{lem:testvector_ram_sl2}.
    In particular, an appropriate Whittaker functional does not vanish on $\tau^{K_{c_\mu(\tau)}}_\mu$.
\end{corollary}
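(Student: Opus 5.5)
The plan is to read the corollary off from the supercuspidal computations just completed, transported back to $G$ through the isomorphisms $\mathscr{G}_m$ of Subsection \ref{subsec:preliminary (tentative)}.

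First, fix the dictionary. Given $\tau=\cInd^G_{J_\delta}(\lambda)$ and $\mu$, let $\pi=\cInd^{\widetilde{G}}_{\widetilde{J}_\delta}(\widetilde{\lambda})$, and take $\eta$ with $\eta'=\eta\chi_\varpi^\delta=\mu$, i.e.\ $\eta=\mu\chi_\varpi^\delta$. Since $c(\chi_\varpi)\le 1\le c(\lambda)$, we have $\max(c(\lambda),c(\eta))=\max(c(\lambda),c(\mu))$ in the defect-$0$ case. In the defect-$1$ case, $2c(\eta)$ and $2c(\mu)$ agree whenever they can exceed $2c(\lambda)-1$. The parity hypothesis $\mu(-1)=\tau(-1_2)$ should translate into $\eta(-1)=z_\psi(\pi)$. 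I would check this from the formula $\bm{s}_\delta(\gamma^g)\bm{s}(\gamma)=\chi_\varpi^\delta(\gamma)$ applied to $\gamma=-1_2$; this bookkeeping is the one step that needs care. If it failed, both invariant spaces would vanish identically and there would be nothing to prove.

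Because each $\mathscr{G}_m\colon\pi^{K_m}_\eta\to\tau^{K_m}_{\mu}$ is a linear isomorphism, we get $c_\mu(\tau)=c_\eta(\pi)$, and the spaces at that level correspond. Lemma \ref{lem:sc-generic} says $\tau$ and $\pi$ are generic for the same additive characters.

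Next, compare the conductor formulas. Propositions \ref{prop:sc-unram2}, \ref{prop:sc-unram4} and \ref{prop:sc-ram} give $c_\eta(\pi)$. By Proposition \ref{prop:compatib-indices_Man-KS} and Proposition \ref{prop:Kut-cond}, $c(\varPi)=2c(\lambda)$ in the defect-$0$ case and $c(\varPi)=2l+1=2c(\lambda)-1$ in the defect-$1$ case. Substituting these values turns those formulas into the two displayed cases. In the ramified case $\tau$ is always $\psi$- or $\psi_\xi$-generic, so only the first line arises there.

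For the spanning statement, note that in each of the three propositions the space $\pi^{K_{c_\eta(\pi)}}_\eta$ was shown to be spanned by $\mathscr{G}^{-1}$ of the test vectors:
\begin{itemize}
\item $f_{\eta'}$ and $\varPi(\alpha)f_{\eta'}$ in the generic case;
\item $\varPi(\beta)f_{\eta'}$ and $\varPi(\alpha\beta)f_{\eta'}$ in the non-generic case;
\item the single appropriate vector from Lemma \ref{lem:testvector_unram4_sl2} or \ref{lem:testvector_ram_sl2} in the other cases.
\end{itemize}
Applying $\mathscr{G}$ gives that $\tau^{K_{c_\mu(\tau)}}_\mu$ is spanned by those vectors lying in $\tau$.

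Finally, these are elements of the Kirillov model. Evaluation $\operatorname{ev}_a$ at a point $a$ of their support, for example $a=1$ for $f_\mu$ or $a=\xi$ for $\varPi(\alpha)f_\mu$, is a nonzero $\psi_a$-Whittaker functional on the corresponding summand. Hence an appropriate Whittaker functional does not vanish on $\tau^{K_{c_\mu(\tau)}}_\mu$.
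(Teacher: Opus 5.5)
Your proposal is correct and is essentially the paper's own argument: the paper reads the corollary off from Propositions \ref{prop:sc-unram2}, \ref{prop:sc-unram4}, \ref{prop:sc-ram} through the isomorphisms $\mathscr{G}_m$ (with $\eta'=\mu$). You have filled in the omitted bookkeeping correctly: $c(\varPi)=2c(\lambda)-d(\lambda)$, the comparison of $c(\eta)$ with $c(\mu)$, the identification of the spanning vectors, and evaluation maps as Whittaker functionals. One small correction: the parity check is needed, not optional, since if it failed the corollary would be false rather than vacuous; it does hold, because $\bm{s}_\delta(-1_2)=\chi_\varpi(-1)^\delta$ gives $z_\psi(\pi)=\chi_\varpi(-1)^\delta\,\tau(-1_2)$ (alternatively, the test-vector lemmas already give $\tau^{K_m}_\mu\neq\{0\}$ for some $m$).
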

\begin{proof}
    The corollary follows from Propositions \ref{prop:sc-unram2}, \ref{prop:sc-unram4}, \ref{prop:sc-ram}, and the family of isomorphisms $\{\mathscr{G}_m\}_{m \geq 0}$.
\end{proof}

\section{Variation of compact subgroups}\label{sec:var-subgroup}
We continue to take $F$ to be a $p$-adic field with odd residual characteristic.
The group $G=\SL_2(F)$ has exactly two maximal open compact subgroups up to conjugacy.
One is $K_0=\SL_2(\mathcal{O})$.
The other is $\prescript{\beta}{}{K_0}$, for which we shall write $L_0$.
Recall from Subsection \ref{subsec:sc-constr} that $L_0=J_1$ can be regarded as a subgroup of $\widetilde{G}$ via $\gamma \mapsto (\gamma, \bm{s}_1(\gamma))$.
In addition, put $L_m=\prescript{\beta}{}{K_m}$ for every $m \geq 1$.
In this little section, we state the theory of local newforms for $\widetilde{G}$ relative to $\{L_m\}_{m \geq 0}$.

As before, for any character $\eta$ of $\mathcal{O}^\times$, we also write $\eta$ for the map
\begin{equation*}
    L_m \ni \left( \begin{array}{cc}a&b\\ c&d \end{array} \right) \mapsto \eta(d) \in \C^\times,
\end{equation*}
which is a character if $m \geq c(\eta)$.
Then we put
\begin{equation*}
    \pi^{L_m}_\eta =\begin{cases*}
        \Set{v\in V | \pi(\gamma)v=\eta(\gamma)v,\ \text{for all $\gamma\in L_m$}}, & if $m \geq c(\eta)$, \\
        \{0\},                                                                      & if $m<c(\eta)$,
    \end{cases*}
\end{equation*}
and define a linear operator $\mathcal{V}_{\varpi^2}$ by
\begin{equation*}
    \mathcal{V}_{\varpi^2} v  = ( -1, \varpi )_{F,2}\int_{\mathcal{O}} \pi\left( \left( \left( \begin{array}{cc}\varpi&\varpi^{-2} u\\ 0&\varpi^{-1} \end{array} \right), 1 \right) \right) v du.
\end{equation*}
We finally define the space of local oldforms and newforms (in the sense of Kohnen and Ueda) relative to $L_m$ by
\begin{align*}
    \pi^{L_m, \KUold}_\eta & = \pi^{L_{m-1}}_\eta + \mathcal{V}_{\varpi^2}\left( \pi^{L_{m-1}}_\eta \right) + \left( \mathcal{Q}_\varpi\left( \pi^{L_{m-1}}_\eta \right) \cap \pi^{L_m}_\eta \right) + \operatorname{Ker}(\mathcal{Q}_\varpi|_{\pi^{L_m}_\eta}), \\
    \pi^{L_m, \KUnew}_\eta & = \pi^{L_m}_\eta / \pi^{L_m, \KUold}_\eta.
\end{align*}
\begin{corollary}\label{cor:main-thm-L_m}
    Let $\psi \colon F \to \C^\times$ be a nontrivial additive character of conductor 1. 
    Let $(\pi, V)$ be an irreducible genuine representation of $\widetilde{G}$.
    Let $\eta$ be a character of $\mathcal{O}^\times$.
    Then the subspace $\pi^{L_m}_\eta$ is zero for every $m$ if and only if $\eta(-1) \neq z_\psi(\pi)$.

    Assume that $\eta(-1) = z_\psi(\pi)$.
    If $\pi$ is neither $\psi$-generic nor $\psi_\xi$-generic, then we have
    \begin{equation*}
        \pi^{L_m, \KUnew}_\eta = \{0\}
    \end{equation*}
    for all $m$.
    If $\pi$ is $\psi_a$-generic for some $a \in \mathcal{O}^\times$, then $\pi^{L_m, \KUnew}_\eta$ is zero unless $m$ is the smallest number such that $\pi^{L_m}_\eta$ is not zero, in which case it has dimension 1 or 2.
    Moreover, in the case of dimension 2, we have
    \begin{equation*}
        \pi^{K_m, \KUnew}_\eta = \pi^{K_m}_\eta,
    \end{equation*}
    for such $m$, and it has a basis $\{v_+, v_-\}$ such that
    \begin{equation*}
        \mathcal{Q}_\varpi v_\pm = \pm \gamma_F(\varpi, \psi) q^{-\frac{1}{2}} \operatorname{vol}(\mathcal{O}) v_\pm.
    \end{equation*}
\end{corollary}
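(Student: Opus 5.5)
We reduce Corollary \ref{cor:main-thm-L_m} to Theorems \ref{thm:main-nongeneric} and \ref{thm:main-generic} by conjugating with $\beta$. Let $\psi$ have conductor $1$ and put $\psi^\circ = \psi_{\varpi}$, which has conductor $0$. Given $\pi$ on $V$, consider the conjugate representation $\pi^\beta$ of $\widetilde{G}$ defined by $\pi^\beta(\tilde g) = \pi((\beta,1)\tilde g(\beta,1)^{-1})$. Here $\beta\notin G$, so we realise the conjugation by $(\beta,1)$ as the automorphism of $\widetilde{G}$ lifting $g\mapsto \beta g\beta^{-1}$. The Kubota cocycle is compatible with conjugation by diagonal elements of $\GL_2(F)$ up to an explicit Hilbert-symbol coboundary, as in \cite[(2.15)]{szpthesis}. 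Since $L_m=\prescript{\beta}{}{K_m}$, we get $\pi^{L_m}_\eta = (\pi^\beta)^{K_m}_{\eta''}$, where $\eta''$ absorbs the discrepancy between the splittings $\bm{s}_1$ on $L_0=J_1$ and $\bm{s}$ on $K_0$. By Lemmas \ref{lem:spl_lv1} and \ref{lem:spl_lv1_J_1}, on the relevant elements this discrepancy is $(\varpi,d)_{F,2}=\chi_\varpi(d)$. One must check that it cancels against the coboundary from the cocycle conjugation, so that $\eta''=\eta$. That is the first thing to verify.

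Next we transport the operators. We have $\beta n(\varpi^{-1}u)\beta^{-1} = n(\varpi^{-2}u)$, and conjugating $\operatorname{diag}(\varpi,\varpi^{-1})n(\varpi^{-1}u)$ gives $\operatorname{diag}(\varpi,\varpi^{-1})n(\varpi^{-2}u)$ up to the lift sign. Hence the identification $V\to V$ carries $\mathcal{R}_\varpi$ for $\pi^\beta$ to $\mathcal{Q}_\varpi$ for $\pi$, and $\mathcal{U}_{\varpi^2}$ to $\mathcal{V}_{\varpi^2}$. The sign $(-1,\varpi)_{F,2}$ in the definition of $\mathcal{V}_{\varpi^2}$ is exactly the cocycle correction from lifting $\operatorname{diag}(\varpi,\varpi^{-1})$, and it does not affect images anyway. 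It follows that $\pi^{L_m,\KUold}_\eta$ corresponds to the uniform version of $(\pi^\beta)^{K_m,\KUold}_\eta$ in Remark \ref{rem:uni-def}, and so the newform spaces correspond as well. We must also check that the uniform definition in Remark \ref{rem:uni-def} agrees with Definition \ref{def:local-KU} for every $\pi$, not only for unramified principal series. For $m\ge2$, Proposition \ref{prop:R-operator}(b) shows that $\mathcal{R}_\varpi(\pi^{K_{m-1}}_\eta)\subset\pi^{K_m}_\eta$. The $\mathcal{U}$-term lies in the $\mathcal{R}$-kernel plus lower terms for the representations at hand, and the $m\le1$ cases can be checked directly from Section \ref{sec:non-sc} and Proposition \ref{prop:sc_<2}.

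Finally we match the invariants. The conjugate $\pi^\beta$ is again irreducible genuine. Conjugating $n(x)$ by $\beta$ gives $n(\varpi^{-1}x)$, so $\pi$ is $\psi_a$-generic if and only if $\pi^\beta$ is $\psi^\circ_{a}$-generic (check that the factor $\varpi$ lands correctly: $\psi(a\varpi^{-1}\cdot)$ versus $\psi^\circ$). Thus "$\psi$- or $\psi_\xi$-generic" for $\pi$ becomes "$\psi^\circ_a$-generic for some $a\in\mathcal{O}^\times$" for $\pi^\beta$. The central element $-1_\psi$ is fixed by conjugation, and $z_\psi(\pi^\beta)=z_\psi(\pi)$. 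We then need to compare this with $z_{\psi^\circ}(\pi^\beta)$. Since $\gamma_F(-1,\psi_\varpi)=(-1,\varpi)_{F,2}\gamma_F(-1,\psi)$, the two differ by $(-1,\varpi)_{F,2}$, and this must be compensated by $\eta''$, or equivalently by the $\chi_\varpi(-1)$ from the splitting. This is the same bookkeeping as in the first step.

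Applying Theorem \ref{thm:conductor} (vanishing iff $\eta(-1)\neq z$), Theorem \ref{thm:main-nongeneric} and Theorem \ref{thm:main-generic} to $\pi^\beta$ and $\psi^\circ$ then yields everything. The eigenvalue becomes $\pm\gamma_F(\varpi,\psi^\circ)q^{-1/2}\operatorname{vol}(\mathcal{O})$ with $\gamma_F(\varpi,\psi_\varpi)=(\varpi,\varpi)_{F,2}\gamma_F(\varpi,\psi)$. Any resulting sign can be absorbed by swapping $v_+$ and $v_-$. The main obstacle is this sign and character bookkeeping ($\eta$ versus $\eta\chi_\varpi$, and the central sign) under $\beta$-conjugation. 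We would settle it by computing $\bm{s}_1(\beta\gamma\beta^{-1})\bm{s}(\gamma)$ explicitly for $\gamma=t(a)$, $n(b)$ and $n^{\mathrm{op}}(c)$.
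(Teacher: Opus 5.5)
Your strategy is the paper's: conjugate by $\beta$ and identify $\pi^{L_m}_\eta$ with $(\pi^\beta)^{K_m}_\eta$. Then use $\mathcal{R}_\varpi^{\pi^\beta}=\mathcal{Q}_\varpi^{\pi}$ and $\mathcal{U}_{\varpi^2}^{\pi^\beta}=\mathcal{V}_{\varpi^2}^{\pi}$, and apply Theorems \ref{thm:conductor}, \ref{thm:main-nongeneric} and \ref{thm:main-generic} to $\pi^\beta$ with $\psi_\varpi$. Three parts are fine as written:
\begin{itemize}
\item your transport of genericity ($\psi_{a\varpi}$-genericity of $\pi^\beta$ is $\psi_{a\varpi^2}$-genericity of $\pi$);
\item your handling of the eigenvalue, where swapping $v_\pm$ absorbs $(\varpi,\varpi)_{F,2}$;
\item your check that the uniform definition of Remark \ref{rem:uni-def} agrees with Definition \ref{def:local-KU} for every $\pi$, which the paper uses only tacitly.
\end{itemize}
For $m\geq 2$ that last check is immediate: $\mathcal{U}_{\varpi^2}(\pi^{K_{m-1}}_\eta)\subset\pi^{K_{m-1}}_\eta$ by Proposition \ref{prop:U-operator}(b), and $\mathcal{R}_\varpi(\pi^{K_{m-1}}_\eta)\subset\pi^{K_m}_\eta$ by Proposition \ref{prop:R-operator}(b).

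The step that fails as written is the central-sign bookkeeping. The lift of conjugation by $\beta$ does \emph{not} fix $-1_\psi$. Kubota's formula, quoted in the paper, gives $\prescript{\beta}{}{(t(a),\epsilon)}=(t(a),\epsilon\,(a,\varpi)_{F,2})$, so $\prescript{\beta}{}{(-1_2,\epsilon)}=(-1_2,\epsilon\,(-1,\varpi)_{F,2})$. Combined with $\gamma_F(-1,\psi_\varpi)=(-1,\varpi)_{F,2}\gamma_F(-1,\psi)$, this yields $\prescript{\beta}{}{(-1_{\psi_\varpi})}=-1_\psi$. Hence $z_{\psi_\varpi}(\pi^\beta)=z_\psi(\pi)$ directly.

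The same formula gives $\prescript{\beta}{}{(t(a),\bm{s}(t(a)))}=(t(a),\bm{s}_1(t(a)))$. More generally, the splitting over $L_0\cong\SL_2(\mathcal{O})$ is unique, because $\SL_2(\mathcal{O})$ is generated by unipotents and has no nontrivial quadratic characters for odd $p$. Therefore $\prescript{\beta}{}{(\gamma,\bm{s}(\gamma))}=(\prescript{\beta}{}{\gamma},\bm{s}_1(\prescript{\beta}{}{\gamma}))$ for all $\gamma\in K_0$. Since $\prescript{\beta}{}{\gamma}$ has the same $(2,2)$-entry as $\gamma$, you get $\eta''=\eta$ exactly.

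So there is no leftover $(-1,\varpi)_{F,2}$ for $\eta''$ to ``compensate''. Your two requirements, $\eta''=\eta$ and $\eta''$ absorbing a $(-1,\varpi)_{F,2}$ in the central sign, are incompatible. With your claim $z_\psi(\pi^\beta)=z_\psi(\pi)$, the hypothesis $\eta(-1)=z_\psi(\pi)$ would not transfer to $\eta(-1)=z_{\psi_\varpi}(\pi^\beta)$ when $q\equiv 3\pmod 4$, and the main theorems would not apply. The $\chi_\varpi(-1)$ discrepancy you found at $t(-1)$ is exactly what moves $-1_\psi$ to $-1_{\psi_\varpi}$. As a consistency check, for $c(\psi)=1$ one has $\gamma_F(-1,\psi)=(-1,\varpi)_{F,2}$, so $-1_\psi=(-1_2,\bm{s}_1(-1_2))$ lies in $L_0$, matching the corollary's condition $\eta(-1)=z_\psi(\pi)$. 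With this correction your argument coincides with the paper's.
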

\begin{proof}
    In general, any element $h$ in $\GL_2(F)$ gives an automorphism $x \mapsto \prescript{h}{}{x} = hxh^{-1}$ of $G$, and this lifts to $\widetilde{G}$.
    We shall write $g \mapsto \prescript{h}{}{g}$ for this action on $\widetilde{G}$, which can be regarded as the conjugation in the metaplectic cover of $\GL_2(F)$.
    This gives us another representation
    \begin{equation*}
        [\pi^h](g) = \pi(\prescript{h}{}{g})
    \end{equation*}
    of $\widetilde{G}$ on the underlying space $V$ of $\pi$.
    It also gives a map $\prescript{h}{}{\eta}$ from $\prescript{h}{}{K_m}$ to $\C^\times$ by $\prescript{h}{}{\gamma} \mapsto \eta(\gamma)$.
    Then we define $\pi^{\prescript{h}{}{K_m}}_{\prescript{h}{}{\eta}}$ in the obvious manner, i.e.,
    \begin{equation*}
        \pi^{\prescript{h}{}{K_m}}_{\prescript{h}{}{\eta}}
        = \Set{v \in V | \pi(\gamma) v = \prescript{h}{}{\eta}(\gamma) v, \quad \text{for all }\gamma \in \prescript{h}{}{K_m} }.
    \end{equation*}
    It easy to see that $\pi^{\prescript{h}{}{K_m}}_{\prescript{h}{}{\eta}}$ is equal to $(\pi^h)^{K_m}_\eta$.
    It is also easy to check that $\pi$ is $(\prescript{h}{}{N}, \prescript{h}{}{\Psi})$-generic if and only if $\pi^h$ is $(N, \Psi)$-generic, for any character $\Psi$ of $N$, where $\prescript{h}{}{\Psi}$ denotes the character of $\prescript{h}{}{N}$ that sends $\prescript{h}{}{n}$ to $\Psi(n)$.

    Consider the special case $h=\beta$.
    Let us write $\mathcal{R}_\varpi^\pi$ (resp. $\mathcal{U}_{\varpi^2}^\pi$, resp. $\mathcal{Q}_\varpi^\pi$, resp. $\mathcal{V}_{\varpi^2}^\pi$) for $\mathcal{R}_\varpi$ (resp. $\mathcal{U}_{\varpi^2}$, resp. $\mathcal{Q}_\varpi$, resp. $\mathcal{V}_{\varpi^2}$) in order to clarify that it depends not only on $V$ but also $\pi$.
    Since we know from \cite{kub} that
    \begin{equation*}
        \prescript{\beta}{}{\left( \left( \begin{array}{cc}a&b\\ &a^{-1} \end{array} \right), 1 \right)} = \left( \left( \begin{array}{cc}a&\varpi^{-1}b\\ &a^{-1} \end{array} \right), ( a, \varpi )_{F,2} \right),
    \end{equation*}
    by a straightforward calculation, we have
    \begin{align*}
        \mathcal{R}_\varpi^{\pi^\beta}     & = \mathcal{Q}_\varpi^\pi,     \\
        \mathcal{U}_{\varpi^2}^{\pi^\beta} & = \mathcal{V}_{\varpi^2}^\pi,
    \end{align*}
    which imply
    \begin{align*}
        \pi^{L_m, \KUold}_{\prescript{\beta}{}{\eta}} & = \left( \pi^\beta \right)^{K_m, \KUold}_\eta, \\
        \pi^{L_m, \KUnew}_{\prescript{\beta}{}{\eta}} & = \left( \pi^\beta \right)^{K_m, \KUnew}_\eta.
    \end{align*}
    One can also check that $z_\psi(\pi) = z_{\psi_\varpi}(\pi^\beta)$, and that $\prescript{\beta}{}{\eta} = \eta$ as functions on $L_0$.
    The corollary is now an immediate consequence of Theorems \ref{thm:conductor}, \ref{thm:main-nongeneric}, and \ref{thm:main-generic}.
\end{proof}


\end{document}